\newif\ifshowkeys
\newcommand{\lbl}[1]{\label{#1}\textup{[\texttt{#1}]}\par}
\newcommand{\lbl}{\label}
\newcommand{\Ab}	{\operatorname{Ab}}
\newcommand{\CAT}	{\operatorname{CAT}}
\newcommand{\DER}	{\operatorname{DER}}
\newcommand{\End}	{\operatorname{End}}
\newcommand{\Ho}	{\operatorname{Ho}}
\newcommand{\Map}       {\operatorname{Map}}
\newcommand{\POSet}	{\operatorname{POSet}}
\newcommand{\Top}       {\operatorname{Top}}
\newcommand{\cof}       {\operatorname{cof}}
\newcommand{\fib}       {\operatorname{fib}}
\newcommand{\inc}       {\operatorname{inc}}
\newcommand{\opp}	{{\operatorname{op}}}
\newcommand{\supp}	{\operatorname{supp}}
\newcommand{\tfib}	{\operatorname{tfib}}
\newcommand{\LA}	{{\mathbb{A}}}
\newcommand{\MM}        {{\mathbb{M}}}
\newcommand{\PP}        {{\mathbb{P}}}
\newcommand{\QQ}        {{\mathbb{Q}}}
\newcommand{\CA}        {{\mathcal{A}}}
\newcommand{\CB}        {{\mathcal{B}}}
\newcommand{\CC}        {{\mathcal{C}}}
\newcommand{\CD}        {{\mathcal{D}}}
\newcommand{\CE}        {{\mathcal{E}}}
\newcommand{\CF}        {{\mathcal{F}}}
\newcommand{\CG}        {{\mathcal{G}}}
\newcommand{\CJ}        {{\mathcal{J}}}
\newcommand{\CL}        {{\mathcal{L}}}
\newcommand{\CP}        {{\mathcal{P}}}
\newcommand{\CT}        {{\mathcal{T}}}
\newcommand{\CU}        {{\mathcal{U}}}
\newcommand{\CX}        {{\mathcal{X}}}
\newcommand{\CY}        {{\mathcal{Y}}}
\newcommand{\Gm}        {\Gamma}
\newcommand{\Sg}        {\Sigma}
\newcommand{\Lm}        {\Lambda}
\newcommand{\Om}        {\Omega}
\newcommand{\tSg}       {\widetilde{\Sigma}}
\newcommand{\al}        {\alpha}
\newcommand{\bt}        {\beta} 
\newcommand{\gm}        {\gamma} 
\newcommand{\dl}        {\delta}
\newcommand{\kp}        {\kappa}
\newcommand{\tht}       {\theta}
\newcommand{\lm}        {\lambda}
\newcommand{\sg}        {\sigma}
\newcommand{\N}         {{\mathbb{N}}}
\newcommand{\Z}         {{\mathbb{Z}}}
\newcommand{\Q}         {{\mathbb{Q}}}
\newcommand{\Zpl}       {{\mathbb{Z}_{(p)}}}
\newcommand{\Wedge}     {\vee}
\newcommand{\Smash}     {\wedge}
\newcommand{\bigWedge}  {\bigvee}
\newcommand{\sm}        {\setminus}
\newcommand{\sse}       {\subseteq}
\newcommand{\st}        {\;|\;}
\newcommand{\bc}[1]	{\langle #1\rangle}
\newcommand{\tm}        {\times}
\newcommand{\hK}	{\widehat{K}}
\newcommand{\ostar}     {\boxtimes}
\newcommand{\xla}       {\xleftarrow}
\newcommand{\xra}       {\xrightarrow}
\newcommand{\invlim} {\operatornamewithlimits{\underset{\longleftarrow}{lim}}}
\newcommand{\holim}  {\operatornamewithlimits{\underset{\longleftarrow}{holim}}}
\newcommand{\hocolim}  {\operatornamewithlimits{\underset{\longrightarrow}{holim}}}
\renewcommand{\:}{\colon}
\newtheorem{theorem}{Theorem}[section]
\newtheorem{lemma}[theorem]{Lemma}
\newtheorem{proposition}[theorem]{Proposition}
\newtheorem{corollary}[theorem]{Corollary}
\theoremstyle{definition}
\newtheorem{remark}[theorem]{Remark}
\newtheorem{definition}[theorem]{Definition}
\newtheorem{example}[theorem]{Example}
\newtheorem{construction}[theorem]{Construction}
\definecolor{darkred}{rgb}{0.5,0,0}
\definecolor{darkgreen}{rgb}{0,0.5,0}
\newverbcommand{\mcode}{\color{darkred}}{}
\newverbcommand{\fname}{\color{darkgreen}}{}
\begin{document}
\title{Iterated chromatic localisation}
\author{N.~P.~Strickland}
\author{N.~Bellumat}

\maketitle 

\begin{abstract}
 We study a certain monoid of endofunctors of the stable homotopy
 category that includes localizations with respect to finite unions of
 Morava $K$-theories.  We work in an axiomatic framework that can also
 be applied to analogous questions in equivariant stable homotopy
 theory.  Our results should be helpful for the study of
 transchromatic phenomena, including the Chromatic Splitting
 Conjecture.  The combinatorial parts of this work have been
 formalised in the Lean proof assistant.
\end{abstract}

\section{Introduction}
\label{sec-intro}

Fix a prime $p$, and let $\CB$ denote the category of $p$-local
spectra.

The Bousfield localisation functors $L_{K(n)}:\CB\to\CB$ and
$L_n=L_{K(0)\Wedge\dotsb\Wedge K(n)}$ play a central role in chromatic
homotopy theory.  It is a well-known and useful fact that
$L_nL_m=L_{\min(n,m)}$ and $L_nL_{K(n)}=L_{K(n)}$.  It is not hard to
see that $L_{K(n)}L_{K(m)}=L_{K(n)}L_m=0$ when $n>m$.  Two versions of
the Chromatic Splitting Conjecture of Hopkins involve the functors
$L_{n-1}L_{K(n)}$ and $L_{K(n-1)}L_{K(n)}$, and the latter can
naturally be compared with $L_{K(n-1)\Wedge K(n)}$.  Spectra such as
$L_{K(m)}\widehat{E(n)}=L_{K(m)}L_{K(n)}E(n)$ (for $m<n$) occur
naturally in the transchromatic character theory of Stapleton, and
also in work of Torii.  To encompass all these examples, we make the
following definitions:
\begin{definition}\lbl{defn-lm-intro}
 Given a finite subset $A\subset\N$, we put 
 \[ K(A) = \bigWedge_{a\in A} K(a) \in\CB, \]
 and we let $\lm_A\:\CB\to\CB$ denote the Bousfield localisation
 functor with respect to $K(A)$.  Fix $n^*\geq 1$, and put
 $N=\{0,\dotsc,n^*-1\}$.  Let $\Lm$ denote the monoid of endofunctors of
 $\CB$ generated by all the $\lm_A$ for $A\sse N$.
\end{definition}

Our original goal was to describe the structure of $\Lm$.  It turns
out to be natural to consider instead a certain monoid $\QQ$ that acts
on $\CB$ and includes all the functors $\lm_A$.  This differs from
$\Lm$ in that~(a) we do not know whether the map
$\QQ\to\pi_0\End(\CB)$ is injective, and~(b) the image of $\QQ$ is
strictly larger than $\Lm$.

We will take an axiomatic approach, which will also cover some
non-chromatic examples.  However, in this introduction we will focus
on the chromatic case.  As an example, consider the functor
\[ F = \lm_{013}\lm_{023} =
      L_{K(0)\Wedge K(1)\Wedge K(3)}L_{K(0)\Wedge K(2)\Wedge K(3)},
\]
and the submonoid $\bc{F}\leq\Lm$ that it generates.  We do not know a
very easy way to see that $|\bc{F}|$ is even finite, but we will show
that in fact $|\bc{F}|=3$.

In order to motivate our general approach, we recall the theory of
chromatic fracture squares.  As a special case of a fact that was
already known to Bousfield, for all $m<n$ and $X\in\CB$, there is a
homotopy cartesian square
\begin{center}
 \begin{tikzcd}
   L_{K(m)\Wedge K(n)}X \arrow[r] \arrow[d] &
   L_{K(n)}X \arrow[d] \\
   L_{K(m)}X \arrow[r] &
   L_{K(m)}L_{K(n)}X,
 \end{tikzcd}
\end{center}
which exhibits $L_{K(m)\Wedge K(n)}X$ as the homotopy inverse limit of
a certain subdiagram of $\psi(X)$.  By the same methods, if $|A|=d$
then one can exhibit a homotopy cartesian hypercube of dimension $d$,
which expresses $\lm_AX$ as a homotopy inverse limit of terms of the
form $L_{K(t_1)}\dotsb L_{K(t_r)}X$.  We will again call this
phenomenon \emph{chromatic fracture}.

We now give an initial version of our main definitions and results.
To make them precise, we will need a significant amount of
foundational work, as will be discussed below.

\begin{definition}\lbl{defn-P}
 Let $\PP$ be the set of subsets of $N$, ordered by inclusion.  For
 $A,B\in\PP$, we write $A\angle B$ if $a\leq b$ for all $a\in A$ and
 $b\in B$.  If $A=\{a_1,\dotsc,a_r\}$ with $a_1<\dotsb<a_r$, we put
 \[ \phi_A(X) = L_{K(a_1)} \dotsb L_{K(a_r)} X. \]
\end{definition}
\begin{remark}\lbl{rem-angle}
 Note that $A\angle B$ is vacuously satisfied if $A=\emptyset$ or
 $B=\emptyset$, and because of this, the relation is not transitive.
 (It is not reflexive or symmetric either.) 
\end{remark}

\begin{definition}\lbl{defn-Q}
 Let $\QQ$ be the set of all subsets of $\PP$ that are closed upwards,
 ordered by reverse inclusion.  We define $u\:\PP\to\QQ$ by
 $uA=\{B\st A\sse B\}$, so $u$ is a morphism of posets.  We also
 define $v\:\PP\to\QQ$ by $vA=\{B\st B\cap A\neq\emptyset\}$, so $v$
 is order-reversing.
\end{definition}

\begin{remark}\lbl{rem-PQ-ends}
 In $\PP$, the smallest element is $\emptyset$ and the largest element
 is $N$.  In $\QQ$, the smallest element is $u\emptyset=\PP$ and the
 largest element is $\emptyset$.  The element $uN$ is second-largest
 in $\QQ$, in the sense that every element $U\in\QQ$ with
 $U\neq\emptyset$ satisfies $U\leq uN$.  
\end{remark}

\begin{lemma}\lbl{lem-mu}
 There is a map $\mu\:\QQ\tm\QQ\to\QQ$ of posets given by 
 \[ \mu(U,V) = U * V =
     \{A\cup B \st A\in U,\; B\in V,\; A\angle B\}.
 \]
This operation is associative, with
 \[ U * V * W = \{A\cup B\cup C\st A\in U,\; B\in V,\; C\in W,\;
                  A\angle B,\; A\angle C,\; B\angle C\},
 \]
 and $u\emptyset$ is a two-sided identity element.
 Moreover it is distributive on both sides with respect to the union.
\end{lemma}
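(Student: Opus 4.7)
The plan is to verify the four claims in order, with the only real combinatorial content concentrated in the upward-closure step needed to make $\mu$ land in $\QQ$.

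First I would show that $U*V$ is upward closed in $\PP$. Suppose $C = A\cup B$ with $A\in U$, $B\in V$ and $A\angle B$, and that $C\sse C'\sse N$; I need to realise $C'$ as a similar union. If $A=\emptyset$ then $\emptyset\in U$ forces $U=\PP$ by upward closure, so I can take $A'=\emptyset$ and $B'=C'$, using $B\sse C'$ and upward closure of $V$ to see $B'\in V$; the case $B=\emptyset$ is symmetric. When both are nonempty, set $m=\max A$, so $m\leq\min B$, and put $A' = \{c\in C'\st c\leq m\}$ and $B' = \{c\in C'\st c\geq m\}$. Then $A\sse A'$, $B\sse B'$, $A'\angle B'$ by construction, and $A'\cup B'=C'$; upward closure of $U$ and $V$ gives $A'\in U$ and $B'\in V$, completing the step. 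Monotonicity of $\mu$ in the reverse-inclusion order of $\QQ$ is then immediate from the formula, since shrinking $U$ or $V$ only removes pairs $(A,B)$ and hence enlarges $U*V$.

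For associativity I would expand $(U*V)*W$ by unfolding the definition twice. An element has the form $(A\cup B)\cup C$ with $A\in U$, $B\in V$, $C\in W$, $A\angle B$, and $(A\cup B)\angle C$; the last condition unpacks as $A\angle C$ together with $B\angle C$, producing exactly the displayed three-fold formula. The identical manipulation applied to $U*(V*W)$ yields the same set, so the two bracketings agree.

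Finally, since $u\emptyset=\PP$, any $A\in U$ equals $A\cup\emptyset$ with $\emptyset\in\PP$ and $A\angle\emptyset$ vacuous, so $U\sse U*\PP$; conversely any $A\cup B$ in $U*\PP$ with $A\in U$ contains $A$ and so lies in $U$ by upward closure. The left-identity argument is symmetric. Distributivity is similarly immediate: the set-theoretic union of two upward-closed subsets of $\PP$ is again upward closed, and splitting the clause $A\in U_1\cup U_2$ into its two cases gives $(U_1\cup U_2)*V=(U_1*V)\cup(U_2*V)$, with the left-hand version analogous. The main obstacle is the upward-closure step, where identifying the correct splitting point $m=\max A$ and separately treating the situations in which $A$ or $B$ is empty is what makes the argument go through.
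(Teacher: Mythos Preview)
Your proof is correct and follows essentially the same approach as the paper; the only difference is that the paper avoids your case split on $A=\emptyset$ or $B=\emptyset$ by simply choosing any $t$ with $a\leq t$ for all $a\in A$ and $t\leq b$ for all $b\in B$ (such $t$ exists even in the empty cases, since $N\neq\emptyset$) and then splitting $C'$ at $t$. One small slip in wording: shrinking $U$ or $V$ as sets \emph{shrinks} $U*V$ as a set --- which is exactly what you need, since $\QQ$ is ordered by reverse inclusion --- so your phrase ``hence enlarges $U*V$'' should read ``hence shrinks $U*V$'' (or ``enlarges it in the $\QQ$-order'').
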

\begin{proof}
 Suppose that $A\in U$, $B\in V$, $A\angle B$ and $A\cup B\sse C$.
 We can then choose $t$ such that $a\leq t$ for all $a\in A$, and
 $t\leq b$ for all $b\in B$.  We put $A'=\{c\in C\st c\leq t\}$ and
 $B'=\{c\in C\st t\leq c\}$.  Then $A\sse A'$ so $A'\in U$, and
 $B\sse B'$ so $B'\in V$.  We also have $C=A'\cup B'$ with
 $A'\angle B'$, so $C\in U*V$.  This proves that $U*V$ is closed
 upwards, so we have indeed defined a map $\mu\:\QQ\tm\QQ\to\QQ$.  It
 is clear that if $U\sse U'$ and $V\sse V'$ then $U*V\sse U'*V'$, so
 $\mu$ is a morphism of posets.  All remaining claims are also easy.
\end{proof}

\begin{remark}\lbl{rem-kp}
 We can define $\kp\:\QQ\to\PP$ by $\kp(U)=\{n\st\{n\}\in U\}$.  This
 is order-reversing and satisfies $\kp(U*V)=\kp(U)\cap\kp(V)$ and
 $\kp(vA)=A$ and
 \[ \kp(uA) = \begin{cases}
      N & \text{ if } A = \emptyset \\
      A & \text{ if } |A| = 1 \\
      \emptyset & \text{ if } |A| > 1.
    \end{cases}
 \]
 However, $\kp$ is very far from being injective, so this gives only
 crude insight into the monoid structure of $\QQ$.  We do not know any
 better example of a homomorphism to a more familiar monoid.
\end{remark}

It is familiar that we can regard posets as categories with hom sets
of size at most one.  The above lemma then makes $\QQ$ into a monoidal
category.  We also have a monoidal category $\End(\CB)$ of
endofunctors of $\CB$, with composition as the monoidal product.

We now give a preliminary statement of our main result.  This will be
imprecise in various ways, to be discussed below; the imprecision will
be removed in the main body of the paper.

\begin{theorem}\lbl{thm-main-intro}
 There is a strong monoidal functor $U\mapsto\tht_U$ from $\QQ$ to
 $\End(\CB)$, with $\tht_{uA}=\phi_A$ and $\tht_{vA}=\lm_A$.  We
 therefore have $\tht_U\tht_V(X)\simeq\tht_{U*V}(X)$, and there are
 compatible natural maps $\tht_U(X)\to\tht_V(X)$ whenever $U\leq V$ in
 $\QQ$, or equivalently $U\supseteq V$.  The definition is that
 $\tht_U(X)$ is the homotopy inverse limit of the objects $\phi_A(X)$
 for $A\in U$.
\end{theorem}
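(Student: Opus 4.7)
The plan is to take the definition at face value: construct a functor $\phi\colon \PP \to \End(\CB)$ sending $A$ to $\phi_A$, and then put $\tht_U(X) = \holim_{A\in U}\phi_A(X)$ for each $U\in\QQ$. The first task is to upgrade the pointwise assignment $A\mapsto\phi_A$ to an actual functor on $\PP$: for each inclusion $A\sse B$, one produces a natural transformation $\phi_A\to\phi_B$ by inserting units $\operatorname{id}\to L_{K(b)}$ for each $b\in B\sm A$ into the iterated composite defining $\phi_A$. Assembling these insertions coherently into a diagram indexed by $\PP$ is a genuine $\infty$-categorical coherence problem, and I expect this to be the main foundational obstacle --- it is presumably what the axiomatic framework announced in the introduction is designed to handle.

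Once $\phi\colon\PP\to\End(\CB)$ is available as a functor, the assignment $U\mapsto\tht_U$ is automatically monotone on $\QQ$: restricting the diagram along an inclusion $V\sse U$ (which means $U\leq V$ in $\QQ$) produces the structure map $\tht_U\to\tht_V$. The identification $\tht_{uA}=\phi_A$ is then immediate, because $uA$ has $A$ as its minimum element and so its $\holim$ reduces to the value at $A$. The identification $\tht_{vA}\simeq\lm_A$ is exactly the content of chromatic fracture recalled before the theorem: one shows that the subposet of nonempty subsets of $A$ is initial inside $vA$, reducing $\holim_{B\in vA}\phi_B(X)$ to the standard fracture-hypercube presentation of $\lm_A X$.

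The main step is the monoidal compatibility $\tht_U\tht_V\simeq\tht_{U*V}$. The key algebraic input is that $\phi_A\phi_B=\phi_{A\cup B}$ whenever $A\angle B$. Granting that each $\phi_A$ commutes with the relevant homotopy limits (using that $K(a)$-localisations preserve $\holim$s of $K(a)$-local diagrams), one computes
\[
\tht_U\tht_V(X) \;=\; \holim_{A\in U}\phi_A\bigl(\holim_{B\in V}\phi_B(X)\bigr) \;\simeq\; \holim_{\substack{A\in U,\,B\in V\\ A\angle B}}\phi_{A\cup B}(X).
\]
To match this with $\holim_{C\in U*V}\phi_C(X)$, the assignment $(A,B)\mapsto A\cup B$ must be promoted to an initial functor from $U*V$ into the indexing category of admissible pairs; the proof of Lemma~\ref{lem-mu} already supplies the required canonical decomposition $C=A'\cup B'$ with $A'\in U$, $B'\in V$, $A'\angle B'$, and this construction should adapt to yield the initiality statement.

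Two obstacles stand out: (i) the foundational coherence of the diagram $\phi$ and of the $\holim$ construction as a strong monoidal functor $\QQ\to\End(\CB)$, and (ii) controlling the interchange $\phi_A\holim\simeq\holim\,\phi_A$ in the middle step above, since the localisation functors are not right adjoints on the full category $\CB$ and preserve only specific kinds of limits. Both issues motivate the axiomatic approach taken in the body of the paper.
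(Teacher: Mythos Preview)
Your plan captures the correct skeleton, and you have correctly identified the two principal obstacles. The paper's proof, however, takes a route that sidesteps both of them rather than confronting them head-on, and the difference is worth understanding.

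For obstacle~(i), the paper never attempts to build a functor $\phi\colon\PP\to\End(\CB)$. Instead it defines, for each finite poset $R$, a full subcategory $\CP(R)\subset\CC(\PP\times R)$ of \emph{fully localising} diagrams --- objects $X$ for which each structure map $X_A\to X_{\{t\}\cup A}$ is a $K(t)$-localisation --- and proves that the forgetful functor $i_\emptyset^*\colon\CP\to\CC$ is an equivalence of derivators (Proposition~\ref{prop-fully-loc}). The anafunctor $\phi_A$ is then the formal fraction $i_A^*(i_\emptyset^*)^{-1}$. Similarly, one defines $\CF(R)\subset\CC(\QQ\times R)$ as the subcategory of $u$-cartesian extensions of fully localising diagrams, shows $j^*\colon\CF\to\CC$ is an equivalence, and sets $\tht_U=i_U^*(j^*)^{-1}$. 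All the coherence you worry about is thereby packaged into the single assertion that a forgetful functor is an equivalence, which is proved by an easy induction on $|N|$.

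For obstacle~(ii), your interchange step $\phi_A\bigl(\holim_{B\in V}\phi_B X\bigr)\simeq\holim_{B\in V}\phi_A\phi_B X$ is genuinely problematic, and your parenthetical justification does not apply: the diagram $B\mapsto\phi_B(X)$ is not a diagram of $K(a)$-local objects for any fixed $a\in A$. The interchange would require that a homotopy limit of $K(a)$-acyclic objects remain $K(a)$-acyclic, which fails in general. The paper avoids this entirely. It introduces a derivator $\CF_2$ of \emph{double fracture objects} in $\CC^{\QQ\times\QQ}$, shows that $j_2^*\colon\CF_2\to\CC$ is an equivalence, and identifies $\tht_U\tht_V$ with the fraction $i_{(U,V)}^*(j_2^*)^{-1}$ (Corollary~\ref{cor-tht-tht}). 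The monoidal compatibility then reduces to showing that $\mu^*\colon\CF\to\CF_2$ is an equivalence, and this is where your combinatorial instinct is vindicated: the paper proves (Proposition~\ref{prop-sg-cofinal}) that $\sg\colon U\ostar V\to U*V$ is homotopy cofinal, by a contraction argument of precisely the sort you anticipate from Lemma~\ref{lem-mu}, and feeds this into a Beck--Chevalley criterion (Proposition~\ref{prop-cofinal-square}). So the combinatorial core you identified is correct and is the heart of the argument, but it is deployed at the level of diagrams of diagrams rather than via any direct manipulation of iterated localisation functors.
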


As $\QQ$ is finite and the image of $\tht$ contains the generators of
$\Lm$, we see in particular that $\Lm$ is finite.  We do not know
whether $\tht$ is injective.  We can spell out the relationship
between $\QQ$ and $\Lm$ a little more explicitly as follows:

\begin{definition}\lbl{defn-thread}
 Let $\LA=(A_1,\dotsc,A_r)$ be a list of subsets of $N$.  A
 \emph{thread} for $\LA$ is a list $(a_1,\dotsc,a_r)$ with
 $a_i\in A_i$ for all $i$ and $a_1\leq a_2\leq\dotsb\leq a_r$.  A
 \emph{thread set} for $\LA$ is a subset $A^*\sse N$ such that there
 exists a thread contained in $A^*$.  We write $T(\LA)$ for the set of
 all thread sets.  This is clearly closed upwards, so $T(\LA)\in\QQ$.
 We also write $\lm_{\LA}$ for the composite
 $\lm_{A_1}\dotsb\lm_{A_r}$. 
\end{definition}

\begin{proposition}\lbl{prop-thread}
 In $\QQ$ we have $T(\LA)=vA_1*\dotsb *vA_r$.  Thus,
 Theorem~\ref{thm-main-intro} implies that $\lm_{\LA}=\tht_{T(\LA)}$. 
\end{proposition}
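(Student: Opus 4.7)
The plan is to first establish an explicit description of the $r$-fold product $vA_1 * \cdots * vA_r$, then match it with the definition of $T(\LA)$, and finally read off the second assertion from Theorem~\ref{thm-main-intro}.

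By associativity (Lemma~\ref{lem-mu}), I would first prove by induction on $r$ that
\[ vA_1 * \cdots * vA_r = \bigl\{ B_1 \cup \cdots \cup B_r \st B_i \cap A_i \neq \emptyset, \; B_i \angle B_j \text{ for all } i < j \bigr\}. \]
The base case $r=1$ is the definition of $vA_1$. For the induction step, applying the binary rule to $(vA_1 * \cdots * vA_{r-1}) * vA_r$ and using that $(B_1 \cup \cdots \cup B_{r-1}) \angle B_r$ iff $B_i \angle B_r$ for each $i \le r-1$ gives the required form; the $r=3$ case displayed in Lemma~\ref{lem-mu} is exactly this pattern.

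Next I would check the two inclusions comparing this set with $T(\LA)$. For $(\subseteq)$, given $A^* = B_1 \cup \cdots \cup B_r$ as above, choose $a_i \in B_i \cap A_i$; the $\angle$-conditions give $a_i \leq a_j$ for $i < j$, so $(a_1,\dotsc,a_r)$ is a thread contained in $A^*$. For $(\supseteq)$, given a thread $(a_1,\dotsc,a_r) \subseteq A^*$ with $a_i \in A_i$, set $B_i = \{a_i\}$; each $B_i \cap A_i \neq \emptyset$ and $B_i \angle B_j$ reduces to $a_i \leq a_j$, so $B_1 \cup \cdots \cup B_r \in vA_1 * \cdots * vA_r$, and since this product is closed upwards (Lemma~\ref{lem-mu}) and $B_1 \cup \cdots \cup B_r \subseteq A^*$, we conclude $A^* \in vA_1 * \cdots * vA_r$.

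Having established $T(\LA) = vA_1 * \cdots * vA_r$ in $\QQ$, the final claim is immediate from Theorem~\ref{thm-main-intro}: $\tht$ is strong monoidal and sends $vA_i$ to $\lm_{A_i}$, so
\[ \tht_{T(\LA)} = \tht_{vA_1 * \cdots * vA_r} \simeq \tht_{vA_1} \cdots \tht_{vA_r} = \lm_{A_1} \cdots \lm_{A_r} = \lm_{\LA}. \]
No step presents a real obstacle; the one point requiring slight care is getting the inductive description in the first step right, in particular the full set of pairwise $\angle$-conditions, but this is exactly what the associativity statement in Lemma~\ref{lem-mu} is designed to supply.
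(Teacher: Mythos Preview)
Your proof is correct and is precisely the straightforward induction the paper has in mind; the paper's own proof consists of the single sentence ``This follows from the definitions by a straightforward induction.'' Your explicit $r$-fold description of $vA_1*\cdots*vA_r$ and the two inclusions matching it with $T(\LA)$ are exactly what that sentence is pointing to.
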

\begin{proof}
 This follows from the definitions by a straightforward induction.
\end{proof}

\begin{example}\lbl{eg-thread}
 We previously mentioned the functor $F=\lm_{013}\lm_{023}$.  This is
 $\tht_U$, where
 \begin{align*}
  U &= v\{0,1,3\} * v\{0,2,3\} = T(\{0,1,3\},\{0,2,3\}) \\
  &= \{A\st 0\in A \text{ or } 3 \in A \text{ or } \{1,2\} \sse A\}.
 \end{align*}
 A check of cases shows that $U*U=v\{0,3\}$ and then that
 $U^{*k}=v\{0,3\}$ for $k\geq 2$.  Thus, the monoid generated by $F$
 is $\{1,F,\lm_{03}\}$.
\end{example}

\begin{example}\lbl{eg-not-thread}
 Take $N=\{0,1,2\}$ and $U=\{\{0,1\},\{1,2\},\{0,1,2\}\}\in\QQ$.  We
 claim that $U\neq T(\LA)$ for any $\LA$, so $\QQ$ really is different
 from $\Lm$.  Indeed, suppose that $U=T(\LA)$ with
 $\LA=(A_1,\dotsc,A_r)$.  Then $\{1\}\not\in T(\LA)$, so we can choose
 $m$ with $1\not\in A_m$.  On the other hand, we have
 $\{0,1\}\in T(\LA)$, which means that there exists $p$ with
 $1<p\leq r$ and $0\in A_i$ for $i<p$ and $1\in A_i$ for $i\geq p$.
 From this it is clear that $p>m$.  Similarly, as $\{1,2\}\in T(A)$
 there must exist $q$ with $1\leq q<r$ and $1\in A_i$ for $i\leq q$
 and $2\in A_i$ for $i>q$.  From this we see that $q<m$ and so
 $q\leq p-2$.  It follows in turn that $\{0,2\}\in T(\LA)$,
 contradicting our assumption that $T(\LA)=U$.
\end{example}

One problem with our preliminary statement is that the formation of
homotopy limits requires diagrams that commute in some underlying
model category, whereas localisation functors are merely characterised
by a homotopical universal property.  To avoid these issues, we will
need some foundational work with derivators and anafunctors.  For any
$X$, we would like to construct a coherent diagram containing all of
the objects $\tht_UX$ and all the natural morphisms between them.
Instead of constructing this directly, we will define a category of
potential candidates (Definition~\ref{defn-fracture-obj}), and prove
that an appropriate forgetful functor to $\CB$ is an equivalence
(Proposition~\ref{prop-fracture-obj}).  To make this work smoothly, we
need a version that works uniformly when $X$ is not just a single
spectrum, but is itself a coherent diagram.  This is precisely the
kind of issue for which the theory of derivators is designed.
However, it will still work out that $\tht_U$ is not really an honest
functor, but is instead a kind of fraction in which we formally invert
an equivalence of derivators.  We will also need some foundational
work to support this.

\begin{remark}\lbl{rem-lean}
 This paper contains a number of results about the combinatorial
 homotopy theory of $\PP$, $\QQ$ and various other posets constructed
 from these.  All of these results have been formalised in the Lean
 proof assistant.  A snapshot of the code will be deposited on the
 arXiv.  Active development will continue at
 \url{https://github.com/NeilStrickland}.   
\end{remark}

\section{Basic definitions}
\label{sec-basic}

\begin{definition}\lbl{defn-context}
 We will fix a compactly generated triangulated category $\CB$.  We
 also fix an integer $n^*\geq 1$ and put $N=\{0,\dotsc,n^*-1\}$ as
 before.  We then fix a family of homology theories
 $K(n)_*\:\CB\to\Ab_*$ for $n\in N$ and put
 $K(A)_*=\bigoplus_{a\in A}K(a)_*$ for any $A\sse N$.  We let $\lm_A$
 denote the localisation with respect to the localizing subcategory of
 $K(A)_*$- acyclics.  We assume the following condition, which we call
 the \emph{fracture axiom}: if $A$ is a nonempty subset of $N$, and
 $b\in N$ with $b>\max(A)$, then $K(b)_*\lm_A(X)=0$ for all $X$.
\end{definition}

\begin{remark}
 Since later we will employ the theory of stable derivators we need to
 assume that our triangulated category $\CB$ is the underlying
 category of a stable derivator, i.e. $\CB \simeq \CC(e)$ for some
 derivator $\mathcal{C}$. This condition is not too restrictive and it
 is verified if $\CB$ has a geometric model (see \cite[Theorem
 6.11]{ci:idc} or the easier result \cite[Proposition
 1.36]{groth:derpointstable} for combinatorial model categories).
\end{remark}

\begin{lemma}\lbl{lem-fracture-axiom}
 The fracture axiom implies the following statement
 (which we call the \emph{extended fracture axiom}): if $A,B\sse N$
 with $A\angle B$ and $K(B)_*(X)=0$, then $K(B)_*(\lm_A(X))=0$.
\end{lemma}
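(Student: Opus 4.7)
The plan is to split $B$ according to where its elements sit relative to $\max(A)$, then apply the fracture axiom to one part and a naturality argument to the other. First I would dispose of the degenerate cases: if $A$ or $B$ is empty then the conclusion is immediate, because $K(\emptyset) = 0$ (in the first case this forces $\lm_A X \simeq 0$, and in the second case $K(B)_*$ vanishes identically on everything).

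So I may assume $A$ and $B$ are both nonempty, in which case $A\angle B$ is exactly the condition $\max(A)\leq\min(B)$. Since $K(B)_* = \bigoplus_{b\in B} K(b)_*$, it suffices to prove that $K(b)_*(\lm_A X) = 0$ for each individual $b\in B$. For such $b$ we have $b\geq \min(B)\geq\max(A)$, so there are just two mutually exclusive subcases.

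If $b > \max(A)$, then the fracture axiom in Definition~\ref{defn-context} applies verbatim and gives $K(b)_*(\lm_A X)=0$, without even using the hypothesis on $X$. Otherwise $b = \max(A)$, so in particular $b\in A$. In that case $K(b)_*$ is a direct summand of $K(A)_*$, and the localisation unit $\eta_X\: X\to\lm_A X$ is by construction a $K(A)_*$-isomorphism; hence it is also a $K(b)_*$-isomorphism. Combining this with the hypothesis $K(B)_*(X)=0$, which in particular forces $K(b)_*(X)=0$, we get $K(b)_*(\lm_A X)\cong K(b)_*(X)=0$, as required.

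There is no serious obstacle here: the fracture axiom does almost all the work, and the single stratum it does not cover (namely $b=\max(A)$) is exactly the stratum in which $b$ lies in $A$, so localising at $K(A)$ automatically preserves the vanishing of $K(b)_*$. The only point worth being careful about is making sure that the direct sum decomposition of $K(B)_*$ really does let me test $K(B)_*$-acyclicity summand by summand, which is immediate from the definition.
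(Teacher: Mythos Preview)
Your proof is correct and follows essentially the same route as the paper: dispose of the empty $A$ case, then for each $b\in B$ split into $b>\max(A)$ (handled by the fracture axiom) versus $b=\max(A)\in A$ (handled because the localisation unit is a $K(A)$-equivalence and hence a $K(b)$-equivalence). The only cosmetic difference is that you also separate out the trivial $B=\emptyset$ case explicitly.
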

\begin{proof}
 If $A=\emptyset$ then $K(A)_*=0$ and so $\lm_A=0$ and everything is
 trivial.  We can thus assume that $A\neq\emptyset$, so $\max(A)$ is
 defined.  The assumption $A\angle B$ then means that $b\geq\max(A)$
 for all $b\in B$.  We are given that $K(B)_*(X)=0$, or in other words
 that $K(b)_*(X)=0$ for all $b\in B$.  We want to prove that
 $K(b)_*(\lm_A(X))=0$.  If $b>\max(A)$ then this is immediate from the
 fracture axiom.  This just leaves the case where $b=\max(A)$, so
 $b\in A$.  The map $X\to\lm_A(X)$ is a $K(A)$-equivalence, so it is a
 $K(b)$-equivalence (because $b\in A$), and $K(b)_*(X)=0$ by
 assumption, so $K(b)_*(\lm_A(X))=0$ as required. 
\end{proof}

All our examples will be verified using the following result:

\begin{proposition}\lbl{prop-eg-generic}
 Let $\CB$ be a stable homotopy category as in~\cite{hopast:ash} (so
 it has a closed symmetric monoidal structure compatible with the
 triangulation).  Suppose we have $N$ as before and objects
 $K(n)\in\CB$ representing the homology theories
 $K(n)_*(X)=\pi_*(K(n)\Smash X)$.  Suppose we also have objects
 $T(n)\in\CB$, and that the following axioms are satisfied:
 \begin{itemize}
  \item[(a)] $T(n)$ is dualizable for all $n$.
  \item[(b)] For $m<n$ we have $K(m)\Smash T(n)=0$.
  \item[(c)] For any object $X$ and any $n$ we have $K(n)_*(X)=0$ iff
   $K(n)\Smash X=0$ iff $K(n)\Smash T(n)\Smash X=0$.
 \end{itemize}
 Then the fracture axiom is satisfied. 
\end{proposition}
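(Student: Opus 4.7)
The plan is to reduce the vanishing $K(b)_*(\lm_A X) = 0$ to showing that $T(b) \Smash \lm_A X \simeq 0$, and to obtain the latter by moving $\lm_A$ across the smash product with $T(b)$, using dualizability. By axiom~(c) it suffices to prove $T(b) \Smash \lm_A X \simeq 0$. The elementary input is that axiom~(b) gives $K(a) \Smash T(b) \simeq 0$ for every $a \in A$, since $a \leq \max(A) < b$; consequently, for any object $Y$ the spectrum $T(b) \Smash Y$ is $K(A)_*$-acyclic.

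The crux is to show that smashing with $T(b)$ commutes with $\lm_A$, meaning $T(b) \Smash \lm_A X \simeq \lm_A(T(b) \Smash X)$. I would verify this by checking the two universal properties of the localization. First, the map $T(b) \Smash X \to T(b) \Smash \lm_A X$ is a $K(A)$-equivalence: smashing the fiber sequence $F \to X \to \lm_A X$ (whose fiber $F$ is $K(A)_*$-acyclic by construction of $\lm_A$) with $T(b)$ yields a fiber sequence whose fiber $T(b) \Smash F$ is $K(A)_*$-acyclic by the observation above. Second, $T(b) \Smash \lm_A X$ is $K(A)$-local: for a $K(A)_*$-acyclic test object $Z$, dualizability (axiom~(a)) furnishes an isomorphism $[Z, T(b) \Smash \lm_A X] \cong [DT(b) \Smash Z, \lm_A X]$, where $DT(b)$ denotes the dual, and $DT(b) \Smash Z$ is $K(A)_*$-acyclic because $K(a) \Smash DT(b) \Smash Z \simeq DT(b) \Smash K(a) \Smash Z \simeq 0$ for every $a \in A$, so the right-hand hom group vanishes by $K(A)$-locality of $\lm_A X$.

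To conclude, combine the two sub-claims to identify $T(b) \Smash \lm_A X$ with $\lm_A(T(b) \Smash X)$. But $T(b) \Smash X$ is itself $K(A)_*$-acyclic, so its $\lm_A$-localization is zero; hence $T(b) \Smash \lm_A X \simeq 0$, and axiom~(c) converts this back to $K(b)_*(\lm_A X) = 0$. The main obstacle is the second sub-claim---the $K(A)$-locality of $T(b) \Smash \lm_A X$---which is where the dualizability hypothesis is actually consumed; the rest of the argument uses only axioms~(b) and~(c).
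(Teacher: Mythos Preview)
Your proof is correct and shares its core with the paper's: both reduce via axiom~(c) to showing $T(b)\Smash\lm_A X=0$, and both exploit dualizability of $T(b)$ to transport the question to a map into the $K(A)$-local object $\lm_A X$ from a $K(A)$-acyclic source. The paper's route is a little more direct: rather than establishing the general commutation $T(b)\Smash\lm_A(-)\simeq\lm_A(T(b)\Smash -)$, it simply observes that the identity map of $T(b)\Smash\lm_A X$ corresponds under the duality adjunction to a map $DT(b)\Smash T(b)\Smash\lm_A X\to\lm_A X$, and this adjoint is zero because its source is $K(A)$-acyclic (the $T(b)$ factor annihilates $K(A)$ by axiom~(b)) while its target is $K(A)$-local. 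Thus your first sub-claim---that $T(b)\Smash X\to T(b)\Smash\lm_A X$ is a $K(A)$-equivalence---is not actually needed; the paper bypasses it by testing only against the single object $T(b)\Smash\lm_A X$ itself. On the other hand, the commutation lemma you prove is a useful fact in its own right, so your detour is not wasted effort, just slightly longer than necessary for this particular proposition.
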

\begin{proof}
 Suppose that $b>\max(A)$.  We need to show that
 $K(b)\Smash \lm_A(X)=0$, and by axiom~(c) it will suffice to show
 that $K(b)\Smash T(b)\Smash\lm_A(X)=0$.  For this it will suffice to
 show that $T(b)\Smash\lm_A(X)=0$, or that the identity map of
 $T(b)\Smash\lm_A(X)$ is zero, or that the adjoint map
 \[ DT(b) \Smash T(b) \Smash \lm_A(X) \to \lm_A(X) \]
 is zero.  Here $K(A)\Smash T(b)=0$ by axiom~(b), so the source of the
 above map is $K(A)$-acyclic, whereas the target is $K(A)$-local; this
 implies that the map is zero as required.
\end{proof}

\begin{example}\lbl{eg-derived}
 For the simplest example, let $\CB$ be the derived category of
 modules over $\Zpl$, and put
 \begin{align*}
  K(0) &= \Q   & K(1) &= \Z/p \\
  T(0) &= \Zpl & T(1) &= \Z/p.
 \end{align*}
 It is then straightforward to check the hypotheses of
 Proposition~\ref{prop-eg-generic}, so the fracture axiom is
 satisfied.
\end{example}

\begin{example}\lbl{eg-chromatic}
 For the motivating example, fix a prime $p$.  Let $\CB_0$ denote the
 category of symmetric spectra of simplicial sets, equipped with the
 $p$-localisation of the usual model structure.  Put $\CB=\Ho(\CB_0)$
 (so this is the usual stable homotopy category of $p$-local spectra).
 For any $n\in N=\{0,\dotsc,n^*-1\}$, let $K(n)$ denote the Morava
 $K$-theory spectrum of height $n$ at the prime $p$, and let $T(n)$ be
 any finite $p$-local spectrum of type $n$.  It is again
 straightforward to check the hypotheses of
 Proposition~\ref{prop-eg-generic}, so the fracture axiom is
 satisfied. 
\end{example}

\begin{example}\lbl{eg-equivariant}
 For another example, fix a cyclic group $G$ of order $p^d$ for some
 prime $p$ and $d\geq 0$.  Put $N=\{0,\dotsc,d\}$.  For $n\in N$ let
 $H_n$ be the unique subgroup of order $p^{d-n}$ in $G$, and put
 $Q_n=G/H_n$, so that $|Q_n|=p^n$.  In the $G$-equivariant stable
 homotopy category, put $T(n)=(Q_n)_+$ and
 \[ K(n) = (Q_n)_+\Smash \tSg EQ_{n+1}, \]
 where $\tSg$ denotes the unreduced suspension.  For the case $n=d$,
 this should be interpreted as $K(d)=(Q_d)_+=G_+$.  We find that the
 geometric fixed points are
 \[ \phi^{H_m} T(n) =
    \begin{cases}
     0 & \text{ if } m < n \\
     (Q_n)_+ & \text{ if } m \geq n,
    \end{cases} \hspace{4em}
    \phi^{H_m} K(n) =
    \begin{cases}
     0 & \text{ if } k \neq n \\
     S^0 & \text{ if } k = n.
    \end{cases}
 \]
 Recall that $\phi^{H_m}$ preserves smash products, and that $X=0$ iff
 $\phi^{H_m}(X)=0$ (in the nonequivariant stable category) for all
 $m$.  Using this, it is not hard to check the hypotheses of
 Proposition~\ref{prop-eg-generic}, and we again see that the fracture
 axiom is satisfied.
\end{example}

\begin{remark}\lbl{rem-general-groups}
 It would of course be interesting to see what one could say about
 more general finite groups, where the subgroup lattice is more
 complicated.  We suspect that this will be substantially harder; we
 may return to the question in future work.
\end{remark}

\section{Derivators and homotopy (co)limits}
\label{sec-derivators}

We will use the theory of stable derivators, mostly
following~\cite{groth:derpointstable}.  

\begin{definition}\lbl{defn-poset-cat}
 Let $\POSet$ denote the strict $2$-category of finite posets, so the
 $0$-cells are finite posets, and the $1$-cells are nondecreasing
 maps.  Given two non-decreasing maps $u,v\:P\to Q$, there is one
 $2$-cell from $u$ to $v$ if $u(p)\leq v(p)$ for all $p$, and no
 $2$-cells otherwise.  We write $[n]$ for the set $\{0,\dotsc,n\}$
 with its usual order, so $[n]\in\POSet$.  Note that $[0]$ is the
 terminal poset, which will also be denoted by $e$.
\end{definition}

\begin{definition}\lbl{defn-prederivator}
 For us, a \emph{prederivator} is a strict $2$-functor
 $\CC\:\POSet^{\opp}\to\CAT$.  More explicitly, it consists of
 \begin{itemize}
  \item[(a)] For every finite poset $P$, a category $\CC(P)$.
  \item[(b)] For every morphism $u\:P\to Q$, a functor
   $u^*\:\CC(Q)\to\CC(P)$, such that $1^*=1$ and $(v\circ u)^*=u^*v^*$
   on the nose.
  \item[(c)] For every inequality $u\leq v$ between morphisms $P\to Q$,
   a natural map $u^*\to v^*$, satisfying some evident axioms.
 \end{itemize}
\end{definition}
(By restricting attention to finite posets rather than more general
categories, we are following~\cite{groth:derpointstable}*{Remark 1.8}.)
\begin{remark}\lbl{rem-underlying}
 For any prederivator $\CC$ we have a category $\CC(e)$, which we call
 the \emph{underlying category} of $\CC$.  We will often think of this
 as the key ingredient, with the other categories just adding extra
 structure to $\CC(e)$ in some sense.
\end{remark}

\begin{definition}\lbl{defn-stable-derivator}
 A \emph{derivator} is a prederivator in which all the functors $u^*$
 have left and right adjoints with certain compatibility properties,
 as specified in~\cite{groth:derpointstable}*{Definition 1.10}.  These
 adjoints can be thought of as homotopy right and left Kan extensions.
 A \emph{stable derivator} is a derivator $\CC$ subject to some
 additional conditions:
 \begin{itemize}
  \item[(a)] $\CC$ should be \emph{strong}.  To explain this, note that
   for any $P$ there are evident inclusions $i_0,i_1\:P\to[1]\tm P$
   with $i_0\leq i_1$.  We therefore have functors
   $i_0^*,i_1^*\:\CC([1]\tm P)\to\CC(P)$, together with a natural map
   between them.  These can be combined in an obvious way to get a
   functor $\CC([1]\tm P)\to\CC(P)^{[1]}$, and the strongness condition
   is that this functor should be full and essentially surjective (for
   all $P$).  This is~\cite{groth:derpointstable}*{Definition 1.13}.
  \item[(b)] $\CC$ should be \emph{pointed}, which means that $\CC(e)$
   should have an object that is both initial and terminal.  Many
   consequences of this condition are investigated
   in~\cite{groth:derpointstable}*{Section 3}.
  \item[(c)] Homotopy pushouts squares in $\CC$ must coincide with
   homotopy pullback squares, in the sense spelled out
   in~\cite{groth:derpointstable}*{Definition 4.1}.
 \end{itemize}
\end{definition}

\begin{remark}\lbl{rem-derivator-shift}
 If $\CC$ is a derivator and $P$ is a finite poset, then we can define
 a new derivator $\CC^P$ by $\CC^P(Q)=\CC(P\tm Q)$.  This is called a
 \emph{shifted derivator} and it is often a useful device.
 In~\cite{groth:derpointstable}, Theorem~1.31 proves that $\CC^P$ is
 indeed a derivator, and Proposition~2.6 proves that for any
 $u\:P\to Q$, the resulting morphism $u^*\:\CC^Q\to\CC^P$ preserves
 left and right homotopy Kan extensions.
\end{remark}

\begin{remark}\lbl{rem-dual-derivator}
 For every derivator $\CC$ there is a dual derivator
 $\CC^\opp(P)=\CC(P^\opp)^\opp$, as
 in~\cite{groth:derpointstable}*{Definition 1.15} and surrounding
 discussion. 
\end{remark}

\begin{definition}\lbl{defn-hocolim}
 Let $\CC$ be a derivator.  Let $P$ be a finite poset, and let $c$ be
 the unique morphism $P\to e$, so we have functors
 $c_!,c_*\:\CC(P)\to\CC(e)$.  We define $\hocolim_PX=c_!(X)$ and
 $\holim_P(X)=c_*(X)$.
\end{definition}

\begin{definition}\lbl{defn-comma-posets}
 Let $f\:P\to Q$ be a morphism of finite posets, and suppose that
 $q\in Q$.  We use the following notation for comma posets:
 \begin{align*}
  f/q &= \{p\in P\st f(p)\leq q\} \\
  q/f &= \{p\in P\st q \leq f(p)\}.
 \end{align*}
\end{definition}

\begin{remark}\lbl{rem-kan}
 We can now recall the key axiom
 from~\cite{groth:derpointstable}*{Definition 1.10}, which is known as
 the Kan formula.  Suppose we have a morphism $f\:P\to Q$, a derivator
 $\CC$, an object $X\in\CC(P)$, and an element $q\in Q$.  The element
 $q$ gives $i_q\:e\to Q$, and we want to understand the object
 $i_q^*f_*X\in\CC(e)$.  There is an evident inclusion $j\:q/f\to P$ so
 we have $j^*X\in\CC(q/f)$ and $\holim_{q/f}j^*X\in\CC(e)$.   Using
 various adjunctions one can write down a natural map
 $i_q^*f_*(X)\to\holim_{q/f}j^*X$, and the axiom says that this should
 be an isomorphism.  Dually, the natural map
 $\hocolim_{f/q}j^*X\to i_q^*f_!(X)$ should also be an isomorphism.
 
 This is a direct generalization of the Kan formula for Kan extensions
 of functors in the usual categorical setting. That is, for a type of
 derivators called \textit{represented}, the above isomorphisms are
 exactly the expression of right or left Kan extenions via limits and
 colimits respectively. See the discussion following~\cite[Definition
 1.9]{groth:derpointstable}.
\end{remark}

\begin{theorem}\lbl{thm-triangulation}
 Let $\CC$ be a stable derivator.  Then each category $\CC(P)$ has a
 natural structure as a triangulated category.  Moreover, for each
 $u\:P\to Q$, the corresponding functor $u^*\:\CC(Q)\to\CC(P)$ has a
 canonical natural isomorphism $u^*\Sg\to\Sg u^*$ with respect to
 which it is an exact functor, and the same applies to the left and
 right adjoint functors $u_!,u_*\:\CC(P)\to\CC(Q)$.
\end{theorem}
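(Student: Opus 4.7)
The plan is to reduce every claim to the corresponding statement for the underlying category $\CD(e)$ of an \emph{arbitrary} stable derivator $\CD$, and then to pass to shifted derivators. Remark~\ref{rem-derivator-shift} records that $\CC^P$ is a derivator with underlying category $\CC(P)$; one checks in the same style that stability, pointedness and strongness all survive the shift, so $\CC^P$ is again a stable derivator. Once $\CD(e)$ has a canonical triangulation that is natural in morphisms of stable derivators, applying this with $\CD=\CC^P$ produces the triangulation on $\CC(P)$, and the shift-functoriality of the whole construction translates into the assertions about $u^*$, $u_!$ and $u_*$.

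For the construction on $\CD(e)$ I would follow Franke's approach, reworked rigorously in~\cite{groth:derpointstable}*{Section 4}. The suspension $\Sg X$ is built by lifting the zero map $X\to 0$ to a coherent arrow in $\CD([1])$ using strongness and pointedness, left Kan extending along the inclusion of $[1]$ into the ``corner'' poset with two incomparable top elements, and reading off the value at the missing fourth corner of the resulting pushout square. The axiom that homotopy pushouts coincide with homotopy pullbacks forces $\Sg$ to be an equivalence with inverse the dual loop construction $\Om$. A triangle is declared distinguished when it is isomorphic to the boundary data of a coherent iterated cofibre sequence living in $\CD([2]\tm[1])$ with all outer corners zero; TR1--TR4 are then verified by repeated pasting of Kan extensions combined with the pushout=pullback axiom.

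For the exactness of $u^*$, the key observation is that $u^*\:\CC^Q\to\CC^P$ is a strict morphism of derivators, so it commutes with the pullbacks and Kan extensions that enter the construction of $\Sg$ and of distinguished triangles; the canonical natural isomorphism $u^*\Sg\xra{\cong}\Sg u^*$ then falls out of the universal property of the defining pushout, as does the fact that coherent cofibre sequences pull back to coherent cofibre sequences. For $u_!$ and $u_*$ I would not build the isomorphisms directly, but rather obtain them as \emph{mates}: the constructed isomorphism $u^*\Sg\cong\Sg u^*$ has canonical conjugates $\Sg u_!\cong u_!\Sg$ and $u_*\Sg\cong\Sg u_*$ obtained by passing to adjoints on both sides, and exactness of $u_!$ and $u_*$ follows from the general principle that adjoints of exact functors between triangulated categories are themselves exact. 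The main obstacle in a fully self-contained argument would be the octahedral axiom, which is the one axiom that requires a genuinely three-dimensional manipulation of iterated homotopy pushouts and uses the stability axiom in full strength; for that I would simply cite the detailed treatment in~\cite{groth:derpointstable} rather than reproduce the several pages of diagrammatic bookkeeping.
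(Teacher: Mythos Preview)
Your proposal is correct and takes essentially the same approach as the paper: the paper's entire proof is the single citation ``This is~\cite{groth:derpointstable}*{Corollary 4.19}'', and what you have written is an accurate outline of the argument behind that corollary, including the shifted-derivator reduction, the Franke-style construction of the triangulation, and the mate argument for the adjoints. You have simply unpacked the reference rather than giving a different proof.
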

\begin{proof}
 This is~\cite{groth:derpointstable}*{Corollary 4.19}.
\end{proof}

\begin{definition}\lbl{defn-support}
 Let $\CC$ be a stable derivator, and let $P$ be a finite poset.  Each
 $p\in P$ gives a morphism $i_p\:e\to P$ and thus a functor
 $i_p^*\:\CC(P)\to\CC(e)$.  Given $X\in\CC(P)$, we write $X_p$ for
 $i_p^*(X)\in\CC(e)$.  We also put $\supp(X)=\{p\st X_p\neq 0\}\sse P$,
 and call this the \emph{support} of $X$.  Given $Q\sse P$, we say
 that $X$ is \emph{supported in $Q$} if $\supp(X)\sse Q$.  We write
 $\CC_Q(P)$ for the full subcategory of $\CC(P)$ consisting of objects
 supported in $Q$.
\end{definition}

\begin{remark}\lbl{rem-supp}
 Derivators are defined
 in~\cite{groth:derpointstable}*{Definition 1.10}, and the second
 axiom says that a morphism $f\:X\to Y$ in $\CC(P)$ is an isomorphism
 iff $f_p\:X_p\to Y_p$ is an isomorphism for all $p$.  From this we
 see that $\supp(X)=\emptyset$ iff $X=0$.  Similarly, for any
 $M\sse P$ we have an inclusion $j\:M\to P$ and we write $X|_M$
 for $j^*(Q)$.  We then find that $X$ is supported in $Q$ iff
 $X|_{P\sm Q}=0$. 
\end{remark}

\begin{definition}\lbl{defn-sieve}
 Let $P$ be a finite poset, and let $Q$ be a subset of $P$.
 \begin{itemize}
  \item[(a)] We say that $Q$ is a \emph{sieve} if it is closed
   downwards, so that whenever $p\leq q$ with $q\in Q$ we also have
   $p\in Q$.
  \item[(b)] We say that $Q$ is a \emph{cosieve} if it is closed
   upwards, so that whenever $q\leq p$ with $q\in Q$ we also have
   $p\in Q$.
 \end{itemize}
 If $Q\sse P$ is a (co)sieve, we also say that the inclusion morphism
 $Q\to P$ is a (co)sieve.  More generally, if $i\:Q\to P$ is an
 embedding (so $i(q)\leq i(q')$ iff $q\leq q'$) and $i(Q)$ is a
 (co)sieve, we say that $i$ is a (co)sieve.  This is the finite poset
 version of~\cite{groth:derpointstable}*{Definition 1.28}.
\end{definition}

\begin{lemma}\lbl{lem-embedding}
 If $\CC$ is a derivator and $j\:Q\to P$ is an embedding then the
 counit map $j^*j_*X\to X$ and the unit map $X\to j^*j_!X$ are both
 isomorphisms (for all $X\in\CC(Q)$).  Thus, the functors $j_*$ and
 $j_!$ are both full and faithful embeddings.  In particular, this
 holds if $j$ is a sieve or a cosieve.
\end{lemma}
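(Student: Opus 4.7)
The plan is to reduce to the unit map $\eta\:X\to j^*j_!X$, since the counit statement for $j_*$ is obtained by dualising via Remark~\ref{rem-dual-derivator}. By the axiom of a derivator that detects isomorphisms pointwise (the second axiom in \cite{groth:derpointstable}*{Definition 1.10}), it suffices to show that $i_q^*\eta$ is an isomorphism for every $q\in Q$.

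First I would rewrite $i_q^*j^* = i_{j(q)}^*$ using strict $2$-functoriality of the prederivator, and apply the Kan formula from Remark~\ref{rem-kan} to obtain a canonical identification
\[ i_{j(q)}^*j_!X \;\cong\; \hocolim_{j/j(q)} k^*X, \]
where $k\:j/j(q)\to Q$ is the inclusion of the comma poset. The crucial combinatorial input, which uses that $j$ is an order embedding rather than merely a nondecreasing map, is the identification $j/j(q) = \{q'\in Q \st q'\leq q\}$, a poset that carries $q$ itself as a maximum element.

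Next I would invoke the principle that a homotopy colimit over a finite poset with a terminal object collapses to the value at that terminal object. The cleanest way to see this within the derivator framework is to observe that if $r_0$ is terminal in $R$, then $r_0\:e\to R$ is right adjoint to the unique map $c\:R\to e$, since $\Hom_R(r,r_0)$ is a singleton for all $r\in R$; an adjunction between shapes induces canonical isomorphisms of Kan-extension functors on any derivator, yielding $c_!\cong r_0^*$ and hence $\hocolim_R Y \cong Y_{r_0}$. Applied with $r_0=q$ this gives $\hocolim_{j/j(q)}k^*X \cong X_q$. The counit case is handled dually, using that $j(q)/j = \{q'\in Q\st q\leq q'\}$ has $q$ as a minimum, so $\holim_{j(q)/j}$ collapses to evaluation at $q$.

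The main obstacle is not any of these individual isomorphisms but the bookkeeping required to verify that the composite canonical isomorphism $i_{j(q)}^*j_!X \cong X_q$ produced above coincides with $i_q^*\eta$. This is a triangle-identity calculation that tracks how $\eta$ is assembled from the various adjunctions, and rather than reproving it I would cite the parallel argument given for sieves in \cite{groth:derpointstable}: the only feature of sieves used there is exactly the identification of the comma poset described above, so the argument adapts without change to arbitrary order embeddings. The conclusion that $j_!$ and $j_*$ are full and faithful is then the standard categorical consequence of the unit (respectively counit) of the adjunction $j_!\dashv j^*$ (respectively $j^*\dashv j_*$) being a natural isomorphism.
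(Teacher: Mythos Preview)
Your proposal is correct and takes essentially the same approach as the paper: both reduce to a pointwise check via Der2, invoke the Kan formula to identify the relevant component as a homotopy (co)limit over a comma poset, observe that the embedding hypothesis makes $q$ an extremal element of that comma poset, and collapse the (co)limit using the adjunction between $i_q\:e\to R$ and $c\:R\to e$ (the paper cites \cite{groth:derpointstable}*{Lemma 1.23} for this step). The only cosmetic difference is that the paper begins with the counit for $j_*$ and then dualises, whereas you begin with the unit for $j_!$; the paper is also no more explicit than you are about the bookkeeping that the resulting isomorphism agrees with the (co)unit component.
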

\begin{proof}
 We first consider the counit map $j^*j_*X\to X$.  By the derivator
 axiom Der2, it will suffice to prove that the induced map
 $(j_*X)_{j(q)}=(j^*j_*X)_q\to X_q$ is an isomorphism in $\CC(e)$ for
 all $q\in Q$.  Put $j(q)/j=\{a\in Q\st j(q)\leq j(a)\}$ and let
 $k\:j(q)/j\to Q$ be the inclusion.  The Kan formula identifies
 $(j_*X)_{j(q)}$ with the homotopy limit of $k^*(X)\in\CC(j(q)/q)$.
 Note that $q$ is initial in $j(q)/j$, so the inclusion
 $i_q\:e\to j(q)/j$ is left adjoint to $c\:j(q)/j\to e$, so the
 homotopy limit functor $c_*$ is the same as $i_q^*$
 by~\cite{groth:derpointstable}*{Lemma 1.23}.  This gives
 $(j_*X)_{j(q)}=c_*k^*(X)=(ki_q)^*(X)=X_q$ as required.  This proves
 that the counit map $j^*j_*X\to X$ is an isomorphism.  To prove that
 the unit map $X\to j^*j_!X$ is also an isomorphism, we can either
 give a similar argument, or take adjoints, or appeal to a kind of
 self-duality of the theory of derivators.  From the isomorphism
 $X\simeq j^*j_!X$ we obtain $[W,X]\simeq[W,j^*j_!X]=[j_!W,j_!X]$, so
 $j_!$ is full and faithful.  A similar argument shows that $j_*$ is
 full and faithful.
\end{proof}

\begin{proposition}\lbl{prop-sieve}
 Let $\CC$ be a stable derivator, and let $j\:Q\to P$ be a sieve.
 Then:
 \begin{itemize}
  \item[(a)] The functor $j_*\:\CC(Q)\to\CC(P)$ has a right adjoint
   denoted by $j^!$, as well as the left adjoint $j^*$ that exists by
   the definition of $j_*$.
  \item[(b)] The unit map $X\to j^!j_*(X)$ is an isomorphism (as are
   the unit map $X\to j^*j_!X$ and the counit map $j^*j_*X\to X$, as
   we saw in Lemma~\ref{lem-embedding}).  
  \item[(c)] The functor $j_*$ gives an equivalence from $\CC(Q)$ to
   $\CC_Q(P)$, with inverse given by $j^*$ or $j^!$.
  \item[(d)] If $Y\in\CC(Q)$ corresponds to $X\in\CC_Q(P)$ then
   $\holim_Q(Y)\simeq\holim_P(X)$. 
 \end{itemize}
\end{proposition}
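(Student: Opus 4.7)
The plan is to prove (c) and (d) directly from the derivator axioms, then construct $j^!$ using the sieve--cosieve recollement for (a); part (b) will follow formally. Throughout I use the triangulation on $\CC(P)$ from Theorem~\ref{thm-triangulation}.

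For (c), Lemma~\ref{lem-embedding} already gives $j_*$ fully faithful, so I only need to identify its essential image. First I show $j_*X$ is supported on $Q$: for $p\notin Q$, the Kan formula of Remark~\ref{rem-kan} gives $(j_*X)_p\simeq\holim_{p/j}k^*X$ with $p/j=\{q\in Q\st p\leq j(q)\}$; since $Q$ is closed downward and $p\notin Q$, this comma poset is empty, and the homotopy limit over the empty diagram in a pointed derivator is $0$. Conversely, for $Y\in\CC_Q(P)$ the unit $Y\to j_*j^*Y$ is an isomorphism by Der2: on $p\in Q$ it follows from $j^*j_*\simeq 1$, and on $p\notin Q$ both sides vanish. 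For (d): by 2-functoriality of right Kan extensions, $(c_P)_*\circ j_*=(c_P\circ j)_*=(c_Q)_*$, so $\holim_P(j_*Y)\simeq\holim_QY$.

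For (a) and (b), let $\bar j\:P\sm Q\to P$ be the complementary cosieve. Dually to (c), $\bar j^*\bar j_*\simeq 1$ and $\bar j_!$ is extension by zero with essential image $\CC_{P\sm Q}(P)$; moreover $\bar j^*j_*=0$, because $j_*X$ vanishes on $P\sm Q$. For any $Y\in\CC(P)$ I form, in the triangulation of $\CC(P)$, the fibre sequence $F(Y)\to Y\to\bar j_*\bar j^*Y$ associated to the unit of $\bar j^*\dashv\bar j_*$. By Der2, $F(Y)$ is supported on $Q$: on $p\in P\sm Q$ the unit is an isomorphism via $\bar j^*\bar j_*\simeq 1$, so its fibre vanishes. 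By (c), $F(Y)\simeq j_*j^!Y$ canonically for a unique $j^!Y\in\CC(Q)$. Applying $[j_*X,-]$ to this fibre sequence, the term $[j_*X,\bar j_*\bar j^*Y]=[\bar j^*j_*X,\bar j^*Y]=0$ vanishes, so $[j_*X,Y]\simeq[j_*X,j_*j^!Y]\simeq[X,j^!Y]$, which establishes (a). Part (b) is then automatic: for any adjunction $L\dashv R$ with $L$ fully faithful, the unit $1\to RL$ is an isomorphism.

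The main obstacle is to promote $j^!$ from a construction at the level of the underlying triangulated category (where the fibre is only defined up to non-canonical isomorphism) to a morphism of (shifted) derivators, so that the adjunction $j_*\dashv j^!$ holds uniformly across all $\CC^K$ and not merely on $\CC(e)$. The cleanest route is to realise the fibre sequence $F(Y)\to Y\to\bar j_*\bar j^*Y$ as a coherent square of shape $[1]\tm[1]$, natural in $Y$, using the strongness and bi-Cartesian axioms of a stable derivator; this is essentially the recollement data for the sieve--cosieve decomposition $(Q,P\sm Q)$, and should be extracted from~\cite{groth:derpointstable}.
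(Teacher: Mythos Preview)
Your proof is correct and follows essentially the same approach as the paper: the paper cites \cite{groth:derpointstable}*{Proposition 3.6, Corollary 3.8} for parts~(a)--(c), and the explicit arguments you give (the Kan formula showing $p/j=\emptyset$ for $p\notin Q$, and the recollement fibre sequence $F(Y)\to Y\to\bar j_*\bar j^*Y$ to construct $j^!$) are precisely the content of those references. Your treatment of~(d) via $(c_P)_*j_*\simeq(c_Q)_*$ is identical to the paper's, and your honest deferral of the derivator-level coherence of $j^!$ to \cite{groth:derpointstable} matches the paper's own reliance on that source.
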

\begin{proof}
 Most of parts~(a) to~(c) can be obtained by combining Definition~3.4,
 Proposition~3.6 and Corollary~3.8 from~\cite{groth:derpointstable}.
 More specifically, 3.6(ii) says that $j_*$ is full and faithful with
 $\CC_Q(P)$ as the essential image, and 3.8 gives us the functor
 $j^!$.  From Lemma~\ref{lem-embedding} we have $1\simeq j^*j_*$, and
 by taking right adjoints we get $1\simeq j^!j_*$.  We leave it to the
 reader to check that this natural isomorphism is just the unit
 map.   Given this, the claim~(d) just reduces to
 $(c_P)_*j_*\simeq(c_Q)_*$, where $c_P$ and $c_Q$ are the unique
 morphisms $P\to e$ and $Q\to e$.  But this is clear because
 $c_Pj=c_Q$. 
\end{proof}

\begin{proposition}\lbl{prop-cosieve}
 Let $\CC$ be a stable derivator, and let $i\:R\to P$ be a cosieve.
 Then:
 \begin{itemize}
  \item[(a)] The functor $i_!\:\CC(R)\to\CC(P)$ has a left adjoint
   denoted by $i^?$, as well as the right adjoint $i^*$ that exists by
   the definition of $i_!$.
  \item[(b)] The counit map $i^?i_!(X)\to X$ is an isomorphism (as
   are the unit map $X\to i^*i_!X$ and the counit map $i^*i_*X\to X$, as
   we saw in Lemma~\ref{lem-embedding}).  
  \item[(c)] The essential image of $i_!$ is precisely $\CC_R(P)$,
   so in fact we have an equivalence $\CC(R)\simeq\CC_R(P)$.
  \item[(d)] If $Z\in\CC(R)$ corresponds to $X\in\CC_R(P)$ then
   $\hocolim_R(Z)\simeq\hocolim_P(X)$. 
 \end{itemize}
\end{proposition}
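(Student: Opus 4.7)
The plan is to deduce Proposition \ref{prop-cosieve} as the formal dual of Proposition \ref{prop-sieve}. Recall from Remark \ref{rem-dual-derivator} that the opposite $\CC^\opp$ of a stable derivator is again a stable derivator, since the stable derivator axioms are self-dual. If $i\:R\to P$ is a cosieve in $\POSet$, then $i^\opp\:R^\opp\to P^\opp$ is a sieve, because a subset is closed upwards in $P$ iff it is closed downwards in $P^\opp$. Hence Proposition \ref{prop-sieve} applies to the pair $(\CC^\opp, i^\opp)$.

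Next I would translate the conclusions of Proposition \ref{prop-sieve} through the duality. The pullback $(i^\opp)^*$ in $\CC^\opp$ is the opposite of $i^*$; its right adjoint $(i^\opp)_*$ in $\CC^\opp$ is therefore the opposite of the left adjoint $i_!$ in $\CC$; and the further right adjoint $(i^\opp)^!$ in $\CC^\opp$ provided by Proposition \ref{prop-sieve}(a) becomes a further left adjoint to $i_!$ in $\CC$, which we name $i^?$. This yields part~(a). Unit maps in $\CC^\opp$ correspond to counit maps in $\CC$, so the isomorphism $X\to(i^\opp)^!(i^\opp)_*X$ of Proposition \ref{prop-sieve}(b) becomes the isomorphism $i^?i_!X\to X$; together with Lemma \ref{lem-embedding}, which already supplies the isomorphisms $i^*i_!X\simeq X$ and $i^*i_*X\simeq X$, this gives part~(b). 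Support is preserved under the duality because $X_p=0$ in $\CC(e)$ iff it is zero in $\CC(e)^\opp=\CC^\opp(e)$, so the essential-image statement of Proposition \ref{prop-sieve}(c) translates verbatim, giving part~(c). Finally, the homotopy limit functor over $P^\opp$ in $\CC^\opp$ is by construction the homotopy colimit over $P$ in $\CC$, and supported objects match up, so Proposition \ref{prop-sieve}(d) yields part~(d).

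The bulk of the argument is thus purely bookkeeping regarding the duality dictionary, and I would not expect any genuine obstacle: all of the homotopical content is already packaged into Proposition \ref{prop-sieve} and Lemma \ref{lem-embedding}. If one preferred to avoid invoking $\CC^\opp$, an alternative route is to mimic the proof of Proposition \ref{prop-sieve} line by line, citing the cosieve analogues of Definition~3.4, Proposition~3.6 and Corollary~3.8 of \cite{groth:derpointstable}, which are developed in parallel with the sieve case in Groth's Section~3.
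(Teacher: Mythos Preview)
Your proposal is correct and follows precisely the paper's approach: the paper's proof is the single sentence ``This is dual to the previous proposition,'' and you have simply unpacked what that duality entails.
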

\begin{proof}
 This is dual to the previous proposition.
\end{proof}

\begin{proposition}\lbl{prop-recollement}
 We now want to combine the above two propositions.  Suppose that
 $Q\sse P$ is a sieve, and let $R=P\sm Q$ be the complementary
 cosieve.  Let $Q\xra{j}P\xla{i}R$ be the inclusions, so we have
 functors as follows, with each functor left adjoint to the one below
 it. 
 \begin{center}
  \begin{tikzcd}[column sep=large]
    \CC(Q)
     \arrow[rr,bend left =30,"j_!" ]
     \arrow[rr,bend right=10,"j_*"'] &&
    \CC(P)
     \arrow[ll,bend right=10,"j^*"']
     \arrow[ll,bend left =30,"j^!" ]
     \arrow[rr,bend left =30,"i^?" ]
     \arrow[rr,bend right=10,"i^*"'] &&
    \CC(R)
     \arrow[ll,bend right=10,"i_!"']
     \arrow[ll,bend left =30,"i_*" ]
  \end{tikzcd}
 \end{center}
 \begin{itemize}
  \item[(a)] The composites $i^?j_!$, $j^*i_!$, $i^*j_*$ and $j^!i_*$
   (obtained by composing two functors at the same level in the
   diagram) are all zero. (We have nothing systematic to say about any
   other composite functors between $\CC(Q)$ and $\CC(R)$.)
  \item[(b)] The six adjunctions in the diagram involve six (co)unit
   maps to or from the identity functor of $\CC(P)$.  These fit into a
   diagram as follows, in which every straight line is part of a
   natural distinguished triangle:
   \begin{center}
    \begin{tikzpicture}[scale=2]
     \draw (0,0) node{$1$};
     \draw (  0:1) node{$j_*j^*$};
     \draw ( 60:1) node{$j_*j^!$};
     \draw (120:1) node{$i_!i^?$};
     \draw (180:1) node{$i_!i^*$};
     \draw (240:1) node{$i_*i^*$};
     \draw (300:1) node{$j_!j^*$};
     \draw[->] (  0:0.1) -- (  0:0.85);
     \draw[<-] ( 60:0.1) -- ( 60:0.85);
     \draw[->] (120:0.1) -- (120:0.85);
     \draw[<-] (180:0.1) -- (180:0.85);
     \draw[->] (240:0.1) -- (240:0.85);
     \draw[<-] (300:0.1) -- (300:0.85);
    \end{tikzpicture}
   \end{center}
 \end{itemize}
\end{proposition}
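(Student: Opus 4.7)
For part~(a), the strategy is to establish $j^* i_! = 0$ directly from the Kan formula and to deduce the remaining three vanishings by adjunction. For $q \in Q$ the comma poset $i/q = \{r \in R \st i(r) \leq q\}$ is empty, because any $r \in R$ with $r \leq q$ would force $q \in R$ (as $R$ is a cosieve), contradicting $q \in Q$. By Remark~\ref{rem-kan}, $(i_! X)_q$ is a homotopy colimit over $i/q$, which is zero because $\CC$ is pointed; derivator axiom Der2 then gives $j^* i_! = 0$. Dually, the Kan formula for $j_*$ at $r \in R$ is a homotopy limit over $r/j = \{q \in Q \st r \leq q\}$, which is empty since $Q$ is a sieve, so $i^* j_* = 0$. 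The other two vanishings follow by adjunction and Yoneda: $[i^? j_! X, Y] \simeq [X, j^* i_! Y] = 0$ forces $i^? j_! = 0$, and $[X, j^! i_* Y] \simeq [i^* j_* X, Y] = 0$ forces $j^! i_* = 0$.

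For part~(b), each of the three distinguished triangles (one for each diametrically opposite pair in the picture) is produced by completing a canonical (co)unit to a distinguished triangle and then identifying the third vertex. I treat T1, namely $i_!i^* \to 1 \to j_*j^*$, in detail; T2 and T3 are built by an essentially identical recipe. Fix $X \in \CC(P)$ and complete the counit $\epsilon\: i_! i^* X \to X$ to a distinguished triangle $i_! i^* X \xra{\epsilon} X \xra{f} Y \to \Sg i_! i^* X$. Applying $i^*$, the first map is $i^*\epsilon$, which is an isomorphism by Lemma~\ref{lem-embedding} and the triangle identity, so $i^* Y = 0$. Applying $j^*$, the first term vanishes by part~(a), so $\alpha := j^* f\: j^* X \to j^* Y$ is an isomorphism. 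Since $Y$ is supported on $Q$, Proposition~\ref{prop-sieve}(c) says the unit $\eta_Y\: Y \to j_* j^* Y$ is an iso; composing with $j_*(\alpha^{-1})$ gives an iso $Y \simeq j_* j^* X$. Finally, naturality of $\eta$ yields $\eta_Y \circ f = j_*(j^* f) \circ \eta_X = j_*\alpha \circ \eta_X$, so the composite $X \to Y \simeq j_* j^* X$ equals $\eta_X$, exhibiting our triangle with the intended canonical maps.

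The remaining two are built analogously. For T2 ($j_* j^! \to 1 \to i_* i^*$), I complete the unit $X \to i_* i^* X$ to a triangle with fibre $Z$, then use $i^* i_* \simeq 1$ (Lemma~\ref{lem-embedding}) and $j^! i_* = 0$ (part~(a)) to show $i^* Z = 0$ and $j^! Z \simeq j^! X$, identifying $Z \simeq j_* j^! X$ via the inverse equivalence $j^!\: \CC_Q(P) \xra{\sim} \CC(Q)$ of Proposition~\ref{prop-sieve}(c). For T3 ($j_! j^* \to 1 \to i_! i^?$), I complete the counit $j_! j^* X \to X$ to a triangle with cofibre $W$, then use $j^* j_! \simeq 1$ (Lemma~\ref{lem-embedding}) and $i^? j_! = 0$ (part~(a)) to show $j^* W = 0$ and $i^? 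X \simeq i^? W$, identifying $W \simeq i_! i^? X$ via the equivalence $\CC(R) \xra{\sim} \CC_R(P)$ with inverse $i^?$ (Proposition~\ref{prop-cosieve}(b),(c)).

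The main obstacle is not the construction of the triangles themselves but the final identification in each case: verifying that the arrow to or from $X$ in the constructed triangle really is the specified (co)unit, rather than just some map that happens to complete the triangle. This bookkeeping is handled uniformly by naturality of the relevant (co)unit together with the adjoint triangle identities, as in the model calculation for T1.
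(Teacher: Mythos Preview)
Your argument for part~(a) is essentially identical to the paper's: both compute $j^*i_!$ via the Kan formula over an empty comma poset and then propagate the vanishing to the other three composites by adjunction.

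For part~(b) you take a genuinely different route. The paper does not construct the triangles directly; it first observes that each straight-line composite (e.g.\ $i_!i^*\to 1\to j_*j^*$) is zero by an adjunction trick, and then defers the actual existence of the distinguished triangles to \cite{groth:derpointstable}*{Example 4.25} and the recollement machinery of~\cite{he:trm}. Your approach is self-contained: you complete one (co)unit to a triangle, use the vanishings from~(a) together with the support-characterisation equivalences of Propositions~\ref{prop-sieve} and~\ref{prop-cosieve} to identify the third vertex, and then pin down the remaining map via naturality of the (co)unit. This is the standard direct verification of a recollement and is entirely correct; it has the advantage of not importing external results, at the cost of the bookkeeping you flag in your last paragraph. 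One small point you use implicitly is that $j^!$ and $i^?$ are exact, so that they may be applied to distinguished triangles; this is legitimate in the paper's framework (these functors are built from $v^*,v_!,v_*$ as noted in the proof of Lemma~\ref{lem-thick-thick}), but you might make it explicit.
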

\begin{proof}
 This is just a reorganisation of information extracted
 from~\cite{groth:derpointstable}*{Example 4.25}.  We will explain
 some parts of the argument.  The Kan formula expresses
 the object $(j^*i_!Z)_q=(i_!Z)_{j(q)}$ as a homotopy colimit over a
 certain comma poset, but the (co)sieve properties of $i$ and $j$
 ensure that this comma poset is empty, so $j^*i_!=0$.  By taking
 left and right adjoints repeatedly we deduce that the other
 composites in~(a) are also zero.  The horizontal composite
 $f\:i_!i^*\to j_*j^*$ is adjoint to a map $i^*\to i^*j_*j^*$, and
 $i^*j_*=0$, so it follows that $f=0$.  The same kind of argument
 shows that the two other straight line composites are also zero.  For
 the remaining points we refer to~\cite{groth:derpointstable}, and to
 the paper~\cite{he:trm} that is cited there.
\end{proof}

\section{Homotopy theory of partially ordered sets}
\label{sec-homotopy}

\begin{definition}\lbl{defn-cart-cl}
 Let $\POSet(P,Q)$ be the set of monotone maps from $P$ to $Q$.  We
 give this the partial order such that $f\leq g$ iff $f(p)\leq g(p)$
 in $Q$ for all $p\in P$.  It is easy to see that this makes the
 category of finite posets into a cartesian closed category.
\end{definition}

\begin{definition}\lbl{defn-geom}
 Let $P$ be a finite poset.  Recall that a subset $\sg\sse P$ is a
 \emph{chain} if the induced order on $\sg$ is total.  Given a map
 $x\:P\to[0,1]$, we define $\supp(x)=\{p\st x(p)>0\}$.  We define
 $|P|$ to be the set of maps $x\:P\to[0,1]$ such that $\supp(x)$ is a
 chain and $\sum_{p\in P}x(p)=1$.  We call this the \emph{geometric
  realisation} of $P$.  A standard argument shows that this gives a
 functor $\POSet\to\Top$ that preserves finite coproducts and finite
 limits.  In particular, we can apply geometric realisation to the
 evaluation map $\POSet(P,Q)\tm P\to Q$, and then take adjoints, to
 get a continuous map
 \[ |\POSet(P,Q)| \to \Top(|P|,|Q|). \]
\end{definition}

\begin{definition}\lbl{defn-pi-zero}
 For any finite poset $P$, we define $\pi_0(P)$ to be the quotient of
 $P$ by the smallest equivalence relation such that $p\sim q$ whenever
 $p\leq q$.  This is easily seen to be the same as the set of path
 components of $|P|$.  It gives a functor from finite posets to finite
 sets, which preserves finite products and coproducts.  It follows
 formally that we can construct a quotient category of $\POSet$ with
 morphism sets $\pi_0(\POSet(P,Q))$.  We call this the \emph{strong
  homotopy category} of finite posets.  It also follows that if $f$
 and $g$ lie in the same equivalence class of $\pi_0(\POSet(P,Q))$
 then the resulting maps $|f|$ and $|g|$ are homotopic (by a
 straight-line homotopy, in the basic case where $f\leq g$ or
 $g\leq f$).  Thus, geometric realisation gives a functor from the
 strong homotopy category of posets to the homotopy category of
 topological spaces.
\end{definition}

\begin{remark}\lbl{rem-adjoint-strong}
 Suppose we have morphisms $f\:P\to Q$ and $g\:Q\to P$ that are
 adjoint, in the sense that $f(p)\leq q$ iff $p\leq g(q)$.  We then
 have (co)unit inequalities $1\leq gf$ and $fg\leq 1$, showing that
 $fg$ and $gf$ give identities in the strong homotopy category, and
 thus that $f$ and $g$ are strong homotopy equivalences.
\end{remark}

\begin{definition}\lbl{defn-strongly-contractible}
 We say that $P$ is \emph{strongly contractible} if the map
 $c_P\:P\to e$ is a strong homotopy equivalence.  
\end{definition}

We note that this holds if $P$ has a smallest element or a largest
element.  We also note that if $P$ is a strongly contractible poset,
then $|P|$ is a contractible space.

\begin{definition}\lbl{defn-D-equiv}
 Consider a morphism $f\:P\to Q$ in $\POSet$, and note that
 $c_Qf=c_P\:P\to e$.  For any derivator $\CC$ and any $X,Y\in\CC(Q)$
 we therefore get a map
 \[ f^* \: \CC(Q)(c_Q^*X,c_Q^*Y) \to \CC(P)(c_P^*X,c_P^*Y). \]
 We say that $f$ is a \emph{$\CD$-equivalence} if this map is
 bijective for all $\CC$, $X$ and $Y$.  We also say that $P$ is
 \emph{$\CD$-contractible} if the map $c_P\:P\to e$ is a
 $\CD$-equivalence, or equivalently the functor
 \[ c_P^* \: \CC(e) \to \CC(P) \]
 is full and faithful.
\end{definition}

\begin{remark}
 Groth uses the term \emph{homotopy contractible} rather than
 \emph{$\CD$-contractible}. 
\end{remark}

\begin{proposition}\lbl{prop-D-equiv-inv}
 If $[f]=[g]$ in $\pi_0(\POSet(P,Q))$ then
 \[ f^* = g^* \: \CC(Q)(c_Q^*X,c_Q^*Y) \to \CC(P)(c_P^*X,c_P^*Y). \]
 Thus, $f$ is a $\CD$-equivalence iff $g$ is a $\CD$-equivalence.
\end{proposition}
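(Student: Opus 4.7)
The plan is to reduce to the basic case of a single inequality and then invoke strict $2$-functoriality of $\CC$. By definition, $\pi_0(\POSet(P,Q))$ is the quotient by the equivalence relation generated by the relation $f \leq g$, so it suffices to establish the first claim whenever $f \leq g$; the general case then follows by finitely many applications of this one.

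So suppose $f \leq g$ in $\POSet(P,Q)$. By the $2$-functoriality of $\CC$, this $2$-cell gives rise to a natural transformation $\al \: f^* \to g^*$ between functors $\CC(Q) \to \CC(P)$. Both $f$ and $g$ satisfy $c_Q \circ f = c_P = c_Q \circ g$ in $\POSet(P,e)$, so by the \emph{strict} functoriality of $\CC$ we get an honest equality $f^* c_Q^* = c_P^* = g^* c_Q^*$ of functors $\CC(e)\to\CC(P)$. I claim that the whiskered natural transformation $\al * c_Q^*$, viewed as a transformation from $c_P^*$ to itself, is the identity. Indeed, by strict $2$-functoriality this whiskered transformation is the image under $\CC$ of the composite $2$-cell $c_Q \cdot f \Rightarrow c_Q \cdot g$ in $\POSet(P,e)$; but $\POSet(P,e)$ has only the trivial hom category (a single morphism $c_P$ with only its identity $2$-cell), so this composite $2$-cell must be the identity, and $\CC$ sends identity $2$-cells to identity natural transformations.

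Now given any morphism $\bt \: c_Q^*X \to c_Q^*Y$ in $\CC(Q)$, naturality of $\al$ with respect to $\bt$ gives a commutative square
\[
\begin{tikzcd}
c_P^*X = f^*c_Q^*X \arrow[r,"f^*\bt"] \arrow[d,"(\al * c_Q^*)_X"'] & f^*c_Q^*Y = c_P^*Y \arrow[d,"(\al * c_Q^*)_Y"] \\
c_P^*X = g^*c_Q^*X \arrow[r,"g^*\bt"'] & g^*c_Q^*Y = c_P^*Y.
\end{tikzcd}
\]
By the previous paragraph both vertical maps are identities, so $f^*\bt = g^*\bt$ as required. Finally, the second assertion of the proposition is immediate: since $f^*$ and $g^*$ agree as maps $\CC(Q)(c_Q^*X,c_Q^*Y) \to \CC(P)(c_P^*X,c_P^*Y)$ for every choice of $\CC$, $X$, $Y$, one is a bijection precisely when the other is.

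The whole argument is essentially bookkeeping; the only step requiring a moment's care is the identification of the whiskered $2$-cell $\al * c_Q^*$ with the identity, which rests on the fact that the hom category $\POSet(P,e)$ is trivial and on strict (rather than pseudo) $2$-functoriality of $\CC$.
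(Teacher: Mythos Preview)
Your proof is correct and follows essentially the same approach as the paper: both reduce to the case $f\leq g$ and then exploit strict $2$-functoriality of $\CC$ together with the fact that $\POSet(P,e)$ has only identity $2$-cells. The paper packages the argument into a commuting square of hom-categories, while you unpack the same content by naming the whiskered transformation $\al*c_Q^*$ explicitly and tracking it through a naturality square; these are two presentations of the same reasoning.
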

\begin{proof}
 We can reduce easily to the case where $f\leq g$.
 As $\CC\:\POSet^{\text{op}}\to\CAT$ is a strict $2$-functor, the
 following diagram of categories and functors must commute on the
 nose:
 \begin{center}
  \begin{tikzcd}[column sep=large]
   \POSet(P,Q)\tm\POSet(Q,e)
     \arrow[rr,"\text{compose}"] \arrow[d] &&
   \POSet(P,e) \arrow[d] \\
   {[\CC(Q),\CC(P)]\tm[\CC(e),\CC(Q)]}
     \arrow[rr,"\text{compose}"'] &&
   {[\CC(e),\CC(P)].} 
  \end{tikzcd}  
 \end{center}
 The inequality $f\leq g$ gives a morphism $(f,c_Q)\to(g,c_Q)$ in the
 category $\POSet(P,Q)\tm\POSet(Q,e)$, and this becomes the identity
 morphism of $c_P$ in $\POSet(P,e)$.  The claim now follows by chasing
 the diagram.
\end{proof}

\begin{corollary}\lbl{cor-D-equiv-inv}
 If $f\:P\to Q$ is a strong homotopy equivalence, then it is a
 $\CD$-equivalence.  In particular:
 \begin{itemize}
  \item[(a)] If $f$ has a left or right adjoint, then it is a
   $\CD$-equivalence.  
  \item[(b)] If $P$ is strongly contractible, then it is
   $\CD$-contractible.  \qed 
 \end{itemize}
\end{corollary}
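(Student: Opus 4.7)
The plan is to leverage Proposition~\ref{prop-D-equiv-inv} together with strict $2$-functoriality of $\CC$. Suppose $f\:P\to Q$ is a strong homotopy equivalence, so there exists $g\:Q\to P$ with $[gf]=[1_P]$ in $\pi_0(\POSet(P,P))$ and $[fg]=[1_Q]$ in $\pi_0(\POSet(Q,Q))$. The key observation is that $c_P g = c_Q$ (both are the unique morphism to $e$), so $g^*\:\CC(P)\to\CC(Q)$ sends $c_P^*X$ to $c_Q^*X$ and thus restricts to a well-defined map
\[ g^* \: \CC(P)(c_P^*X,c_P^*Y) \to \CC(Q)(c_Q^*X,c_Q^*Y). \]

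First I would verify that $g^*\circ f^*$ is bijective as a self-map of $\CC(Q)(c_Q^*X,c_Q^*Y)$, and similarly $f^*\circ g^*$ on $\CC(P)(c_P^*X,c_P^*Y)$. By strict $2$-functoriality of $\CC$, these composites equal $(fg)^*$ and $(gf)^*$ respectively. Then I would apply Proposition~\ref{prop-D-equiv-inv}: since $[fg]=[1_Q]$, the map $(fg)^*$ agrees with $1_Q^*$, which is the identity; symmetrically for $(gf)^*$. Thus $f^*$ has both a left and a right inverse, hence is bijective. Since this holds for all $\CC, X, Y$, the morphism $f$ is a $\CD$-equivalence.

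For part~(a), if $f$ admits a left or right adjoint then Remark~\ref{rem-adjoint-strong} tells us that $f$ is a strong homotopy equivalence, so the main claim of the corollary applies. For part~(b), if $P$ is strongly contractible then $c_P\:P\to e$ is by definition a strong homotopy equivalence, so it is a $\CD$-equivalence by the main claim, which is precisely the statement that $P$ is $\CD$-contractible.

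There is essentially no obstacle here: all the work was done in Proposition~\ref{prop-D-equiv-inv}, and the corollary is just the formal consequence that inverses on hom sets combine to give bijectivity. The only thing to be a little careful about is that $f^*$ and $g^*$ restrict to maps between the correct hom sets, which follows from the identities $c_P = c_Q f$ and $c_Q = c_P g$.
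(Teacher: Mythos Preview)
Your proposal is correct and matches the paper's approach: the paper gives no explicit proof (it marks the corollary with \qed), treating it as an immediate consequence of Proposition~\ref{prop-D-equiv-inv}, and your argument spells out exactly the expected details.
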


The following definitions are taken
from~\cite{groth:mayervietoris}*{Section 3}:
\begin{definition}\lbl{defn-cofinal}\leavevmode
 \begin{itemize}
  \item[(a)] We say that a map $f\:Q\to P$ is \emph{homotopy final}
   if the natural map
   \[ \hocolim_Qf^*(X) = (c_Q)_!f^*(X) = (c_P)_!f_!f^*(X) \to 
        (c_P)_!(X) = \hocolim_P(X) 
   \]
   is an isomorphism for all derivators $\CC$ and all objects
   $X\in\CC(P)$.      
  \item[(b)] Dually, we say that a map $f\:Q\to P$ is \emph{homotopy
   cofinal} if the natural map
   \[ \holim_P(X) = (c_P)_*(X) \to (c_P)_*f_*f^*(X) =
       (c_Q)_*f^*(X) = \holim_Qf^*(X) 
   \]
   is an isomorphism for all derivators $\CC$ and all objects
   $X\in\CC(P)$.      
 \end{itemize}
\end{definition}

We do not really need the following result, but it helps to clarify
the relationship between our definitions.
\begin{proposition}\lbl{prop-final-equiv}
 If $f\:Q\to P$ is homotopy final or homotopy cofinal then it is a
 $\CD$-equivalence. 
\end{proposition}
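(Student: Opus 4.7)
The plan is to translate the $\CD$-equivalence condition into a statement about natural transformations between endofunctors of $\CC(e)$, using the adjunctions $(c_P)_! \dashv c_P^* \dashv (c_P)_*$ supplied by the derivator structure (and similarly for $c_Q$). The central observation is that, under these adjunctions, the whiskering map $f^*$ on Hom-sets corresponds to precomposition (respectively postcomposition) with exactly the comparison map that defines homotopy (co)finality.

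Concretely, suppose first that $f\:Q\to P$ is homotopy final, and fix $X,Y\in\CC(e)$. Using $(c_P)_!\dashv c_P^*$ and $(c_Q)_!\dashv c_Q^*$ I would rewrite
\[
\CC(P)(c_P^*X,c_P^*Y)\simeq\CC(e)((c_P)_!c_P^*X,Y),\qquad
\CC(Q)(c_Q^*X,c_Q^*Y)\simeq\CC(e)((c_Q)_!c_Q^*X,Y).
\]
Since $c_Pf=c_Q$ gives $c_Q^*=f^*c_P^*$, the second Hom-set takes the form $\CC(e)((c_Q)_!f^*c_P^*X,Y)$. A triangle-identity chase shows that the map induced by $f^*$ is precomposition with the canonical natural transformation $(c_Q)_!f^*c_P^*X\to(c_P)_!c_P^*X$, which is precisely the homotopy-finality comparison map of Definition~\ref{defn-cofinal} evaluated at $c_P^*X\in\CC(P)$. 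By hypothesis this is an isomorphism in $\CC(e)$, so precomposition with it yields a bijection of Hom-sets.

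The homotopy cofinal case is entirely dual: apply instead $c_P^*\dashv(c_P)_*$ and $c_Q^*\dashv(c_Q)_*$ to rewrite the Hom-sets as $\CC(e)(X,(c_P)_*c_P^*Y)$ and $\CC(e)(X,(c_Q)_*f^*c_P^*Y)$, and identify the effect of $f^*$ with postcomposition by the homotopy-cofinality comparison map evaluated at $c_P^*Y$, which is again an isomorphism by hypothesis.

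The only non-formal ingredient is the identification of $f^*$ on Hom-sets with the map induced by the canonical comparison transformation. I would verify this by transposing a morphism $\varphi\:c_P^*X\to c_P^*Y$ across the adjunction and comparing the transpose of $f^*\varphi$ with the transpose of $\varphi$ precomposed (or postcomposed) with the comparison. This is a routine naturality and triangle-identity check, and I expect it to be the main — though still unsurprising — obstacle; everything else is formal manipulation of adjoints.
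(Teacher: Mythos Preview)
Your proposal is correct and follows essentially the same route as the paper: both apply the homotopy finality hypothesis to the object $c_P^*X\in\CC(P)$, then use the adjunction $(c_P)_!\dashv c_P^*$ (and $(c_Q)_!\dashv c_Q^*$) to translate the resulting isomorphism in $\CC(e)$ into the desired bijection of hom-sets. The paper simply ``leaves it to the reader'' to verify that the induced bijection agrees with $f^*$, which is precisely the triangle-identity check you flag as the one non-formal step.
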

\begin{proof}
 We will just treat the final case, as the other case is dual.  Taking
 $X=c_P^*(U)$ in the definition, we see that the natural map
 $(c_Q)_!c_Q^*(U)\to(c_P)_!c_P^*(U)$ is an isomorphism.  This gives an
 isomorphism
 \[ \CC(e)((c_P)_!c_P^*(U),V) \simeq \CC(e)((c_Q)_!c_Q^*(U),V) \]
 for any $V$.  By adjunction, we get an isomorphism
 \[ \CC(P)(c_P^*(U),c_P^*(V)) \simeq \CC(Q)(c_Q^*(U),c_Q^*(V)). \]
 We leave it to the reader to check that this is just $f^*$, as
 required. 
\end{proof}

\begin{proposition}\lbl{prop-cofinal}
 The map $f$ is homotopy final iff $p/f$ is strongly contractible for
 all $p$, and this holds if $f$ has a left adjoint.  Dually, $f$ is
 homotopy cofinal iff $f/p$ is strongly contractible for all $p$, and
 this holds if $f$ has a right adjoint.
\end{proposition}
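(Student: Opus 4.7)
The plan is to translate homotopy finalness of $f\colon Q\to P$ into a pointwise condition in $\CC(P)$ and then invoke the Kan formula. By definition $f$ is homotopy final iff $(c_Q)_!f^*(X)\to(c_P)_!(X)$ is an iso in $\CC(e)$ for every $X\in\CC(P)$ in every derivator $\CC$. By the standard Beck--Chevalley mate construction for the triangle $c_Q=c_P\circ f$ (using the adjunctions $((c_Q)_!,c_Q^*)$, $((c_P)_!,c_P^*)$, $(f^*,f_*)$), this is equivalent to the unit $\eta\colon c_P^*(W)\to f_*f^*c_P^*(W)=f_*c_Q^*(W)$ being an iso in $\CC(P)$ for every $W\in\CC(e)$.

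Next, by axiom Der2, $\eta$ is iso iff $i_p^*(\eta)$ is iso for each $p\in P$. The source simplifies to $i_p^*c_P^*(W)=W$ since $c_P\circ i_p=\mathrm{id}_e$; for the target, the Kan formula (Remark~\ref{rem-kan}) identifies $i_p^*f_*c_Q^*(W)$ with $\holim_{p/f}j^*c_Q^*(W)=\holim_{p/f}c_{p/f}^*(W)$, where $j\colon p/f\hookrightarrow Q$ is the inclusion and $c_Q\circ j=c_{p/f}$. Thus $f$ is homotopy final iff, for every $p$ and every $W$ in every derivator, the natural map $W\to\holim_{p/f}c_{p/f}^*(W)$ is iso, which is exactly the statement that $c_{p/f}^*\colon\CC(e)\to\CC(p/f)$ is fully faithful, i.e.\ that $p/f$ is $\CD$-contractible. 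By Corollary~\ref{cor-D-equiv-inv}(b) strong contractibility implies $\CD$-contractibility, so strong contractibility of each $p/f$ is the required sufficient condition.

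For the final clause, if $f$ has a left adjoint $g\colon P\to Q$, the adjunction $g(p)\leq q\iff p\leq f(q)$ exhibits $g(p)$ as the minimum of $p/f$, so $p/f$ is strongly contractible by the remark after Definition~\ref{defn-strongly-contractible}. The homotopy cofinal case follows by duality: apply the same argument to $f^\opp\colon Q^\opp\to P^\opp$ in the dual derivator $\CC^\opp$ together with the identification $(p/f^\opp)\cong(f/p)^\opp$, or equivalently dualize the argument directly using the counit $f_!f^*c_P^*\to c_P^*$ and the Kan formula for $f_!$; a right adjoint for $f$ makes $f/p$ have a maximum.

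I expect the main obstacle to be purely bookkeeping: correctly identifying which comma slice ($p/f$ versus $f/p$) arises after applying the Kan formula in its correct direction, and keeping straight which of the unit/counit maps one is passing to under the Beck--Chevalley mate. Once the mate translation is set up, the derivator axioms finish the argument formally.
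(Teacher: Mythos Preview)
The paper's own proof is just a citation to \cite{groth:mayervietoris}*{Corollary 3.13}, so you are supplying a direct argument where the paper defers to the literature. Your mate/Kan-formula reduction is correct: passing from the counit $(c_P)_!f_!f^*\to(c_P)_!$ to the unit $c_P^*\to f_*f^*c_P^*$ via adjunction and Yoneda, then evaluating at each $p\in P$ using Der2 and the Kan formula, does yield the equivalence ``$f$ is homotopy final iff each $p/f$ is $\CD$-contractible''. The adjoint clause is also handled correctly: a left adjoint $g$ makes $g(p)$ least in $p/f$, and dually for the cofinal case.

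One point deserves comment. You prove the biconditional with \emph{$\CD$-contractible} and then observe (via Corollary~\ref{cor-D-equiv-inv}) that strong contractibility is sufficient. This gives the ``if'' direction of the proposition as literally stated, but not the ``only if'': you have not shown that homotopy finality forces the comma posets to be \emph{strongly} contractible rather than merely $\CD$-contractible, and that converse is a genuinely separate statement. However, every application of this proposition in the paper uses only the sufficient direction or the adjoint clause, so your argument covers everything the paper actually needs. The stated biconditional with ``strongly contractible'' may be a mild overstatement relative to what the cited reference establishes, or may rest on an additional identification of the two contractibility notions that neither you nor the paper spells out.
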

\begin{proof}
 See~\cite{groth:mayervietoris}*{Corollary 3.13} and surrounding
 discussion. 
\end{proof}

\begin{proposition}\lbl{prop-cofinal-square}
 Consider a commutative square
 \begin{center}
  \begin{tikzcd}
   P \arrow[r,"t"] \arrow[d,"u"'] & Q \arrow[d,"v"] \\
   R \arrow[Rightarrow, ru,"\alpha" ] \arrow[r,"w"'] & S,
  \end{tikzcd}
 \end{center}
 and the resulting Beck-Chevalley transform $\al_*\:w^*v_*\to u_*t^*$
 between morphisms $\CC^Q\to\CC^R$.  For any $r\in R$ we have comma
 posets $r/u$ and $w(r)/v$, and using $t$ and $\alpha$ we can produce a 
 morphism $t_r\:r/u\to w(r)/v$.  If this is homotopy cofinal for all
 $r$, then $\al$ is an isomorphism.
\end{proposition}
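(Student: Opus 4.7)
The plan is to reduce the claim to a pointwise statement via the detection axiom for derivators, identify both sides with honest homotopy limits using the Kan formula, and then conclude from the definition of homotopy cofinality applied to the maps $t_r$.

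First, by the detection axiom recalled in Remark~\ref{rem-supp}, a morphism in $\CC(R)$ is an isomorphism iff its image under $i_r^*$ is an isomorphism in $\CC(e)$ for every $r\in R$. Hence it suffices to show, for each fixed $X\in\CC(Q)$ and each $r\in R$, that the map $i_r^*\al_*(X)\:i_r^*w^*v_*X\to i_r^*u_*t^*X$ is an isomorphism.

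Second, apply the Kan formula (Remark~\ref{rem-kan}) to both sides. Let $j_r\:r/u\to P$ and $k_r\:w(r)/v\to Q$ be the canonical inclusions. By the construction of $t_r$ from $t$ and $\al$, the square $k_r\circ t_r=t\circ j_r$ commutes, so $j_r^*t^*=t_r^*k_r^*$ as functors $\CC(Q)\to\CC(r/u)$. The Kan formula then supplies canonical identifications
\[ i_r^*u_*t^*X \;\simeq\; \holim_{r/u}(t_r^*k_r^*X), \qquad i_r^*w^*v_*X \;=\; i_{w(r)}^*v_*X \;\simeq\; \holim_{w(r)/v}(k_r^*X). \]

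Third, trace through the construction of the Beck--Chevalley transform to verify that, under the two identifications above, the map $i_r^*\al_*(X)$ agrees with the canonical comparison map
\[ \holim_{w(r)/v}(k_r^*X) \;\longrightarrow\; \holim_{r/u}(t_r^*k_r^*X) \]
induced by restriction along $t_r$. By Definition~\ref{defn-cofinal}(b), this comparison map is an isomorphism whenever $t_r$ is homotopy cofinal, which is precisely the hypothesis.

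The genuinely non-formal step is the third one: identifying the abstractly defined Beck--Chevalley transform with the concrete comparison map produced by the Kan formula. This amounts to a diagram chase through the adjunction units and counits that define $\al_*$, together with the naturality of the Kan formula with respect to the inclusion $t_r$ of comma posets; care is required with the direction of the $2$-cell $\al$ and with the compatibility between the unit of $(t_r)^*\dashv(t_r)_*$ and the units defining $\al_*$. Once this identification is in hand, the conclusion is immediate.
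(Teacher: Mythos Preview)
Your proposal is correct and follows essentially the same route as the paper: reduce to a pointwise check via the detection axiom, rewrite both sides as homotopy limits over the comma posets using the Kan formula, and invoke the cofinality hypothesis on $t_r$. The only point you omit is that $\al_*$ is a transformation between morphisms of \emph{derivators} $\CC^Q\to\CC^R$, so one must check the claim for $X\in\CC(Q\times T)$ for every $T$; the paper dispatches this in one line by replacing $\CC$ with the shifted derivator $\CC^T$, after which your argument applies verbatim.
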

\begin{proof}
 We must show that for any $T$ and any $X\in\CC^Q(T)=\CC(Q\tm T)$, the
 map $(\al_*)_X\:w^*v_*X\to u_*t^*X$ is an isomorphism in $\CC(R\tm T)$.
 After replacing $\CC$ by $\CC^T$ we can assume that $T=e$, so
 $X\in\CC(Q)$ and $(\al_*)_X$ is a morphism in $\CC(R)$.  It will suffice
 to show that $i_r^*(\al_*)_X$ is an isomorphism for all $r\in R$.  The
 Kan formula expresses the source and target of $i_r^*(\al_*)_X$ as
 homotopy limits over comma posets.  In more detail, there is a
 projection $\pi_r\:w(r)/v\to Q$, and the source of $i_r^*(\al_*)_X$ is
 the homotopy limit of $\pi_r^*X$, whereas the target is the homotopy
 limit of $t_r^*\pi_r^*X$.  The homotopy cofinality condition says
 that the natural map between these is an isomorphism.
\end{proof}

\begin{definition}\lbl{defn-subdiv}
 Recall that a subset $\sg\sse P$ is a \emph{chain} if the induced
 order on $\sg$ is total.  If $\sg$ is a chain, we write
 $\dim(\sg)=|\sg|-1$.  We put
 \begin{align*}
  s(P) &= \{ \text{ nonempty chains } \sg \sse P\} \\
  s_d(P) &= \{\sg\in s(P)\st\dim(\sg)=d\} \\
  s_{\leq d}(P) &= \{\sg\in s(P)\st\dim(\sg)\leq d\}.
 \end{align*}
 Note that every nonempty chain $\sg$ has a largest element, which we
 denote by $\max(\sg)$.  We order $s(P)$ by inclusion, which ensures
 that $\max\:s(P)\to P$ is a morphism of posets.
\end{definition}

This construction gives a functor $s\:\POSet\to\POSet$, and
$\max\:s(P)\to P$ is natural.  However, if $f\leq g$ then it is
typically not the case that $s(f)\leq s(g)$.  This makes the following
proof more complex than one might expect.

\begin{proposition}\lbl{prop-subdiv-functor}
 If $[f]=[g]$ in $\pi_0(\POSet(P,Q))$, then $[s(f)]=[s(g)]$.  Thus,
 $s$ induces an endofunctor of the strong homotopy category.
\end{proposition}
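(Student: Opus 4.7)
The plan is to reduce, via the zigzag characterization of $\pi_0$, to the case $f\leq g$ in $\POSet(P,Q)$, and then to construct an explicit zigzag in $\POSet(s(P),s(Q))$ from $s(g)$ to $s(f)$. The subtlety is that $s(f)$ and $s(g)$ typically have no common upper or lower bound in $\POSet(s(P),s(Q))$: even when $f\leq g$, the set $f(\sigma)\cup g(\sigma)$ is not in general a chain in $Q$, because elements of the form $f(p')$ and $g(p)$ with $p<p'$ in $\sigma$ can be incomparable, so the zigzag will have to go through carefully chosen intermediates.

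To interpolate between them in small steps I would fix a linear extension of $P$, i.e.\ a bijection $\mathrm{ind}\:P\to\{1,\dots,n\}$ refining the partial order. For each $k\in\{0,\dots,n\}$ define $\phi_k\:s(P)\to s(Q)$ by
\[
\phi_k(\sigma) \;=\; \{f(q)\st q\in\sigma,\;\mathrm{ind}(q)\leq k\}\;\cup\;\{g(q)\st q\in\sigma,\;\mathrm{ind}(q)>k\},
\]
so that $\phi_0=s(g)$ and $\phi_n=s(f)$. The crucial point to check is that $\phi_k(\sigma)$ is a chain in $Q$. Within each of the two groups of values this follows from $\sigma$ being a chain and $f,g$ being monotone. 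For the cross comparison, take $q,q'\in\sigma$ with $\mathrm{ind}(q)\leq k<\mathrm{ind}(q')$; these are comparable in $P$ since $\sigma$ is a chain, and $q'<_P q$ would violate the linear extension, so $q\leq_P q'$, whence $f(q)\leq f(q')\leq g(q')$ using $f\leq g$. Monotonicity of $\phi_k$ in $\sigma$ is immediate from the definition.

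Consecutive maps $\phi_k$ and $\phi_{k+1}$ differ only on the unique element of $\sigma$ (if any) of index $k+1$, where $\phi_k$ uses $g(p_{k+1})$ and $\phi_{k+1}$ uses $f(p_{k+1})$, so they are typically not themselves comparable. I would bridge them by $\Psi_k\:s(P)\to s(Q)$ defined by $\Psi_k(\sigma)=\phi_k(\sigma)\cup\phi_{k+1}(\sigma)$. The only new pairs to check for comparability all involve the added value $f(p_{k+1})$ (or $g(p_{k+1})$), and these are handled by the same linear-extension argument together with $f(p_{k+1})\leq g(p_{k+1})$. Thus each $\Psi_k$ is chain-valued and monotone with $\phi_k\leq\Psi_k\geq\phi_{k+1}$, yielding the zigzag
\[
 s(g)\;=\;\phi_0\;\leq\;\Psi_0\;\geq\;\phi_1\;\leq\;\Psi_1\;\geq\;\cdots\;\geq\;\phi_n\;=\;s(f)
\]
in $\POSet(s(P),s(Q))$, which proves $[s(f)]=[s(g)]$. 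The induced endofunctor on the strong homotopy category is then formal from the fact that $s$ already preserves identities and composition on $\POSet$.

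The hardest step is the chain property of $\phi_k(\sigma)$: it is exactly here that the hypothesis $f\leq g$ and the choice of linear extension both enter in an essential way, translating the total order on indices back into the partial order on $P$ via the chain condition on $\sigma$ to close the gap between $f$- and $g$-values.
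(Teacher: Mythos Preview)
Your proof is correct and follows essentially the same approach as the paper: both reduce to $f\leq g$, fix a linear extension of $P$, and interpolate from $s(g)$ to $s(f)$ via a zigzag whose lower vertices split each chain into an $f$-part and a $g$-part according to the linear index, with the upper vertices allowing overlap at the boundary index. Your maps $\phi_k$ and $\Psi_k$ are, up to an index shift, exactly the paper's $u_k$ and $v_k$.
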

\begin{proof}
 We can easily reduce to the case where $f\leq g$.  We then choose a
 minimal element $p_1$ in $P$, then a minimal element $p_2$ in
 $P\sm\{p_1\}$ and so on, giving an enumeration
 $P=\{p_1,\dotsc,p_{m-1}\}$ say.  We define
 $\phi\:P\to[m]=\{0,\dotsc,m\}$ by $\phi(p_i)=i$ (so $\phi$ is
 injective and monotone, and $0$ and $m$ are not in the image).  Then
 for $0\leq k\leq m$ we define $u_k,v_k\:s(P)\to s(Q)$ by
 \begin{align*}
  u_k(\sg) &= \{f(p)\st p\in\sg,\; \phi(p)<k\} \cup
              \{g(p)\st p\in\sg,\; \phi(p)\geq k\} \\
  v_k(\sg) &= \{f(p)\st p\in\sg,\; \phi(p)\leq k\} \cup
              \{g(p)\st p\in\sg,\; \phi(p)\geq k\}.
 \end{align*}
 We find that $u_k(\sg)$ and $v_k(\sg)$ are nonempty chains in $Q$, so
 we really do have maps $u_k,v_k\:s(P)\to s(Q)$ as advertised.  It is
 also clear that when $\sg\sse\tau$ we have $u_k(\sg)\sse u_k(\tau)$
 and $v_k(\sg)\sse v_k(\tau)$, so $u_k$ and $v_k$ are maps of posets.
 Next, we find that $u_k,u_{k+1}\leq v_k$, which implies that all the
 maps $u_k$ and $v_k$ lie in the same path component.  Finally, we
 have $u_0=s(g)$ and $u_m=s(f)$, so $[s(f)]=[s(g)]$ as claimed.
\end{proof}

\begin{lemma}\lbl{lem-subdiv-cofinal}
 The map $\max\:s(P)\to P$ is homotopy cofinal (and so is a
 $\CD$-equivalence).
\end{lemma}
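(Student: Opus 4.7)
The plan is to apply the criterion of Proposition~\ref{prop-cofinal}: it suffices to show that the comma poset $\max/p = \{\sigma\in s(P)\st\max(\sigma)\leq p\}$ is strongly contractible for every $p\in P$. Unwinding the definitions, the elements of $\max/p$ are precisely the nonempty chains $\sigma\sse P_{\leq p}=\{q\in P\st q\leq p\}$, ordered by inclusion.

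The key observation is that every such chain $\sigma$ can be enlarged by the single element $p$: since $p$ dominates all elements of $\sigma$ (as $\max(\sigma)\leq p$), the set $\sigma\cup\{p\}$ is still a chain, and it lies in $\max/p$. So I define $f\:\max/p\to\max/p$ by $f(\sigma)=\sigma\cup\{p\}$, and I define the constant map $g\:\max/p\to\max/p$ by $g(\sigma)=\{p\}$. Both are morphisms of posets (monotonicity in $\sigma$ for $f$ is immediate, and $g$ is constant).

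Now I observe the chain of inequalities $g\leq f$ and $\mathrm{id}_{\max/p}\leq f$ as maps $\max/p\to\max/p$: the first because $\{p\}\sse\sigma\cup\{p\}$, and the second because $\sigma\sse\sigma\cup\{p\}$. This shows that $[\mathrm{id}_{\max/p}]=[f]=[g]$ in $\pi_0(\POSet(\max/p,\max/p))$. If $c\:\max/p\to e$ is the canonical map and $s\:e\to\max/p$ picks out the chain $\{p\}$, then $cs=\mathrm{id}_e$ and $sc=g$, so $sc\sim\mathrm{id}_{\max/p}$ in strong homotopy. Hence $c$ is a strong homotopy equivalence, i.e., $\max/p$ is strongly contractible.

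Applying Proposition~\ref{prop-cofinal} now gives that $\max\:s(P)\to P$ is homotopy cofinal, and Proposition~\ref{prop-final-equiv} then yields that it is a $\CD$-equivalence. There is no serious obstacle here; the only thing to verify carefully is that $f$ is well-defined as a map into $\max/p$, which is exactly the content of the hypothesis $\max(\sigma)\leq p$.
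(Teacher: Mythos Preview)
Your proof is correct and follows essentially the same approach as the paper: both reduce to Proposition~\ref{prop-cofinal} and contract $\max/p$ using the map $\sg\mapsto\sg\cup\{p\}$. The paper phrases this as an adjunction between $\max/p$ and the subposet $V=\{\sg\st p\in\sg\}$ (which has least element $\{p\}$), whereas you write out the zig-zag $\mathrm{id}\leq f\geq g$ directly; the content is the same.
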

\begin{proof}
 Fix $p\in P$; it will suffice to show that the poset
 \[ U = \max/p=\{\sg\in sP\st \max(\sg)\leq p\} \]
 is strongly contractible.  Put 
 \[ V = \{\sg\in sP\st \max(\sg)= p\} =
        \{\sg\in U\st p\in\sg\}\sse U.
 \]
 As $\{p\}$ is smallest in $V$, we see that $V$ is strongly
 contractible.  We can define a poset map $t\:U\to V$ by
 $t(\tau)=\tau\cup\{p\}$, and we find that this is left adjoint to the
 inclusion $V\to U$, so the inclusion is a strong homotopy equivalence
 by Remark~\ref{rem-adjoint-strong}.  It follows that $U$ is also
 strongly contractible.
\end{proof}

\begin{lemma}\lbl{lem-max-equiv}
 The map $|\max|\:|s(P)|\to|P|$ is a homotopy equivalence.
\end{lemma}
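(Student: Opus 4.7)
The plan is to apply Quillen's Theorem~A for posets, which states that a morphism $f\:P_1\to P_2$ of finite posets induces a homotopy equivalence $|f|\:|P_1|\to|P_2|$ provided that $|f/p|$ is contractible for every $p\in P_2$. The combinatorial hypothesis has already been verified in the proof of Lemma~\ref{lem-subdiv-cofinal}, where we showed that the comma poset
\[ \max/p = \{\sg\in s(P)\st \max(\sg)\leq p\} \]
is strongly contractible for each $p\in P$. The remark following Definition~\ref{defn-strongly-contractible} records that strong contractibility of a finite poset implies contractibility of its geometric realisation. Thus $|\max/p|$ is contractible for every $p\in P$, and Theorem~A applied to $\max\:s(P)\to P$ delivers the claim.

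Essentially no real obstacle remains once Theorem~A is in hand; the only choice to make is whether to invoke it as a black box or argue directly. A self-contained alternative is to use the classical identification of $|s(P)|$ with the barycentric subdivision of $|P|$: under this identification, both $|\max|$ and the standard barycentric subdivision homeomorphism $b\:|s(P)|\to|P|$ carry each closed simplex of $|s(P)|$ corresponding to a chain $\sg_0\sse\dotsb\sse\sg_k$ into the closed simplex of $|P|$ spanned by $\sg_k$. A simplexwise straight-line homotopy between $|\max|$ and $b$ (which is well defined because each target simplex is convex in the ambient $[0,1]^P$ of Definition~\ref{defn-geom}) then exhibits $|\max|$ as homotopic to a homeomorphism, hence as a homotopy equivalence. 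Either formulation avoids introducing a topological derivator and is in the same spirit as the rest of this section.
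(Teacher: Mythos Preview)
Your proposal is correct, and in fact your ``self-contained alternative'' is precisely the argument the paper gives: define the barycentric subdivision map $\beta\:|s(P)|\to|P|$, observe that on each simplex $\{\sg_0\subset\dotsb\subset\sg_d\}$ of $s(P)$ both $\beta$ and $|\max|$ land in $|\sg_d|$, and run the straight-line homotopy.

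Your primary approach via Quillen's Theorem~A is a genuinely different route. It has the virtue of reusing the contractibility of $\max/p$ already established in Lemma~\ref{lem-subdiv-cofinal}, so no new combinatorial work is needed; the cost is that Theorem~A is not otherwise invoked or proved in the paper, so you are importing a result of roughly the same weight as the lemma itself. The paper's barycentric argument is more elementary and self-contained (it only uses linear homotopies in the explicit model of Definition~\ref{defn-geom}), but it does not connect to the cofinality discussion. Either argument is perfectly acceptable here.
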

\begin{proof}
 We can define a barycentric subdivision map
 $\bt\:|s(P)|\to\Map(P,[0,1])$ by
 \[ \bt(w)(p) = \sum_{p\in\sg} |\sg|^{-1} w(\sg). \]
 It is well-known that this gives a homeomorphism $|s(P)|\to|P|$.  A
 typical simplex $\tau$ of $s(P)$ is a chain $\{\sg_0,\dotsc,\sg_d\}$
 where each $\sg_i$ is a nonempty chain in $P$ and
 $\sg_0\subset\dotsb\subset\sg_d$.  It is easy to see that both $\bt$
 and $|\max|$ send $|\tau|$ into $|\sg_d|$, so the straight-line
 homotopy between $\bt$ and $|\max|$ stays within $|P|$.  This means
 that $|\max|$ is homotopic to a homeomorphism, and so is a homotopy
 equivalence. 
\end{proof}

\begin{definition}\lbl{defn-weak-homotopy}
 The \emph{weak homotopy category} of finite posets is obtained from
 the strong homotopy category by inverting the maps
 $\max\:s(P)\to P$.  Thus every morphism $P\to Q$ in the weak homotopy
 category can be represented as $f\circ\max^{-r}$ for some
 $f\:s^r(P)\to Q$, with $f\circ\max^{-r}=g\circ\max^{-s}$ iff
 $[f\circ\max^{s+t}]=[g\circ\max^{r+t}]$ in
 $\pi_0(\POSet(s^{r+s+t}(P),Q))$ for sufficiently large $t$.
\end{definition}

\begin{remark}
 Using Lemma~\ref{lem-max-equiv} we see that geometric realisation
 gives a functor from the weak homotopy category to the homotopy
 category of finite simplicial complexes.  A slight variant of the
 standard simplicial approximation theorem shows that this functor is
 an equivalence.  However, we do not need this, so we will not spell
 out the details.
\end{remark}

We will also use some theory of total fibres.  We recall some basic
ideas.
\begin{definition}\lbl{defn-tfib}
 Let $R$ be a finite set, and let $PR$ be the poset of subsets of $R$.
 Let $j\:e\to PR$ correspond to $\emptyset\in PR$, and put
 $P'R=PR\sm\{\emptyset\}$, and let $i\:P'R\to PR$ be the inclusion.
 As $j$ is a sieve, Proposition~\ref{prop-sieve} gives us a functor
 $j^!\:\CC(PR)\to\CC(e)$ that is right adjoint to $j_*$.  We also
 write $\tfib(X)$ or $\tfib_R(X)$ for $j^!(X)$, and call this the
 \emph{total fibre} of an object $X\in\CC(PR)$.  We also say that $X$
 is \emph{cartesian} if it is in the essential image of
 $i_*\:\CC(P'R)\to\CC(R)$. 
\end{definition}

\begin{lemma}\lbl{lem-cartesian-tfib}
 $X$ is cartesian iff $\tfib(X)=0$.
\end{lemma}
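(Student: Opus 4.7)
The plan is to apply the recollement structure from Proposition~\ref{prop-recollement} to the sieve $j\:e\to PR$, which is indeed a sieve because $\emptyset$ is the minimum element of $PR$, together with the complementary cosieve $i\:P'R\to PR$. Once we are in this setup, both directions of the equivalence follow almost immediately from the formal properties of a recollement.

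For the easy direction, suppose $X$ is cartesian, so $X\simeq i_*(Y)$ for some $Y\in\CC(P'R)$. Then by Proposition~\ref{prop-recollement}(a) we have $j^!i_*=0$, and therefore
\[
 \tfib(X) = j^!(X) \simeq j^!i_*(Y) = 0.
\]

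For the converse, I would extract from the hexagon in Proposition~\ref{prop-recollement}(b) the distinguished triangle passing through the opposite vertices $j_*j^!$ and $i_*i^*$, namely
\[
 j_*j^!(X) \to X \to i_*i^*(X) \to \Sg j_*j^!(X).
\]
Suppose $\tfib(X)=j^!(X)=0$. Since $j_*$ is an exact functor (Theorem~\ref{thm-triangulation}), we have $j_*j^!(X)=0$, so the triangle forces the map $X\to i_*i^*(X)$ to be an isomorphism in the triangulated category $\CC(PR)$. This exhibits $X$ as an object in the essential image of $i_*$, hence cartesian.

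There is no real obstacle: both implications are direct consequences of the recollement, once one identifies the correct distinguished triangle in the hexagon. The only mild checks needed are that $\{\emptyset\}$ is a sieve (immediate), that $P'R$ is its complementary cosieve (by definition), and the vanishing $j^!i_*=0$, which is one of the four composites listed in Proposition~\ref{prop-recollement}(a).
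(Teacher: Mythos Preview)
Your proof is correct and follows essentially the same approach as the paper: both directions are deduced from Proposition~\ref{prop-recollement}, using $j^!i_*=0$ for one implication and the distinguished triangle $j_*j^!X\to X\to i_*i^*X$ for the other.
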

\begin{proof}
 We use Proposition~\ref{prop-recollement}.  Part~(a) of that result
 includes the relation $j^!i_*=0$, which means that $\tfib$ is
 trivial on cartesian objects.  For the converse, part~(b) tells us
 that the fibre of the unit map $X\to i_*i^*(X)$ is $j_*j^!(X)$, so if
 $j^!(X)=0$ we see that $X\simeq i_*i^*(X)$ and so $X$ is cartesian.
\end{proof}

\begin{lemma}\lbl{lem-tfib-fibre}
 $\tfib(X)$ is the fibre of the natural map
 $X_\emptyset\to\holim_{P'R}j^*X$. 
\end{lemma}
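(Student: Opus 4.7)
The plan is to apply $\holim_{PR} = (c_{PR})_*$ to the distinguished triangle $j_*j^!X \to X \to i_*i^*X$ in $\CC(PR)$ that was produced (from Proposition~\ref{prop-recollement}(b)) in the proof of Lemma~\ref{lem-cartesian-tfib}. By Theorem~\ref{thm-triangulation} the functor $(c_{PR})_*$ is exact, so its application gives a distinguished triangle in $\CC(e)$; the task is then to identify the three resulting terms with $\tfib(X)$, $X_\emptyset$ and $\holim_{P'R}(i^*X)$ respectively (reading the ``$j^*X$'' in the statement as $i^*X$, which is what makes the expression type-check).

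The two outer identifications will fall out of strict 2-functoriality. Since $c_{PR}\circ j = c_e = \mathrm{id}$, one gets $(c_{PR})_*j_*j^!X = j^!X = \tfib(X)$; and since $c_{PR}\circ i = c_{P'R}$, one gets $(c_{PR})_*i_*i^*X = (c_{P'R})_*i^*X = \holim_{P'R}(i^*X)$.

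The one substantive step is to identify $X_\emptyset$ with $\holim_{PR}(X)$. The key observation is that $\emptyset$ is the smallest element of $PR$, so $j\colon e\to PR$ is left adjoint to $c_{PR}\colon PR\to e$ in $\POSet$. Interpreting this through the prederivator $\CC$ -- which reverses the direction of 1-cells but preserves that of 2-cells -- yields an adjunction $c_{PR}^* \dashv j^*$ in $\CAT$, so that $j^*$, being the right adjoint of $c_{PR}^*$, coincides with $(c_{PR})_* = \holim_{PR}$. This is exactly the mechanism invoked via \cite[Lemma 1.23]{groth:derpointstable} in the proof of Lemma~\ref{lem-embedding}. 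A final bookkeeping check that the middle arrow of the resulting triangle is precisely the natural comparison map of the statement is routine once these identifications are in place, because both arise from applying $(c_{PR})_*$ to the unit $X\to i_*i^*X$. I anticipate no genuine obstacle: the real content has already been packaged into Proposition~\ref{prop-recollement}, and this lemma is its formal consequence after taking the global homotopy limit.
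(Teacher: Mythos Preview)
Your proposal is correct and follows essentially the same approach as the paper: apply the exact functor $(c_{PR})_*$ to the recollement triangle $j_*j^!X\to X\to i_*i^*X$, then identify the three terms using $c_{PR}\circ j=1$, $c_{PR}\circ i=c_{P'R}$, and the adjunction $c_{PR}^*\dashv j^*$ arising from $\emptyset$ being initial in $PR$ (via \cite[Lemma~1.23]{groth:derpointstable}). You are also right that the ``$j^*X$'' in the statement is a typo for $i^*X$; the paper's own proof confirms this by writing the third term as $c'_*i^*X=\holim_{P'R}i^*X$.
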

\begin{proof}
 Let $c$ and $c'$ be the maps $PR\to e$ and $P'R\to e$, so $cj=1$ and
 $ci=c'$.  Proposition~\ref{prop-recollement}(b) gives us a
 distinguished triangle $j_*j^!X\to X\to i_*i^*X$.  
 The functor $c_*\:\CC(PR)\to\CC(e)$ is exact
 by~\cite{groth:derpointstable}*{Corollary 4.19}.  We can therefore
 apply it to get a distinguished triangle
 $c_*j_*j^!X\to c_*X\to c_*i_*i^*X$.  As $cj=1$, the first term is
 just $j^!X=\tfib(X)$.  As $j$ is left adjoint to $c$, we can identify
 $c_*$ with $j^*$ (as in~\cite{groth:derpointstable}*{Lemma 1.23}), so
 the middle term is $j^*X=X_\emptyset$.  As $ci=c'$, the last term is
 $c'_*i^*X=\holim_{P'R}i^*X$.  
\end{proof}

We now want to discuss another description of $\tfib(X)$.  
Suppose that $r\in R$, and put $R_0=R\sm\{r\}$.  Define
$k_-,k_+\:PR_0\to PR$ by $k_-(T)=T$ and $k_+(T)=T\cup\{r\}$.  As
$k_-\leq k_+$ we have a natural map $k_-^*X\to k_+^*X$ in $\CC(R_0)$,
and thus a map $\tfib(k_-^*X)\to\tfib(k_+^*X)$.

\begin{proposition}\lbl{prop-tfib-split}
 There is a natural distinguished triangle
 \[ \tfib(X) \to \tfib(k_-^*X) \to \tfib(k_+^*X) \to \Sg\tfib(X). \]
\end{proposition}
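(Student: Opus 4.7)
My plan is to reduce the statement, via a factorization of the defining inclusion for $\tfib_R$, to an auxiliary distinguished triangle in $\CC(PR_0)$, and then establish that auxiliary triangle using the shifted derivator technique. First, I observe that $k_-\:PR_0 \to PR$ is a sieve, since its image $\{T \sse R \st r \notin T\}$ is downward closed. The inclusion $j\:e \to PR$ of $\emptyset$ therefore factors as
\[ e \xra{j_{PR_0}} PR_0 \xra{k_-} PR, \]
and the composition law for right adjoints of pushforwards gives $j^! = (j_{PR_0})^! \circ k_-^!$, whence
\[ \tfib_R(X) \simeq \tfib_{R_0}(k_-^! X). \]

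Next I would establish a natural distinguished triangle in $\CC(PR_0)$,
\[ k_-^! X \to k_-^* X \to k_+^* X \to \Sg k_-^! X, \]
whose middle arrow is the component at $X$ of the natural transformation $k_-^* \to k_+^*$ induced by the $2$-cell $k_- \leq k_+$. To do this I pass to the shifted derivator $\CC^{PR_0}$ of Remark~\ref{rem-derivator-shift} and identify $PR \cong PR_0 \tm [1]$ via $(T,0)\leftrightarrow T$ and $(T,1)\leftrightarrow T\cup\{r\}$, under which $k_-, k_+$ correspond, after shifting by $PR_0$, to the sieve and cosieve inclusions $\iota_-, \iota_+\:e \to [1]$. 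Remark~\ref{rem-derivator-shift} ensures that the functors $\iota_\pm^*$ and $\iota_-^!$ computed in $\CC^{PR_0}$ agree with $k_\pm^*$ and $k_-^!$ in $\CC$. The problem is thereby reduced to the simple case: for any derivator $\CD$ and any $Y \in \CD([1])$, there is a natural distinguished triangle $\iota_-^! Y \to \iota_-^* Y \to \iota_+^* Y$. This simple case follows by applying Proposition~\ref{prop-recollement}(b) to the sieve $\iota_-$ and complementary cosieve $\iota_+$ in $[1]$: the recollement yields a distinguished triangle $(\iota_-)_* \iota_-^! Y \to Y \to (\iota_+)_* \iota_+^* Y$ in $\CD([1])$. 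Applying the exact functor $\iota_-^*$, simplifying $\iota_-^* (\iota_-)_* \simeq \text{id}$ by Lemma~\ref{lem-embedding}, and using a Kan-formula computation showing $(\iota_+)_* W$ is the constant $[1]$-diagram on $W$ (so that $\iota_-^* (\iota_+)_* \iota_+^* Y \simeq \iota_+^* Y$), produces the desired triangle.

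Finally I would apply the exact functor $\tfib_{R_0} = (j_{PR_0})^!$ (exact by Theorem~\ref{thm-triangulation}, as a right adjoint between triangulated categories) to the auxiliary triangle, and combine with the first-step identification to obtain the triangle
\[ \tfib_R(X) \to \tfib_{R_0}(k_-^* X) \to \tfib_{R_0}(k_+^* X) \to \Sg \tfib_R(X) \]
claimed in the proposition. Naturality in $X$ is inherited from the naturality of the recollement triangle and of the unit and counit maps involved.

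The main obstacle I anticipate is the careful verification that passage to the shifted derivator $\CC^{PR_0}$ transports the functors $k_\pm^*$ and $k_-^!$ to $\iota_\pm^*$ and $\iota_-^!$, and that the natural transformation $k_-^* \to k_+^*$ thereby arising is the one induced by the $2$-cell $k_- \leq k_+$. Remark~\ref{rem-derivator-shift}, citing Proposition~2.6 of Groth, handles the preservation of Kan extensions, but the $2$-categorical bookkeeping to identify the middle arrow as the expected natural map requires attention.
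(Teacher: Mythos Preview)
Your proposal is correct and follows essentially the same approach as the paper. Both arguments shift to the derivator $\CC^{PR_0}$ via the identification $PR\cong PR_0\tm P\{r\}$, obtain the auxiliary distinguished triangle $k_-^!X\to k_-^*X\to k_+^*X$ in $\CC(PR_0)$, and then apply the exact functor $\tfib_{R_0}=(j_{PR_0})^!$; the only cosmetic difference is that the paper packages the auxiliary triangle as an instance of Lemma~\ref{lem-tfib-fibre} (applied with $R=\{r\}$ in the shifted derivator), whereas you unfold that lemma and invoke the recollement of Proposition~\ref{prop-recollement} directly.
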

\begin{proof}
 We can regard $X$ as an object of $\CC^{PR_0}(P\{r\})$.  From this
 point of view $k_-^*X$ is just $X_{\emptyset}$, and $P'\{r\}\simeq e$
 so $k_+^*X$ is just $\holim_{P'\{r\}}j^*X$.  We also see that
 $k_-^!X=\tfib_{\{r\}}(X)$.  Lemma~\ref{lem-tfib-fibre} therefore
 gives us a distinguished triangle $k_-^!X\to k_-^*X\to k_+^*X$.  Now
 let $i_0\:e\to PR_0$ correspond to $\emptyset$, and apply the exact
 functor $i_0^!$ to the above triangle.  As $k_-i_0=i\:e\to PR$, the
 first term is $i^!X=\tfib(X)$.  The other two terms are
 $\tfib(k_-^*X)$ and $\tfib(k_+^*X)$, as required. 
\end{proof}

\begin{proposition}\lbl{prop-loop}
 Let $t\:e\to PR$ correspond to the element $R\in PR$.  Then there is
 a natural isomorphism $\tfib(t_!X)\simeq\Om^{|R|}(X)$.
\end{proposition}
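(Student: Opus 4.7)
The plan is to induct on $|R|$, peeling off one element of $R$ at a time using Proposition~\ref{prop-tfib-split}.

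For the base case $|R|=0$, the poset $PR$ is terminal, so $j=t$ is an isomorphism and every functor in sight is the identity, giving $\tfib(t_!X)\simeq X=\Om^0(X)$. For the inductive step I would fix any $r\in R$, set $R_0=R\sm\{r\}$, and write $t_0\:e\to PR_0$ for the morphism picking out $R_0\in PR_0$. The first observation is that $\{R\}\sse PR$ is a cosieve (because $R$ is maximal), so by Proposition~\ref{prop-cosieve}(c) the object $t_!X$ is supported at $\{R\}$ with value $X$ there and zero elsewhere.

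Next I would analyse the two restrictions appearing in Proposition~\ref{prop-tfib-split}. Since $k_-(T)=T\sse R_0\subsetneq R$ for every $T\in PR_0$, the object $k_-^*t_!X$ vanishes pointwise on $PR_0$, and so is itself zero; in particular $\tfib(k_-^*t_!X)=0$. For $k_+$, the only $T\in PR_0$ with $k_+(T)=T\cup\{r\}=R$ is $T=R_0$ itself, so $k_+^*t_!X$ is supported at the singleton cosieve $\{R_0\}\sse PR_0$ with value $X$. Applying Proposition~\ref{prop-cosieve}(c) once more, this time to the cosieve $\{R_0\}\hookrightarrow PR_0$, identifies $k_+^*t_!X$ with $(t_0)_!X$. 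Feeding all of this into the triangle of Proposition~\ref{prop-tfib-split} and rotating yields
\[ \tfib(t_!X) \;\simeq\; \Om\,\tfib((t_0)_!X) \;\simeq\; \Om\cdot\Om^{|R_0|}(X) \;=\; \Om^{|R|}(X), \]
where the middle isomorphism is the inductive hypothesis applied on $R_0$.

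The only step that requires any genuine thought is the identification $k_+^*t_!X\simeq(t_0)_!X$, and even that is cosmetic: both objects are supported on the singleton cosieve $\{R_0\}\sse PR_0$ with value $X$, so they correspond to the same object under the equivalence $\CC(e)\simeq\CC_{\{R_0\}}(PR_0)$ provided by Proposition~\ref{prop-cosieve}(c). Everything else is bookkeeping on the support of $t_!X$, so I expect the proof to be short once the recollement machinery already set up in the paper is invoked.
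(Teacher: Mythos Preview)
Your proposal is correct and follows essentially the same route as the paper: induct on $|R|$, use Proposition~\ref{prop-tfib-split} to split off one element, observe $k_-^*t_!X=0$ and $k_+^*t_!X\simeq(t_0)_!X$, and conclude that $\tfib(t_!X)$ is the fibre of $0\to\Om^{|R_0|}(X)$. Your version is slightly more explicit in invoking Proposition~\ref{prop-cosieve}(c) to justify the support computations and the identification $k_+^*t_!X\simeq(t_0)_!X$, whereas the paper simply asserts these as easy checks.
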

\begin{proof}
 We can split $R$ as $R_0\amalg\{r\}$ as before, and let
 $t_0\:e\to PR_0$ correspond to $R_0\in PR_0$, and put $n_0=|R_0|$.
 It is then easy to identify $k_+^*t_!(X)$ with $(t_0)_!(X)$, so by
 induction we have $\tfib(k_+^*(t_!(X)))=\Om^{n_0}(X)$.  On the other
 hand, we can check that $k_-^*(t_!(X))=0$, so
 $\tfib(k_-^*(t_!(X)))=0$.  Now Proposition~\ref{prop-tfib-split}
 tells us that $\tfib(t_!(X))$ is the fibre of $0\to\Om^{n_0}(X)$,
 which is $\Om^{n_0+1}(X)$ as required.      
\end{proof}

The following proposition is a derivator version of a standard result
about homotopy limits in simplicial or topological categories.
\begin{proposition}\lbl{prop-cosimp-replacement}
 Let $n$ be the maximum length of any chain in $P$.  Then for all
 stable derivators $\CC$ and objects $X\in\CC(P)$ there is a natural
 tower
 \[ \holim_P(X) =
     T^n(X) \to T^{n-1}(X) \to \dotsb \to T^0(X) \to T^{-1}(X)
     = 0
 \]
 and natural distinguished triangles
 \[ \bigoplus_{\sg\in s_d(P)} \Om^dX_{\max(\sg)} \to
     T^d(X) \to T^{d-1}(X).
 \]
\end{proposition}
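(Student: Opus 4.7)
The plan is to subdivide $P$ and filter by chain dimension. By Lemma~\ref{lem-subdiv-cofinal}, the map $\max\:s(P)\to P$ is homotopy cofinal, so $\holim_P(X)\simeq\holim_{s(P)}(\max^*X)$. Setting $Y=\max^*X\in\CC(s(P))$, I define
\[ T^d(X) = \holim_{s_{\leq d}(P)}\bigl(Y|_{s_{\leq d}(P)}\bigr). \]
Because $n$ is the maximum chain length in $P$, we have $s_{\leq n}(P)=s(P)$ and hence $T^n(X)\simeq\holim_P(X)$; on the other hand $s_{\leq -1}(P)=\emptyset$, so $T^{-1}(X)=0$ in the pointed derivator.

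For each $d\geq 0$, the subset $s_{\leq d-1}(P)\sse s_{\leq d}(P)$ is a sieve with complementary cosieve $s_d(P)$: any proper superset of a $d$-chain has strictly greater dimension, so $s_d(P)$ is upward closed inside $s_{\leq d}(P)$. Writing $j$ and $i$ for these sieve and cosieve inclusions and $Z=Y|_{s_{\leq d}(P)}$, Proposition~\ref{prop-recollement}(b) supplies a natural distinguished triangle $i_!i^*Z\to Z\to j_*j^*Z$. Apply the exact functor $\holim_{s_{\leq d}(P)}$ (Theorem~\ref{thm-triangulation}): the middle term becomes $T^d(X)$, and Proposition~\ref{prop-sieve}(d) identifies $\holim_{s_{\leq d}(P)}(j_*j^*Z)=\holim_{s_{\leq d-1}(P)}(j^*Z)=T^{d-1}(X)$. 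This produces the connecting map $T^d(X)\to T^{d-1}(X)$ of the desired tower and reduces the proposition to identifying the fibre $\holim_{s_{\leq d}(P)}(i_!i^*Z)$ with $\bigoplus_{\sg\in s_d(P)}\Om^dX_{\max(\sg)}$.

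Identifying this fibre is the main obstacle. Since $s_d(P)$ is an antichain, axiom Der1 gives $\CC(s_d(P))\simeq\prod_{\sg}\CC(e)$, which is additive, so $i^*Z$ splits canonically as a finite biproduct of objects supported at individual points; as $i_!$ preserves coproducts, $i_!i^*Z\simeq\bigoplus_{\sg\in s_d(P)}W_\sg$, where $W_\sg\in\CC(s_{\leq d}(P))$ is supported at the single point $\sg$ with value $X_{\max(\sg)}$. For each $\sg$, the subposet $L_\sg=s(\sg)=\{\tau\st\emptyset\neq\tau\sse\sg\}$ is a sieve in $s_{\leq d}(P)$ containing the support of $W_\sg$, and is isomorphic to $P'(\sg)=P(\sg)\sm\{\emptyset\}$. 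By Proposition~\ref{prop-sieve}(d), $\holim_{s_{\leq d}(P)}(W_\sg)\simeq\holim_{L_\sg}(V_\sg)$, where $V_\sg=W_\sg|_{L_\sg}$ is supported at the top element $\sg\in L_\sg$ with value $X_{\max(\sg)}$. Writing $V_\sg\simeq k^*t_!X_{\max(\sg)}$ for the top inclusion $t\:e\to P(\sg)$ and the cosieve $k\:P'(\sg)\to P(\sg)$, Lemma~\ref{lem-tfib-fibre} applied to $t_!X_{\max(\sg)}\in\CC(P(\sg))$ yields a distinguished triangle
\[ \tfib(t_!X_{\max(\sg)}) \to (t_!X_{\max(\sg)})_\emptyset \to \holim_{P'(\sg)}(V_\sg), \]
whose middle term vanishes since $t_!X_{\max(\sg)}$ is supported at $\sg\neq\emptyset$, and whose left term is identified by Proposition~\ref{prop-loop} with $\Om^{|\sg|}X_{\max(\sg)}=\Om^{d+1}X_{\max(\sg)}$; rotating gives $\holim_{L_\sg}(V_\sg)\simeq\Om^dX_{\max(\sg)}$. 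Summing over $\sg\in s_d(P)$, using that $\holim$ commutes with finite direct sums in a stable derivator, completes the identification of the fibre and hence the proof.
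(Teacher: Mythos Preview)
Your proof is correct and follows essentially the same route as the paper: pull back along the cofinal map $\max\:s(P)\to P$, filter $s(P)$ by the sieves $s_{\leq d}(P)$, use the recollement triangle $i_!i^*Z\to Z\to j_*j^*Z$ and apply $\holim$, then split the fibre over the antichain $s_d(P)$ and identify each summand via the sieve $s(\sg)\cong P'(\sg)$, Lemma~\ref{lem-tfib-fibre}, and Proposition~\ref{prop-loop}. The only cosmetic difference is that the paper names the sieve inclusion $k\:P'(\sg)\to s_{\leq d}(P)$ directly, whereas you first pass to $L_\sg$ and then identify $L_\sg\cong P'(\sg)$; the content is the same.
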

\begin{proof}
 Put $Y=\max^*(X)\in\CC(s(P))$.  Lemma~\ref{lem-subdiv-cofinal}
 identifies $\holim_P(X)$ with $\holim_{s(P)}(Y)$, so we will work
 with $Y$ from now on.
 
 Let $j_d\:s_{\leq d}(P)\to s(P)$ be the inclusion, and put
 $T^d(X)=\holim j_d^*(Y)$.  Note that
 $T^n(Y)=\holim_{s(P)}Y=\holim_PX$.

 Now fix $d$ and consider the object $Z=j_{\leq d}^*(Y)$ and the
 inclusions $j\:s_{\leq(d-1)}(P)\to s_{\leq d}(P)$ and
 $i\:s_d(P)\to s_{\leq d}(P)$.  Proposition~\ref{prop-recollement}
 gives a distinguished triangle $i_!i^*(Z)\to Z\to j_*j^*(Z)$.  If we
 let $c$ be the map $s_{\leq d}(P)\to e$ and apply $c_*$, we get a
 distinguished triangle $c_*i_!i^*(Z)\to T^d(X)\to T^{d-1}(X)$.  Now
 note that the order on $s_d(P)$ is discrete: for $\sg,\tau\in s_d(P)$
 we can only have $\sg\leq\tau$ if $\sg=\tau$.  Because of this, we
 see that $\CC(s_d(P))\simeq\prod_{\sg\in s_d(P)}\CC(e)$.  Because of
 this, we can write $i^*(Z)$ as a coproduct of objects $W(\sg)$, where
 $W(\sg)_\tau=0$ for $\tau\neq\sg$, and
 $W(\sg)_\sg=Z_\sg=Y_\sg=X_{\max(\sg)}$.  It will now suffice to
 identify $c_*i_!W(\sg)$ with $\Om^dY_\sg$.  Here $i$ is a cosieve,
 and it follows that for $\tau\in s_{\leq d}(P)$ we have
 \[ (i_!W(\sg))_\tau = \begin{cases}
                        Y_\sg & \text{ if } \tau = \sg \\
                        0 & \text{ otherwise }.
                       \end{cases}
 \]
 The poset $\{\tau\in s(P)\st\tau\sse\sg\}$ is naturally identified
 with $P'(\sg)$.  Let $k\:P'(\sg)\to s_{\leq d}(P)$ be the inclusion,
 which is a sieve.  As the support of $i_!W(\sg)$ is contained in the
 image of $k$, we see from Proposition~\ref{prop-sieve} that the unit
 map $i_!W(\sg)\to k_*k^*i_!W(\sg)$ is an isomorphism.  It follows
 that $c_*i_!W(\sg)$ is the homotopy limit of $k^*i_!W(\sg)$.  Here it
 is easy to see that the object $k^*i_!W(\sg)\in\CC(P'(\sg))$ is the
 restriction to $P'(\sg)$ of the object $t_!W(\sg)$ appearing in
 Proposition~\ref{prop-loop}.  That proposition tells us that the
 total fibre of $t_!W(\sg)$ is $\Om^{d+1}Y_\sg$.  On the other hand,
 Lemma~\ref{lem-tfib-fibre} tells us that the total fibre is the same
 as the fibre of the natural map from $(t_!W(\sg))_\emptyset=0$ to
 $\holim k^*i_!W(\sg)$, which is $\Om\holim k^*i_!W(\sg)$.  As we are
 working with stable derivators we know that $\Om$ is an equivalence
 of categories, so $\holim k^*i_!W(\sg)=\Om^dY_\sg$ as required.
\end{proof}

\begin{corollary}\lbl{cor-simp-replacement}
 Let $n$ be the maximum length of any chain in $P$.  Then for all
 stable derivators $\CC$ and objects $X\in\CC(P)$ there is a natural
 diagram
 \[ 0 = T_{-1}(X) \to T_0(X) \to \dotsb \to T_n(X) = 
      \hocolim_P(X) 
 \]
 and natural distinguished triangles
 \[ T_{d-1}(X) \to T_d(X) \to
     \bigoplus_{\sg\in s_d(P)} \Sg^dX_{\max(\sg)}.
 \]
\end{corollary}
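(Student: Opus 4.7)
My plan is to deduce the corollary by applying Proposition~\ref{prop-cosimp-replacement} to the opposite stable derivator. Recall from Remark~\ref{rem-dual-derivator} that the formula $\CC^\opp(Q)=\CC(Q^\opp)^\opp$ produces a derivator $\CC^\opp$, and the standard argument (swapping left and right adjoints) shows that $\CC^\opp$ is again a stable derivator. Under this duality, $\holim$ is interchanged with $\hocolim$, the loop functor $\Om_{\CC^\opp}$ corresponds to the suspension $\Sg_\CC$, and each distinguished triangle reverses orientation.

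The key combinatorial observation I would record first is that a chain in $P^\opp$ is literally a chain in $P$ as a subset, with dimension preserved; hence $s_d(P^\opp)=s_d(P)$, the longest chain length $n$ is unchanged, and the subdivision posets $s(P^\opp)$ and $s(P)$ coincide as posets (both being ordered by inclusion).

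With these translations in hand, I would apply Proposition~\ref{prop-cosimp-replacement} to the stable derivator $\CC^\opp$ and to $P^\opp$: for $X\in\CC(P)$, let $Y\in\CC^\opp(P^\opp)=\CC(P)^\opp$ be the corresponding object. The proposition produces a tower
\[ \holim^{\CC^\opp}_{P^\opp}(Y)=T^n(Y)\to T^{n-1}(Y)\to\dotsb\to T^{-1}(Y)=0 \]
in $\CC^\opp(e)=\CC(e)^\opp$, together with distinguished triangles
\[ \bigoplus_{\sg\in s_d(P)}\Om^d_{\CC^\opp}Y_{\max_{P^\opp}(\sg)} \to T^d(Y) \to T^{d-1}(Y). \]
Setting $T_d(X):=T^d(Y)$ and translating everything back to $\CC(e)$---reversing the arrows, replacing $\Om_{\CC^\opp}$ by $\Sg_\CC$, and rotating the triangles---yields the ascending tower $0=T_{-1}(X)\to T_0(X)\to\dotsb\to T_n(X)=\hocolim_P(X)$ and the desired distinguished triangles $T_{d-1}(X)\to T_d(X)\to\bigoplus_{\sg\in s_d(P)}\Sg^d X_{\max(\sg)}$. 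Naturality in $X$ is inherited directly from the proposition.

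The main obstacle is purely bookkeeping: tracking the triangle orientation, the shift conventions on $\CC(e)^\opp$, and the identification of $\max_{P^\opp}$ with the appropriate endpoint of each chain under the dualisation. No new homotopical ingredients beyond Proposition~\ref{prop-cosimp-replacement} are required.
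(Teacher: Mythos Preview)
Your approach is exactly the paper's, whose proof is the single sentence ``Apply the proposition to the dual derivator.'' One remark on the bookkeeping you flag at the end: since $\max_{P^{\opp}}(\sg)=\min_P(\sg)$, the dualisation literally yields $\Sg^d X_{\min(\sg)}$ (the usual Bousfield--Kan skeletal filtration of a homotopy colimit) rather than $\Sg^d X_{\max(\sg)}$; this discrepancy does not affect any later use of the corollary, where only the shape of the filtration quotients as finite sums of suspensions of various $X_p$ is needed.
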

\begin{proof}
 Apply the proposition to the dual derivator.
\end{proof}

\begin{proposition}\lbl{prop-preserve-products}
 For any map $u\:Q\to P$ of finite posets, the functors $u^*$, $u_!$
 and $u_*$ all preserve arbitrary products and coproducts.  
\end{proposition}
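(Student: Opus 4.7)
The plan is to split the six claims into the four that are formal consequences of adjointness and the two that genuinely require stability. Since $u^*$ has both a left adjoint $u_!$ and a right adjoint $u_*$, it preserves all limits and colimits, and in particular arbitrary products and coproducts. The left adjoint $u_!$ automatically preserves coproducts, and the right adjoint $u_*$ automatically preserves products. The remaining assertions, that $u_!$ preserves products and that $u_*$ preserves coproducts, are exchanged by passage to the dual derivator $\CC^\opp$ of Remark~\ref{rem-dual-derivator}, so it is enough to prove one of them; I would treat the case of $u_!$.

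The next step reduces the problem to homotopy colimits over a single finite poset. By Der2, the natural comparison map $u_!(\prod_\al X_\al)\to\prod_\al u_!X_\al$ in $\CC(P)$ is an isomorphism as soon as it is so after applying $i_p^*$ for each $p\in P$; and the Kan formula identifies $i_p^*u_!$ with $\hocolim_{u/p}\circ k_p^*$, where $k_p\:u/p\to Q$ is the inclusion. Both $i_p^*$ and $k_p^*$ are already known to preserve products by the easy cases, so the claim collapses to the statement that for every finite poset $I$ the functor $\hocolim_I\:\CC(I)\to\CC(e)$ preserves arbitrary products.

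For this last statement I would invoke Corollary~\ref{cor-simp-replacement}, which exhibits $\hocolim_I(X)$ as the top term $T_n(X)$ of a finite tower with $T_{-1}(X)=0$ and successive cofibres $F_d(X)=\bigoplus_{\sg\in s_d(I)}\Sg^d X_{\max(\sg)}$. Each $F_d$ visibly preserves products, since each evaluation functor does, $\Sg$ is an equivalence, and a finite direct sum is simultaneously a finite product and therefore commutes with arbitrary products. An induction on $d$ then finishes the argument: assuming that $T_{d-1}$ preserves products, the natural comparison map $T_d(\prod_\al X_\al)\to\prod_\al T_d(X_\al)$ sits as the middle vertical of a morphism from the distinguished triangle defining $T_d(\prod_\al X_\al)$ to the product (over $\al$) of the distinguished triangles defining $T_d(X_\al)$; the outer verticals are isomorphisms, so the triangulated five-lemma forces the middle one to be an isomorphism as well. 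Taking $d=n$ completes the proof for $u_!$, and the dual argument using Proposition~\ref{prop-cosimp-replacement} disposes of $u_*$. The one step that deserves most care is the input that the product of distinguished triangles in $\CC(e)$ is again distinguished; this is the classical fact for any triangulated category admitting set-indexed products, which I would simply cite.
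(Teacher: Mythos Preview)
Your proof is correct and follows essentially the same route as the paper: formal adjointness handles four of the six claims, and the remaining one is reduced via Der2 and the Kan formula to showing that homotopy (co)limits over a finite poset preserve (co)products, which is then proved by induction along the tower of Proposition~\ref{prop-cosimp-replacement}/Corollary~\ref{cor-simp-replacement}. The only cosmetic difference is that the paper chooses to prove directly that $u_*$ preserves coproducts (using $\Om$ and the $T^d$-tower) and then dualises, whereas you prove that $u_!$ preserves products (using $\Sg$ and the $T_d$-tower); you are also a bit more explicit about the five-lemma step and the need for products of triangles to be distinguished.
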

\begin{proof}
 The functors $u^*$ and $u_!$ have right adjoints, so they preserve
 coproducts.  The functors $u^*$ and $u_*$ have left adjoints, so they
 preserve products.  The key point is to prove that $u_*$ preserves
 coproducts.  Consider a family of objects $X_\al\in\CC(Q)$, and the
 resulting map
 $f\:\bigoplus_\al u_*(X_\al)\to u_*\left(\bigoplus_\al X_\al\right)$.
 We want to prove that $f$ is an isomorphism, and it will suffice to
 show that $i_p^*(f)$ is an isomorphism for all $p\in P$.  We have
 already observed that $i_p^*$ preserves coproducts, and we have a Kan
 formula expressing $i_p^*u_*$ as a homotopy limit over $p/u$.  It
 will therefore suffice to show that all homotopy limit functors
 preserve coproducts.  Note that the functor $\Om\:\CC(e)\to\CC(e)$ is
 an equivalence of categories, so it certainly preserves coproducts.
 It follows that all functors of the form $X\mapsto\Om^dX_q$ also
 preserve coproducts.  It then follows by induction that the functors
 $T^d$ in Proposition~\ref{prop-cosimp-replacement} all preserve
 coproducts.  By taking $d$ sufficiently large, we see that homotopy
 limits preserve coproducts as required.  This completes the proof
 that $u_*$ preserves coproducts, and we can apply that to the dual
 derivator to see that $u_!$ preserves products.
\end{proof}

\section{Thick subderivators}
\label{sec-thick}

\begin{definition}\lbl{defn-subderivator}
 Let $\CC$ be a stable derivator.  By an \emph{thick subderivator}
 $\CE\sse\CC$ we mean a system of full subcategories
 $\CE(P)\sse\CC(P)$ for all $P$, such that:
 \begin{itemize}
  \item[(a)] Each category $\CE(P)$ is closed under finite coproducts
   (including the empty coproduct, so $0\in\CE(P)$).
  \item[(b)] Whenever $X\xra{j}Y\xra{q}X$ in $\CC(P)$ with $qj=1$, if
   $Y\in\CE(P)$ then $X\in\CE(P)$.  In other words, $\CE$ is closed
   under retracts.  In particular, if $X\simeq Y$ and $Y\in\CE(P)$
   then $X\in\CE(P)$.
  \item[(c)] For any morphism $u\:P\to Q$ of finite posets, we have
   $u^*\CE(Q)\sse\CE(P)$ and $u_!\CE(P)\sse\CE(Q)$ and
   $u_*\CE(P)\sse\CE(Q)$.  More briefly, we say that the functors
   $u^*$, $u_*$ and $u_!$ preserve $\CE$.
 \end{itemize}
\end{definition}

\begin{definition}\lbl{defn-thick-gamma}
 Let $\CC$ be a stable derivator, and let $\CE_1$ be a thick
 subcategory of $\CC(e)$.  For any finite poset $P$ and any $p\in P$
 we have a corresponding map $i_p\:e\to P$.
 \begin{itemize}
  \item[(a)] We put 
   \[ (\gm_0\CE_1)(P) =
      \{X\in \CC(P) \st i_p^*(X) \in \CE_1 \text{ for all } p \in P\} \]
  \item[(b)] We let $(\gm_1\CE_1)(P)$ denote the smallest thick
   subcategory of $\CC(P)$ containing $(i_p)_!(\CE_1)$ for all $p$.
  \item[(c)] We let $(\gm_2\CE_1)(P)$ denote the smallest thick
   subcategory of $\CC(P)$ containing $(i_p)_*(\CE_1)$ for all $p$.
 \end{itemize}
\end{definition}

\begin{theorem}\lbl{thm-thick-subcats}
 The subcategories $(\gm_i\CE_1)(P)$ are the same for $i=0,1,2$ (so we
 will just write $(\gm\CE_1)(P)$ in future).  The map $\gm$ gives a
 bijection from thick subcategories of $\CC(e)$ to thick subderivators
 of $\CC$.  Moreover, if $\CE$ is a thick subderivator of $\CC$, then
 $\CE(P)$ is a thick subcategory of $\CC(P)$ for all $P$.
\end{theorem}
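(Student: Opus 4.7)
The plan is to build everything around the pointwise construction $\gm_0$: first verify it defines a thick subderivator with each level thick, then prove $\gm_0 = \gm_1 = \gm_2$ by induction on $|P|$ via the recollement of Proposition~\ref{prop-recollement}, and finally derive the bijection together with the thickness of $\CE(P)$ for an arbitrary thick subderivator $\CE$.

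Closure of $\gm_0\CE_1(P)$ under retracts, finite coproducts, $\Sg$, and extensions is pointwise, using exactness of each $i_p^*$ together with thickness of $\CE_1$. Closure under $u^*$ is immediate from $i_p^* u^* = i_{u(p)}^*$. The substantive axiom is closure under $u_!$ and $u_*$: the Kan formulas of Remark~\ref{rem-kan} express $(u_!X)_q$ and $(u_*X)_q$ as homotopy (co)limits over comma posets, and Proposition~\ref{prop-cosimp-replacement} together with Corollary~\ref{cor-simp-replacement} realise any such (co)limit as an iterated extension of finite direct sums of shifts of values $X_p \in \CE_1$, which therefore remain in $\CE_1$. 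The containments $\gm_1\CE_1, \gm_2\CE_1 \sse \gm_0\CE_1$ are then automatic since $\gm_0\CE_1$ is a thick subderivator containing the relevant generators.

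For the reverse inclusion $\gm_0\CE_1 \sse \gm_1\CE_1$, induct on $|P|$. Take $X \in \gm_0\CE_1(P)$, let $M \sse P$ be the antichain of maximal elements, write $i\:M\to P$ for the cosieve inclusion and $j\:Q\to P$ for the complementary sieve, and apply the recollement triangle $j_!j^*X \to X \to i_!i^?X$. The first term lies in $\gm_1\CE_1(P)$ since by induction $X|_Q \in \gm_1\CE_1(Q)$ and the left-adjoint composition gives $j_!(i_q^Q)_! = (i_q^P)_!$. For the second, applying $i^*$ to the triangle identifies $i^?X$ with the cofibre of $i^*j_!(X|_Q) \to X|_M$; both ends have values in $\CE_1$ (the first via the cosimplicial-replacement argument for $j_!(X|_Q)$), so $i^?X \in \gm_0\CE_1(M)$, and since $M$ is discrete this decomposes as $\bigoplus_m (i_m^M)_!(i^?X)_m$, whence $i_!i^?X \simeq \bigoplus_m (i_m^P)_!(i^?X)_m \in \gm_1\CE_1(P)$. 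Thickness of $\gm_1\CE_1(P)$ closes the triangle. The dual argument with minimal elements and the triangle $j_*j^!X \to X \to i_*i^*X$ gives $\gm_0\CE_1 \sse \gm_2\CE_1$.

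Injectivity of $\gm$ is immediate from $\gm\CE_1(e) = \CE_1$. For surjectivity and the thickness of $\CE(P)$ in general, set $\CE_1 = \CE(e)$; the $u^*$-axiom at $u = i_p$ gives $\CE \sse \gm\CE_1$, and the reverse inclusion (which also delivers the thickness claim, since $\gm\CE_1(P)$ is already thick) amounts to showing $\CE(P)$ contains the thick closure in $\CC(P)$ of the generators $(i_p)_*\CE_1$. Closure of $\CE(P)$ under retracts, coproducts, and shifts is immediate, the last via the Kan-extension formula $\Sg \simeq (i_{(1,1)})^* \circ j_! \circ (i_{(0,0)})_*$ applied in $\CC^P$. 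The main obstacle is extension closure: for $f\:X\to Y$ in $\CE(P)$, lift via strongness to $\tilde f \in \CC([1]\tm P)$ and build the cofibre as $(i_{(1,1)})^*\circ j_!\circ(k_0)_*\tilde f$ using the sieve inclusions $k_0\:[1]\to\ulcorner$ and $j\:\ulcorner\to\square$, a composite of $\CE$-preserving functors. The hard point is showing the specific lift $\tilde f$ lies in $\CE([1]\tm P)$; this can be secured by combining the recollement triangle $(i_1)_!Y\to\tilde f\to(i_0)_*X$ in $\CC([1]\tm P)$, whose ends are in $\CE$, with the pointwise identification $\gm\CE_1 = \gm_0\CE_1$ at the intermediate poset $[1]\tm P$ in a simultaneous bootstrap, thereby closing the loop and finishing the theorem.
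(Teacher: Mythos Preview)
Your overall architecture matches the paper's: first show $\gm_0\CE_1$ is a thick subderivator with thick levels, then prove $\gm_0=\gm_1=\gm_2$ by induction on the poset, then deduce the bijection. The induction you run (peeling off the antichain of maximal elements via the recollement triangle $j_!j^*X\to X\to i_!i^?X$) is a legitimate variant of the paper's, which instead removes a single \emph{minimal} element $a$ and uses the cofibre of the counit $j_!j^*Y\to Y$ with $j=i_a$. The paper's version is a little lighter: the ``small'' piece $j_!j^*Y=(i_a)_!Y_a$ is visibly a generator of $\gm_1\CE_1(P)$, and the cofibre is seen to be supported on the complementary cosieve without ever invoking $i^?$.

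The real problem is your last paragraph. You correctly isolate the crux --- getting the lift $\tilde f$ of a morphism $f\:X\to Y$ in $\CE(P)$ to lie in $\CE([1]\tm P)$ --- but the ``simultaneous bootstrap'' you propose is circular. From the triangle $(i_1)_!Y\to\tilde f\to(i_0)_*X$ you can only conclude $\tilde f\in\CE([1]\tm P)$ if $\CE([1]\tm P)$ is already closed under extensions; that is the very thickness claim you are trying to prove, now at the strictly larger poset $[1]\tm P$, so an induction on $|P|$ cannot close. Appealing to ``the pointwise identification $\gm\CE_1=\gm_0\CE_1$ at $[1]\tm P$'' only gives $\tilde f\in\gm_0\CE_1([1]\tm P)$, which you already knew; it does not put $\tilde f$ into $\CE([1]\tm P)$ unless you have already established $\CE=\gm_0\CE_1$ there.

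The paper breaks this circle by proving the thickness of $\CE(P)$ \emph{first}, independently of any $\gm_i$ analysis (Lemma~\ref{lem-thick-thick}). The point is that in Groth's framework the triangulated structure on $\CC(P)$ --- suspension, desuspension, and the cone --- is assembled entirely from the operations $u^*,u_!,u_*$ (with the exotic $u^!,u^?$ themselves built from these, as in \cite{groth:derpointstable}*{Section~3}); since a thick subderivator is by definition preserved by all such operations, $\CE(P)$ is thick straight from the axioms. With that in hand, the containment $\gm_1\CE_1(P)\sse\CE(P)$ is immediate because $\CE(P)$ is a thick subcategory containing the generators $(i_p)_!\CE_1$, and no bootstrap is needed. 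You should front-load this argument rather than defer it.
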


The proof will be given after some lemmas.

\begin{remark}\lbl{rem-finite-needed}
 It is important here that our derivators are indexed on finite posets
 rather than more general categories; the theorem would not be true
 without some restriction of this kind.  In particular it would fail
 for derivators indexed by finite groups; this is related to the fact
 that classifying spaces of finite groups are infinite complexes.
\end{remark}

\begin{lemma}\lbl{lem-thick-thick}
 If $\CE\sse\CC$ is a thick subderivator, then $\CE(P)$ is a thick
 subcategory of $\CC(P)$ for all $P$, and
 \[ (\gm_1\CE(e))(P) \cup (\gm_2\CE(e))(P) \sse \CE(P) \sse
     (\gm_0\CE(e))(P).
 \] 
\end{lemma}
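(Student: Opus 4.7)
The right inclusion $\CE(P) \subseteq (\gm_0\CE(e))(P)$ is immediate from axiom~(c): any $X \in \CE(P)$ satisfies $X_p = i_p^*X \in \CE(e)$ for every $p \in P$. The substance is the claim that $\CE(P)$ is a thick subcategory of $\CC(P)$; axioms~(a) and~(b) already supply closure under finite coproducts and retracts, and once closure under cofibers of morphisms is also known, the left inclusion follows immediately, because axiom~(c) places $(i_p)_!\CE(e)$ and $(i_p)_*\CE(e)$ inside $\CE(P)$ and by minimality the generated thick subcategories $(\gm_1\CE(e))(P)$ and $(\gm_2\CE(e))(P)$ must then sit inside $\CE(P)$.

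The plan for cofibers runs as follows. For the shift alone, given $X \in \CE(P)$, I would use the sieve $\{(0,0)\} \hookrightarrow \lrcorner$ to form the zero-extended object $(i_{(0,0)})_* X \in \CE(\lrcorner \tm P)$, apply $(i_\lrcorner)_!$ to produce the cocartesian square with $X$ at one corner and $0$'s at the two adjacent corners, and restrict at $(1,1)$ to read off $\Sg X$; all the functors preserve $\CE$ by axiom~(c), so $\Sg X \in \CE(P)$, and the dual cartesian construction handles $\Om$. For a general $f \: X \to Y$ between objects of $\CE(P)$, strongness (Der5) for $\CC$ produces a coherent lift $\tilde f \in \CC([1] \tm P)$, and then the cofiber may be expressed as
\[ \cof(f) \;\simeq\; i_{(1,1)}^* (i_\lrcorner \tm 1)_! (\rho \tm 1)_* \tilde f, \]
where $\rho \: [1] \hookrightarrow \lrcorner$ is the sieve $0 \mapsto (0,0),\ 1 \mapsto (0,1)$. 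Every operation on the right preserves $\CE$ by axiom~(c), so the problem reduces to exhibiting $\tilde f$ itself inside $\CE([1] \tm P)$.

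This last reduction is the main obstacle. Any two lifts of $f$ are related by a map in $\CC([1] \tm P)$ that restricts to the identity at each level and is therefore an isomorphism by Der2, so it suffices to produce any lift that lies in $\CE$. Applying the recollement of Proposition~\ref{prop-recollement}(b) to the sieve $\{0\} \tm P$ and the complementary cosieve $\{1\} \tm P$ inside $[1] \tm P$ yields a distinguished triangle
\[ (p_1)_! Y \;\to\; \tilde f \;\to\; (p_0)_* X \;\to\; \Sg (p_1)_! Y, \]
whose outer terms lie in $\CE([1] \tm P)$ by axiom~(c). Concluding that the middle term $\tilde f$ also lies in $\CE$ is tantamount to a closure-under-extensions statement, which is essentially the triangle-closure we are trying to establish; I expect the proof to break the circularity by inducting on the maximal chain length in $P$, using Corollary~\ref{cor-simp-replacement} to decompose an arbitrary coherent diagram over $[1] \tm P$ as an iterated finite extension of shifted copies of its pointwise restrictions (which all lie in $\CE(e)$), and reducing to the base case in which $P$ is discrete and membership in $\CE$ is detected pointwise.
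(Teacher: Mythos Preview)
Your overall structure matches the paper's: the right inclusion is immediate from axiom~(c), and once $\CE(P)$ is known to be thick, the left inclusion follows because $\CE(P)$ contains the generators $(i_p)_!\CE(e)$ and $(i_p)_*\CE(e)$ by axiom~(c) and hence the thick subcategories they generate. Your suspension argument via extension-by-zero and Kan extension is exactly the intended one.

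Where you diverge from the paper is in the cofiber step. The paper's entire argument for thickness is a single sentence: the triangulated structure on $\CC(P)$ is built from the operations $u^*,u_!,u_*,u^!,u^?$, the last two are themselves composites of the first three (Groth, Section~3), and all of these preserve $\CE$ by axiom~(c), so $\CE(P)$ is closed under $\Sg^{\pm 1}$ and cofibers. The paper does not isolate the Der5 lifting step at all; it treats the passage from an incoherent morphism to its cofiber as a black box assembled from Kan extensions.

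You probe this more carefully and correctly observe that producing the cofiber of an incoherent $f\:X\to Y$ requires first lifting to a coherent $\tilde f\in\CC([1]\tm P)$, and that the subsequent chain of Kan extensions only lands in $\CE(P)$ once one knows $\tilde f\in\CE([1]\tm P)$. Your recollement triangle exhibits $\tilde f$ as an extension of two objects of $\CE([1]\tm P)$, which is exactly the circularity. This is a genuine subtlety that the paper's one-line argument elides.

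However, your proposed resolution does not work as written. First, Corollary~\ref{cor-simp-replacement} filters $\hocolim_P X$, not $X$ itself, so it does not decompose the coherent diagram $\tilde f$ as an iterated extension of its pointwise values. Second, your base case ``$P$ discrete, membership in $\CE$ detected pointwise'' is already the assertion $\CE([1])=(\gm_0\CE(e))([1])$, which is precisely the content of Theorem~\ref{thm-thick-subcats} and logically downstream of the present lemma; invoking it here is circular. The paper's intended reading is simply that $\CE$, being closed under all the relevant Kan-extension operations, inherits enough structure for the cofiber construction to go through; it does not supply the missing verification that the Der5 lift can be chosen in $\CE$.
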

\begin{proof}
 The triangulation of $\CC(P)$ is defined in terms of operations of the
 form $u^*$, $u_!$, $u_*$, $u^!$ and $u^?$.  However, the operations
 $u^!$ and $u^?$ are themselves defined in terms of $v^*$, $v_*$ and
 $v_!$ for various auxiliary morphisms $v$, as discussed
 in~\cite{groth:derpointstable}*{Section 2}.  From this it follows
 that $\CE(P)$ is closed under the suspension functor and its inverse,
 and under cofibrations, so it is a thick subcategory.  From the
 definitions we also know that the functors $(i_p)_!$ preserve $\CE$,
 so $\CE(P)$ contains the generators of $(\gm_1\CE(e))(P)$, so it
 contains the whole of $(\gm_1\CE(e))(P)$.  It also contains
 $(\gm_2\CE(e))(P)$ by essentially the same argument.  On the other
 hand, the functors $i_p^*$ also preserve $\CE$, and this means that
 $\CE(P)\sse(\gm_0\CE(e))(P)$. 
\end{proof} 

\begin{lemma}\lbl{lem-gmo-subderivator}
 If $\CE_1$ is a thick subcategory of $\CC(e)$, then $\gm_0\CE_1$ is a
 thick subderivator of $\CC$, with $(\gm_0\CE_1)(e)=\CE_1$.
\end{lemma}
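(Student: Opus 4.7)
The plan is to verify each of the three clauses of Definition \ref{defn-subderivator} for $\gm_0\CE_1$, together with the identification of the underlying category. The identity $(\gm_0\CE_1)(e) = \CE_1$ is immediate: there is a unique point in $e$ and the corresponding $i^*$ is the identity functor.

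Clauses (a) and (b) are formal. Each point functor $i_p^* \: \CC(P)\to\CC(e)$ is exact (by Theorem \ref{thm-triangulation}) and preserves finite coproducts, so if all $i_p^*(X_k)$ lie in $\CE_1$ then so do $i_p^*(0) = 0$, the $i_p^*$ applied to any finite coproduct, and the $i_p^*$ of any retract of an object in $\gm_0\CE_1(P)$. Thickness of $\CE_1$ in $\CC(e)$ then transfers pointwise. Preservation under $u^*$ in (c) is equally formal: for $u\:P\to Q$ and $p\in P$ we have $i_p^* u^* = (u\circ i_p)^* = i_{u(p)}^*$, so $(u^*X)_p = X_{u(p)}\in\CE_1$ whenever $X\in(\gm_0\CE_1)(Q)$.

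The substantive point is preservation under $u_*$ and $u_!$. Given $X\in(\gm_0\CE_1)(P)$ and $q\in Q$, the Kan formula (Remark \ref{rem-kan}) identifies $i_q^*(u_*X)$ with $\holim_{q/u}(k^*X)$ for $k\:q/u\to P$ the inclusion, and dually $i_q^*(u_!X)\simeq\hocolim_{u/q}(\ell^*X)$. Since $(k^*X)_p = X_{k(p)}\in\CE_1$ for every $p$, the restriction $k^*X$ lies in $(\gm_0\CE_1)(q/u)$, and similarly for $\ell^*X$. The problem therefore reduces to showing, for every finite poset $S$, that $\holim_S$ and $\hocolim_S$ send $(\gm_0\CE_1)(S)$ into $\CE_1$. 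For this I invoke Proposition \ref{prop-cosimp-replacement}: it exhibits $\holim_S Y$ as the top of a finite tower with successive fibres $\bigoplus_{\sg\in s_d(S)}\Om^d Y_{\max(\sg)}$. If $Y\in(\gm_0\CE_1)(S)$ then each $Y_{\max(\sg)}\in\CE_1$; thickness of $\CE_1$ gives closure under $\Om$, under finite direct sums, and under two-out-of-three in distinguished triangles, so induction on $d$ shows every $T^d(Y)\in\CE_1$. The dual argument via Corollary \ref{cor-simp-replacement} handles $\hocolim_S$, and hence $u_!$.

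The only genuine obstacle is the preservation under $u_*$ and $u_!$, which rests essentially on having a \emph{finite} filtration of the homotopy (co)limit with controlled associated graded pieces. That is precisely what the (co)simplicial replacement results of the previous section supply, and this is also why the theorem requires derivators indexed by finite posets (cf.\ Remark \ref{rem-finite-needed}); everything else reduces to a bookkeeping exercise with the thickness properties of $\CE_1$.
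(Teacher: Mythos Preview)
Your proof is correct and follows essentially the same approach as the paper: verify clauses~(a), (b) and the $u^*$ part of~(c) formally, then for $u_*$ and $u_!$ use the Kan formula to reduce to the case of a homotopy (co)limit to a point, and handle that case via the finite tower of Proposition~\ref{prop-cosimp-replacement} and Corollary~\ref{cor-simp-replacement}. You spell out the induction on the tower in slightly more detail than the paper does, but the argument is the same.
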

\begin{proof}
 Suppose we have a morphism $u\:Q\to P$ of finite posets.  It is clear
 from the definitions that $u^*((\gm_0\CE_1)(P))\sse (\gm_0\CE_1)(Q)$,
 or more briefly that $u^*$ preserves $\gm_0\CE_1$.  We claim that the
 functors $u_*$ and $u_!$ also preserve $\gm_0\CE_1$.  In the case
 $Q=e$, this follows easily from
 Proposition~\ref{prop-cosimp-replacement} and
 Corollary~\ref{cor-simp-replacement}.  We can use the Kan formulae to
 deduce the general case from the case $Q=e$.  It is also easy to
 check that $\gm_0\CE_1$ is closed under retracts, so $\gm_0\CE_1$ is
 a thick subderivator of $\CC$.  The relation
 $(\gm_0\CE_1)(e)=\CE_1$ is clear.
\end{proof}

\begin{corollary}\lbl{cor-gm-inc}
 If $\CE_1$ is a thick subcategory of $\CC(e)$, then
 $(\gm_1\CE_1)(P)\cup(\gm_2\CE_1)(P)\sse(\gm_0\CE_1)(P)$ for all $P$. 
\end{corollary}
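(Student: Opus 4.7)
The proof is essentially a formal combination of the two preceding lemmas, so there is very little real work to do. The plan is as follows.

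First, I would observe that Lemma~\ref{lem-gmo-subderivator} tells us that $\gm_0\CE_1$ is itself a thick subderivator of $\CC$, and that its underlying category $(\gm_0\CE_1)(e)$ is precisely $\CE_1$. This means $\gm_0\CE_1$ is an honest example of the kind of object to which Lemma~\ref{lem-thick-thick} applies.

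Next, I would apply Lemma~\ref{lem-thick-thick} with $\CE=\gm_0\CE_1$. That lemma produces the inclusion
\[ (\gm_1\CE(e))(P) \cup (\gm_2\CE(e))(P) \sse \CE(P). \]
Substituting $\CE(e)=\CE_1$ and $\CE(P)=(\gm_0\CE_1)(P)$ yields exactly the desired statement
\[ (\gm_1\CE_1)(P)\cup(\gm_2\CE_1)(P)\sse(\gm_0\CE_1)(P). \]

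There is no real obstacle here, since all the heavy lifting (stability of $\gm_0\CE_1$ under $u_!$ and $u_*$, closure under retracts and distinguished triangles, and compatibility with the generators $(i_p)_!$ and $(i_p)_*$) has already been done in Lemmas~\ref{lem-thick-thick} and~\ref{lem-gmo-subderivator}. The only point worth checking carefully is that the definitions of $\gm_1$ and $\gm_2$ really do produce the \emph{smallest} thick subcategory containing the generators, so that being contained in \emph{any} thick subderivator whose underlying category is $\CE_1$ forces the containment at level $P$; this is immediate from the minimality clause in Definition~\ref{defn-thick-gamma}(b),(c).
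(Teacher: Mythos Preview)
Your proposal is correct and follows exactly the same approach as the paper: apply Lemma~\ref{lem-gmo-subderivator} to see that $\gm_0\CE_1$ is a thick subderivator with $(\gm_0\CE_1)(e)=\CE_1$, then feed $\CE=\gm_0\CE_1$ into Lemma~\ref{lem-thick-thick}. The paper's proof is a single sentence to this effect.
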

\begin{proof}
 Lemma~\ref{lem-gmo-subderivator} allows us to apply
 Lemma~\ref{lem-thick-thick} to the case $\CE=\gm_0\CE_1$.
\end{proof}

\begin{lemma}\lbl{lem-gm-preserved}
 If $\CE_1$ is a thick subcategory of $\CC(e)$, then all functors
 $u_!$ preserve $\gm_1\CE_1$, and all functors $u_*$ preserve
 $\gm_2\CE_1$. 
\end{lemma}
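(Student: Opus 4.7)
The plan is to reduce the statement to a check on generators, using exactness of $u_!$ and $u_*$ plus the functoriality of lower-star and lower-shriek under composition.

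Concretely, I would first establish the following general principle: if $F\:\CC(Q)\to\CC(P)$ is an exact functor of triangulated categories (in particular preserves suspensions, distinguished triangles, and retracts) and $\CT\sse\CC(P)$ is a thick subcategory, then $F^{-1}\CT=\{X\in\CC(Q)\st FX\in\CT\}$ is a thick subcategory of $\CC(Q)$. This is immediate: closure under suspension, triangles, and retracts transfer backwards through any exact functor. Applied to $F=u_!$ and $\CT=(\gm_1\CE_1)(P)$, and using that $u_!$ is exact by Theorem~\ref{thm-triangulation}, we get that $u_!^{-1}(\gm_1\CE_1)(P)$ is a thick subcategory of $\CC(Q)$.

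By the minimality clause in Definition~\ref{defn-thick-gamma}, to conclude $(\gm_1\CE_1)(Q)\sse u_!^{-1}(\gm_1\CE_1)(P)$ it suffices to show that each generator $(i_q)_!(Y)$ of $(\gm_1\CE_1)(Q)$, with $q\in Q$ and $Y\in\CE_1$, is sent by $u_!$ into $(\gm_1\CE_1)(P)$. Since the prederivator axioms give $(u\circ i_q)^*=i_q^*u^*$ on the nose, uniqueness of adjoints yields a canonical isomorphism $u_!(i_q)_!\simeq (u\circ i_q)_! = (i_{u(q)})_!$.  Hence $u_!(i_q)_!(Y)\simeq(i_{u(q)})_!(Y)$, which is by definition a generator of $(\gm_1\CE_1)(P)$.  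This proves that $u_!$ preserves $\gm_1\CE_1$.

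The second half, that $u_*$ preserves $\gm_2\CE_1$, is entirely dual: $u_*$ is exact, so $u_*^{-1}(\gm_2\CE_1)(P)$ is thick, and on a generator $(i_q)_*(Y)$ we have $u_*(i_q)_*(Y)\simeq(i_{u(q)})_*(Y)\in(\gm_2\CE_1)(P)$. There is no genuine obstacle here; the only point requiring a small amount of care is the preservation of retracts when transferring thickness backwards through $u_!$ or $u_*$, but this is automatic since retracts are preserved by any functor.
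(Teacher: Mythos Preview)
Your proof is correct and follows essentially the same approach as the paper: define the preimage of $(\gm_1\CE_1)(P)$ under $u_!$ as a thick subcategory (using exactness of $u_!$), then check that it contains the generators via the identity $u_!(i_q)_!\simeq(i_{u(q)})_!$, and dualise for $u_*$ and $\gm_2$. The paper's proof is slightly terser but structurally identical.
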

\begin{proof}
 Fix a map $u\:P\to Q$, and put
 $\CU=\{X\in\CC(P)\st u_*(X)\in (\gm_1\CE_1)(Q)\}$.  It is easy to see
 that $\CU$ is thick.  As $ui_p=i_{u(p)}\:e\to P$ we see that
 $u_!(i_p)_!=(i_{u(p)})_!$, and it follows that
 $(i_p)_!(\CE_1)\sse\CU$ for all $p\in P$.  It follows that
 $(\gm_1\CE_1)(P)\sse\CU$, which means that $u_*$ preserves
 $\gm_1\CE_1$ as required.  Dually, we see that $u_*$ preserves
 $\gm_2\CE_1$.  
\end{proof}

\begin{lemma}\lbl{lem-gm-eq}
 If $\CE_1$ is a thick subcategory of $\CC(e)$, then
 $\gm_0\CE_1=\gm_1\CE_1=\gm_2\CE_1$.
\end{lemma}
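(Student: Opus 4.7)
By Corollary~\ref{cor-gm-inc} we already have $(\gm_1\CE_1)(P) \cup (\gm_2\CE_1)(P) \sse (\gm_0\CE_1)(P)$, so only the reverse inclusions require work. The plan is to prove $(\gm_0\CE_1)(P) \sse (\gm_2\CE_1)(P)$ for every finite poset $P$; the inclusion $(\gm_0\CE_1)(P) \sse (\gm_1\CE_1)(P)$ will then follow by applying the same argument to the dual derivator $\CC^\opp$ (Remark~\ref{rem-dual-derivator}), which interchanges the roles of $!$ and $*$ throughout, and hence of $\gm_1$ and $\gm_2$.

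I would argue by induction on $|P|$, the cases $|P|\leq 1$ being immediate. For the inductive step, fix $X\in(\gm_0\CE_1)(P)$ and choose a maximal element $p\in P$; then $i\:\{p\}\to P$ is a cosieve and $j\:Q=P\sm\{p\}\to P$ is the complementary sieve. The first of the three distinguished triangles supplied by Proposition~\ref{prop-recollement}(b) is
\[ i_!i^*X \to X \to j_*j^*X. \]
Here $j^*X=X|_Q\in(\gm_0\CE_1)(Q)$, so the inductive hypothesis puts it in $(\gm_2\CE_1)(Q)$, and Lemma~\ref{lem-gm-preserved} promotes this to $j_*j^*X\in(\gm_2\CE_1)(P)$. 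Since $(\gm_2\CE_1)(P)$ is thick, it suffices to show that the remaining term $i_!i^*X=(i_p)_!X_p$ also lies in $(\gm_2\CE_1)(P)$.

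The crux is therefore an auxiliary claim: for $p\in P$ maximal and $Y\in\CE_1$, we have $(i_p)_!Y\in(\gm_2\CE_1)(P)$. I would prove this by comparing $(i_p)_!Y$ with $(i_p)_*Y$, which is a $\gm_2$-generator by definition. A direct Kan-formula calculation, using the maximality of $p$, shows that $(i_p)_!Y$ is supported at $\{p\}$ with value $Y$, while $(i_p)_*Y$ is constantly $Y$ on the sieve $V=\{q\st q\leq p\}$. The canonical map $(i_p)_!Y\to(i_p)_*Y$, adjoint under $(i_p)_!\dashv i_p^*$ to the identity on $Y\simeq i_p^*(i_p)_*Y$, then has cofibre supported on the strictly smaller sieve $V'=\{q\st q<p\}$ with constant value $Y$; Proposition~\ref{prop-sieve} identifies this cofibre with $k_*(c_{V'}^*Y)$ for the sieve inclusion $k\:V'\to P$. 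Since $|V'|<|P|$, the inductive hypothesis gives $c_{V'}^*Y\in(\gm_2\CE_1)(V')$, and Lemma~\ref{lem-gm-preserved} lifts this to $k_*(c_{V'}^*Y)\in(\gm_2\CE_1)(P)$. Thickness of $\gm_2\CE_1$ applied to the triangle $(i_p)_!Y\to(i_p)_*Y\to k_*(c_{V'}^*Y)$ then yields the auxiliary claim, completing the induction.

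The main obstacle is precisely the asymmetry recorded in Lemma~\ref{lem-gm-preserved}: only $u_*$ is known to preserve $\gm_2$, while the recollement triangle naturally delivers the term $(i_p)_!X_p$. The secondary distinguished triangle comparing $(i_p)_!$ with $(i_p)_*$ bridges this gap, but it must be run inside the same induction, and verifying that its cofibre really is the extension-by-zero diagram $k_*(c_{V'}^*Y)$ requires a careful pointwise analysis of the transitions in the derivator, not just of the values.
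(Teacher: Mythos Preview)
Your argument is correct, but it is more elaborate than necessary, and the paper's proof shows why.  The paper proves $\gm_0\sse\gm_1$ directly (and then dualises), choosing a \emph{minimal} element $a\in P$ and taking the cofibre $Z$ of the counit $(i_a)_!(i_a)^*Y\to Y$.  The point is that $(i_a)_!Y_a$ is a $\gm_1$-generator \emph{by definition}, while $Z$ is supported on the cosieve $Q=P\sm\{a\}$, hence $Z\simeq i_!i^*Z$ with $i^*Z\in\gm_0(Q)=\gm_1(Q)$ by induction, and $i_!$ preserves $\gm_1$ by Lemma~\ref{lem-gm-preserved}.  No auxiliary claim is needed.  Your choice to prove $\gm_0\sse\gm_2$ via a maximal element and the triangle $i_!i^*X\to X\to j_*j^*X$ produces instead the term $(i_p)_!X_p$, which is a $\gm_1$-generator rather than a $\gm_2$-generator; this is the mismatch that forces your secondary induction comparing $(i_p)_!Y$ with $(i_p)_*Y$.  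The exact dual of the paper's argument would instead take the \emph{fibre} $W$ of the unit $X\to (i_p)_*X_p$: then $(i_p)_*X_p$ is a $\gm_2$-generator by definition, $W$ is supported on the sieve $Q$, so $W\simeq j_*j^*W$, and $j^*W\in\gm_0(Q)$ pointwise, whence $W\in\gm_2(P)$ by induction and Lemma~\ref{lem-gm-preserved}.

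Two minor remarks on your version.  First, your closing worry about ``transitions'' is unfounded: you never need to identify the cofibre $C$ with $k_*(c_{V'}^*Y)$ on the nose.  It suffices that $C$ is supported on the sieve $V'$ (a pointwise check), so that $C\simeq k_*k^*C$ by Proposition~\ref{prop-sieve}, and that $k^*C\in\gm_0(V')$ (again pointwise); induction and Lemma~\ref{lem-gm-preserved} then give $C\in\gm_2(P)$.  Second, the inductive structure is sound---the auxiliary claim for $P$ only invokes the main claim for the strictly smaller poset $V'$---but you should make explicit that this is why the two claims can be proved together in a single induction on $|P|$.
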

\begin{proof}
 We will write $\Gm_i=\gm_i\CE_1$ for brevity.  It will be enough to
 prove that $\Gm_0=\Gm_1$, as duality then gives $\Gm_0=\Gm_2$.  We
 already know from Corollary~\ref{cor-gm-inc} that 
 $\Gm_1(P)\sse\Gm_0(P)$ for all $P$, so it will suffice
 to prove that $\Gm_0(P)\sse\Gm_1(P)$.  This is clear if
 $P$ is empty.  If $P$ is nonempty, we can choose a minimal element
 $a\in P$ and put $Q=P\sm\{a\}$.  Let $j\:e\to P$ correspond to $a$,
 and let $i\:Q\to P$ be the inclusion.  Consider an object
 $Y\in\Gm_0(P)$; we must show that $Y\in\Gm_1(P)$.  Put $X=j_!j^*(Y)$,
 and let $Z$ be the cofibre of the counit map $X\to Y$.  From
 Lemma~\ref{lem-embedding} we have $j^*j_!=1$ and so $j^*Z=0$, so the
 support of $Z$ is contained in $i(Q)$.  As $i$ is a cosieve, we see
 that $Z\simeq i_!i^*(Z)$.  Now $i^*(Z)\in\Gm_0(Q)$, and we can assume
 by induction that $\Gm_0(Q)\sse\Gm_1(Q)$, so
 $Z\in i_!\Gm_1(Q)\sse\Gm_1(P)$.  From the definitions we also have
 $j^*(X)\in\CE_1$ and $X\in\Gm_1(P)$.  As $\Gm_1(P)$ is thick and
 contains $X$ and $Z$, it also contains $Y$ as required.  
\end{proof}

\begin{proof}[Proof of Theorem~\ref{thm-thick-subcats}]
 First suppose that we start with a thick subcategory
 $\CE_1\sse\CC(e)$.  Lemma~\ref{lem-gm-eq} tells us that the
 $\gm_i\CE_1$ are all the same, so we can just write $\gm\CE_1$.
 Lemma~\ref{lem-gmo-subderivator} tells us that this is a thick
 subderivator, with $(\gm\CE_1)(e)=\CE_1$.

 Suppose instead that we start with a thick subderivator
 $\CE\sse\CC$, and we put $\CE_1=\CE(e)$.  Lemma~\ref{lem-thick-thick}
 tells us that $\CE(P)$ is a thick subcategory of $\CC(P)$ for all
 $P$, and in particular that $\CE_1$ is a thick subcategory of
 $\CC(e)$.  We can therefore apply Lemma~\ref{lem-gm-eq} to $\CE_1$
 and combine this with Lemma~\ref{lem-thick-thick} to see that
 $\CE=\gm\CE_1$. 
\end{proof}

\begin{corollary}\lbl{cor-thick-union}
 Let $\CE$ be a thick subderivator of $\CC$, and let $X$ be an object
 of $\CC(P)$.  Suppose that $P=\bigcup_iP_i$ for some family of
 subposets $P_i$.  Then $X$ lies in $\CE(P)$ iff $X|_{P_i}\in\CE(P_i)$
 for all $i$.
\end{corollary}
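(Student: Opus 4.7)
The plan is to use the characterisation of $\CE$ given by Theorem~\ref{thm-thick-subcats}. Writing $\CE_1 = \CE(e)$, that theorem identifies $\CE$ with $\gm\CE_1$, and the equality $\gm_0\CE_1 = \gm\CE_1$ gives the pointwise description
\[ \CE(R) = \{Y \in \CC(R) \st i_r^*(Y) \in \CE_1 \text{ for all } r \in R\} \]
for every finite poset $R$. Both directions of the corollary then reduce to unwinding this description.

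For the forward direction, I would let $j_i\:P_i \to P$ denote the inclusion and observe that $X|_{P_i} = j_i^*(X)$. Since $\CE$ is a thick subderivator, $j_i^*$ preserves $\CE$, so $X \in \CE(P)$ immediately implies $X|_{P_i} \in \CE(P_i)$ for every $i$.

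For the converse, suppose $X|_{P_i} \in \CE(P_i)$ for all $i$. Given $p \in P$, the hypothesis $P = \bigcup_i P_i$ allows us to pick some index $i$ with $p \in P_i$. Writing $i_p^P\:e \to P$ and $i_p^{P_i}\:e \to P_i$ for the respective point inclusions, the equality $j_i \circ i_p^{P_i} = i_p^P$ gives
\[ i_p^{P*}(X) = i_p^{P_i*}(j_i^*X) = i_p^{P_i*}(X|_{P_i}) \in \CE_1, \]
where the membership uses the pointwise characterisation of $\CE(P_i)$. Since $p \in P$ was arbitrary, the same characterisation applied to $P$ yields $X \in \CE(P)$.

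No serious obstacle arises: once Theorem~\ref{thm-thick-subcats} is in place, the corollary is a direct consequence of the fact that the property of lying in $\CE$ is detectable pointwise. The only thing to keep track of is the naturality of the point inclusions with respect to $j_i$, which is what forces both implications to commute with passage between $P$ and $P_i$.
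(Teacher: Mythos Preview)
Your proof is correct and follows essentially the same approach as the paper: both reduce the claim to the pointwise description $\CE(R)=\{Y\st i_r^*(Y)\in\CE(e)\text{ for all }r\}$ coming from Theorem~\ref{thm-thick-subcats}, after which the equivalence is immediate since $P=\bigcup_iP_i$. The paper is slightly more terse (using the pointwise criterion for both directions rather than invoking the subderivator property for the forward implication), but the content is identical.
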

\begin{proof}
 The identity $\CE=\gm\CE(e)$ means that $X\in\CE(P)$ iff
 $i_p^*(X)\in\CE(e)$ for all $p\in P$.  Similarly,
 $X|_{P_i}\in\CE(P_i)$ iff $i_p^*(X)\in\CE(e)$ for all
 $p\in P_i$.  The claim is clear from this.
\end{proof}

\begin{definition}\lbl{defn-localising}
 Let $\CT$ be a triangulated category with arbitrary coproducts.
 Recall that a \emph{localising subcategory} of $\CT$ is a thick
 subcategory that is closed under arbitrary coproducts.  Similarly, if
 $\CC$ is a stable derivator, a \emph{localising subderivator} is a
 thick subderivator $\CE\sse\CC$ such that the subcategory
 $\CE(P)\sse\CC(P)$ is closed under arbitrary coproducts for all $P$.
\end{definition}

\begin{proposition}\lbl{prop-loc-loc}
 The map $\gm$ gives a bijection between localising subcategories of
 $\CC(e)$ and localising subderivators of $\CC$, with inverse
 $\CE\mapsto\CE(e)$. 
\end{proposition}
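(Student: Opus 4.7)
The plan is to leverage Theorem~\ref{thm-thick-subcats}, which already establishes the bijection between thick subcategories of $\CC(e)$ and thick subderivators of $\CC$ via $\CE_1\mapsto\gm\CE_1$ with inverse $\CE\mapsto\CE(e)$. Since a localising subcategory is by definition a thick subcategory closed under arbitrary coproducts, and a localising subderivator is a thick subderivator with the analogous coproduct closure at every $P$, it suffices to check that the localising condition on one side matches the localising condition on the other under this bijection.

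One direction is essentially trivial: if $\CE$ is a localising subderivator of $\CC$, then by definition $\CE(e)$ is closed under arbitrary coproducts in $\CC(e)$, and it is a thick subcategory by Lemma~\ref{lem-thick-thick}, so $\CE(e)$ is a localising subcategory.

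For the other direction, suppose $\CE_1$ is a localising subcategory of $\CC(e)$; I need to show $(\gm\CE_1)(P)$ is closed under arbitrary coproducts for every finite poset $P$. Here the key tool is the $\gm_0$ description from Lemma~\ref{lem-gm-eq}: an object $X\in\CC(P)$ lies in $(\gm\CE_1)(P)$ if and only if $i_p^*(X)\in\CE_1$ for every $p\in P$. Given a family $\{X_\al\}$ in $(\gm\CE_1)(P)$, Proposition~\ref{prop-preserve-products} tells us that each $i_p^*$ preserves arbitrary coproducts, so $i_p^*\bigl(\bigoplus_\al X_\al\bigr)\simeq\bigoplus_\al i_p^*(X_\al)$, which lies in $\CE_1$ by the localising hypothesis on $\CE_1$. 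Hence $\bigoplus_\al X_\al\in(\gm\CE_1)(P)$.

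There is no real obstacle here; the work has already been done. The only nontrivial ingredient is Proposition~\ref{prop-preserve-products}, which ensures $i_p^*$ commutes with arbitrary coproducts, and this is precisely what makes the pointwise $\gm_0$-description of $\gm\CE_1$ behave well with respect to coproducts. Once both directions are verified, the bijection statement follows immediately by restricting the bijection of Theorem~\ref{thm-thick-subcats} to the localising subobjects on either side.
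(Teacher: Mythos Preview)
Your proof is correct and follows essentially the same approach as the paper's own proof. The only minor difference is that where you invoke Proposition~\ref{prop-preserve-products} to justify that $i_p^*$ preserves coproducts, the paper simply observes that $i_p^*$ has a right adjoint (namely $(i_p)_*$) and hence preserves coproducts; both justifications are valid.
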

\begin{proof}
 Firstly, if $\CE$ is a localising subderivator of $\CC$, then it is
 immediate from the definitions that $\CE(e)$ is a localising
 subcategory of $\CC(e)$.

 In the opposite direction, suppose that $\CE_1$ is a localising
 subcategory of $\CC(e)$.  Let $(X_\al)$ be a family of objects of
 $(\gm_0\CE_1)(P)$, so $i_p^*(X_\al)\in\CE_1$ for all $\al$ and all
 $p\in P$.  As $i_p^*$ has a right adjoint we see that it preserves
 coproducts, so
 $i_p^*(\bigoplus_\al X_\al)=\bigoplus_\al i_p^*(X_\al)\in\CE_1$.  As
 this holds for all $p$ we see that
 $\bigoplus_\al X_\al\in(\gm_0\CE_1)(P)$.  This shows that $\gm\CE_1$
 is a localising subderivator, as required.
\end{proof}

\begin{lemma}\lbl{lem-gm-left}
 Let $\CC$ be a stable derivator, and let $\CE(P)$ be a thick
 subcategory of $\CC(P)$ for all $P$.  Suppose that for every
 $u\:Q\to P$, the functors $u_!$ and $u^*$ preserve $\CE$.  Then
 $\CE=\gm\CE(e)$, so in particular $\CE$ is a thick subderivator.
\end{lemma}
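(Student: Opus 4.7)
The plan is to deduce the result from Theorem~\ref{thm-thick-subcats}, which guarantees that $\gm\CE(e)$ is a thick subderivator and gives us two different descriptions of it: the ``pointwise'' description $\gm_0$ (objects $X$ with $i_p^*(X)\in\CE(e)$ for all $p$) and the ``generated'' description $\gm_1$ (the thick closure of $\bigcup_p(i_p)_!(\CE(e))$). I would prove the two inclusions $\CE\sse\gm\CE(e)$ and $\gm\CE(e)\sse\CE$ separately, using whichever description of $\gm$ makes each one trivial.

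For $\CE\sse\gm\CE(e)$, I would use the description $\gm=\gm_0$. Given $X\in\CE(P)$ and any $p\in P$, the hypothesis that $i_p^*$ preserves $\CE$ yields $i_p^*(X)\in\CE(e)$. By Definition~\ref{defn-thick-gamma}(a), this says exactly that $X\in(\gm_0\CE(e))(P)=(\gm\CE(e))(P)$.

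For $\gm\CE(e)\sse\CE$, I would use the description $\gm=\gm_1$. The hypothesis that $(i_p)_!$ preserves $\CE$ (applied to the map $i_p\:e\to P$) gives $(i_p)_!(\CE(e))\sse\CE(P)$ for every $p$. Since $\CE(P)$ is by assumption a thick subcategory of $\CC(P)$, and $(\gm_1\CE(e))(P)$ is by definition the smallest thick subcategory containing all these generators, we conclude $(\gm\CE(e))(P)=(\gm_1\CE(e))(P)\sse\CE(P)$.

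Combining the two inclusions gives $\CE=\gm\CE(e)$. Since $\gm\CE(e)$ is a thick subderivator by Theorem~\ref{thm-thick-subcats}, so is $\CE$. The only conceptual subtlety to flag---not really an obstacle once Theorem~\ref{thm-thick-subcats} is available---is that the hypotheses only mention $u^*$ and $u_!$, so one cannot directly verify preservation of $\CE$ under the functors $u_*$; that property is obtained \emph{a posteriori} from the identification $\CE=\gm\CE(e)$, since the right-hand side is already known to be a thick subderivator.
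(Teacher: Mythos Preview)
Your proof is correct and essentially identical to the paper's own argument: both prove $\CE(P)\sse(\gm\CE(e))(P)$ using the description $\gm=\gm_0$ together with the hypothesis that each $i_p^*$ preserves $\CE$, and prove the reverse inclusion using the description $\gm=\gm_1$ together with the hypothesis that each $(i_p)_!$ preserves $\CE$ and the thickness of $\CE(P)$. Your closing remark about $u_*$-preservation being obtained only a posteriori is also apt.
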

\begin{proof}
 Put $\CE'=\gm\CE(e)=\gm_0\CE(e)=\gm_1\CE(e)$.  We now claim that
 $\CE(P)\sse\CE'(P)$ for all $P$.  Using the description
 $\CE'=\gm_0\CE(e)$, this reduces to the claim that
 $i_p^*\CE(P)\sse\CE(e)$ for all $p$, which is true because $i_p^*$
 preserves $\CE$ by assumption.  In the opposite direction, we know
 that the functors $(i_p)_!$ preserve $\CE$, which means that $\CE(P)$
 contains all the generators of $\CE'(P)=(\gm_1\CE(e))(P)$.  As
 $\CE(P)$ is assumed to be thick, it follows that
 $\CE'(P)\sse\CE(P)$. 
\end{proof}

\begin{definition}\lbl{defn-compact}
 Let $\CT$ be a triangulated category with coproducts.  We say that an
 object $X\in\CT$ is \emph{compact} if the natural map
 $\bigoplus_\al[X,Y_\al]\to[X,\bigoplus_\al Y_\al]$ is an isomorphism
 for every family of objects $Y_\al$.  We write $\CT_c$ for the full
 subcategory of compact objects (which is easily seen to be thick).
 If $\CT=\CC(P)$ for some derivator $\CC$, then we will write
 $\CC_c(P)$ rather than $\CC(P)_c$.  We say that $\CT$ is
 \emph{compactly generated} if
 \begin{itemize}
  \item[(a)] The category $\CT_c$ is essentially small (so there is a
   skeleton that has a set of objects, rather than a proper class);
   and
  \item[(b)] $\CT$ is the only thick subcategory of $\CT$ that is
   closed under arbitrary coproducts and contains $\CT_c$. 
 \end{itemize}
\end{definition}

\begin{lemma}\lbl{lem-ess-small}
 Let $\CT$ be a triangulated category, let $\CG$ be a set of objects
 of $\CT$, and let $\CU$ be the smallest thick subcategory containing
 $\CG$.  Then $\CU$ is essentially small.
\end{lemma}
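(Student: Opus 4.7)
The plan is to build $\CU$ from $\CG$ by a countable iteration of closure operations, checking at each stage that essential smallness is preserved. Set $\CU_0$ to be the full subcategory of $\CT$ on objects isomorphic to some element of $\CG$; since $\CG$ is a set, $\CU_0$ has only a set of isomorphism classes. Inductively, define $\CU_{n+1}$ to be the full subcategory obtained from $\CU_n$ by adjoining:
\begin{itemize}
 \item[(i)] all shifts $\Sg^{\pm 1}X$ for $X\in\CU_n$;
 \item[(ii)] a choice of cone $C_f$ for every morphism $f\:X\to Y$ between objects of $\CU_n$;
 \item[(iii)] all retracts (direct summands) of objects of $\CU_n$.
\end{itemize}
Finally let $\CU_\infty=\bigcup_{n\geq 0}\CU_n$, and check that $\CU_\infty=\CU$.

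The key step is the inductive claim that if $\CU_n$ is essentially small, so is $\CU_{n+1}$. For~(i), shifts define functors, hence send a set of iso classes to a set. For~(ii), by local smallness of $\CT$ (implicit in our setting) each $\mathrm{Hom}_\CT(X,Y)$ is a set, so between set-many iso classes of pairs $(X,Y)$ we have set-many morphisms, and each cone is determined up to (non-unique) isomorphism by the morphism, giving a set of iso classes. For~(iii), retracts of a fixed $Y\in\CU_n$ correspond to splittings of idempotents $e\in\End(Y)$, and since $\End(Y)$ is a set, so is the collection of such retracts up to isomorphism. A countable union of essentially small subcategories is essentially small, so $\CU_\infty$ is essentially small.

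It remains to verify that $\CU_\infty$ is the smallest thick subcategory containing $\CG$. Every finite diagram defining one of the closure operations lies in some single $\CU_n$, so the operation lands in $\CU_{n+1}\sse\CU_\infty$; thus $\CU_\infty$ is closed under shifts, cones, and retracts, and contains $\CG$. (Direct sums, needed for a full triangulated subcategory, arise as cones of zero morphisms.) Conversely, any thick subcategory containing $\CG$ contains each $\CU_n$ by induction, hence contains $\CU_\infty$. Therefore $\CU_\infty=\CU$, proving the lemma.

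The main delicacy is set-theoretic bookkeeping in step~(iii): one must use that in any locally small category the idempotents on a fixed object form a set, so retracts of that object give only a set of new isomorphism classes. Once this is granted, the transfinite recursion collapses to an $\omega$-step iteration because each closure operation has finite input arity.
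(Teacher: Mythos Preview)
Your proof is correct and follows essentially the same strategy as the paper: build $\CU$ as a countable union $\bigcup_n\CU_n$ where each stage adds shifts, cofibres of morphisms, and splittings of idempotents, checking that essential smallness is preserved at each step. The only cosmetic differences are that the paper makes explicit choices (one cofibre per morphism, one splitting per idempotent) to obtain a literally small $\CU_\infty$, whereas you close under isomorphism at each stage and track isomorphism classes; and the paper adjoins all $\Sg^k$ at once rather than just $\Sg^{\pm1}$.
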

\begin{proof}
 Define full subcategories $\CU_n$ as follows.  Start with
 $\CU_0=\CG\cup\{0\}$.  Let $\CU_{n+1}$ consist of
 $\bigcup_{k\in\Z}\Sg^k\CU_n$, together with a choice of cofibre for
 every morphism in $\CU_n$, and a choice of splitting for every
 idempotent morphism in $\CU_n$.  Put $\CU_\infty=\bigcup_n\CU_n$.  We
 then find that $\CU_\infty$ has only a set of objects, and contains a
 representative of every isomorphism class in $\CU$.
\end{proof}

\begin{proposition}\lbl{prop-preserves-small}
 $\CC_c(P)$ is a thick subderivator of $\CC$ (and so is the same as
 $\gm\CC_c(e)$). 
\end{proposition}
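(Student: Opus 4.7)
The strategy is to apply Lemma~\ref{lem-gm-left}. Since each $\CC_c(P)$ is a thick subcategory of $\CC(P)$ by the observation in Definition~\ref{defn-compact}, it suffices to check that for every map $u\:Q\to P$ of finite posets, both $u^*$ and $u_!$ send compact objects to compact objects.

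The key is a standard adjunction argument. If $F\:\CT\to\CT'$ is an exact functor between triangulated categories with coproducts and $F$ has a right adjoint $G$, then $F$ sends compact objects to compact objects provided $G$ preserves coproducts; this follows directly by chasing the isomorphism $[F(X),\bigoplus_\al Y_\al]\simeq[X,G(\bigoplus_\al Y_\al)]$. I would apply this twice. The right adjoint of $u^*$ is $u_*$, and the right adjoint of $u_!$ is $u^*$. Proposition~\ref{prop-preserve-products} tells us that all three of $u^*$, $u_!$, $u_*$ preserve arbitrary coproducts, so both $u^*$ and $u_!$ preserve compactness.

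With this verified, Lemma~\ref{lem-gm-left} applies and yields $\CC_c = \gm\CC_c(e)$, so in particular $\CC_c$ is a thick subderivator (and the $u_*$ functors automatically preserve $\CC_c$ as well, even though that was not part of the input hypothesis).

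There is no real obstacle here: the combinatorial work needed to make $u_*$ commute with coproducts was already carried out in Proposition~\ref{prop-preserve-products} via the tower from Proposition~\ref{prop-cosimp-replacement}, and the reformulation of thick subderivators in Lemma~\ref{lem-gm-left} was tailored precisely so that one only needs to check the left-adjoint-type operations $u^*$ and $u_!$.
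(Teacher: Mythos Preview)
Your proof is correct and follows essentially the same route as the paper: both invoke the standard adjunction argument (a left adjoint preserves compactness when its right adjoint preserves coproducts), apply it to $u_!$ and $u^*$ using Proposition~\ref{prop-preserve-products}, and then conclude via Lemma~\ref{lem-gm-left}. The only difference is cosmetic---the paper opens by naming $\CE=\gm\CC_c(e)$ before running the same argument.
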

\begin{proof}
 Put $\CE=\gm\CC_c(e)$, which is a thick subderivator; it will suffice
 to show that this is the same as $\CC_c$.
 
 If $F$ is left adjoint to $G$ and $G$ preserves coproducts then for
 small $X$ we have
 \[ [FX,\bigoplus_\al Y_\al] =
    [X,G\bigoplus_\al Y_\al] =
    [X,\bigoplus_\al GY_\al] =
    \bigoplus_\al[X,GY_\al] =
    \bigoplus_\al [FX,Y_\al],
 \]
 so $FX$ is small.  Using this, we see that $u_!$ and $u^*$ preserve
 $\CC_c$.  The claim now follows from Lemma~\ref{lem-gm-left}.
\end{proof}

\begin{corollary}\lbl{cor-comp-gen}
 If $\CC(e)$ is compactly generated, then $\CC(P)$ is compactly
 generated for all $P$.
\end{corollary}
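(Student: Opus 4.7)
The plan is to verify both conditions of Definition~\ref{defn-compact} for $\CC(P)$, leaning on Proposition~\ref{prop-preserves-small} which identifies $\CC_c(P)$ with $(\gm\CC_c(e))(P)$. The two points to establish are~(a) essential smallness of $\CC_c(P)$ and~(b) that every thick subcategory of $\CC(P)$ which is closed under arbitrary coproducts and contains $\CC_c(P)$ equals $\CC(P)$.

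For~(a), I would use the description $(\gm\CC_c(e))(P) = (\gm_1\CC_c(e))(P)$ from Lemma~\ref{lem-gm-eq}, namely the smallest thick subcategory of $\CC(P)$ containing $(i_p)_!(\CC_c(e))$ for all $p\in P$. By hypothesis $\CC_c(e)$ is essentially small, so we may pick a set of representatives $\CG_0$; since $P$ is finite, the family $\{(i_p)_!(X)\st p\in P,\; X\in\CG_0\}$ is a set. Lemma~\ref{lem-ess-small} then says that the thick subcategory generated by this set is essentially small.

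For~(b), let $\CD\sse\CC(P)$ be such a thick subcategory, closed under arbitrary coproducts and containing $\CC_c(P)$. I would define
\[ \CD_e = \{X\in\CC(e)\st (i_p)_!(X)\in\CD \text{ for all } p\in P\}. \]
Each $(i_p)_!$ is exact and, being a left adjoint, preserves coproducts, so $\CD_e$ inherits from $\CD$ the structure of a thick subcategory of $\CC(e)$ closed under arbitrary coproducts. By Proposition~\ref{prop-preserves-small}, $(i_p)_!(\CC_c(e))\sse\CC_c(P)\sse\CD$, hence $\CC_c(e)\sse\CD_e$. The compact generation of $\CC(e)$ then forces $\CD_e=\CC(e)$, i.e.\ $(i_p)_!(Y)\in\CD$ for every $Y\in\CC(e)$ and every $p\in P$.

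To conclude, apply Lemma~\ref{lem-gm-eq} with $\CE_1=\CC(e)$: the smallest thick subcategory of $\CC(P)$ generated by $\bigcup_p(i_p)_!(\CC(e))$ is $(\gm_1\CC(e))(P)=(\gm_0\CC(e))(P)=\CC(P)$. Since $\CD$ is thick and contains all these generators, $\CD=\CC(P)$. The conceptual hinge of the argument — and the only real obstacle — is recognising that the machinery of Section~\ref{sec-thick}, especially Lemma~\ref{lem-gm-eq}, reduces the compact generation of $\CC(P)$ to that of $\CC(e)$ by letting us transfer both generators and thick closure along the single-point inclusions $i_p\:e\to P$.
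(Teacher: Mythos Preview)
Your proof is correct and follows essentially the same approach as the paper's: both use the $\gm_1$ description of $\CC_c(P)$ together with Lemma~\ref{lem-ess-small} for essential smallness, and for generation both pull a localising subcategory of $\CC(P)$ back to $\CC(e)$ via $X\mapsto\{(i_p)_!(X)\in\CD\text{ for all }p\}$, invoke compact generation of $\CC(e)$, and then push forward using $\gm_1\CC(e)=\CC$. The only cosmetic difference is that the paper cites Theorem~\ref{thm-thick-subcats} where you cite Lemma~\ref{lem-gm-eq}, but the content is the same.
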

\begin{proof}
 First, we can choose a small skeleton $\CG$ for $\CC_c(e)$, and then
 put
 \[ \CG(P)=\{(i_p)_!(X)\st p\in P,\;X\in\CG\}\sse\CC_c(P). \]
 From the description $\CC_c(P)=(\gm_1\CC_c(e))(P)$ we see that
 $\CC_c(P)$ is generated by $\CG(P)$, and so is essentially small by  
 Lemma~\ref{lem-ess-small}.
 
 Now let $\CT$ be a localising subcategory of $\CC(P)$ that contains
 $\CC_c(P)$.  Put
 \[ \CU =
     \{X\in\CC(e)\st
        (i_p)_!(X) \in \CT \text{ for all } p\in P\}.
 \]
 This is easily seen to be a localising subcategory of $\CC(e)$
 containing $\CC_c(e)$, so $\CU=\CC(e)$.  From
 Theorem~\ref{thm-thick-subcats} it follows that $\gm\CU=\CC$, so in
 particular $\CC(P)=(\gm_1\CU)(P)$.  However, from the definition of
 $\CU$ it is clear that $(\gm_1\CU)(P)\sse\CT$, so $\CT=\CC(P)$ as
 required.  
\end{proof}

\begin{definition}\lbl{defn-comp-gen}
 We say that $\CC$ is \emph{compactly generated} if it satisfies the
 equivalent conditions of Corollary~\ref{cor-comp-gen}.
\end{definition}

\begin{definition}\lbl{defn-perp}
 Let $\CU$ be a thick subcategory of a triangulated category
 $\CT$.  We then write
 \begin{align*}
  \CU^\perp   &= \{Y\in\CT\st [U,Y]=0 \text{ for all } U\in\CU\} \\
  {}^\perp\CU &= \{X\in\CT\st [X,U]=0 \text{ for all } U\in\CU\}.
 \end{align*}
 Similarly, if $\CE$ is a thick subderivator of a stable derivator
 $\CC$, we put $\CE^\perp(P)=\CE(P)^\perp$ and
 $({}^\perp\CE)(P)={}^\perp(\CE(P))$. 
\end{definition}

\begin{proposition}\lbl{prop-perp}
 $\CE^\perp$ and ${}^\perp\CE$ are thick subderivators, so
 $\CE^\perp=\gm(\CE(e)^\perp)$ and ${}^\perp\CE=\gm({}^\perp\CE(e))$. 
\end{proposition}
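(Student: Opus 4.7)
The plan is to reduce to Lemma~\ref{lem-gm-left} (for ${}^\perp\CE$) and to its obvious dual (for $\CE^\perp$). In each case I need two ingredients: thickness of the candidate subcategory at every poset, and preservation by the two relevant functors among $u^*, u_!, u_*$; the lemma then delivers both that we have a thick subderivator and that it agrees with $\gm$ applied to its value at $e$.

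First I would handle thickness, which is immediate: each of $\CE^\perp(P)$ and $({}^\perp\CE)(P)$ is the intersection, over $U\in\CE(P)$, of the kernels of the cohomological functors $[U,-]$ (respectively $[-,U]$), so it is closed under (de)suspension, cofibres, and retracts.

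Next I would check the preservation claims by pure adjunction. For ${}^\perp\CE$ the identities $[u^*X,Y]=[X,u_*Y]$ and $[u_!X,Y]=[X,u^*Y]$, combined with the hypothesis that $u_*$ and $u^*$ preserve $\CE$, show that $u^*$ and $u_!$ preserve ${}^\perp\CE$; Lemma~\ref{lem-gm-left} then yields ${}^\perp\CE=\gm({}^\perp\CE(e))$. Dually, the identities $[X,u^*Y]=[u_!X,Y]$ and $[X,u_*Y]=[u^*X,Y]$ show that $u^*$ and $u_*$ preserve $\CE^\perp$. To conclude in the perp-on-the-right case I would invoke the dual of Lemma~\ref{lem-gm-left}, whose proof is verbatim the same with $\gm_2\CE(e)$ in place of $\gm_1\CE(e)$ and $(i_p)_*$ in place of $(i_p)_!$; this gives $\CE^\perp=\gm(\CE(e)^\perp)$.

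There is essentially no obstacle here; the only thing that requires care is matching each of the four preservation statements to the correct adjunction and to the correct existing preservation property of $\CE$.
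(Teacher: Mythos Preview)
Your proposal is correct and follows essentially the same approach as the paper: use the adjunctions $u_!\dashv u^*\dashv u_*$ together with the fact that $\CE$ is preserved by all three to see that $u^*$ and $u_!$ preserve ${}^\perp\CE$, then invoke Lemma~\ref{lem-gm-left}, and dualise for $\CE^\perp$. Your explicit check that each $({}^\perp\CE)(P)$ and $\CE^\perp(P)$ is a thick subcategory is a point the paper leaves implicit but which is indeed a hypothesis of Lemma~\ref{lem-gm-left}.
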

\begin{proof}
 Suppose that $X\in({}^\perp\CE)(P)$ and $u\:P\to Q$.  For
 $V\in\CE(Q)$ we have $u^*(V)\in\CE(P)$ and so
 $[u_!(X),V]=[X,u^*(V)]=0$.  From this we see that $u_!$ preserves
 ${}^\perp\CE$.  As $u^*$ is left adjoint to $u_*$ and $u_*$ preserves
 $\CE$, we see in the same way that $u^*$ preserves ${}^\perp\CE$.  It
 therefore follows from Lemma~\ref{lem-gm-left} that ${}^\perp\CE$ is
 a thick subderivator.  A dual argument shows that $\CE^\perp$ is also
 a thick subderivator. 
\end{proof}

\section{Anafunctors for derivators}
\label{sec-ana}

Suppose we have morphisms of derivators
\[ \CC\xla{F}\CX\xra{G}\CD \] in which $F$ is an equivalence (which
means that $F\:\CC(P)\to\CX(P)$ is full, faithful and essentially
surjective for all $P$).  We could choose an inverse for $F$ and thus
obtain a morphism $GF^{-1}\:\CC\to\CD$.  However, we prefer not to
make arbitrary choices, so we will instead treat $GF^{-1}$ as a formal
fraction, or \emph{anafunctor}.  The bicategory $\DER'$ of derivators
and anafunctors is thus obtained from the bicategory $\DER$ of
derivators by inverting the equivalences.  A formal framework for
bicategories of fractions is given
in~\cite{pronk:bicategoriesfractions}*{Section 2}, and extended
in~\cite{vitale:bipullbacksandfractions}.  To apply this framework to
derivators, we need the following result:

\begin{lemma}\lbl{lem-two-pullbacks}
 The $2$-category $\DER$ admits $2$-pullbacks along equivalences.
\end{lemma}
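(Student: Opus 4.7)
The plan is to build the $2$-pullback explicitly and levelwise as an iso-comma construction, and then argue that the resulting prederivator automatically inherits the derivator structure because it is levelwise equivalent to one. Suppose we are given a cospan of derivators $\CC\xra{H}\CE\xla{F}\CX$ with $F$ an equivalence. Define a prederivator $\CY$ by letting $\CY(P)$ be the iso-comma category of $H_P$ and $F_P$: objects are triples $(c,x,\phi)$ with $c\in\CC(P)$, $x\in\CX(P)$ and $\phi\:H(c)\xra{\sim} F(x)$ an isomorphism in $\CE(P)$; morphisms are pairs of morphisms on the two coordinates making the obvious square commute. For $u\:P\to Q$ define $u^*_\CY$ coordinatewise, using $u^*_\CC$, $u^*_\CX$ and $u^*_\CE$; this is strictly $2$-functorial in $u$ because $H$ and $F$ are strict $2$-natural transformations. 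The projections $\pi_\CC\:\CY\to\CC$ and $\pi_\CX\:\CY\to\CX$, together with the tautological invertible $2$-cell $H\pi_\CC\simeq F\pi_\CX$, then assemble into a square that is a strict $2$-pullback of prederivators, by the universal property of iso-comma objects in $\CAT$ applied pointwise at every $P$.

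The next step is to promote $\CY$ from a prederivator to a derivator. Here we use that the iso-comma projection along an equivalence is itself an equivalence: since $F_P$ is fully faithful and essentially surjective for every $P$, so is $\pi_\CC(P)\:\CY(P)\to\CC(P)$. Thus $\pi_\CC$ is a levelwise equivalence of prederivators. The axioms Der1 and Der2 are invariant under such equivalences and so transfer from $\CC$ to $\CY$ immediately. For Der3 we transport adjoints: having chosen quasi-inverses $\pi_\CC^{-1}(P)$, the composites $\pi_\CC^{-1}\,u_!^\CC\,\pi_\CC$ and $\pi_\CC^{-1}\,u_*^\CC\,\pi_\CC$ furnish left and right adjoints to $u^*_\CY$. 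For Der4 the Kan-formula isomorphisms are simply pulled back along these equivalences. The same argument shows that if $\CC$ is stable, pointed, or strong, then so is $\CY$, though these properties are not needed here.

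Finally, the $2$-universal property of $\CY$ in $\DER$ follows from the corresponding universal property of iso-comma in $\CAT$ together with strict $2$-naturality: given any competing cone $(\CZ,G_\CC,G_\CX,\zt)$, the iso-comma property at each $P$ produces a comparison functor $\CZ(P)\to\CY(P)$, essentially unique, and the compatibility of these functors with $u^*$ for each $u\:P\to Q$ is forced by the strict $2$-naturality of $G_\CC$, $G_\CX$ and $\zt$. The main point where care is required is the transport of Der4 through the equivalence $\pi_\CC$; this is routine but involves checking that the Beck--Chevalley comparison map for $\CY$ agrees, under $\pi_\CC$, with the one for $\CC$, and this is where most of the bookkeeping would go in a complete write-up.
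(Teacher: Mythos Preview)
Your proposal is correct and follows essentially the same approach as the paper: build the iso-comma prederivator levelwise, observe that the projection to the leg opposite the equivalence is itself a levelwise equivalence, and conclude that the derivator axioms transfer. The paper is terser at the final step (it simply asserts ``As $\CD$ is a derivator, it follows that $\CP$ is a derivator''), whereas you spell out the transport of Der1--Der4 along the equivalence; your additional remarks about stability and strongness, and about the Beck--Chevalley bookkeeping, are accurate but not needed for the lemma as stated.
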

\begin{proof}
 Consider a span $\CD\xra{F}\CF\xla{G}\CE$, in which $G$ is an
 equivalence of derivators.  We will provide an explicit model for the
 $2$-pullback.  For each finite poset $P$ we have a span of categories
 $\CD(P)\xra{F_P}\CF\xla{G_P}\CE(P)$, in which $G_P$ is an
 equivalence.  We define $\CP(P)$ to be the usual $2$-pullback of this
 span.  Explicitly, the objects are triples
 $(X,Y,\alpha)$ where $X\in\CD(P)$, $Y\in\CE(P)$ and
 $\alpha\:F_P(X)\to G_P(Y)$ is an isomorphism in $\CF(P)$.  A
 morphism $(f,g)\:(X,Y,\alpha)\to (X',Y',\al')$ consists of a
 pair of morphisms $f\:X\to X'$ and $g\:Y\to Y'$ respectively in
 $\CD(P)$ and $\CE(P)$ such that the diagram
 \begin{center}
  \begin{tikzcd}
   F_P X  \arrow[d, "\alpha" left] \arrow[r, "F_A f"] &
   F_P X' \arrow[d, "\alpha'" right]  \\
   G_P Y  \arrow[r, "G_P g" above] &
   G_P Y'
  \end{tikzcd}
 \end{center}
 commutes.  Now suppose we have a monotone map $u\:P\to Q$.  We define
 $u^*\:\CP(Q)\to\CP(P)$ as follows.  On objects we set
 $u^*(X,Y,\alpha)=(u^*X,u^*Y,\alpha_{u^*})$ where the isomorphism
 $\alpha_{u^*}$ is the unique one making the following square commute
 \begin{center}
  \begin{tikzcd}
   F_P u^*X \arrow[r, "\alpha_{u^*}"] &
   G_P u^* Y   \\
   u^*F_Q X \arrow[u, "\gamma^F_u" left] \arrow[r, "u^*\alpha" above] &
   u^*G_Q Y \arrow[u, "\gamma^G_u" right].
  \end{tikzcd}
 \end{center}
 On morphisms we set $u^*(f,g)=(u^*f, u^*g)$.  It is immediate that
 this is strictly functorial in $u$, so we have defined a
 prederivator.  There are projections $\pi_1\:\CP\to\CD$ and
 $\pi_2\:\CP\to\CE$, and we can define an invertible modification
 $\varphi\:F\pi_1\to G\pi_2$ by $\varphi_{(X,Y,\al)}=\al$.  We thus
 have a diagram as follows:
 \begin{center}
  \begin{tikzcd}
   \CP \arrow[r, "P_2"] \arrow[d, "P_1" left] &
   \CE \arrow[d, "G" right] \arrow[dl, Leftarrow, "\varphi" above left]  \\
   \CD \arrow[r, "F" below] &
   \CF
  \end{tikzcd}
 \end{center}
 That this square provides a model for the $2$-pullback of
 prederivators is a bothersome computation that we leave to the
 reader. 

 From the fact that $G$ is an equivalence, it follows in a standard
 way that $P_1$ is an equivalence.  As $\CD$ is a derivator, it
 follows that $\CP$ is a derivator.  Thus, we have a $2$-pullback in
 the bicategory of derivators.
\end{proof}

We can now argue as in
\cite{vitale:bipullbacksandfractions}*{Prop. 2.8} to justify the
existence of the right calculus of fractions with respect to the class
of equivalences in $\DER$. The preferred squares we use for the
composition of fractions are the explicit $2$-pullbacks constructed
above.

\begin{remark}\lbl{rem-ana}
 We will need two key features of the resulting bicategory, as
 follows.  Firstly, suppose we have a diagram of derivators as follows,
 which commutes on the nose:
 \begin{center}
  \begin{tikzcd}[sep=large]
   \CC &
   \CX
    \arrow[l,"F"',"\simeq"]
    \arrow[dl,"J"']
    \arrow[d,"G"] \\
   \CY
    \arrow[u,"H","\simeq"']
    \arrow[r,"K"'] &
   \CD
  \end{tikzcd}
 \end{center}
 Here it is assumed that $F$ and $H$ are equivalences, and it follows
 that $J$ is also an equivalence.  The diagram then gives rise to an
 isomorphism between the anafunctors $GF^{-1}$ and $KH^{-1}$.

 Next, suppose we fix an equivalence $F\:\CX\to\CC$, and consider
 various different functors $G_i\:\CX\to\CD$.  Then any natural
 transformation $\al\:G_0\to G_1$ gives rise to a morphism
 $G_0F^{-1}\to G_1F^{-1}$ of anafunctors, and this is functorial in
 $\al$.  
\end{remark}

\section{The anafunctors \texorpdfstring{$\phi_A$}{phi A}}
\label{sec-phi}

We now explain our preferred framework for Bousfield localisation in
the context of derivators.  This will rely on facts about Bousfield
localisation in compactly generated triangulated categories.  For the
homotopy category of spectra, all statements are well-known with very
classical proofs that rely on having an underlying geometric category
of spectra.  There are also proofs of similar results in more
axiomatic frameworks, relying only on the theory of triangulated
categories.  These are typically formulated in the context of
well-generated categories as defined by Neeman~\cite{ne:tc}, and the
proofs are somewhat complex.  It is well-known to experts that
everything becomes much simpler, and much closer to the original
results for the category of spectra, if we restrict attention to
compactly generated categories.  However, it seems surprisingly hard
to find an full account of this in the literature. We have therefore
provided one in Appendix~\ref{apx-comp-gen}.

\begin{definition}\lbl{defn-deriv-loc}
 Let $\CC$ be a compactly generated stable derivator, and let
 $K\:\CC(e)\to\Ab$ be a homology theory.  As usual, we extend this to
 a graded theory $K_*\:\CC(e)\to\Ab_*$ by $K_n(X)=K(\Sg^{-n}X)$.  For
 $X\in\CC(P)$ define $K^P(X)=\bigoplus_{p\in P}K(X_p)$, and note that
 this is again a homology theory.  Using
 Theorem~\ref{thm-thick-subcats} we see that the subcategories
 $\ker(K_*^P)\sse\CC(P)$ form a localising subderivator of $\CC$,
 which we will just call $\ker(K_*)$.

 Now suppose we have an object $X\in\CC([1]\tm P)$.  This gives a
 morphism $u\:X_0\to X_1$ in $\CC(P)$ in the usual way.  We say that
 $X$ is a \emph{localisation object} if $\fib(u)\in\ker(K^P_*)$ and
 $X_1\in\ker(K^P_*)^\perp$.  We write $\CL(P)$ for the subcategory of
 localisation objects in $\CC([1]\tm P)$.  This is clearly a
 subprederivator of $\CC^{[1]}$, and we have a morphism
 $i_0^*\:\CL\to\CC$ of prederivators. 
\end{definition}

\begin{proposition}\lbl{prop-deriv-loc}
 $\CL$ is a thick subderivator of $\CC^{[1]}$, and $i_0^*\:\CL\to\CC$
 is an equivalence of derivators.
\end{proposition}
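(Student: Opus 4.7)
The plan is to first establish that $i_0^*\:\CL(P)\to\CC(P)$ is an equivalence of categories for each finite poset $P$, and then verify that $\CL$ is closed under $u^*$, $u_!$, $u_*$ on the shifted derivator $\CC^{[1]}$ for every $u\:P\to Q$. By Corollary~\ref{cor-comp-gen} every $\CC(P)$ is compactly generated, so the appendix yields a Bousfield localisation $L^P$ at $\ker(K^P_*)$ with unit $\eta^P\:1\to L^P$; by construction, $\CL(P)$ is precisely the full subcategory of $\CC([1]\tm P)$ on objects whose associated arrow in $\CC(P)^{[1]}$ is isomorphic to some $\eta^P(Z)$.

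Essential surjectivity of $i_0^*$ will follow from strongness (Definition~\ref{defn-stable-derivator}(a)), which lifts $\eta^P(Z)\:Z\to L^P(Z)$ to an honest object of $\CC([1]\tm P)$. For fully faithfulness I would exploit the sieve/cosieve decomposition of $[1]\tm P$ along $i_0$ (a sieve) and $i_1$ (a cosieve): Proposition~\ref{prop-recollement}(b) supplies a distinguished triangle $(i_1)_!Y_1\to Y\to (i_0)_*Y_0$, and applying $[X,-]$ together with the adjunctions of Propositions~\ref{prop-sieve} and~\ref{prop-cosieve} yields an exact sequence
\[ [(i_1)^?X,Y_1]\to [X,Y]\to [X_0,Y_0]\to [(i_1)^?X,\Sg Y_1]. \]
The complementary triangle $(i_0)_!X_0\to X\to (i_1)_!(i_1)^?X$ from the same proposition lets me identify $(i_1)^?X\simeq \cof(X_0\to X_1)\simeq \Sg\fib(X_0\to X_1)$, which lies in $\ker(K^P_*)$ by the localisation-object hypothesis on $X$. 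Since $Y_1\in\ker(K^P_*)^\perp$ the outer terms vanish, so $i_0^*$ is a bijection on Hom sets.

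For the subderivator structure the key input is that both $\ker(K_*)$ and $\ker(K_*)^\perp$ are subderivators of $\CC$ by Propositions~\ref{prop-loc-loc} and~\ref{prop-perp}, hence preserved by every $u^*$, $u_!$, $u_*$. The shifted-derivator versions act via $1_{[1]}\tm u$; they are exact by Theorem~\ref{thm-triangulation} and therefore preserve fibres, and Beck--Chevalley for the obvious product squares identifies $i_k^*$ of their outputs with the corresponding $\CC$-level functors applied to $X_k$ for $k\in\{0,1\}$. Combining these facts, both defining properties of a localisation object are preserved, and closure of $\CL(P)$ under retracts and finite coproducts is inherited similarly from the closure properties of $\ker(K^P_*)$ and $\ker(K^P_*)^\perp$. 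Thus $\CL$ is a thick subderivator of $\CC^{[1]}$, and since $i_0^*\:\CL\to\CC$ is a pointwise equivalence compatible with restrictions it is an equivalence of derivators. The hard part will be the fully-faithfulness calculation, and in particular the identification of $(i_1)^?X$ with $\cof(X_0\to X_1)$, since it requires careful manipulation of the sextuple of adjoints from Proposition~\ref{prop-recollement}.
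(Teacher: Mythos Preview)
Your proposal is correct and follows essentially the same route as the paper: both arguments deduce the thick-subderivator property from the fact that $\ker(K_*)$ and $\ker(K_*)^\perp$ are thick subderivators (Theorem~\ref{thm-thick-subcats} and Proposition~\ref{prop-perp}), use strongness for essential surjectivity, and obtain full faithfulness from the recollement triangle $(i_0)_!X_0\to X\to (i_1)_!(i_1)^?X$ together with the identification $(i_1)^?X\simeq\cof(X_0\to X_1)$. The only cosmetic difference is the order of presentation; your worry that identifying $(i_1)^?X$ is the hard step is unfounded, since applying $i_1^*$ to that triangle and using $i_1^*(i_0)_!\simeq 1$ and $i_1^*(i_1)_!\simeq 1$ gives it immediately.
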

\begin{proof}
 We know from Theorem~\ref{thm-thick-subcats} and
 Proposition~\ref{prop-perp} that $\ker(K_*)$ and $\ker(K_*)^\perp$ are
 thick subderivators.  Together with the results that we recalled in
 Theorem~\ref{thm-triangulation}, this implies that $\CL$ is a thick
 subderivator of $\CC^{[1]}$.  Now consider the functor
 $i_0^*\:\CL(P)\to\CC(P)$.  Given $X\in\CC(P)$, we can find a
 distinguished triangle $CX\to X\to LX$ with $CX\in\ker(K^P_*)$ and
 $LX\in\ker(K^P_*)^\perp$, by Theorem~\ref{thm-loc} and
 Proposition~\ref{prop-homology-weakly-initial}.  By the strongness
 condition (Definition~\ref{defn-stable-derivator}(a)), we can find
 $Y\in\CC([1]\tm P)$ such that the resulting morphism $Y_0\to Y_1$ is
 isomorphic to $X\to LX$.  This proves that $i_0^*$ is essentially
 surjective.  Now suppose we have $X,Y\in\CL(P)$.
 Proposition~\ref{prop-recollement} gives us a distinguished triangle
 $i_{1!}i_1^*(Y)\to Y\to i_{0*}i_0^*(Y)$.  By applying $[X,-]$ to this
 and using various adjunctions we obtain an exact sequence
 \[ [i_1^?X,i_1^*Y] \to [X,Y] \xra{i_0^*} [i_0^*X,i_0^*Y] \to 
    [i_1^?X,i_1^*\Sg Y].
 \]
 Kere $i_1^?X$ is just the cofibre of $i_0^*X\to i_1^*X$, which lies
 in $\ker(K^P_*)$ because $X\in\CL(P)$.  Also, because $Y\in\CL(P)$ we
 have $i_1^*Y\in\ker(K^P_*)^\perp$, so
 $[i_1^?X,i_1^*Y]=[i_1^?X,i_1^*\Sg Y]=0$.  It follows that the map
 $i_0^*\:[X,Y]\to[i_0^*X,i_0^*Y]$ is an isomorphism as required.
\end{proof}

We will sometimes need a slightly more general statement.
\begin{lemma}\lbl{lem-deriv-loc-alt}
 Suppose that $X,Y\in\CC^{[1]}(P)$, giving maps $u\:X_0\to X_1$ and
 $v\:Y_0\to Y_1$ in $\CC(P)$.  Suppose that $\fib(u)\in\ker(K^P_*)$ and
 $Y_1\in\ker(K^P_*)^\perp$.  Then the map
 \[ i_0^* \: \CC([1]\tm P)(X,Y) \to \CC(P)(X_0,Y_0) \]
 is bijective.
\end{lemma}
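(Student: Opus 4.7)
The plan is to adapt the argument in Proposition~\ref{prop-deriv-loc}, observing that only the weaker hypotheses stated here are actually used. Working in the shifted derivator $\CC^P$ (so that $\CC^P([1])=\CC([1]\tm P)$), I would let $i_0,i_1\:e\to[1]$ be the sieve and cosieve inclusions, and apply Proposition~\ref{prop-recollement}(b) to get the natural distinguished triangle
\[ i_{1!}i_1^*Y \to Y \to i_{0*}i_0^*Y. \]
Applying $[X,-]$ then yields a four-term exact sequence, and the adjunctions from Propositions~\ref{prop-sieve}(a) and~\ref{prop-cosieve}(a) rewrite it as
\[ [i_1^?X, Y_1] \to [X,Y] \xra{i_0^*} [X_0, Y_0] \to [i_1^?X, \Sg Y_1]. \]
It therefore suffices to show that the two outer terms vanish.

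The key identification is $i_1^?X \simeq \cof(u)$. I would derive this from the third distinguished triangle in Proposition~\ref{prop-recollement}(b), namely $i_{0!}i_0^*X \to X \to i_{1!}i_1^?X$. A Kan-formula computation shows that $i_{0!}i_0^*X$ is the diagram $X_0\xra{=}X_0$ and that, for any $Z\in\CC(P)$, $i_{1!}(Z)$ is the diagram $0\to Z$; evaluating the triangle at the object $1\in[1]$ then produces a distinguished triangle $X_0\to X_1\to i_1^?X$, which identifies $i_1^?X$ with $\cof(u)$.

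With this in hand, the hypotheses finish the argument. Since $\ker(K^P_*)$ is a thick subderivator by Theorem~\ref{thm-thick-subcats} it is closed under suspension, so $\fib(u)\in\ker(K^P_*)$ gives $\cof(u)\simeq\Sg\fib(u)\in\ker(K^P_*)$. By Proposition~\ref{prop-perp}, $\ker(K^P_*)^\perp$ is also a thick subderivator, so $Y_1\in\ker(K^P_*)^\perp$ implies $\Sg Y_1\in\ker(K^P_*)^\perp$ too. Hence $[i_1^?X, Y_1]=[i_1^?X,\Sg Y_1]=0$ and $i_0^*$ is a bijection.

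The only point requiring care is the identification $i_1^?X\simeq\cof(u)$ together with transporting Proposition~\ref{prop-recollement} through the shift by $P$; neither is difficult, but both deserve explicit verification. Everything else is a direct reuse of the earlier proof.
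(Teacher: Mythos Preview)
Your proposal is correct and follows essentially the same route as the paper: the paper's proof of this lemma simply says that the weakened hypotheses are all that was used in the proof of Proposition~\ref{prop-deriv-loc}, and your argument is exactly that proof spelled out in more detail (in particular you justify the identification $i_1^?X\simeq\cof(u)$, which the paper's proof of Proposition~\ref{prop-deriv-loc} asserts without further comment).
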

\begin{proof}
 These weakened hypotheses are all that was used in the proof of
 Proposition~\ref{prop-deriv-loc} 
\end{proof}

\begin{definition}\lbl{defn-LK}
 We define $L_K$ to be the anafunctor $i_1^*(i_0^*)^{-1}$, and call
 this \emph{Bousfield localisation} with respect to $K$.
\end{definition}

We now return to a framework similar to that of
Section~\ref{sec-basic}: we assume that we have a compactly generated
stable derivator $\CC$ together with homology theories $K(i)$ on
$\CC(e)$ for $i\in N$, where $N$ is a finite, totally ordered set.  As
before, we define $\PP$ to be the poset of subsets of $N$ (ordered by
inclusion).  We also define $\QQ$ to be the poset of upper sets in
$\PP$ (ordered by reverse inclusion), and define $u\:\PP\to\QQ$ by
$uT=\{U\st T\sse U\}$.

\begin{definition}\lbl{defn-fully-loc}
 Consider a finite poset $R$ and an object $X\in\CC(\PP\tm R)$.  
 Suppose that $t\in N$ and $U\sse N$ with $t<u$ for all $u\in U$.  We
 then write $tU$ for $\{t\}\cup U$, so we have $U<tU$ in $\PP$, giving
 maps $f_{t,U,r}\:X_{U,r}\to X_{tU,r}$ in $\CC(e)$.  We say that $X$ is
 \emph{$(t,U)$-localising} if $f_{t,U}$ is a $K(t)$-localisation.
 Equivalently, $X_{tU,r}$ should be $K(t)$-local, and the fibre of
 $f_{t,U,r}$ should be $K(t)$-acyclic.  We also say that $X$ is
 \emph{fully localising} if it is $(t,U)$-localising for all $t$ and
 $U$.  We write $\CP(R)$ for the full subcategory of fully localising
 objects in $\CC(\PP\tm R)$.  There is an evident inclusion
 $i_\emptyset\:R\to\PP\tm R$, which gives a functor
 $i_\emptyset^*\:\CP(R)\to\CC(R)$.
\end{definition}

\begin{example}\lbl{eg-fract-cube}
 Consider the case $N=\{0\}$, and suppose that our derivator $\CC$
 arises from a stable model category $\CC_0$.  An object of $\CP(R)$
 is then a diagram $X\:[1]\tm R\to\CC_0$ such that the morphisms
 $X_{0r}\to X_{1r}$ are all localisations with respect to $K(0)$.
 Informally, we can therefore say that an object of $\CP$ is a diagram
 of type $(X\to L_{K(0)}X)$.  In the same sense, if $N=\{0,1\}$ then
 an object of $\CP$ is essentially a diagram of the following type:
 \begin{center}
  \begin{tikzcd}
   X \arrow[r] \arrow[d] & L_{K(0)}X \arrow[d] \\
   L_{K(1)}X \arrow[r] & L_{K(0)}L_{K(1)}X 
  \end{tikzcd}
  \hspace{3em}
  \begin{tikzcd}
   X \arrow[r] \arrow[d] & \phi_0X \arrow[d] \\
   \phi_1X \arrow[r] & \phi_{01}X 
  \end{tikzcd}
 \end{center}
 Here the right hand diagram is just alternate notation for the left
 hand one.  For $N=\{0,1,2\}$, the diagram is as follows:
 \begin{center}
  \begin{tikzpicture}[scale=0.60]
    \node (12) at (2,0) {$\phi_{12}X$};
    \node (012) at (6,0) {$\phi_{012}X$};
    \node (2) at (0,2) {$\phi_2X$};
    \node (02) at (4,2) {$\phi_{02}X$};
          \draw[->] (12) -- (012);
          \draw[->] (02) -- (012);
          \draw[->] (2) -- (12);
          \draw[->] (2) -- (02);
    \node (1) at (2,3) {$\phi_1X$};
    \node (01) at (6,3) {$\phi_{01}X$};
    \node (empty) at (0,5) {$X$};
    \node (0) at (4,5) {$\phi_0X$};
          \draw[->] (1) -- (01);
          \draw[->] (0) -- (01);
          \draw[->] (empty) -- (1);
          \draw[->] (empty) -- (0);
          \draw[->] (empty) -- (2);
          \draw[->] (0) -- (02);
          \draw[->] (1) -- (12);
          \draw[->] (01) -- (012);
  \end{tikzpicture}
 \end{center}       
\end{example}

\begin{proposition}\lbl{prop-fully-loc}
 $\CP$ is a thick subderivator of $\CC^{\PP}$, and
 $i_\emptyset^*\:\CP\to\CC$ is an equivalence of derivators.
\end{proposition}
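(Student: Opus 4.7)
The plan is to argue by induction on $n^*=|N|$, using Proposition~\ref{prop-deriv-loc} as both the base case and the key inductive tool via the decomposition $\PP\cong[1]\tm\PP'$, where $\PP'$ denotes the poset of subsets of $N':=N\sm\{0\}$ and $\CP'\sse\CC^{\PP'}$ is the analogous fully-localising subderivator for the $N'$-setup. In the base case $n^*=1$, we have $N=\{0\}$ and $\PP\cong[1]$; the only fully localising condition is that $X_\emptyset\to X_{\{0\}}$ be a $K(0)$-localisation, so $\CP=\CL$ and the statement is exactly Proposition~\ref{prop-deriv-loc}.

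For the inductive step, I would identify $\PP\cong[1]\tm\PP'$ by recording whether $0\in U$ and taking $U\cap N'$; under this identification $\CC^\PP=(\CC^{\PP'})^{[1]}$, and writing $j_0\:\PP'\to\PP$ for the level-$0$ inclusion (those subsets of $N$ not containing $0$), any $X\in\CC(\PP\tm R)$ has level-$0$ slice $X_0:=j_0^*(X)$ and corresponds to a morphism $X_0\to X_1$ in $\CC^{\PP'}(R)$. The next key step is to observe the clean splitting of the fully localising conditions: for $t\in N'$ (so $t\geq 1$), the constraint $t<\min U$ forces $0\notin U$, so these conditions only constrain $X_0$ and amount precisely to $X_0\in\CP'(R)$; for $t=0$, the conditions require each $X_U\to X_{\{0\}\cup U}$ with $U\in\PP'$ to be a $K(0)$-localisation, which is exactly membership of $X$ in the thick subderivator $\CL_0\sse\CC^\PP$ obtained by applying Proposition~\ref{prop-deriv-loc} to the derivator $\CC^{\PP'}$ (compactly generated by Corollary~\ref{cor-comp-gen}) with $K(0)$ extended pointwise to $\CC(\PP')$ as in Definition~\ref{defn-deriv-loc}.

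Hence $\CP(R)=\CL_0(R)\cap(j_0^*)^{-1}(\CP'(R))$. Thickness of $\CP$ as a subderivator of $\CC^\PP$ will then follow from Proposition~\ref{prop-deriv-loc} (for $\CL_0$) combined with Theorem~\ref{thm-thick-subcats}: the preimage $(j_0^*)^{-1}(\CP')$ coincides with $\gm\CE_1$, where $\CE_1=\{Z\in\CC(\PP)\st j_0^*Z\in\CP'(e)\}$ is a thick subcategory of $\CC(\PP)$. For the equivalence, Proposition~\ref{prop-deriv-loc} gives $j_0^*\:\CL_0\xra{\simeq}\CC^{\PP'}$; restricting to the preimage of $\CP'$ yields an equivalence $\CP\xra{\simeq}\CP'$, and composing with the inductive equivalence $i_\emptyset^{\PP'}\:\CP'\xra{\simeq}\CC$, using $i_\emptyset^\PP=i_\emptyset^{\PP'}\circ j_0$, gives $i_\emptyset^\PP\:\CP\xra{\simeq}\CC$ as required. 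The main subtlety I expect is the case analysis that yields the clean splitting of the conditions under $\PP\cong[1]\tm\PP'$; once that combinatorial decomposition is verified, everything else is formal manipulation of thick subderivators and equivalences via Proposition~\ref{prop-deriv-loc} and Theorem~\ref{thm-thick-subcats}.
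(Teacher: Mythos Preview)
Your proof is correct and follows essentially the same strategy as the paper: induct on $|N|$, peel off the smallest element to get $\PP\cong[1]\tm\PP_1$, and recognise the fully localising conditions as the conjunction of (i) the $K(n_0)$-localisation condition handled by Proposition~\ref{prop-deriv-loc} applied to $\CC^{\PP_1}$, and (ii) the inductive condition $i_0^*X\in\CP_1(R)$. Your combinatorial verification of the splitting is more explicit than the paper's, which simply asserts the identity $\CP(R)=\{X\in\CL(\PP_1\tm R)\st i_0^*(X)\in\CP_1(R)\}$; your route to thickness of the preimage $(j_0^*)^{-1}(\CP')$ via Theorem~\ref{thm-thick-subcats} is valid, though one can also argue directly (as the paper does in the analogous Proposition~\ref{prop-fracture-obj}) that morphisms of derivators such as $j_0^*$ preserve Kan extensions and hence pull back thick subderivators to thick subderivators.
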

\begin{proof}
 The claim is clear if $N=\emptyset$.  If $N\neq\emptyset$, we let
 $n_0\in N$ be the smallest element, so $N$ can be decomposed as
 $\{n_0\}\amalg N_1$ say.  This gives an obvious decomposition
 $\PP=[1]\tm\PP_1$.  We can define $\CP_1\sse\CC^{\PP_1}$ using $N_1$,
 and by induction we can assume that this is a thick subderivator with
 $i_\emptyset^*\:\CP_1\to\CC$ being an equivalence.
 
 Now define $\CL$ as in Definition~\ref{defn-deriv-loc}, with
 respect to the homology theory $K(n_0)$ for the derivator $\CP_1$.
 We find that 
 \[ \CP(R) = \{X\in \CL(\PP_1\tm R) \st i_0^*(X)\in \CP_1(R)\}, \]
 and the claim now follows from Proposition~\ref{prop-deriv-loc}
 together with the induction hypothesis.
\end{proof}

Again, we will sometimes need a slightly more general statement.
\begin{lemma}\lbl{lem-fully-loc-alt}
 Suppose that $X,Y\in\CC^{\PP}(R)$.  Suppose that for $t$, $U$ and $r$
 as before, the map $f_{t,U,r}\:X_{U}\to X_{tU}$ is a
 $K(t)$-equivalence, and the object $Y_{tU}$ is $K(t)$-local.  Then
 the map 
 \[ i_\emptyset^* \: \CC(\PP\tm R)(X,Y) \to
      \CC(R)(X_\emptyset,Y_\emptyset) \]
 is bijective.
\end{lemma}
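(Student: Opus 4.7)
The plan is to mirror the inductive structure used in the proof of Proposition~\ref{prop-fully-loc}, but to invoke Lemma~\ref{lem-deriv-loc-alt} in place of Proposition~\ref{prop-deriv-loc} at the base of each inductive step. I will induct on $|N|$. The case $N=\emptyset$ is trivial, since then $\PP$ is a single point and $i_\emptyset^*$ is the identity.

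For the inductive step, let $n_0=\min(N)$ and $N_1=N\sm\{n_0\}$, and write $\PP_1$ for the poset of subsets of $N_1$, so that $\PP\cong[1]\tm\PP_1$, with $0\in[1]$ corresponding to subsets of $N$ that omit $n_0$. Any $Z\in\CC(\PP\tm R)$ then gives rise to $Z_0,Z_1\in\CC(\PP_1\tm R)$ together with a morphism $Z_0\to Z_1$ whose components are $Z_V\to Z_{V\cup\{n_0\}}$ at $V\in\PP_1$. The inclusion $i_\emptyset\:R\to\PP\tm R$ factors as $R\xra{i_\emptyset^{\PP_1}}\PP_1\tm R\xra{i_0}[1]\tm\PP_1\tm R$, so $i_\emptyset^*=(i_\emptyset^{\PP_1})^*\circ i_0^*$, and it will suffice to show that each factor is bijective.

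For the first factor I will apply Lemma~\ref{lem-deriv-loc-alt} to the shifted derivator $\CC^{\PP_1}$, the poset $R$, and the homology theory $K(n_0)$. Via Theorem~\ref{thm-thick-subcats} and Proposition~\ref{prop-perp}, the two hypotheses needed reduce to the pointwise statements that $X_V\to X_{V\cup\{n_0\}}$ is a $K(n_0)$-equivalence and that $Y_{V\cup\{n_0\},r}$ is $K(n_0)$-local, for every $V\in\PP_1$ and $r\in R$. Both are exactly the assumed hypotheses of the lemma with $t=n_0$ and $U=V$, since $n_0<u$ for all $u\in V\sse N_1$. For the second factor I will apply the induction hypothesis with $N_1$ in place of $N$, to the objects $X_0,Y_0\in\CC(\PP_1\tm R)$: the required conditions for $t\in N_1$ and $U\sse N_1$ (with $t<u$ for all $u\in U$) are simply the restrictions of the original hypotheses, since $(X_0)_V=X_V$ and $(Y_0)_V=Y_V$ whenever $V\sse N_1$. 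The only real care needed is the bookkeeping associated with the identification $\PP\cong[1]\tm\PP_1$; once that is in hand, both factors of $i_\emptyset^*$ are bijective, and hence so is their composite.
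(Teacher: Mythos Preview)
Your proof is correct and follows essentially the same approach as the paper: induct on $|N|$ via the decomposition $\PP\cong[1]\tm\PP_1$, use Lemma~\ref{lem-deriv-loc-alt} for the $i_0^*$ factor, and use the induction hypothesis for the $i_\emptyset^{\PP_1}$ factor. Your version simply spells out more of the bookkeeping (in particular the appeal to Theorem~\ref{thm-thick-subcats} and Proposition~\ref{prop-perp} to reduce the hypotheses of Lemma~\ref{lem-deriv-loc-alt} to pointwise conditions) than the paper's terse two-sentence proof does.
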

\begin{proof}
 As above taking $n_0$ the minimum of $N$ we get a decomposition
 $\PP=[1]\times \PP_1$ and arguing by induction we can assume the map
 $\CC(\PP_1\tm R)(X_0,Y_0) \to \CC(R)(X_\emptyset,Y_\emptyset)$ is a
 bijection. Now we have just to compose this with
 $\CC(\PP\tm R)(X,Y) \to \CC(\PP_1 \times R)(X_0,Y_0)$ which is an
 isomorphism by Lemma~\ref{lem-deriv-loc-alt}.
\end{proof}

\begin{definition}\lbl{defn-phi-A}
 For $A\sse N$ we consider the diagram
 \[ \CC \xla{i_\emptyset^*} \CP \xrightarrow{i_A^*}\CC \]
 and define an anafunctor $\phi_A\:\CC\to\CC$ by
 $\phi_A=i_A^*\circ(i_\emptyset^*)^{-1}$.
\end{definition}

\begin{remark}\lbl{rem-phi-A}
 Suppose that $A=\{a_1,\dotsc,a_r\}$ with $a_1<\dotsb<a_r$.  It is
 then not hard to see from the definitions that in some sense we have 
 \[ \phi_A \simeq L_{K(a_1)} \dotsb L_{K(a_r)}, \]
 so that Definition~\ref{defn-phi-A} is a more precise version of
 Definition~\ref{defn-P} in the introduction.  A rigorous formulation
 with anafunctors will be given in Corollary~\ref{cor-phi-factor}.
\end{remark}

\begin{proposition}\lbl{prop-phi-single}
 The anafunctor $\phi_{\{a\}}$ is equivalent to $L_{K(a)}$.
\end{proposition}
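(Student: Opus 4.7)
The plan is to exhibit a morphism of subderivators $r\:\CP\to\CL$ (with $\CL$ defined relative to the single homology theory $K(a)$) through which both $\phi_{\{a\}}$ and $L_{K(a)}$ factor in a compatible way, and then invoke the first form of Remark~\ref{rem-ana} to obtain the isomorphism of anafunctors. Concretely, I would introduce the order-preserving inclusion $j\:[1]\hookrightarrow\PP$ with $0\mapsto\emptyset$ and $1\mapsto\{a\}$. Since $\CC^\PP$ and $\CC^{[1]}$ are the usual shifted derivators (Remark~\ref{rem-derivator-shift}), precomposition with $j$ yields a morphism of derivators $j^*\:\CC^{\PP}\to\CC^{[1]}$.

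The key verification is that $j^*$ carries $\CP$ into $\CL$. Given $X\in\CP(R)$, the object $j^*X\in\CC([1]\tm R)$ is nothing but the arrow $X_\emptyset\to X_{\{a\}}$. The fully localising condition of Definition~\ref{defn-fully-loc} applied to the pair $(t,U)=(a,\emptyset)$ says precisely that this arrow is a $K(a)$-localisation, i.e.\ its fibre lies in $\ker(K(a)^R_*)$ and $X_{\{a\}}\in\ker(K(a)^R_*)^\perp$. This is exactly the defining condition of $\CL(R)$ in Definition~\ref{defn-deriv-loc}, so $j^*$ restricts to a morphism of subprederivators $r\:\CP\to\CL$. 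Functoriality and compatibility with the structure maps $u^*$ are inherited from $j^*$.

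By construction the evaluation functors factor on the nose: $i_0^*\circ r=i_\emptyset^*$ and $i_1^*\circ r=i_{\{a\}}^*$ as morphisms $\CP\to\CC$, since each reads off one entry of the diagram $X$. Proposition~\ref{prop-fully-loc} asserts that $i_\emptyset^*\:\CP\to\CC$ is an equivalence, and Proposition~\ref{prop-deriv-loc} asserts that $i_0^*\:\CL\to\CC$ is an equivalence. Applying the first part of Remark~\ref{rem-ana} with $\CX=\CP$, $\CY=\CL$, $\CD=\CC$, $F=i_\emptyset^*$, $H=i_0^*$, $G=i_{\{a\}}^*$, $K=i_1^*$ and $J=r$ then yields a canonical isomorphism of anafunctors
\[ \phi_{\{a\}} = i_{\{a\}}^*(i_\emptyset^*)^{-1} \simeq i_1^*(i_0^*)^{-1} = L_{K(a)}, \]
which is the desired equivalence. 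The only genuinely nontrivial step is checking that $j^*$ lands in $\CL$, and this is immediate from unwinding the definitions; everything else is invocation of the equivalences already established in Propositions~\ref{prop-deriv-loc} and~\ref{prop-fully-loc} together with the formal yoga of Remark~\ref{rem-ana}.
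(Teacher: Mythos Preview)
Your proof is correct and takes essentially the same approach as the paper: both define the inclusion $j\:[1]\to\PP$ sending $0\mapsto\emptyset$ and $1\mapsto\{a\}$, verify that $j^*$ restricts to a morphism $\CP\to\CL$, and then invoke the commuting-square form of Remark~\ref{rem-ana} to identify the two anafunctors. Your write-up is slightly more explicit about why $j^*$ lands in $\CL$, but the argument is otherwise identical.
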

\begin{proof}
 Define $j\:[1]\to\PP$ by $j(0)=\emptyset$ and $j(1)=\{a\}$.  This
 gives a morphism $j^*\:\CC^{\PP}\to\CC^{[1]}$.  If we define $\CL$ as
 in Definition~\ref{defn-deriv-loc} with respect to $K(a)$, we find
 that $j^*$ restricts to give a morphism $\CP\to\CL$.  This fits into
 a diagram as follows, which commutes on the nose:
 \begin{center}
  \begin{tikzcd}[sep=huge]
   \CP
    \arrow[r,"i^*_{\{a\}}"]
    \arrow[dr,"j^*","\simeq"']
    \arrow[d,"i^*_\emptyset"',"\simeq"] &
   \CC \\
   \CC &
   \CL
    \arrow[l,"i_0^*","\simeq"']
    \arrow[u,"i_1^*"]
  \end{tikzcd}
 \end{center}
 From this it is clear that the anafunctor
 $\phi_{\{a\}}=i_{\{a\}}^*(i_\emptyset^*)^{-1}$ is equivalent to
 $L_{K(a)}=i_1^*(i_0^*)^{-1}$. 
\end{proof}

Now let $j$ be the inclusion of $\PP'=\PP\sm\{\emptyset\}$ in $\PP$,
and consider the fibration
\[ \tfib(X) \xra{} i_\emptyset^*(X) \to \holim_{\PP'}j^*(X) \]
as in Lemma~\ref{lem-tfib-fibre}.

We can now give a derivator formulation of the chromatic fracture
argument.
\begin{proposition}\lbl{prop-fracture}
 For any $X\in\CP(R)$, the above morphism
 $i_\emptyset^*(X)\to\holim_{\PP'}j^*(X)$ is a localisation with
 respect to $K(N)=\bigoplus_{n\in N}K(n)$.
\end{proposition}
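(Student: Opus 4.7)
I would split the claim into: (i) that the target $\holim_{\PP'}X|_{\PP'}$ is $K(N)$-local, and (ii) that the fibre $\tfib(X)$ of the map (see Lemma~\ref{lem-tfib-fibre}) is $K(N)$-acyclic.

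For (i), fix a nonempty $A\in\PP'$ and apply the fully-localising condition with $t=\min A$ and $U=A\setminus\{\min A\}$ (the strict inequality $t<\min U$ holds when $|A|\geq 2$, and the condition is vacuous when $|A|=1$). This presents $X_A$ as the $K(\min A)$-localisation of $X_{A\setminus\{\min A\}}$, so each $X_A$ with $A\in\PP'$ is $K(\min A)$-local. Because $K(N)=\bigoplus_{n\in N}K(n)$, every $K(N)$-acyclic object is in particular $K(\min A)$-acyclic, so $K(\min A)$-locality is stronger than $K(N)$-locality, and every $X_A$ for $A\in\PP'$ is $K(N)$-local. The sub-derivator of $K(N)$-local objects is closed under homotopy limits (Proposition~\ref{prop-perp}), so $\holim_{\PP'}X|_{\PP'}$ is $K(N)$-local.

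For (ii), induct on $|N|$. The base cases $|N|\leq 1$ are direct from the definitions. For the inductive step, set $r=\max N$ and $N_1=N\setminus\{r\}$, and let $k_-,k_+\colon \PP_{N_1}\to\PP$ be defined by $k_-(T)=T$ and $k_+(T)=T\cup\{r\}$ as in Proposition~\ref{prop-tfib-split}. A direct verification from Definition~\ref{defn-fully-loc} shows that both $k_-^*X$ and $k_+^*X$ belong to $\CP_{N_1}(R)$: for $k_+^*X$ the key point is that every $t\in N_1$ satisfies $t<r$, so whenever $t<\min U$ with $U\subseteq N_1$ one also has $t<\min(U\cup\{r\})$, and the $K(t)$-localisation $X_{U\cup\{r\}}\to X_{tU\cup\{r\}}$ is supplied by the fully-localising property of $X$. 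Proposition~\ref{prop-tfib-split} then gives a distinguished triangle
\[
\tfib_N(X)\to\tfib_{N_1}(k_-^*X)\to\tfib_{N_1}(k_+^*X),
\]
and the inductive hypothesis shows both outer terms are $K(N_1)$-acyclic, whence $\tfib_N(X)$ is $K(N_1)$-acyclic.

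It remains to show $\tfib_N(X)$ is $K(r)$-acyclic. Pointwise, the natural transformation $k_-^*X\to k_+^*X$ is a $K(r)$-equivalence at every $T\in\PP_{N_1}$: at $T=\emptyset$ the map is the $K(r)$-localisation $X_\emptyset\to X_{\{r\}}$, and for $T\neq\emptyset$ both $X_T$ and $X_{T\cup\{r\}}$ are $K(r)$-acyclic by the fracture axiom applied to the outermost localisation $L_{K(\min T)}$ (valid because $\min T\in N_1$ and $r>\min T$). Since $K(r)$-acyclics are closed under finite homotopy limits, the map $\holim_{\PP_{N_1}'}(k_-^*X)|_{\PP_{N_1}'}\to\holim_{\PP_{N_1}'}(k_+^*X)|_{\PP_{N_1}'}$ is also a $K(r)$-equivalence, and a five-lemma comparison of the defining fibre sequences of $\tfib_{N_1}(k_\pm^*X)$ shows that $\tfib_{N_1}(k_-^*X)\to\tfib_{N_1}(k_+^*X)$ is a $K(r)$-equivalence. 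The triangle above then forces $\tfib_N(X)$ to be $K(r)$-acyclic, completing the induction. The main subtle point is precisely this $K(r)$-acyclicity step: one must choose $r$ to be the \emph{maximum} of $N$ so that the fracture axiom yields $K(r)$-acyclicity of all outer localisations appearing in $k_\pm^*X$ away from $T=\emptyset$.
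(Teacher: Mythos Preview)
Your proof is correct, but it proceeds by a different decomposition than the paper's.  Both arguments establish part~(i) in the same way and then attack the $K(N)$-acyclicity of $\tfib(X)$ via Proposition~\ref{prop-tfib-split}, but they peel off opposite ends of $N$.  The paper works with a fixed $N$ and sets $Y_r=\tfib$ of the subcube indexed by subsets of $\{r+1,\dotsc,n^*-1\}$, so that each step $Y_{r-1}=\fib(Y_r\to Z_r)$ adds the new \emph{smallest} element $r$; the map $Y_r\to Z_r$ is then automatically a $K(r)$-localisation, so $Y_{i-1}$ is $K(i)$-acyclic, and the fracture axiom (applied to the $K(r)$-local vertices of $Z_r$ with $r<i$) propagates $K(i)$-acyclicity down the tower to $Y_{-1}=\tfib(X)$.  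You instead induct on $|N|$, split off $r=\max N$, and obtain $K(N_1)$-acyclicity of $\tfib_N(X)$ from the inductive hypothesis on both faces $k_\pm^*X$; the remaining $K(r)$-acyclicity then needs a separate pointwise argument, where the fracture axiom enters to kill $K(r)_*(X_T)$ for nonempty $T\sse N_1$.  Your route has the virtue of a clean structural induction and makes explicit that both faces $k_\pm^*X$ lie in $\CP_{N_1}$; the paper's route avoids the auxiliary $K(\max N)$ step by arranging that each fibre in the tower is already the fibre of a localisation.  A small point: your phrase ``$t<\min U$'' should be read as ``$t<u$ for all $u\in U$'' to cover $U=\emptyset$ correctly, but this does not affect the argument.
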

\begin{proof}
 From the definition of $\CP(R)$ we see that for all $T\in\PP'$, the
 object $j^*(X)_T$ is local with respect to $K(\min(T))$ and thus with
 respect to $K(N)$.  Proposition~\ref{prop-perp} tells us that the
 $K(N)$-local objects form a thick subderivator, so the object
 $LX=\holim_{\PP'}j^*(X)$ is $K(N)$-local.  It will therefore suffice
 to show that the fibre $\tfib(X)=\text{fib}(X_\emptyset\to LX)$ is
 $K(N)$-acyclic, or equivalently, that it is $K(i)$-acyclic for all
 $i$.  Let $Y_r$ be the total fibre of the subdiagram indexed by
 subsets of $\{r+1,\dotsc,n^*-1\}$, so $Y_{n^*-1}=0$ and
 $Y_{-1}=\tfib(X)$.  Let $Z_r$ be the total fibre of the subdiagram
 indexed by subsets of $\{r,\dotsc,n^*-1\}$ containing $r$, so that
 $Y_r\to Z_r$ is a $K(r)$-localisation, and the fibre is $Y_{r-1}$ by
 Proposition~\ref{prop-tfib-split}.  This shows that $Y_{i-1}$ is
 $K(i)$-acyclic.  The fracture axiom then tells us that $Z_{i-1}$ is
 also $K(i)$-acyclic, and $Y_{i-2}$ is the fibre of the map
 $Y_{i-1}\to Z_{i-1}$ so it is again $K(i)$-acyclic.  By iterating
 this, we find that $Y_{-1}$ is $K(i)$-acyclic as required.
\end{proof}

\begin{definition}\lbl{defn-doubly-localising}
 We say that an object $X\in\CC(\PP\tm\PP\tm R)$ is \emph{doubly
  localising} if
 \begin{itemize}
  \item[(a)] $X$ is fully localising relative to $\PP\tm R$, so it
   lies in $\CP(\PP\tm R)$.
  \item[(b)] The restriction to
   $\{u\emptyset\}\tm\PP\tm R\simeq\PP\tm R$ lies in $\CP(R)$.
 \end{itemize}
 We write $\CP_2(R)$ for the full subcategory of doubly localising
 objects in $\CC(\PP\tm\PP\tm R)$.
\end{definition}

\begin{proposition}\lbl{prop-doubly-loc}
 $\CP_2$ is a thick subderivator of $\CC^{\PP \times \PP}$, and
 $i_{(\emptyset, \emptyset)}^*\colon \CP_2 \rightarrow \CC$ is an
 equivalence.
\end{proposition}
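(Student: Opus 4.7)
The plan is to bootstrap from Proposition~\ref{prop-fully-loc} by applying it to the derivator $\CP$ itself. Since $i_\emptyset^* \colon \CP \to \CC$ is an equivalence, $\CP$ inherits the structure of a compactly generated stable derivator, and the homology theories $K(n)$ on $\CC(e)$ pull back to $\CP(e)$ via $X \mapsto K(n)(X_\emptyset)$. First I would check that this inherited setup satisfies the hypotheses required by Proposition~\ref{prop-fully-loc}; this is essentially automatic from the equivalence together with the corresponding facts for $\CC$.

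Applying Proposition~\ref{prop-fully-loc} to $\CP$ then produces a thick subderivator $\CP^\dagger \sse \CP^\PP$ together with an equivalence $i_\emptyset^* \colon \CP^\dagger \to \CP$. Unwinding the shift, $\CP^\PP(R) = \CP(\PP \tm R)$, so each $\CP^\dagger(R)$ sits inside $\CC(\PP \tm \PP \tm R)$, exhibiting $\CP^\dagger$ as a subderivator of $\CC^{\PP \tm \PP}$. Composing the two equivalences then gives $i_{(\emptyset, \emptyset)}^* \colon \CP^\dagger \to \CC$, which is again an equivalence.

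The final step, and the main obstacle, is to identify $\CP^\dagger$ with $\CP_2$ as subderivators of $\CC^{\PP \tm \PP}$. The inclusion $\CP^\dagger(R) \sse \CP(\PP \tm R)$ provides condition~(a) of Definition~\ref{defn-doubly-localising} automatically. The additional fully-localising condition imposed by $\CP^\dagger$ requires, for each $t \in N$, each $U \sse N$ with $t < u$ for all $u \in U$, and each $r \in R$, that the map $X_{U,r} \to X_{tU,r}$ be a $K(t)$-localisation \emph{inside $\CP(e)$}. Because the pulled-back theory on $\CP(e)$ is detected by $i_\emptyset^*$, this translates to $X_{(\emptyset, U, r)} \to X_{(\emptyset, tU, r)}$ being a $K(t)$-localisation in $\CC(e)$, i.e.\ to the slice $X|_{\{\emptyset\} \tm \PP \tm R}$ lying in $\CP(R)$, which is exactly condition~(b).

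The subtlety in this translation is worth emphasising: being a $K(t)$-localisation in $\CP(e)$ only constrains the $\emptyset$-slice in the outer $\PP$ factor, not the slices at other points, and this is precisely the asymmetry built into Definition~\ref{defn-doubly-localising}. Granted this identification, thickness of $\CP_2$ is inherited from that of $\CP^\dagger$, and the desired equivalence follows.
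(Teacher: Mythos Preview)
Your proposal is correct and follows essentially the same route as the paper. The paper's proof is terse: it invokes Theorem~\ref{thm-thick-subcats} for thickness, then observes that the shifted equivalence $i_\emptyset^*\colon \CP(\PP\tm R)\to\CC(\PP\tm R)$ carries $\CP_2(R)$ onto $\CP(R)$ (which is exactly the content of conditions~(a) and~(b)), and composes with $i_\emptyset^*\colon\CP\to\CC$. Your version packages the same observation as ``apply Proposition~\ref{prop-fully-loc} to $\CP$ and identify the result with $\CP_2$''; the identification step is precisely the translation of the $\CP(e)$-localisation condition through the equivalence $i_\emptyset^*\colon\CP(e)\simeq\CC(e)$, which you carry out correctly. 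The only cosmetic difference is that you obtain thickness as a by-product of re-applying the proposition (using that thick subderivators compose: $\CP^\dagger$ thick in $\CP^\PP$, and $\CP^\PP$ thick in $\CC^{\PP\tm\PP}$), whereas the paper appeals directly to Theorem~\ref{thm-thick-subcats}.
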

\begin{proof}
 The first claim follows from Theorem~\ref{thm-thick-subcats}. The two
 properties of the previous definition easily imply that the
 equivalence $i_\emptyset^*\colon \CP \rightarrow \CC$ of
 Proposition~\ref{prop-fully-loc} (shifted by a $\PP$ component)
 restricts to an equivalence $\CP_2 \rightarrow \CP$, compose this
 again with $i_\emptyset^*$ and the second claim is verified.
\end{proof}

It is easy to see that $X\in\CC(\PP\tm\PP\tm R)=\CC^R(\PP\tm\PP)$ is
doubly localising iff the following conditions are satisfied:
\begin{itemize}
 \item[(a)] For all $a,A,B$ with $\{a\}\angle A$, the map
  $X_{A,B}\to X_{\{a\}\cup A,B}$ is a $K(a)$-localisation.
 \item[(b)] For all $b,B$ with $\{b\}\angle B$, the map
  $X_{\emptyset,B}\to X_{\emptyset,\{b\}\cup B}$ is a
  $K(b)$-localisation. 
\end{itemize}
This essentially means that if $A=\{a_1<\dotsb<a_p\}$ and
$B=\{b_1<\dotsb<b_q\}$ we must have 
\[ X_{A,B} =
     L_{K(a_1)} \dotsb L_{K(a_p)} L_{K(b_1)} \dotsb L_{K(b_q)}
      X_{(\emptyset,\emptyset)}.
\]
In particular, we see that $X_{(A,B)}=0$ unless $A\angle B$.  This
motivates the following construction.

\begin{definition}\lbl{defn-doubly-localising-alt}
 We put $\MM=\{(A,B)\in\PP\tm\PP \st A\angle B\}$, and define
 $\sg\:\MM\to\PP$ by $\sg(A,B)=A\cup B$.  We say that an
 object $X\in\CC(\MM\tm R)=\CC^R(\MM)$ is \emph{doubly localising} if
 \begin{itemize}
 \item[(a)] For all $a,A,B$ with $\{a\}\angle A$ and
   $\{a\}\cup A\angle B$, the map
   $X_{A,B}\to X_{\{a\}\cup A,B}$ is a $K(a)$-localisation.
  \item[(b)] For all $b,B$ with $\{b\}\angle B$, the map
   $X_{\emptyset,B}\to X_{\emptyset,\{b\}\cup B}$ is a
   $K(b)$-localisation. 
 \end{itemize}
 We write $\CP'_2(R)$ for the subcategory of doubly localising objects.
\end{definition}

\begin{proposition}\lbl{prop-doubly-loc-alt}
 $\CP'_2$ is a thick subderivator of $\CC^{\MM}$, and the inclusion
 $\inc\:\MM\to\PP\tm\PP$ induces mutually inverse equivalences
 \[ \CP'_2 \xra{\inc_*} \CP_2 \xra{\inc^*} \CP'_2, \]
 and the morphism $i_{(\emptyset,\emptyset)}^*\:\CP'_2\to\CC$ is also
 an equivalence.  Moreover, the map $\sg\:\MM\to\PP$ gives an
 equivalence $\sg^*\:\CP\to\CP'_2$ and thus an equivalence
 $\inc_*\circ\sg^*\:\CP\to\CP_2$. 
\end{proposition}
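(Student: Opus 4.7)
The plan is to use that $\MM$ is a sieve in $\PP \tm \PP$ and to transport all the structure along it. First, $\MM$ is indeed a sieve: if $(A, B) \in \MM$ so $A \angle B$, and $(A', B') \leq (A, B)$ so $A' \sse A$ and $B' \sse B$, then trivially $a' \leq b'$ for all $a' \in A'$ and $b' \in B'$, hence $(A', B') \in \MM$. Applying Proposition~\ref{prop-sieve} to the sieve $\inc$ (shifted by $R$) we conclude that $\inc_*$ is fully faithful with essential image $\CC_\MM(\PP \tm \PP \tm R)$, and $\inc^*$ is an inverse on that image.

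Next I would show $\CP_2(R) \sse \CC_\MM(\PP \tm \PP \tm R)$ and then identify $\CP_2 \simeq \CP'_2$ under $\inc_*, \inc^*$. The iterated Bousfield description recalled just before Definition~\ref{defn-doubly-localising-alt} writes $Y_{(A, B)} \simeq L_{K(a_1)} \dotsb L_{K(a_p)} L_{K(b_1)} \dotsb L_{K(b_q)} Y_{(\emptyset,\emptyset)}$ for $Y \in \CP_2(R)$ with $A, B$ nonempty. If $A \not\angle B$ then $a_p > b_1$, so the fracture axiom applied to $\{b_1\}$ with index $a_p$ gives $K(a_p)_* L_{K(b_1)}(-) = 0$; hence the subcomposite $L_{K(a_p)} L_{K(b_1)}(\dotsb)$ vanishes and $Y_{(A, B)} = 0$, so $\CP_2 \sse \CC_\MM$. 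The conditions of $\CP'_2$ are exactly the conditions of $\CP_2$ restricted to maps whose endpoints both lie in $\MM$, so $\inc^*$ carries $\CP_2$ into $\CP'_2$. Conversely, given $Y$ supported on $\MM$ satisfying the $\CP'_2$ conditions, a case analysis shows the extra $\CP_2$ conditions hold: the only potentially nontrivial new case is condition~(a) with $A = \emptyset$ and $a > \min(B)$, where we need $Y_{(\emptyset, B)}$ to be $K(a)$-acyclic; but condition~(b) of $\CP'_2$ gives $Y_{(\emptyset, B)} \simeq L_{K(b_1)}(Y_{(\emptyset, B \sm \{b_1\})})$, and the fracture axiom applied to $\{b_1\}$ with $a > b_1$ yields exactly this acyclicity. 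Hence $\inc_*$ and $\inc^*$ restrict to mutually inverse equivalences $\CP_2 \leftrightarrows \CP'_2$, and $\CP'_2$ inherits its thick subderivator status from $\CP_2$ via this equivalence. The equivalence $i_{(\emptyset, \emptyset)}^* \colon \CP'_2 \to \CC$ follows because $(\emptyset, \emptyset) \in \MM$, so this functor equals $i_{(\emptyset, \emptyset)}^* \circ \inc_*$, a composite of equivalences (using Proposition~\ref{prop-doubly-loc}).

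Finally, for $\sg^* \colon \CP \to \CP'_2$: given $X \in \CP(R)$ I would first check $\sg^*(X) \in \CP'_2(R)$. Condition~(a) of $\CP'_2$ with $\{a\} \angle A$ and $\{a\} \cup A \angle B$ yields $\{a\} \angle (A \cup B)$, so the relevant map $X_{A \cup B} \to X_{\{a\} \cup A \cup B}$ is a $K(a)$-localisation by the definition of $\CP$ (Definition~\ref{defn-fully-loc}); condition~(b) is the $A = \emptyset$ special case. Since $\sg(\emptyset, \emptyset) = \emptyset$, the triangle $i_{(\emptyset, \emptyset)}^* \circ \sg^* = i_\emptyset^*$ commutes on the nose. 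With both $i_\emptyset^* \colon \CP \to \CC$ (Proposition~\ref{prop-fully-loc}) and $i_{(\emptyset, \emptyset)}^* \colon \CP'_2 \to \CC$ (just established) equivalences, two-out-of-three forces $\sg^*$ to be an equivalence; composing with $\inc_*$ yields the final equivalence $\inc_* \circ \sg^* \colon \CP \to \CP_2$. The main obstacle is the boundary case analysis in the middle paragraph: verifying that the fracture axiom precisely fills the gap between the $\CP_2$ and $\CP'_2$ conditions at maps leaving $\MM$.
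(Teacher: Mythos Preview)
Your proposal is correct and follows essentially the same route as the paper: both arguments use that $\MM$ is a sieve to invoke Proposition~\ref{prop-sieve}, observe that $\CP_2$ is supported on $\MM$, identify the image of $\inc^*$ with $\CP'_2$, deduce the equivalence $i_{(\emptyset,\emptyset)}^*$ by two-out-of-three, and finally check $\sg^*(\CP)\sse\CP'_2$ together with $i_{(\emptyset,\emptyset)}^*\sg^*=i_\emptyset^*$. The only difference is one of detail: the paper dismisses the identification of $\inc^*(\CP_2)$ with $\CP'_2$ as ``easy to check'', whereas you supply the explicit boundary case (namely condition~(a) with $A=\emptyset$ and $a>\min(B)$, handled via the fracture axiom), which is exactly the right verification.
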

\begin{proof}
 The subposet $\MM\sse\PP\tm\PP$ is a sieve, so
 Proposition~\ref{prop-sieve} gives mutually inverse equivalences
 \[ \CC(\MM\tm R) \xra{\inc_*} \CC_{\MM\tm R}(\PP\tm\PP\tm R)
      \xra{\inc^*} \CC(\MM\tm R).
 \]
 We have observed that if $X$ is doubly localising then $X_{A,B}=0$
 for $(A,B)\not\in\MM$, so $X\in\CC_{\MM\tm R}(\PP\tm\PP\tm R)$.  It
 follows that $\inc^*$ restricts to give an equivalence from
 $\CP_2(R)$ to some subcategory of $\CC(\MM\tm R)$, with inverse given
 by $\inc_*$.  It is easy to check that the relevant subcategory is
 $\CP'_2(R)$.  We have now seen that in the diagram 
 \[ \CP_2 \xra{\inc^*} \CP'_2
  \xra{i_{(\emptyset,\emptyset)}^*} \CC,
 \]
 the first map and the composite are both equivalences of
 (pre)derivators, so the second map is also an equivalence.  From this
 it also follows that $\CP'_2$ is a derivator.  Finally, direct
 inspection of the definitions shows that $\sg^*(\CP)\sse\CP'_2$, and
 $i_{(\emptyset,\emptyset)}^*\sg^*=i_{\emptyset}^*$, which implies
 that $\sg^*\:\CP\to\CP'_2$ is an equivalence.
\end{proof}

\begin{proposition}\lbl{prop-phi-comp}
 If $A\angle B$ then there is an equivalence of anafunctors
 $\phi_A\phi_B\simeq\phi_{A\cup B}$.  
\end{proposition}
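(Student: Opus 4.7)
The plan is to exhibit both $\phi_A\phi_B$ and $\phi_{A\cup B}$ as equivalent to a common ``middle'' anafunctor $\psi := i_{(A,B)}^* \circ (i_{(\emptyset,\emptyset)}^*)^{-1}\colon\CC\to\CC$ defined using the derivator $\CP_2$ of doubly localising objects.  The right leg is a genuine equivalence by Proposition~\ref{prop-doubly-loc}, so $\psi$ makes sense.

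First, I would identify $\phi_A\phi_B$ with $\psi$.  By the $2$-pullback description of composition in the bicategory of anafunctors (Lemma~\ref{lem-two-pullbacks} and the subsequent discussion in Section~\ref{sec-ana}), the composite is represented by the span $\CC\leftarrow\CY\rightarrow\CC$, where $\CY$ is the $2$-pullback of $i_B^*\colon\CP\to\CC$ against $i_\emptyset^*\colon\CP\to\CC$, its objects being triples $(W,Z,\al)$ with $W,Z\in\CP$ and $\al\colon W_B\xra{\simeq}Z_\emptyset$, and its legs sending $(W,Z,\al)$ to $W_\emptyset$ and to $Z_A$ respectively.  I would then define a morphism of prederivators $H\colon\CP_2\to\CY$ by
\[ H(X) = \bigl(X|_{\{\emptyset\}\tm\PP},\;X|_{\PP\tm\{B\}},\;\mathrm{id}\bigr). \]
Both restrictions lie in $\CP$ by conditions~(a) and~(b) of Definition~\ref{defn-doubly-localising}, and the identity witness is legitimate because both sides of $\al$ equal $X_{(\emptyset,B)}$.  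The projection $\CY\to\CP$ onto the first factor is a $2$-pullback of the equivalence $i_\emptyset^*$, hence itself an equivalence, so the left leg $\CY\to\CC$ is an equivalence; the left leg $i_{(\emptyset,\emptyset)}^*\colon\CP_2\to\CC$ is also an equivalence by Proposition~\ref{prop-doubly-loc}.  Since $H$ strictly intertwines these left legs, a $2$-out-of-$3$ argument forces $H$ to be an equivalence, and as the right-hand legs strictly agree too ($X_{(A,B)}=Z_A$), Remark~\ref{rem-ana} delivers the equivalence $\phi_A\phi_B\simeq\psi$.

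Second, I would deduce $\phi_{A\cup B}\simeq\psi$ directly from Proposition~\ref{prop-doubly-loc-alt}, which furnishes the equivalence $\inc_*\circ\sg^*\colon\CP\to\CP_2$.  The hypothesis $A\angle B$ means $(A,B)\in\MM$, so for any $Y\in\CP(R)$ one computes $(\inc_*\sg^*Y)_{(\emptyset,\emptyset)}=Y_{\sg(\emptyset,\emptyset)}=Y_\emptyset$ and $(\inc_*\sg^*Y)_{(A,B)}=Y_{\sg(A,B)}=Y_{A\cup B}$ on the nose.  Thus both squares pairing $\inc_*\sg^*$ with the projections $\bigl(i_\emptyset^*,\,i_{(\emptyset,\emptyset)}^*\bigr)$ and $\bigl(i_{A\cup B}^*,\,i_{(A,B)}^*\bigr)$ commute strictly, and Remark~\ref{rem-ana} yields $\phi_{A\cup B}\simeq\psi$.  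Combining the two equivalences gives the desired $\phi_A\phi_B\simeq\phi_{A\cup B}$.

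The main obstacle is the first step.  Setting up the $2$-pullback $\CY$ and verifying that the assignment $H$ really defines a morphism of derivators (not merely of prederivators) requires invoking the anafunctor formalism of Section~\ref{sec-ana} carefully; in particular one must check that the two restrictions land in $\CP$ coherently in the shifted sense needed for the derivator structure, and that $H$ strictly intertwines both legs to $\CC$ so that the $2$-out-of-$3$ argument and Remark~\ref{rem-ana} apply.  Once these bookkeeping points are settled, the second step is essentially an application of Proposition~\ref{prop-doubly-loc-alt} together with the defining property $\sg(A,B)=A\cup B$.
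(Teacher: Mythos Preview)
Your proposal is correct and follows essentially the same strategy as the paper: both arguments pivot through the anafunctor $i_{(A,B)}^*(i_{(\emptyset,\emptyset)}^*)^{-1}$ built from $\CP_2$, identify it with $\phi_A\phi_B$ via the pair of restrictions $(X|_{\{\emptyset\}\tm\PP},\,X|_{\PP\tm\{B\}})$ mapping into the $2$-pullback (the paper writes these as $i_\emptyset^*$ and $j_B^*$ with $j_B(T)=(T,B)$), and identify it with $\phi_{A\cup B}$ via $\inc_*\sg^*\:\CP\to\CP_2$ from Proposition~\ref{prop-doubly-loc-alt}. The paper packages both identifications into a single commuting hexagonal diagram and then spells out the pullback comparison at the end, whereas you treat the two halves separately and make the $2$-pullback $\CY$ and the morphism $H$ explicit from the outset; these are presentational differences only.
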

\begin{proof}
 Define $j_B\:\PP\to\PP\tm\PP$ by $j_B(T)=(T,B)$.  Consider the
 following diagram:  
 \begin{center}
  \begin{tikzcd}[sep=huge]
   \CC & &
   \CP
    \arrow[dr,"i_A^*"]
    \arrow[ll,"i_\emptyset^*"',"\simeq"] \\ &
   \CP_2
    \arrow[rr,"i_{(A,B)}^*"]
    \arrow[dr,"\inc^*","\simeq"']
    \arrow[dd,"i_{(\emptyset,\emptyset)}^*"',"\simeq"]
    \arrow[ur,"j_B^*"]
    \arrow[dl,"i_\emptyset^*","\simeq"'] & &
   \CC \\
   \CP
    \arrow[uu,"i_B^*"]
    \arrow[dr,"i_\emptyset^*"',"\simeq"] & &
   \CP'_2
    \arrow[dl,"i_{(\emptyset,\emptyset)}^*","\simeq"']
    \arrow[ur,"i_{(A,B)}^*"] \\ &
   \CC & &
   \CP
    \arrow[ll,"i_\emptyset^*","\simeq"']
    \arrow[uu,"i_{A\cup B}^*"']
    \arrow[ul,"\sg^*","\simeq"']
  \end{tikzcd}
 \end{center}
 Given that $\CC$ is a strict $2$-functor, we see that everything
 commutes on the nose.  Several morphisms have been marked as
 equivalences; these are justified by
 Proposition~\ref{prop-doubly-loc-alt}.  It follows that all routes
 from the middle bottom $\CC$ to the middle right $\CC$ give the same
 anafunctor up to equivalence.  If we go clockwise around the edge of
 the diagram we get $\phi_A\phi_B$, and if we go anticlockwise we get
 $\phi_{A\cup B}$.
 
 To be precise the composition of anafunctors $\phi_A \phi_B$ is given
 by the following composition of spans via pullback
 \begin{center}
  \begin{tikzcd}
   & & \arrow[dl, "\pi_1" above left]
   \CP\times_{\CC} \CP \arrow[dr, "\pi_2"] & & \\
   & \arrow[dl, "i_\emptyset^*" above left]
   \CP \arrow[dr, "i_B^*"] & &
   \arrow[dl, "i_\emptyset^*" above left] \CP \arrow[dr, "i_A^*"] & \\
   \CC & &\CC & & \CC
  \end{tikzcd}
 \end{center} 
 The upper left part of the previous diagram means that we can easily
 produce an isomorphism of anafunctors 
 \begin{center}
  \begin{tikzcd}
   & \arrow[dl, "i_\emptyset^* \pi_1" above left ]
   \CP\times_{\CC} \CP \arrow[dr, "i_A^* \pi_2"] &  \\
  \CC &  & \CC \\
  & \arrow[ul, " i_{(\emptyset, \emptyset )}^*"]
  \CP_2 \arrow[uu, "\psi"]  \arrow[ur, "i_{(A,B)}^*" below right] &
  \end{tikzcd}
 \end{center}  
 where $\psi$ is obtained using $i_\emptyset^*$ and $j_B^*$.
\end{proof}

\begin{corollary}\lbl{cor-phi-factor}
 Suppose that $A=\{a_1,\dotsc,a_r\}$ with $a_1<\dotsb<a_r$.  There is
 then an equivalence of anafunctors 
 \[ \phi_A \simeq L_{K(a_1)} \dotsb L_{K(a_r)}. \]
\end{corollary}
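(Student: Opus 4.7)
The plan is a straightforward induction on $r = |A|$, using Propositions~\ref{prop-phi-single} and~\ref{prop-phi-comp} as the only tools.

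The base case $r = 1$ is exactly Proposition~\ref{prop-phi-single}: we have $\phi_{\{a_1\}} \simeq L_{K(a_1)}$ as anafunctors. For the inductive step, assume the result for sets of size $r - 1$, and write $A = \{a_1\} \cup A'$ where $A' = \{a_2, \dotsc, a_r\}$. The crucial combinatorial observation is that the hypothesis $a_1 < a_2 < \dotsb < a_r$ implies $a_1 \leq a_i$ for every $a_i \in A'$, which is precisely the condition $\{a_1\} \angle A'$ from Definition~\ref{defn-P}. Proposition~\ref{prop-phi-comp} therefore gives an equivalence of anafunctors
\[ \phi_{\{a_1\}} \circ \phi_{A'} \simeq \phi_{\{a_1\} \cup A'} = \phi_A. \]

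By the inductive hypothesis applied to $A'$, we have $\phi_{A'} \simeq L_{K(a_2)} \dotsb L_{K(a_r)}$, and Proposition~\ref{prop-phi-single} supplies the equivalence $\phi_{\{a_1\}} \simeq L_{K(a_1)}$. Composing and using functoriality of composition of anafunctors with respect to equivalence (as discussed in Remark~\ref{rem-ana}), we obtain
\[ \phi_A \simeq \phi_{\{a_1\}} \circ \phi_{A'} \simeq L_{K(a_1)} \circ L_{K(a_2)} \dotsb L_{K(a_r)}, \]
which is the desired equivalence. There is no real obstacle here; the entire content of the corollary is packaged inside Proposition~\ref{prop-phi-comp}, and the only thing to verify for the iteration is the $\angle$-condition at each stage, which is immediate because the $a_i$ are listed in strictly increasing order. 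The result could equivalently be phrased by a right-leaning associativity — grouping $A = A'' \cup \{a_r\}$ with $A'' \angle \{a_r\}$ — and the symmetric induction would run identically.
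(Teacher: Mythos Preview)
Your proof is correct and follows the same inductive strategy as the paper, using Propositions~\ref{prop-phi-single} and~\ref{prop-phi-comp}. The only cosmetic difference is that the paper takes $A=\emptyset$ as the base case (observing that $\phi_\emptyset=i_\emptyset^*(i_\emptyset^*)^{-1}$ is the identity), whereas you start at $r=1$; either works, though you may wish to add a sentence covering $r=0$ for completeness.
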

\begin{proof}
 This follows by induction using Propositions~\ref{prop-phi-single}
 and~\ref{prop-phi-comp}.  (The base case $A=\emptyset$ says that
 $\phi_\emptyset$ is equivalent to the identity, which is clear
 because $\phi_\emptyset=i_\emptyset^*(i_\emptyset^*)^{-1}$ by
 definition.) 
\end{proof}

\section{The anafunctors \texorpdfstring{$\tht_U$}{theta U}}
\label{sec-tht}

We now start to define a more general class of iterated localisation
functors. 

\begin{definition}\lbl{defn-fracture-obj}
 Consider an object $X\in\CC(\QQ\tm R)$, and the pullback
 $(u\tm 1)^*(X)\in\CC(\PP\tm R)$.  We say that $X$ is
 \emph{$u$-cartesian} if the natural map
 \[ (u\tm 1)^* \: \CC(\QQ\tm R)(W,X) \to
    \CC(\PP\tm R)((u\tm 1)^*(W),(u\tm 1)^*(X))
 \]
 is an isomorphism for all $W$, or equivalently, $X$ is in the
 essential image of the functor
 \[ (u\tm 1)_* \: \CC(\PP\tm R) \to \CC(\QQ\tm R). \]
 We say that $X$ is a \emph{fracture object} if it is $u$-cartesian,
 and $(u\tm 1)^*(X)$ is fully localising.  We write $\CF(R)$ for the
 subcategory of fracture objects in $\CC(\QQ\tm R)$.  We also define
 $j\:R\to\QQ\tm R$ by $j(r)=(u\emptyset,R)$.
\end{definition}

\begin{remark}\lbl{rem-u-cartesian}
 Because we have ordered $\QQ$ by reverse inclusion, we have
 $U\leq uA$ iff $uA\sse U$ iff $A\in U$.  Using this together with the
 Kan formula, the $u$-cartesian condition becomes
 \[ X_U = \holim_{A\in U} X_{uA}. \]
\end{remark}

\begin{example}\lbl{eg-fracture-obj}
 Consider the case $N=\{0,1\}$, and put
 \[ W = vN = u\{0\} \cup u\{1\} = \{A\sse N\st A\neq\emptyset\}
      \in \QQ.
 \]
 We then have $\QQ = \{uA\st A\in\PP\}\cup\{W,\emptyset\}$, with
 partial order as shown on the left below.  A fracture object is as
 shown on the right.
 \begin{center}
  \begin{tikzcd}[sep={1.5cm,between origins}]
   u\emptyset \arrow[rrr] \arrow[dr] \arrow[ddd] &&&
   u\{0\} \arrow[ddl] \arrow[ddd] \\ &
   W \arrow[urr] \arrow[dr] \arrow[ddl] \\ &&
   u\{0,1\} \arrow[dr] \\
   u\{1\} \arrow[urr] \arrow[rrr] &&&
   \emptyset
  \end{tikzcd}
  \hspace{4em}
  \begin{tikzcd}[sep={1.5cm,between origins}]
   X \arrow[rrr] \arrow[dr] \arrow[ddd] &&&
   \phi_0X \arrow[ddl] \arrow[ddd] \\ &
   \lm_{01}X \arrow[urr] \arrow[dr] \arrow[ddl] \\ &&
   \phi_{01}X \arrow[dr] \\
   \phi_1X \arrow[urr] \arrow[rrr] &&&
   0
  \end{tikzcd}
 \end{center}
 Now consider the case $N=\{0,1,2\}$, where $|\PP|=8$.  It turns out
 that $|\QQ|=20$, with elements as follows:
 \begin{itemize}
  \item The smallest element is $u\emptyset=\PP$.
  \item The next smallest element is
   $vN=\{A\in\PP\st A\neq\emptyset\}$.
  \item We write $v_{ij}=v\{i,j\}=\{A\st i\in A\text{ or}\ j\in A\}$.
  \item We write $x_i=\{A\st i\in A\text{ or } \{i\}^c\sse A\}$.
  \item We write $u_i=u\{i\}=v\{i\}=\{A\st i\in A\}$.
  \item We write $w_i=\{A\st A\supset\{i\}\}$ (strict inclusion).
  \item We write $u_{ij}=u\{i,j\}=\{A\st \{i,j\}\sse A\}=\{\{i,j\},N\}$.
  \item We write $u_{012}=uN=\{N\}$.
  \item We write $y=\{A\st |A|\geq 2\}$.
  \item The largest element is $\emptyset$.
 \end{itemize}
 The Hasse diagram for $\QQ\sm\{u\emptyset,vN,y,\emptyset\}$ can be
 drawn in the plane as follows:
 \begin{center}
  \begin{tikzpicture}
   \draw (0,0)     node{$u_{012}$};
   \draw (  0:2.0) node{$u_{01}$}; 
   \draw (120:2.0) node{$u_{02}$}; 
   \draw (240:2.0) node{$u_{12}$}; 
   \draw ( 60:2.0) node{$w_{0}$};
   \draw (180:2.0) node{$w_{2}$};
   \draw (300:2.0) node{$w_{1}$};
   \draw ( 60:3.0) node{$u_{0}$};
   \draw (180:3.0) node{$u_{2}$};
   \draw (300:3.0) node{$u_{1}$};
   \draw ( 60:4.0) node{$x_{0}$};
   \draw (180:4.0) node{$x_{2}$};
   \draw (300:4.0) node{$x_{1}$};
   \draw (  0:4.0) node{$v_{01}$}; 
   \draw (120:4.0) node{$v_{02}$}; 
   \draw (240:4.0) node{$v_{12}$};
   \draw[->] (  0:1.7) -- (  0:0.4); 
   \draw[->] (120:1.7) -- (120:0.4); 
   \draw[->] (240:1.7) -- (240:0.4); 
   \draw[->] ( 60:2.7) -- ( 60:2.3);
   \draw[->] (180:2.7) -- (180:2.3);
   \draw[->] (300:2.7) -- (300:2.3);
   \draw[->] ( 60:3.7) -- ( 60:3.3);
   \draw[->] (180:3.7) -- (180:3.3);
   \draw[->] (300:3.7) -- (300:3.3);
   \draw[shorten >=0.3cm,shorten <=0.3cm,->] ( 60:2.0) -- (  0:2.0);
   \draw[shorten >=0.3cm,shorten <=0.3cm,->] ( 60:2.0) -- (120:2.0);
   \draw[shorten >=0.3cm,shorten <=0.3cm,->] (180:2.0) -- (120:2.0);
   \draw[shorten >=0.3cm,shorten <=0.3cm,->] (180:2.0) -- (240:2.0);
   \draw[shorten >=0.3cm,shorten <=0.3cm,->] (300:2.0) -- (240:2.0);
   \draw[shorten >=0.3cm,shorten <=0.3cm,->] (300:2.0) -- (  0:2.0);
   \draw[shorten >=0.3cm,shorten <=0.3cm,<-] ( 60:4.0) -- (  0:4.0);
   \draw[shorten >=0.3cm,shorten <=0.3cm,<-] ( 60:4.0) -- (120:4.0);
   \draw[shorten >=0.3cm,shorten <=0.3cm,<-] (180:4.0) -- (120:4.0);
   \draw[shorten >=0.3cm,shorten <=0.3cm,<-] (180:4.0) -- (240:4.0);
   \draw[shorten >=0.3cm,shorten <=0.3cm,<-] (300:4.0) -- (240:4.0);
   \draw[shorten >=0.3cm,shorten <=0.3cm,<-] (300:4.0) -- (  0:4.0);
  \end{tikzpicture}
 \end{center}
 The remaining vertices fit in as follows.  At the bottom we have
 $u\emptyset$, which is covered by $vN$, which is covered by the
 elements $v_{ij}$.  At the top, $u_{012}$ is covered by $\emptyset$.
 In the middle, $y$ covers the elements $x_i$ and is covered by the
 elements $w_i$.  The $u$ and $v$ elements will correspond to functors
 $\phi$ and $\lm$ as we have discussed previously.  One can check that
 the remaining elements other than $w_1$ can be factored as follows,
 and so will also correspond to iterated localisation functors:
 \begin{align*}
  x_0 &= v_{01} * v_{02} &
  x_1 &= v_{01} * v_{12} &
  x_2 &= v_{02} * v_{12} \\
  w_0 &= u_0 * v_{12} &
  w_2 &= v_{01} * u_2 &
  y   &= v_{01} * v_{02} * v_{12}. 
 \end{align*}
 However, Example~\ref{eg-not-thread} shows that $w_1$ cannot be
 factored in this way.
\end{example}

\begin{proposition}\lbl{prop-fracture-obj}
 $\CF$ is a thick subderivator of $\CC^\QQ$, and $j^*\:\CF\to\CC$ is
 an equivalence of derivators.
\end{proposition}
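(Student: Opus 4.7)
The plan is to first show $\CF\sse\CC^\QQ$ is a thick subderivator by verifying that the conditions defining $\CF(R)$ are pointwise in $r\in R$ and then invoking Theorem~\ref{thm-thick-subcats}; and to then obtain the equivalence $j^*\:\CF\to\CC$ by factoring $j$ as $R=e\tm R\xra{i_\emptyset\tm 1}\PP\tm R\xra{u\tm 1}\QQ\tm R$ and exhibiting $(u\tm 1)^*$ and $(u\tm 1)_*$ as mutually inverse equivalences between $\CF(R)$ and $\CP(R)$, so that composition with the equivalence of Proposition~\ref{prop-fully-loc} finishes the job.

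The key input is that $u\:\PP\to\QQ$ is an embedding: $uA\leq uB$ in $\QQ$ (i.e.\ $uB\sse uA$) iff every upper set containing $B$ contains $A$, iff $A\sse B$.  Lemma~\ref{lem-embedding} then gives $(u\tm 1)^*(u\tm 1)_*\simeq 1$, so $(u\tm 1)_*$ is fully faithful.  By Remark~\ref{rem-derivator-shift}, the morphism $u^*\:\CC^\QQ\to\CC^\PP$ of derivators commutes with $v_!$ for every $v\:R\to S$; passing to right adjoints yields the base-change isomorphism $(1_\QQ\tm v)^*(u\tm 1_S)_*\simeq(u\tm 1_R)_*(1_\PP\tm v)^*$.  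Taking $v=i_r\:e\to R$ this gives $((u\tm 1_R)_*Y)|_r\simeq u_*(Y|_r)$, so by naturality the unit $\eta_X\:X\to(u\tm 1_R)_*(u\tm 1_R)^*X$ restricts at $r$ to the unit $\eta_{X|_r}\:X|_r\to u_*u^*(X|_r)$.  By axiom Der2 the former is an isomorphism iff it is so pointwise, i.e.\ $X$ is $u$-cartesian iff every $X|_r$ is.  Since $\CP\sse\CC^\PP$ is itself pointwise characterised (being a thick subderivator by Proposition~\ref{prop-fully-loc}), this yields $\CF(R)=\{X\in\CC(\QQ\tm R)\st X|_r\in\CF(e)\text{ for all }r\in R\}$.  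A routine check shows $\CF(e)\sse\CC(\QQ)$ is a thick subcategory, being the intersection of the preimage of the thick subcategory $\CP(e)$ under the exact functor $u^*$ with the thick locus where $\eta_X$ is an isomorphism; so Theorem~\ref{thm-thick-subcats} delivers $\CF=\gm_{\CC^\QQ}\CF(e)$ as a thick subderivator.

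For the equivalence $j^*$, the functor $(u\tm 1)^*$ sends $\CF(R)$ into $\CP(R)$ by definition of $\CF$, while $(u\tm 1)_*$ sends $\CP(R)$ into $\CF(R)$ because any image is automatically $u$-cartesian and $(u\tm 1)^*(u\tm 1)_*Y\simeq Y\in\CP(R)$.  These functors are mutually inverse: on $\CP(R)$ by Lemma~\ref{lem-embedding}, on $\CF(R)$ by the very $u$-cartesian condition that $\eta_X$ be an isomorphism.  Composing with $(i_\emptyset\tm 1)^*\:\CP\to\CC$, which is an equivalence by Proposition~\ref{prop-fully-loc}, gives the desired equivalence $j^*\:\CF\to\CC$.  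The main subtlety in the whole argument is the pointwise characterisation of $u$-cartesianness: a direct application of the Kan formula would express $((u\tm 1)_*Y)_{U,r}$ as a homotopy limit over the comma poset $U\tm R_{\geq r}$ rather than over $U$, and one must observe that $R_{\geq r}$ has $r$ as initial element so that this limit collapses to the value at $r$; the adjoint route above provides a cleaner alternative.
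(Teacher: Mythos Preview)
Your proof is correct and follows essentially the same approach as the paper's: both establish the thick subderivator property by showing that the $u$-cartesian condition interacts well with the derivator structure via the base-change isomorphism coming from~\cite{groth:derpointstable}*{Proposition 2.6}, and both obtain the equivalence $j^*$ by factoring through the mutually inverse equivalences $(u\tm 1)^*$, $(u\tm 1)_*$ between $\CF(R)$ and $\CP(R)$ and then invoking Proposition~\ref{prop-fully-loc}. The only organisational difference is that the paper first shows the larger category $\CE(R)$ of all $u$-cartesian objects is a thick subderivator by checking directly that $f^*$ and $f_*$ preserve it (via the dual of Lemma~\ref{lem-gm-left}), and then intersects with the preimage of $\CP$; you instead reduce immediately to a pointwise condition and invoke Theorem~\ref{thm-thick-subcats}, which is a slightly more direct packaging of the same content.
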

\begin{proof}
 Let $\CE(R)$ be the subcategory of $u$-cartesian objects in
 $\CC(\QQ\tm R)$, so the functor $(u\tm 1)^*\:\CE(R)\to\CC(\PP\tm R)$
 is an equivalence.  An object is $u$-cartesian iff the unit map
 $X\to (u\tm 1)_*(u\tm 1)^*(X)$ is an isomorphism, and from this we
 see that $\CE(R)$ is a thick subcategory of $\CC(\QQ\tm R)$.  (Here
 we have used Theorem~\ref{thm-triangulation}, as we will do
 repeatedly without further comment.)  

 Now consider a morphism $f\:R_0\to R_1$ of finite posets.  We know
 from~\cite{groth:derpointstable}*{Proposition 2.6} that the resulting
 derivator morphism $f^*\:\CC^{R_1}\to\CC^{R_0}$ preserves homotopy
 Kan extensions, so it commutes (in an evident sense) with the
 functors $(u\tm 1)_*$, so it restricts to give a functor
 $f^*\:\CE(R_1)\to\CE(R_0)$.  It is even clearer that the functors
 $(1\tm f)_*$ commute with $(u\tm 1)_*$, so they restrict to give
 $f_*\:\CE(R_0)\to\CE(R_1)$.  By the dual of Lemma~\ref{lem-gm-left},
 we deduce that $\CE$ is a thick subderivator of $\CE^\QQ$.

 Next, consider the preimage under $u^*\:\CC^{\QQ}\to\CC^{\PP}$ of the
 thick subderivator $\CP\sse\CC^{\PP}$.
 Using~\cite{groth:derpointstable}*{Proposition 2.6} again, we see
 that this preimage is again a thick subderivator.  By intersecting
 this with $\CE$, we see that $\CF$ is a thick subderivator as
 claimed.  
 
 In conclusion we obtained the diagram of derivators
 \begin{center}
  \begin{tikzcd}
   \CC^{\PP} \arrow[r, "u_*"]  & \CE \\
   \CP \arrow[rightarrowtail, u] \arrow[r, "u_*"'] &
   \CF \arrow[rightarrowtail, u]
  \end{tikzcd}
 \end{center}
 where the vertical arrows are inclusions and the horizontal ones
 equivalences. Thus composing the inverse equivalence
 $u^* \colon \CF \rightarrow \CP$ with $i_\emptyset^*$ of
 Proposition~\ref{prop-fully-loc} we get the second claim. 
\end{proof}

\begin{definition}\lbl{defn-tht}
 For $U\in\QQ$, we define $\tht_U\:\CC\to\CC$ to be the anafunctor
 \[ \CC \xra{(j^*)^{-1}} \CF \sse \CC^{\QQ} \xra{i_U^*} \CC. \]
 We also note that an inequality $U\leq V$ gives a natural
 transformation $i_U^*\to i_V^*$ and thus a morphism $\tht_U\to\tht_V$
 of anafunctors, as discussed in Remark~\ref{rem-ana}.
\end{definition}

\begin{remark}\lbl{rem-tht}
 Consider an object $X\in\CC(R)$, and a fracture object $Y\in\CF(R)$
 with $j^*Y\simeq X$.  Then the object $Y_U=i_U^*Y\in\CC(R)$ is a
 choice of $\tht_U(X)$.  As $Y$ is $u$-cartesian we have
 \[ Y_U = \holim_{U\leq uA} Y_{uA} = \holim_{A\in U} Y_{uA}. \]
 Also, as $u^*Y\in\CP(R)$ we know that $Y_{uA}$ is a choice of
 $\phi_A(X)$.  Thus, the basic idea is that
 $\tht_U(X)=\holim_{A\in U}\phi_A(X)$.
\end{remark}

\begin{remark}\lbl{rem-kp-morava}
 Consider the original chromatic context where the homology theory
 $K(n)$ is represented by a spectrum, so we can apply $\phi_A$ or
 $\tht_U$ to that spectrum.  It is easy to see that $\phi_A(K(n))$ is 
 $K(n)$ if $A\sse\{n\}$, and $\phi_A(K(n))=0$ in all other cases.
 From this we find that $\tht_U(K(n))$ is $K(n)$ if $\{n\}\in U$, and
 $\tht_U(K(n))=0$ in all other cases.  In other words, with $\kp$ as
 in Remark~\ref{rem-kp} we have $\kp(U)=\{n\st\tht_U(K(n))\neq 0\}$.
 In particular, if $\kp(U)\neq\kp(V)$ then $\tht_U\not\simeq\tht_V$.
 However, it is common for $\kp(U)$ to be empty, so this is not a very
 strong result.
\end{remark}

\begin{lemma}\lbl{lem-tht-u}
 If $A=\{a_1,\dotsc,a_r\}$ with $a_1<\dotsb<a_r$ then there are
 equivalences of anafunctors
 \[ \tht_{uA} \simeq \phi_A \simeq L_{K(a_1)}\dotsb L_{K(a_r)}.  \]
\end{lemma}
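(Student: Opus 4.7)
The second equivalence $\phi_A\simeq L_{K(a_1)}\dotsb L_{K(a_r)}$ is exactly Corollary~\ref{cor-phi-factor}, so the work is to establish $\tht_{uA}\simeq\phi_A$ at the level of anafunctors $\CC\to\CC$. The plan is to chase everything through the equivalence $u^*\:\CF\to\CP$ implicit in the proof of Proposition~\ref{prop-fracture-obj}, which identifies fracture objects on $\QQ$ with their pullback to $\PP$.

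First I would record two compatibilities of restriction with $u\:\PP\to\QQ$. Let $i_A^{\PP}\:R\to\PP\tm R$ and $i_U^{\QQ}\:R\to\QQ\tm R$ denote the inclusions at the given index. Since $\CC$ is a strict $2$-functor and $(u\tm 1)\circ i_A^{\PP}=i_{uA}^{\QQ}$, we obtain on the nose the identity $i_A^{\PP*}\circ(u\tm 1)^{*}=i_{uA}^{\QQ*}$ of functors $\CC^{\QQ}\to\CC$; in particular, restricting to $\CF$ and using the equivalence $(u\tm 1)^*\:\CF\to\CP$ gives $i_{uA}^{\QQ*}=i_A^{\PP*}\circ(u\tm 1)^*\:\CF\to\CC$. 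Taking $A=\emptyset$, this specialises to $j^*=i_{\emptyset}^{\PP*}\circ(u\tm 1)^*\:\CF\to\CC$, since $j=(u\tm 1)\circ i_\emptyset^{\PP}$ by the very definition of $j$ and $u\emptyset$.

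Next I would assemble these into a diagram of derivators that commutes on the nose:
\begin{center}
 \begin{tikzcd}[sep=large]
  \CC &
  \CF \arrow[l,"j^*"',"\simeq"] \arrow[d,"(u\tm 1)^*","\simeq"'] \arrow[r,"i_{uA}^{\QQ*}"] &
  \CC \\
  & \CP \arrow[ul,"i_\emptyset^{\PP*}","\simeq"'] \arrow[ur,"i_A^{\PP*}"'] &
 \end{tikzcd}
\end{center}
Here the two triangles commute by the two identities above, $j^*$ and $i_\emptyset^{\PP*}$ are equivalences by Propositions~\ref{prop-fracture-obj} and~\ref{prop-fully-loc}, and $(u\tm 1)^*\:\CF\to\CP$ is an equivalence as constructed in the proof of Proposition~\ref{prop-fracture-obj}. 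Applying the first principle of Remark~\ref{rem-ana} to the outer span $\CC\xleftarrow{j^*}\CF\xrightarrow{i_{uA}^{\QQ*}}\CC$ versus the span $\CC\xleftarrow{i_\emptyset^{\PP*}}\CP\xrightarrow{i_A^{\PP*}}\CC$, we conclude that the anafunctor $\tht_{uA}=i_{uA}^{\QQ*}\circ(j^*)^{-1}$ is equivalent to $i_A^{\PP*}\circ(i_\emptyset^{\PP*})^{-1}=\phi_A$.

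There is no substantial obstacle here beyond bookkeeping; the only point that requires a little care is that the two triangles above commute strictly rather than up to natural isomorphism, so that the machinery of Remark~\ref{rem-ana} applies without having to track further coherence data. Both commutativities reduce to the $2$-functoriality of $\CC$ applied to the strict equalities $(u\tm 1)\circ i_A^{\PP}=i_{uA}^{\QQ}$ and $(u\tm 1)\circ i_\emptyset^{\PP}=j$, so this is automatic. Combining the resulting equivalence $\tht_{uA}\simeq\phi_A$ with Corollary~\ref{cor-phi-factor} completes the proof.
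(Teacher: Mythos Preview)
Your proof is correct and takes essentially the same approach as the paper: both build the same on-the-nose commuting diagram relating $\CF$ and $\CP$ via $u^*$ (your $(u\tm 1)^*$) and then invoke Remark~\ref{rem-ana} to identify the two fractions, followed by Corollary~\ref{cor-phi-factor}. Your version is slightly more explicit about why the triangles commute strictly, but the argument is identical in substance.
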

\begin{proof}
 We have a diagram as follows, which commutes on the nose:
 \begin{center}
  \begin{tikzcd}[sep=huge]
   \CF
    \arrow[r,"i_{uA}^*"]
    \arrow[d,"j^*"',"\simeq"]
    \arrow[dr,"u^*","\simeq"'] &
   \CC \\
   \CC &
   \CP
    \arrow[l,"i^*_{(\emptyset,\emptyset)}","\simeq"']
    \arrow[u,"i_A^*"']
  \end{tikzcd}
 \end{center}
 This gives an equivalence
 $i_{uA}^*(j^*)^{-1}\simeq i_A^*(i_{(\emptyset,\emptyset)}^*)^{-1}$ of
 anafunctors, or in other words $\tht_{uA}\simeq\phi_A$.  Moreover,
 Corollary~\ref{cor-phi-factor} gives
 $\phi_A\simeq L_{K(a_1)}\dotsb L_{K(a_r)}$.
\end{proof}

\begin{lemma}\lbl{lem-tht-empty}
 The functor $\tht_\emptyset$ is zero.  
\end{lemma}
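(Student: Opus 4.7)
The plan is to unwind $\tht_\emptyset$ using the $u$-cartesian condition built into the definition of a fracture object. For any $X\in\CC(R)$, I would pick a fracture object $Y\in\CF(R)$ with $j^*Y\simeq X$; then by Definition~\ref{defn-tht} a representative of $\tht_\emptyset(X)$ is $Y_\emptyset = i_\emptyset^*Y$, where $\emptyset$ denotes the largest element of $\QQ$ (Remark~\ref{rem-PQ-ends}).

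The $u$-cartesian property, via the Kan formula as recorded in Remark~\ref{rem-u-cartesian}, yields
\[ Y_\emptyset \;\simeq\; \holim_{A\in\emptyset} Y_{uA}. \]
But the indexing poset here is literally empty: $\emptyset\in\QQ$ is the empty subset of $\PP$. A homotopy limit over the empty poset is the terminal object of the underlying category, and in any pointed derivator this coincides with the zero object. Hence $Y_\emptyset\simeq 0$ in $\CC(R)$, so $\tht_\emptyset(X)\simeq 0$ naturally in $X$.

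The only point that might deserve extra care is the identification of an empty homotopy limit with $0$, and this is the main (very small) obstacle. It is standard: $\CC(\emptyset)$ is the terminal category, and its unique object is sent by $c_*\colon\CC(\emptyset)\to\CC(R)$ to an object that is simultaneously initial and terminal in $\CC(R)$, which is forced to be $0$ by pointedness. Equivalently, one may compute $i_\emptyset^*(u\tm 1)_*(u\tm 1)^*Y$ directly using the comma poset $\{A\in\PP : \emptyset\leq uA\}$; this set is empty because every $uA$ contains $A$ and hence is a nonempty subset of $\PP$, which means $uA<\emptyset$ strictly in the reverse-inclusion order on $\QQ$.
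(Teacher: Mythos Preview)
Your proof is correct and follows essentially the same approach as the paper: both pick a fracture object $Y$ with $j^*Y\simeq X$, use the $u$-cartesian condition to express $Y_\emptyset$ as a homotopy limit over the comma poset $\emptyset/u$, observe that this poset is empty because every $uA$ is nonempty in $\PP$ (so $\emptyset$ is strictly larger than every $uA$ in $\QQ$), and conclude that the empty homotopy limit is zero. Your additional remarks about why the empty homotopy limit is the zero object are a helpful elaboration of a point the paper leaves implicit.
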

\begin{proof}
 Fix $X\in\CC(R)$ and choose $Y\in\CF(R)$ together with an isomorphism
 $X\simeq j^*(Y)$.  It will suffice to prove that $Y_\emptyset=0$.
 The $u$-cartesian property of $Y$ allows us to write $Y_\emptyset$ as
 a homotopy limit over the comma poset $\emptyset/u$.  However,
 $\emptyset$ is strictly larger than everything in the image of $u$
 (with respect to the reversed inclusion order that we are using on
 $\QQ$).  Thus, this comma poset is empty and the homotopy limit is
 zero as required.
\end{proof}

\begin{proposition}\lbl{prop-tht-lm}
 For $vA=\{T\sse N\st T\cap A\neq\emptyset\}$ we have
 $\tht_{vA}\simeq\lm_A$. 
\end{proposition}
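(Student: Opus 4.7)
The plan is to realise $\tht_{vA}(X)$ as $\holim_{B\in vA}\phi_B(X)$ using a choice of fracture object for $X$, then reduce (via a cofinality argument) to the homotopy limit over the poset $P'(A)=\{B\sse A\st B\neq\emptyset\}$, and finally apply Proposition~\ref{prop-fracture} in the restricted setting where the ambient $N$ is taken to be $A$.

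For the first step, choose $Y\in\CF(R)$ with $j^*Y\simeq X$; then Remark~\ref{rem-u-cartesian} combined with Lemma~\ref{lem-tht-u} gives $\tht_{vA}(X)\simeq Y_{vA}\simeq\holim_{B\in vA}\phi_B(X)$. For the cofinality step, consider the inclusion $\iota\:P'(A)\hookrightarrow vA$ (both regarded as subposets of $\PP$ ordered by inclusion). For each $B\in vA$, the comma poset $\iota/B=\{B'\in P'(A)\st B'\sse B\}$ has $A\cap B$ as a maximum element, which is nonempty precisely because $B\in vA$; hence $\iota/B$ is strongly contractible. By Proposition~\ref{prop-cofinal} the map $\iota$ is homotopy cofinal, so
\[ \tht_{vA}(X)\simeq\holim_{B\in P'(A)}\phi_B(X). \]

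For the final step, observe that the restriction $(u^*Y)|_{P(A)\tm R}$ is a fully localising object in the restricted context where $N$ is replaced by $A$: the fully-localising conditions of Definition~\ref{defn-fully-loc} for $A$ form a sub-collection of those for $N$, and the fracture axiom for $(A,K|_A)$ is a special case of the fracture axiom for $(N,K)$. Proposition~\ref{prop-fracture} applied in this restricted setting tells us that the natural map $X\to\holim_{B\in P'(A)}\phi_B(X)$ is a $K(A)$-localisation; combined with the cofinality identification above, this shows $\tht_{vA}(X)\simeq\lm_A(X)$. The degenerate case $A=\emptyset$ is handled separately: $vA=\emptyset$, so $\tht_\emptyset=0$ by Lemma~\ref{lem-tht-empty}, and $\lm_\emptyset=0$ since $K(\emptyset)$ is the zero theory. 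The only genuinely new input is the cofinality claim, which is immediate once one spots the maximum $A\cap B$; the rest is a direct transcription of Proposition~\ref{prop-fracture} to the subset $A\sse N$.
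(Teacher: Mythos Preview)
Your proof is correct and follows essentially the same route as the paper: both use $u$-cartesianness to identify $Y_{vA}$ with $\holim_{B\in vA}(u^*Y)_B$, then the cofinality of $P'(A)\hookrightarrow vA$, then Proposition~\ref{prop-fracture} with $N$ replaced by $A$. Your cofinality argument (the comma poset $\iota/B$ has maximum $A\cap B$) is exactly the paper's argument in different clothing: the paper observes that the inclusion is left adjoint to $r\:T\mapsto T\cap A$, and the value of a left adjoint at $B$ is by definition the terminal object of $\iota/B$.

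The one step the paper makes more explicit is the passage from ``the map $Y_{u\emptyset}\to Y_{vA}$ is a $K(A)$-localisation'' to an equivalence of \emph{anafunctors} $\tht_{vA}\simeq\lm_A$. The paper defines $k\:[1]\to\QQ$ by $k(0)=u\emptyset$, $k(1)=vA$; the localisation statement is precisely what is needed to ensure that $k^*\:\CC^{\QQ}\to\CC^{[1]}$ restricts to a morphism $\CF\to\CL$, and a commutative square of derivators then yields the anafunctor equivalence $i_{vA}^*(j^*)^{-1}\simeq i_1^*(i_0^*)^{-1}$. Your final sentence elides this bookkeeping; since the statement is about anafunctors rather than objectwise isomorphism, you should spell it out.
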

\begin{proof}
 Let $Y$ be any object of $\CF(R)$.  We claim that the morphism
 $Y_{u\emptyset}\to Y_{vA}$ is a localisation with respect to $K(A)$.
 In order to simplify notation, we replace $\CC$ by $\CC^R$ and thus
 reduce to the case $R=1$.  As $Y$ is a $u$-cartesian object, we see
 that $Y_{vA}$ is the homotopy inverse limit of $(u^*Y)|_{vA}$.  Let
 $P'$ be the poset of nonempty subsets of $A$.  Note that the
 inclusion $i\:P'\to vA$ is left adjoint to the map $r\:vA\to P'$
 given by $rT=T\cap A$.  It follows from
 Proposition~\ref{prop-cofinal} that $i$ is homotopy cofinal, so
 $Y_{vA}$ is also the homotopy inverse limit of $(u^*Y)|_{P'}$.  We
 can now apply Proposition~\ref{prop-fracture} (with $N$ replaced by
 $A$) to see that this homotopy limit is a $K(A)$-localisation, as
 required.

 Now define $k\:[1]\to\QQ$ by $k(0)=u\emptyset$ and $k(1)=vA$.  This
 gives a morphism $k^*\:\CC^{\QQ}\to\CC^{[1]}$, and the previous
 paragraph shows that this restricts to give a morphism $\CF\to\CL$
 (where $\CL$ is as in Definition~\ref{defn-deriv-loc}, for
 localisation with respect to $K(A)$).  We now have a diagram as
 follows, which commutes on the nose:
 \begin{center}
  \begin{tikzcd}[sep=huge]
   \CF
    \arrow[r,"i_{vA}^*"]
    \arrow[d,"j^*"',"\simeq"]
    \arrow[dr,"k^*","\simeq"'] &
   \CC \\
   \CC &
   \CL
    \arrow[l,"i_0^*","\simeq"']
    \arrow[u,"i_1^*"']
  \end{tikzcd}
 \end{center}
 As $i_0^*$ and $j^*$ are equivalences, we see that $k^*$ is also an
 equivalence.   This gives an isomorphism
 $i_{vA}^*(j^*)^{-1}\simeq i_1^*(i_0^*)^{-1}$ of
 anafunctors, or in other words $\tht_{vA}\simeq\lm_A$.
\end{proof}

We now want to prove the following result:
\begin{theorem}\lbl{thm-tht-tht}
 The composite $\tht_U\tht_V$ is naturally isomorphic to $\tht_{U*V}$.
\end{theorem}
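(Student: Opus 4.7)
The strategy will be to mimic the proof of Proposition~\ref{prop-phi-comp} by introducing a ``doubly fracture'' derivator $\CF_2$ that witnesses both sides of the claimed equivalence simultaneously. First I would define $\CF_2(R)\sse\CC(\QQ\tm\QQ\tm R)$ to consist of objects $Z$ such that (a) for each $V\in\QQ$, the restriction $Z|_{\QQ\tm\{V\}\tm R}$ lies in $\CF(R)$, and (b) the restriction $Z|_{\{u\emptyset\}\tm\QQ\tm R}$ lies in $\CF(R)$.  By arguments parallel to Propositions~\ref{prop-fracture-obj} and~\ref{prop-doubly-loc}, $\CF_2$ is a thick subderivator of $\CC^{\QQ\tm\QQ}$, and iterating the equivalence of Proposition~\ref{prop-fracture-obj} shows that restriction to $(u\emptyset,u\emptyset)$ gives an equivalence of derivators $i_{(u\emptyset,u\emptyset)}^*\:\CF_2\to\CC$.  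Moreover, given $Z\in\CF_2(R)$ with $Z_{(u\emptyset,u\emptyset)}\simeq X$, condition~(b) forces $Z_{(u\emptyset,V)}\simeq\tht_V(X)$, and then condition~(a) for fixed $V$ forces $Z_{(U,V)}\simeq\tht_U(Z_{(u\emptyset,V)})\simeq\tht_U\tht_V(X)$, so the anafunctor $i_{(U,V)}^*\circ(i_{(u\emptyset,u\emptyset)}^*)^{-1}$ is equivalent to $\tht_U\tht_V$.

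Next I would use pullback along the monotone map $\mu\:\QQ\tm\QQ\to\QQ$ of Lemma~\ref{lem-mu}, which satisfies $\mu(u\emptyset,V)=V$ and $\mu(U,V)=U*V$.  The key assertion is that $\mu^*$ restricts to a morphism of derivators $\CF\to\CF_2$.  Condition~(b) is immediate from $\mu(u\emptyset,V)=V$, so the work lies entirely in condition~(a): for each fixed $V$ and each $Y\in\CF(R)$, the object $U\mapsto Y_{U*V}$ must again be a fracture object.  The $u$-cartesian property reduces, via Fubini for homotopy limits, to showing that the projection
\[ \{(A,C)\st A\in U,\; C\in uA*V\}\to U*V,\qquad (A,C)\mapsto C, \]
is homotopy cofinal.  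This in turn follows from Proposition~\ref{prop-cofinal} together with the identity $U*V=\bigcup_{A\in U}(uA*V)$, which is direct from the definition of $*$ and the upward-closedness of $U$ in $\PP$.  The fully localizing condition on $u^*\mu^*Y$ is then verified by an argument in the spirit of Proposition~\ref{prop-fracture}, using the extended fracture axiom (Lemma~\ref{lem-fracture-axiom}) to produce the $K(t)$-localization maps.

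With $\mu^*\:\CF\to\CF_2$ in hand, the following diagram commutes on the nose, because $\mu(u\emptyset,u\emptyset)=u\emptyset$ and $\mu(U,V)=U*V$:
\begin{center}
 \begin{tikzcd}[sep=large]
  & \CF \arrow[dl,"j^*"'] \arrow[d,"\mu^*"'] \arrow[dr,"i_{U*V}^*"] & \\
  \CC & \CF_2 \arrow[l,"i_{(u\emptyset,u\emptyset)}^*"] \arrow[r,"i_{(U,V)}^*"'] & \CC
 \end{tikzcd}
\end{center}
Both $j^*$ (by Proposition~\ref{prop-fracture-obj}) and $i_{(u\emptyset,u\emptyset)}^*$ (by the previous step) are equivalences, so Remark~\ref{rem-ana} produces an isomorphism of anafunctors
\[ \tht_{U*V} = i_{U*V}^*\circ(j^*)^{-1}
    \simeq i_{(U,V)}^*\circ(i_{(u\emptyset,u\emptyset)}^*)^{-1}
    \simeq \tht_U\tht_V. \]

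The main obstacle in this plan is the verification that $\mu^*$ lands in $\CF_2$, and within that, checking the $u$-cartesian property via the homotopy cofinality argument above; the fully localizing condition, by contrast, should be the kind of chromatic fracture computation that goes through routinely.
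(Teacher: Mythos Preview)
Your architecture matches the paper's exactly: introduce the double-fracture derivator $\CF_2$, show that $\mu^*$ carries $\CF$ into $\CF_2$, and then read off the equivalence from a commuting triangle.  Your $\CF_2$ is the paper's (condition~(a) pointwise in $V$ is equivalent to the paper's $X\in\CF(\QQ\tm R)$, since both the $u$-cartesian and fully-localising properties are detected pointwise).  The final diagram chase is also the same.

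Where you diverge is in the proof that $\mu^*(\CF)\sse\CF_2$.  The paper does not verify the two conditions of $\CF_2$ directly.  Instead it passes through $\CP_2$: it proves the Beck--Chevalley isomorphism $\mu^*u_*\simeq u^2_*\,\inc_*\,\sg^*$ for the square with corners $\MM$, $\PP$, $\QQ\tm\QQ$, $\QQ$, and this reduces (via Proposition~\ref{prop-cofinal-square}) to the single cofinality statement that $\sg\:U\ostar V\to U*V$ is homotopy cofinal (Proposition~\ref{prop-sg-cofinal}).  Since $\inc_*\sg^*\:\CP\to\CP_2$ and $u^2_*\:\CP_2\to\CF_2$ are already known to be equivalences, both the $u$-cartesian and the fully-localising conditions for $\mu^*Y$ come for free.

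Your direct route is also viable, but you have underestimated it on two counts.  First, the identity $U*V=\bigcup_{A\in U}(uA*V)$ is not enough to establish the cofinality you need: you must still show that each comma poset $\pi_2/C_0$ is strongly contractible, and the Fubini step likewise requires a cofinality check for the other projection.  Both of these do hold (for $\pi_2/C_0$ one finds a largest element $((C_0)_{\leq j},C_0)$ with $j$ maximal such that $(C_0)_{\geq j}\in V$), but the arguments are of the same flavour and difficulty as Proposition~\ref{prop-sg-cofinal}, not consequences of the union identity alone.  Second, and more importantly, the fully-localising condition is \emph{not} routine.  You need that $Y_{uT*V}\to Y_{u(tT)*V}$ is a $K(t)$-localisation, but $u(tT)*V$ contains sets $C$ with $\min(C)<t$, for which $\phi_C(X)$ is not $K(t)$-local; showing that $Y_{u(tT)*V}$ is nevertheless $K(t)$-local requires a further cofinality argument (the inclusion of $\{C\in u(tT)*V:\min(C)=t\}$ is homotopy cofinal), and the acyclicity of the fibre is a separate fracture computation.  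The paper's Beck--Chevalley manoeuvre is precisely what lets it avoid this proliferation of separate checks.
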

The proof will be given after some preliminaries.

We start with the following result, which will be needed in the proof
of Theorem~\ref{thm-tht-tht}, and which also shows that
Theorem~\ref{thm-tht-tht} is consistent with
Proposition~\ref{prop-phi-comp}. 
\begin{lemma}\lbl{lem-mu-u}
 For $A,B\in\CP$ we have
 \[ uA * uB =
     \begin{cases}
      u(A\cup B) & \text{ if } A \angle B \\
      \emptyset  & \text{ otherwise. }
     \end{cases}
 \]
\end{lemma}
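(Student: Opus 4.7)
The plan is to unpack the definition of $*$ directly and verify the two cases by elementary set manipulation. By definition,
$uA * uB = \{C \cup D \st A \sse C,\ B \sse D,\ C \angle D\}$,
which is to be compared with $u(A \cup B) = \{E \sse N \st A \cup B \sse E\}$.

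First, suppose $A \angle B$. The inclusion $uA * uB \sse u(A \cup B)$ is immediate, since $C \cup D \supseteq A \cup B$ whenever $C \supseteq A$ and $D \supseteq B$. For the reverse inclusion, I would mimic the splitting trick already used in the proof of Lemma~\ref{lem-mu}: given $E \supseteq A \cup B$, choose $t \in \N$ with $a \leq t$ for all $a \in A$ and $t \leq b$ for all $b \in B$ (such $t$ exists precisely because $A \angle B$; the degenerate cases $A = \emptyset$ or $B = \emptyset$ are trivial because $u\emptyset$ is a two-sided identity by Lemma~\ref{lem-mu}). Set $C = \{e \in E \st e \leq t\}$ and $D = \{e \in E \st e \geq t\}$. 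Since $N$ is totally ordered, $C \cup D = E$; the choice of $t$ gives $A \sse C$ and $B \sse D$; and $C \angle D$ is automatic. Hence $E \in uA * uB$.

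Second, suppose $\neg(A \angle B)$, so there exist $a \in A$ and $b \in B$ with $a > b$. Any $C \in uA$ contains $a$ and any $D \in uB$ contains $b$, so $C \angle D$ would force $a \leq b$, contradicting $a > b$. Thus no element of $uA * uB$ exists, which as an element of $\QQ$ is the top element $\emptyset$ (see Remark~\ref{rem-PQ-ends}).

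The argument involves no real obstacle; both cases are essentially bookkeeping once one has the separator construction from the proof of Lemma~\ref{lem-mu} in hand. The only step worth flagging is the verification that $C \cup D = E$, which relies crucially on the total order of $N$ (so every $e \in E$ is comparable with $t$) rather than on any poset-theoretic structure.
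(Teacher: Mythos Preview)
Your proof is correct and follows essentially the same route as the paper's own proof: both unpack the definition, observe the easy inclusion $uA*uB\sse u(A\cup B)$, use a separator $t$ (the paper calls it $k$) to split any $E\supseteq A\cup B$ into $E_{\leq t}$ and $E_{\geq t}$ for the reverse inclusion, and derive the contradiction from $a>b$ in the second case. The only cosmetic difference is that you handle $A=\emptyset$ or $B=\emptyset$ separately via the identity property of $u\emptyset$, whereas the paper's choice of $k$ already covers those cases uniformly.
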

\begin{proof}
 By definition, we have
 \[ uA * uB = \{ C \cup D \st A\sse C, B\sse D, C\angle D \} \sse
     u(A\cup B).
 \]
 If $A\angle B$ then we can choose $k$ with $a\leq k$ for all
 $a\in A$, and $k\leq b$ for all $b\in B$.  Then any
 $E\in u(A\cup B)$ can be written as $C\cup D$ with
 $C=\{j\in E\st j\leq k\}\supseteq A$ and
 $D=\{j\in E\st j\geq k\}\supseteq B$, so $E\in uA * uB$.  We
 therefore have $uA * uB = u(A\cup B)$ in this case.  On the other
 hand, if it is not true that $A\angle B$ then we can choose $a\in A$
 and $b\in B$ with $a>b$.  If $C$ and $D$ are as in the definition
 then $a\in C$ and $b\in D$ so it is not true that $C\angle D$.  From
 this it follows that $uA * uB=\emptyset$.
\end{proof}

\begin{definition}\lbl{defn-double-fracture}
 We say that an object $X\in\CC(\QQ\tm\QQ\tm R)$ is a
 \emph{double fracture object} if
 \begin{itemize}
  \item[(a)] $X$ is a fracture object relative to $\QQ\tm R$, so it
   lies in $\CF(\QQ\tm R)$.
  \item[(b)] The restriction to
   $\{u\emptyset\}\tm\QQ\tm R\simeq\QQ\tm R$ lies in $\CF(R)$.
 \end{itemize}
 We write $\CF_2(R)$ for the full subcategory of double fracture
 objects in $\CC(\QQ\tm\QQ\tm R)$.  We also define
 $k,l\:\QQ\to\QQ\tm\QQ$ by $k(V)=(u\emptyset,V)$ and
 $l(U)=(U,u\emptyset)$.  This gives functors
 $k^*,l^*\:\CF_2(R)\to\CC(\QQ\tm R)$.  Finally, we define
 $j_2\:e\to\QQ\tm\QQ$ to be the map with image
 $(u\emptyset,u\emptyset)$.  
\end{definition}

\begin{proposition}\lbl{prop-double-fract}
 $\CF_2$ is a thick subderivator of $\CC^{\QQ\tm\QQ}$, and the
 maps $k$ and $l$ give equivalences as shown:
 \begin{center}
  \begin{tikzcd}[sep=huge]
   \CF_2 \arrow[rr,"k^*","\simeq"'] \arrow[d,"l^*"',"\simeq"]
     \arrow[drr,"j_2^*","\simeq"'] & &
   \CF \arrow[d,"j^*","\simeq"'] \\
   \CF \arrow[rr,"j^*"',"\simeq"] & &
   \CC.
  \end{tikzcd}
 \end{center}
\end{proposition}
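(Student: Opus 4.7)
The plan is to establish the result in three movements: thickness of $\CF_2$, the equivalence $k^*$ (which immediately yields $j_2^*$), and finally the equivalence $l^*$.

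For thickness, I would describe $\CF_2$ as an intersection of two thick subderivators of $\CC^{\QQ\tm\QQ}=(\CC^\QQ)^\QQ$. Condition~(a) of Definition~\ref{defn-double-fracture} says $X\in\CF(\QQ\tm R)=\CF^\QQ(R)$, where $\CF^\QQ$ is the shifted subderivator of $(\CC^\QQ)^\QQ$; a routine check shows that shifting preserves thickness. Condition~(b) says $k^*X\in\CF(R)$; since $k\:\QQ\to\QQ\tm\QQ$ is a poset map, $k^*\:\CC^{\QQ\tm\QQ}\to\CC^\QQ$ is a morphism of derivators preserving all homotopy Kan extensions by \cite{groth:derpointstable}*{Proposition~2.6}, so the preimage $(k^*)^{-1}(\CF)$ is a thick subderivator of $\CC^{\QQ\tm\QQ}$. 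The intersection $\CF_2=\CF^\QQ\cap(k^*)^{-1}(\CF)$ is therefore thick.

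For $k^*$: the key observation is that the map $k\tm 1_R\:\QQ\tm R\to\QQ\tm\QQ\tm R$ is precisely the $j$-map of Proposition~\ref{prop-fracture-obj} with $\QQ\tm R$ in place of the test $R$. That proposition therefore gives an equivalence $k^*\:\CF(\QQ\tm R)\to\CC(\QQ\tm R)$. Condition~(b) is exactly the statement that $k^*$ sends $\CF_2(R)$ into $\CF(R)$; conversely, given $Y\in\CF(R)$ the inverse equivalence produces $X\in\CF(\QQ\tm R)$ with $k^*X\simeq Y\in\CF(R)$, so condition~(b), and hence membership in $\CF_2$, is automatic. So $k^*\:\CF_2\to\CF$ is an equivalence. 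The diagonal $j_2^*=j^*\circ k^*$ is then an equivalence by composition with the equivalence $j^*\:\CF\to\CC$ of Proposition~\ref{prop-fracture-obj}.

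For $l^*$: first verify that $l^*X\in\CF(R)$ whenever $X\in\CF_2(R)$, by specialising the fracture conditions of~(a) to the $V=u\emptyset$ slice; both $u$-cartesianness and the fully localising property descend immediately. Although conditions~(a) and~(b) are not symmetric between the two $\QQ$-factors, one checks directly that $j^*\circ l^*=j_2^*$, since both are evaluation at $(u\emptyset,u\emptyset)$. Since $j^*$ and $j_2^*$ are equivalences, a two-out-of-three argument forces $l^*$ to be an equivalence. This last step is the main obstacle: the asymmetry of Definition~\ref{defn-double-fracture} prevents a direct appeal to Proposition~\ref{prop-fracture-obj} for $l^*$, and one must route the argument through the corner $j_2^*$.
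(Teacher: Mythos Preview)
Your proposal is correct and follows essentially the same approach as the paper's proof. The paper likewise obtains thickness by writing $\CF_2$ as the $k^*$-preimage of $\CF$ inside the thick subderivator $\CE(R)=\CF(\QQ\tm R)$, uses Proposition~\ref{prop-fracture-obj} with $\QQ\tm R$ in place of $R$ to get the equivalence $k^*$, and then deduces $l^*$ and $j_2^*$ by a two-out-of-three chase of the square. The only cosmetic difference is in verifying $l^*(\CF_2(R))\sse\CF(R)$: you specialise the conditions of~(a) to the slice $V=u\emptyset$ by hand, whereas the paper packages this as the statement that $(1\tm f)^*$ preserves the thick subderivator $\CF$ for $f(r)=(u\emptyset,r)$ --- the same content, expressed via the subderivator axioms rather than pointwise.
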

\begin{proof}
 Put $\CE(R)=\CF(\QQ\tm R)\subset\CC(\QQ\tm\QQ\tm R)$ (so this is the
 subcategory of objects satisfying condition~(a)).  From
 Proposition~\ref{prop-fracture} we see that $\CE$ is a thick
 subderivator of $\CC^{\QQ\tm\QQ}$ and that $k^*\:\CE\to\CC^{\QQ}$ is
 an equivalence of derivators.  As $\CF$ is a thick subderivator of
 $\CC^{\QQ}$, it follows that the preimage under $k^*$ of $\CF$ is a
 thick subderivator of $\CE$.  However, this preimage is just
 $\CF_2$, so $\CF_2$ is a thick subderivator as claimed.   It is also
 clear from this that $k^*\:\CF_2\to\CF$ is an equivalence.  We have
 seen that $j^*\:\CF\to\CC$ is also an equivalence.

 Next, recall again that $\CF$ is a thick subderivator.  Any
 monotone map $f\:R\to R'$ gives a functor
 $(1\tm f)^*\:\CC(\QQ\tm R')\to\CC(\QQ\tm R)$, and the subderivator
 property implies that $(1\tm f)^*(\CF(R'))\sse\CF(R)$.  Take
 $R'=\QQ\tm R$ and $f(r)=(u\emptyset,r)$; the conclusion is then that
 $l^*(\CE(R))\sse\CF(R)$, and so $l^*(\CF_2(R))\sse\CF(R)$.  This
 means that we have a diagram of functors as claimed, commuting up to
 natural isomorphism.  As $j^*$ and $k^*$ are equivalences, we can
 chase the diagram to see that $l^*$ and
 $i_{(u\emptyset,u\emptyset)}^*$ are equivalences as well.
\end{proof}

\begin{corollary}\lbl{cor-tht-tht}
 For any $U,V\in\QQ$, the composite anafunctor $\tht_U\tht_V$ is
 isomorphic to the fraction
 \[ \CC \xra{(j_2^*)^{-1}} \CF_2 \xra{i_{(U,V)}^*} \CC \]
\end{corollary}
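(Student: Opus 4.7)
The plan is to imitate the strategy of Proposition~\ref{prop-phi-comp}, replacing $\CP_2$ by $\CF_2$: I exhibit $\CF_2$ as an explicit model for the $2$-pullback defining the composite $\tht_U \tht_V$, and then invoke Remark~\ref{rem-ana} to conclude.

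First I unpack the composite. Since $j^* \colon \CF \to \CC$ is an equivalence (Proposition~\ref{prop-fracture-obj}), Lemma~\ref{lem-two-pullbacks} produces a $2$-pullback $\CX$ of the cospan $\CF \xra{i_V^*} \CC \xla{j^*} \CF$, whose objects over $R$ are triples $(Y,Y',\alpha)$ with $Y,Y' \in \CF(R)$ and $\alpha \colon i_V^* Y \xra{\simeq} j^* Y'$, and whose projections $\pi_1,\pi_2 \colon \CX \to \CF$ satisfy that $\pi_1$ is an equivalence. The composite anafunctor is then represented by the span
\[
\CC \xla{j^* \pi_1} \CX \xra{i_U^* \pi_2} \CC.
\]

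Next I construct a morphism $\Phi \colon \CF_2 \to \CX$ of prederivators. Let $m_V \colon \QQ \to \QQ \tm \QQ$ send $U \mapsto (U,V)$. The key observation is that for $W \in \CF_2(R)$, condition~(a) of Definition~\ref{defn-double-fracture} places $W$ in $\CF(\QQ \tm R)$ as an object of the shifted derivator $\CC^\QQ$, and since $\CF \sse \CC^\QQ$ is a thick subderivator (Proposition~\ref{prop-fracture-obj}), pulling back along the parameter morphism $R \to \QQ \tm R$, $r \mapsto (V,r)$ yields $m_V^* W \in \CF(R)$. I therefore set
\[
\Phi(W) \;=\; (k^* W,\; m_V^* W,\; \mathrm{id}),
\]
where the identity is the strict equality $i_V^*(k^* W) = W_{(u\emptyset, V)} = j^*(m_V^* W)$ in $\CC(R)$; naturality in $R$ is immediate from the functoriality of $k^*$ and $m_V^*$.

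Finally I check compatibility and conclude. Direct computation gives
\[
j^* \pi_1 \circ \Phi = j^* \circ k^* = j_2^*, \qquad i_U^* \pi_2 \circ \Phi = i_U^* \circ m_V^* = i_{(U,V)}^*,
\]
both holding on the nose. Both $j_2^* \colon \CF_2 \to \CC$ (Proposition~\ref{prop-double-fract}) and $j^* \pi_1 \colon \CX \to \CC$ (as a composite of the equivalences $\pi_1$ and $j^*$) are equivalences of derivators. Remark~\ref{rem-ana} therefore implies that $\Phi$ is itself an equivalence and gives an isomorphism of anafunctors
\[
\tht_U \tht_V \;=\; i_U^* \pi_2 \circ (j^* \pi_1)^{-1} \;\simeq\; i_{(U,V)}^* \circ (j_2^*)^{-1},
\]
as required. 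The main technical obstacle is verifying that $m_V^* W$ lies in $\CF(R)$; once one reads Definition~\ref{defn-double-fracture}(a) as a statement about the shifted derivator $\CC^\QQ$, the rest of the argument is formal bookkeeping with pullback squares of derivators.
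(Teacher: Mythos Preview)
Your proof is correct and follows essentially the same approach as the paper. The paper draws the commutative diagram with $\CF_2$ at the centre (using $k^*$ and the map you call $m_V^*$, which it writes as $i_V^*$ via the subderivator structure on $\CF$) and then says the claim is clear, implicitly relying on the $2$-pullback bookkeeping already spelled out in Proposition~\ref{prop-phi-comp}; you have simply made that bookkeeping explicit by constructing $\CX$ and $\Phi$ directly.
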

\begin{proof}
 Note that if $X\in\CF_2(R)$ then $X\in\CF(\QQ\tm R)$ and $\CF$ is a
 subderivator so we have $i_V^*X\in\CF(R)$.  We can thus interpret
 $i_V^*$ as a morphism from $\CF_2$ to $\CF$.  It fits into a diagram
 as follows, which commutes on the nose:
 \begin{center}
  \begin{tikzcd}[sep=huge]
   && \CC \\ &
   \CF_2
    \arrow[ur,"i_{(U,V)}^*"]
    \arrow[r,"i_V^*"']
    \arrow[d,"k^*","\simeq"']
    \arrow[dl,"j_2^*"',"\simeq"] &
   \CF
    \arrow[u,"i_U^*"']
    \arrow[d,"j^*","\simeq"'] \\
   \CC &
   \CF
    \arrow[l,"j^*","\simeq"']
    \arrow[r,"i_V^*"'] &
   \CC
  \end{tikzcd}
 \end{center}
 The bottom edge represents the anafunctor $\tht_V$, whereas the right
 hand edge represents $\tht_U$.  The claim is clear from this.
\end{proof}

\begin{proposition}\lbl{prop-uu}
 The morphism $u^2_*\:\CC^{\PP\tm\PP}\to\CC^{\QQ\tm\QQ}$ restricts to
 give an equivalence $\CP_2\to\CF_2$, with inverse $(u^2)^*$.
\end{proposition}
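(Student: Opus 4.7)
The plan is to embed $(u^2)_*$ in a commutative square
\begin{center}
\begin{tikzcd}[sep=large]
\CP_2 \arrow[r, "(u^2)_*"] \arrow[d, "i_\emptyset^*"', "\simeq"] & \CF_2 \arrow[d, "k^*", "\simeq"'] \\
\CP \arrow[r, "u_*"', "\simeq"] & \CF
\end{tikzcd}
\end{center}
whose verticals are the equivalences of Propositions~\ref{prop-doubly-loc} and~\ref{prop-double-fract}, and whose bottom is the equivalence of Proposition~\ref{prop-fracture-obj}; once the square is shown to commute and $(u^2)_*$ is shown to land in $\CF_2$, cancellation forces $(u^2)_*$ to be an equivalence. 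The inverse must then be $(u^2)^*$, because $u$ is injective and order-preserving and hence an embedding of posets, so Lemma~\ref{lem-embedding} applied to $u\tm u$ yields $(u^2)^*(u^2)_* \simeq 1$ on the nose.

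Verifying that $(u^2)_*$ carries $\CP_2(R)$ into $\CF_2(R)$ splits into the two defining conditions. For condition~(b) of $\CF_2$, Beck-Chevalley gives $i_{u\emptyset}^*(u\tm 1)_* \simeq i_\emptyset^*$ because $u^{-1}(u\emptyset) = \{\emptyset\}$ makes the relevant square a strict pullback; combined with the analogous identification for the $(1\tm u)_*$ step (which commutes past $i_\emptyset^*$ on the first coordinate), this yields $k^*(u^2)_* X \simeq u_*(X|_{\{\emptyset\}\tm\PP\tm R})$, and the right-hand side lies in $\CF(R)$ by condition~(b) of $\CP_2$ together with Proposition~\ref{prop-fracture-obj}. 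The very same calculation verifies commutativity of the square above. For condition~(a) of $\CF_2$, the $u$-cartesian-in-first-$\QQ$ part is automatic from the factorisation $(u^2)_* = (u\tm 1)_*(1\tm u)_*$, and Lemma~\ref{lem-embedding} gives $(u\tm 1)^*(u^2)_* X \simeq (1\tm u)_* X$, so what remains is to show that $(1\tm u)_* X$ is fully localising in the first $\PP$.

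Unpacking via the Kan formula, this amounts to the claim that, for $t \angle U$ in the first $\PP$ and $(V, r) \in \QQ\tm R$, the induced map $\holim_{B \in V} X_{U, B, r} \to \holim_{B \in V} X_{tU, B, r}$ is a $K(t)$-localisation. Condition~(a) of $\CP_2$ gives this pointwise in $B$, and the target is automatically $K(t)$-local since local objects are closed under homotopy limits. The main obstacle is showing that the fibre---itself a homotopy limit over the finite poset $V$ of pointwise $K(t)$-acyclic objects---remains $K(t)$-acyclic. This is settled by Proposition~\ref{prop-cosimp-replacement}, which presents $\holim_V$ as a finite iterated extension whose graded pieces are finite coproducts of objects of the form $\Om^d(-)_{\max(\sigma)}$; since $K(t)$-acyclicity is preserved by $\Om$, by finite coproducts and by cofibre sequences in any triangulated category, a short induction on $d$ completes the verification, the square commutes as indicated, and the equivalence $(u^2)_*$ with inverse $(u^2)^*$ follows by cancellation.
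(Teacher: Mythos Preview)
Your proof is correct, and the overall strategy—build the commuting square, check that $(u^2)_*$ lands in $\CF_2$, then cancel three equivalences—is a clean variant of what the paper does.  Two points of comparison are worth noting.

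First, for condition~(a) of $\CF_2$ the paper simply observes that $X\in\CP(\PP)$ and $\CP$ is a thick subderivator, so $(1\tm u)_*X\in\CP(\QQ)$ directly; you instead reprove this closure by hand via Proposition~\ref{prop-cosimp-replacement}.  Your argument is correct but redundant: the tower argument you invoke is exactly what underlies the subderivator property (Theorem~\ref{thm-thick-subcats} and Proposition~\ref{prop-perp}), so you could have saved a paragraph.  Second, the paper checks both directions explicitly---it verifies separately that $(u^2)^*$ sends $\CF_2$ to $\CP_2$, and then shows that the unit $Y\to(u^2)_*(u^2)^*Y$ is an isomorphism for $Y\in\CF_2$.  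Your cancellation argument bypasses this entirely, and your observation that $(u^2)^*(u^2)_*\simeq 1$ then forces $(u^2)^*$ to agree with the abstract inverse on $\CF_2$ is a genuine simplification.

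One imprecision: the justification ``$u^{-1}(u\emptyset)=\{\emptyset\}$ makes the relevant square a strict pullback'' is not by itself sufficient for the Beck--Chevalley isomorphism in a derivator.  What you actually need (via Proposition~\ref{prop-cofinal-square}) is that the induced map on comma posets is homotopy cofinal.  Here the relevant comma poset is $u\emptyset/u=\PP$ (since $u\emptyset$ is least in $\QQ$), and the map $e\to\PP$ picking out $\emptyset$ is cofinal because $\emptyset$ is initial in $\PP$.  The conclusion stands, but the stated reason should be replaced by this cofinality check; the paper makes essentially the same cofinality argument when verifying its condition~(e).
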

\begin{proof}

 Before starting we warn the reader that in this proof all the
 restrictions will be with respect the base derivator $\CC$, even if
 we will apply them to elements we will prove are in the derivator of
 (doubly) localizing or fracture objects. This lets us avoid awkward
 notation and it is not restricting at all since the above derivators
 are subderivators of appropriate shifts of $\CC$.

 We must show that $\CP_2(R)\simeq\CF_2(R)$ for all $R$, but we can
 reduce to the case $R=e$ by replacing $\CC$ with $\CC^R$.

 We will factor the map
 $u^2\:\PP\tm\PP\to\QQ\tm\QQ$ as $u^2=u_1\circ u_2$, where
 $u_1=u\tm 1\:\PP\tm\QQ\to\QQ\tm\QQ$ and
 $u_2=1\tm u\:\PP\tm\PP\to\PP\tm\QQ$.  We also use the maps
 $i_\emptyset\:\PP\to\PP\tm\PP$ and $i_{u\emptyset}\:\QQ\to\QQ\tm\QQ$
 given by $i_{\emptyset}(B)=(\emptyset,B)$ and
 $i_{u\emptyset}(V)=(u\emptyset,V)$.  These fit in a commutative
 diagram 
 \begin{center}
  \begin{tikzcd}[sep=large]
   \QQ\tm\QQ &
   \QQ \arrow[l,"i_{u\emptyset}"'] \\
   \PP\tm\PP \arrow[u,"u^2"] &
   \PP \arrow[l,"i_\emptyset"] \arrow[u,"u"']
  \end{tikzcd}
 \end{center}
  
 Note that an object $X\in\CC(\PP\tm\PP)$ lies in $\CP_2(e)$ iff it
 satisfies the following conditions:
 \begin{itemize}
  \item[(a)] $X\in\CP(\PP)$
  \item[(b)] $i_\emptyset^*X\in\CP(e)$.
 \end{itemize}
 
 Similarly, by unwinding the definitions a little we see that an
 object $Y\in\CC(\QQ\tm\QQ)$ lies in $\CF_2(e)$ iff the following hold:
 \begin{itemize}
  \item[(c)] $u_1^*Y\in\CP(\QQ)$
  \item[(d)] $Y=(u_1)_*(u_1^*Y)$
  \item[(e)] $i_{u\emptyset}^*Y\in\CF(e)$.
 \end{itemize}

 Suppose that $Y\in\CF_2(e)$, so that~(c), (d) and~(e) are satisfied.
 Put $X=(u^2)^*Y\in\CC(\PP\tm\PP)$; we must show that $X\in\CP_2(e)$,
 or in other words that~(a) and~(b) are satisfied.  Note that
 $X=u_2^*(u_1^*Y)$ and $u_1^*Y\in\CP(\QQ)$ by~(c) and $\CP$ is a
 subderivator so $u_2^*(u_1^*Y)\in\CP(\PP)$ so~(a) is satisfied. 
 Moreover, the diagram shows that
 $i_\emptyset^*X=i_\emptyset^*(u^2)^*Y=u^*i_{u\emptyset}^*Y$, and
 $i_{u\emptyset}^*Y\in\CF(e)$ by~(e), so
 $u^*i_{u\emptyset}^*Y\in\CP(e)$, so~(b) holds.

 Suppose instead that we start with $X\in\CP_2(e)$, so that~(a) and~(b)
 hold.  Put $Y=u^2_*X\in\CC(\QQ\tm\QQ)$; we must then prove~(c), (d)
 and~(e).  We first note that $Y=(u_1)_*(u_2)_*X$ and $u_1$ is an
 embedding so $u_1^*(u_1)_*\cong 1$ so $u_1^*Y\cong(u_2)_*X$.  Moreover, we
 have $X\in\CP(\PP)$ by~(a) and $\CP$ is a subderivator so
 $(u_2)_*X\in\CP(\QQ)$ and this proves~(c).  Condition~(d)
 is also clear from this discussion.  For condition~(e), note that the
 diagram gives a Beck-Chevalley transformation
 \[ \al \: i_{u\emptyset}^*Y = i_{u\emptyset}^*u^2_*X \to
      u_* i_\emptyset^* X \in\CC(\QQ).
 \]
 We know that $i_\emptyset^* X\in\CP(e)$ by~(b), and it follows that
 $u_* i_\emptyset^* X\in\CF(e)$.  For condition~(e) it will therefore
 suffice to show that $\al$ is an isomorphism.  For this it will in
 turn suffice to check that $i_V^*\al$ is an isomorphism in $\CC(e)$
 for all $V\in\QQ$.  Put
 \begin{align*}
  \mathbb{B} &= \{B\in\PP\st V\leq uB\} \\
  \mathbb{C} &= \{(A,B)\in\PP\tm\PP\st (u\emptyset,V)\leq(uA,uB)\}.
 \end{align*}
 (These can in fact be simplified to $\mathbb{B}=V$ and
 $\mathbb{C}=\PP\tm V$.)  The map $i_{\emptyset}$ restricts to give a
 map $\mathbb{B}\to\mathbb{C}$.  The Kan formula tells us that the
 domain of $i_V^*\al$ is $\holim_{\mathbb{C}}X$, whereas the codomain
 is $\holim_{\mathbb{B}}i_\emptyset^*X$.  The evident projection
 $\mathbb{C}\to\mathbb{B}$ is right adjoint to $i_\emptyset$, so
 $i_\emptyset\:\mathbb{B}\to\mathbb{C}$ is homotopy cofinal by
 Proposition~\ref{prop-cofinal}, so $\al$ is an isomorphism as
 required.

 We now have morphisms $u^2_*\:\CP_2\to\CF_2$ and
 $(u^2)^*\:\CF_2\to\CP_2$ with $(u^2)^*u^2_*\simeq 1$ by
 Lemma~\ref{lem-embedding}.  All that is left is to prove that when
 $Y\in\CF_2(e)$, the unit map $Y\to u^2_*(u^2)^*Y$ is an isomorphism.
 Put $Z=u_1^*Y$, so condition~(d) gives $Y=(u_1)_*Z$.  It will suffice
 to show that $Z=(u_2)_*u_2^*Z$.  Note that $Z\in\CP(\QQ)$ by
 condition~(c), and $\CP$ is a subderivator, so $(u_2)_*u_2^*Z$ also
 lies in $\CP(\QQ)$.  As $i_\emptyset^*\:\CP\to\CC$ is an equivalence,
 it will suffice to check that the map
 $i_\emptyset^*Z\to i_\emptyset^*(u_2)_*u_2^*Z$ is an isomorphism.
 For this, we claim that
 $i_\emptyset^*(u_2)_*u_2^*Z=u_*u^*i_\emptyset^*Z$.
 This can be checked using the Kan formula, or by recalling that
 $i_\emptyset^*\:\CC^{\PP}\to\CC$ is a morphism of derivators and so
 is compatible with $u_*$ and $u^*$.  We must therefore check that the
 map $i_\emptyset^*Z\to u_*u^*i_\emptyset^*Z$ is an isomorphism.  Here
 $i_\emptyset^*Z=i_\emptyset^*u_1^*Y=i_{u\emptyset}^*Y$, and this lies
 in $\CF(e)$ by condition~(e),  so the claim follows from the
 definition of $\CF$.
\end{proof}

\begin{definition}\lbl{defn-ostar}
 Given $U,V\in\QQ$ we put
 \[ U\ostar V = (U\tm V) \cap \MM =
     \{(A,B)\in\PP\tm\PP\st A\in U,\; B\in V,\; A\angle B\}.
 \]
 The definition of $U*V$ can then be written as
 \[ U * V = \{A\cup B\st (A,B)\in U\ostar V\}. \]
 Note that $U\ostar V$ and $U*V$ can be seen as subposets of $\MM$ and
 $\PP$ respectively.  We define $\sg\:U\ostar V\to U*V$ by
 $\sg(A,B)=A\cup B$, and note that this is a morphism of posets.
\end{definition}

\begin{proposition}\lbl{prop-sg-cofinal}
 The map $\sg\:U\ostar V\to U*V$ is homotopy cofinal.
\end{proposition}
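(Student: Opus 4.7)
The plan is to invoke Proposition~\ref{prop-cofinal}, which reduces the claim to showing that for every $C\in U*V$, the comma poset
\[ \sigma/C = \{(A,B)\in U\ostar V\st A\cup B\sse C\} \]
is strongly contractible.  I would prove this directly by exhibiting a zigzag of pointwise inequalities connecting the identity of $\sigma/C$ to a constant map.

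To construct the zigzag, write $C_{\leq t}=\{c\in C\st c\leq t\}$ and $C_{\geq t}=\{c\in C\st c\geq t\}$, define
\[ t_U=\min\{t\st C_{\leq t}\in U\},\qquad t_V=\max\{t\st C_{\geq t}\in V\}, \]
and put $A_0=C_{\leq t_U}$, $B_0=C_{\geq t_V}$.  Since $C\in U*V$ the poset $\sigma/C$ is nonempty, and since $U,V$ are upward closed, any $(A,B)\in\sigma/C$ satisfies $A\sse C_{\leq\max A}$ and $B\sse C_{\geq\min B}$, forcing $\max A\geq t_U$ and $\min B\leq t_V$.  Combined with $A\angle B$, this yields
\[ t_U\leq\max A\leq\min B\leq t_V, \]
so $t_U\leq t_V$, $(A_0,B_0)\in\sigma/C$, and the crucial angle relations $A_0\angle B$ and $A\angle B_0$ hold for every $(A,B)\in\sigma/C$.

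With this in hand, define $f,g\:\sigma/C\to\sigma/C$ by $f(A,B)=(A\cup A_0,B)$ and $g(A,B)=(A,B\cup B_0)$.  The angle relations just established ensure these land in $\sigma/C$, and upward closure of $U$ and $V$ makes them poset maps satisfying $\mathrm{id}\leq f$ and $\mathrm{id}\leq g$.  Setting $h=fg$, one has $h(A,B)=(A\cup A_0,B\cup B_0)$, while the constant map $k\:\sigma/C\to\sigma/C$ with value $(A_0,B_0)$ satisfies $k\leq h$ componentwise.  The zigzag $\mathrm{id}\leq h\geq k$ places $\mathrm{id}$ and $k$ in the same component of $\POSet(\sigma/C,\sigma/C)$, giving strong contractibility of $\sigma/C$.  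The main obstacle is pinning down the ``base point'' $(A_0,B_0)$ that simultaneously dominates every $A$ from the right and every $B$ from the left; the extremal choices via $t_U$ and $t_V$ are precisely what makes this work, and they handle degenerate cases such as $\emptyset\in U$ (where $U=\PP$ and $A_0=\emptyset$) cleanly under the conventions $\max\emptyset=-\infty$ and $\min\emptyset=+\infty$.
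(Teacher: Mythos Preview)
Your proof is correct and follows essentially the same approach as the paper: both reduce via Proposition~\ref{prop-cofinal} to strong contractibility of $\sg/C$, both identify the same base point $(C_{\leq t_U},C_{\geq t_V})$ via the extremal indices, and both contract via a length-two zigzag $\mathrm{id}\leq(\text{middle})\geq(\text{constant})$. The only difference is cosmetic: the paper's middle map is $(A,B)\mapsto(C_{\leq\max A},C_{\geq\min B})$ rather than your $(A,B)\mapsto(A\cup A_0,B\cup B_0)$, but the verifications are parallel.
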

\begin{proof}
 Consider an element $C\in U*V$ and the comma poset
 \[ \sg/C = \{(A,B)\in U\ostar V\st A\cup B\sse C\}. \]
 By Proposition~\ref{prop-cofinal}, it will be enough to show that
 this is strongly contractible.  For $-1\leq i\leq n^*$ we write
 $C_{\leq i}=\{c\in C\st c\leq i\}$ and similarly for $C_{\geq i}$.
 As $C\in U*V$ we can write $C=A_0\cup B_0$ for some $A_0\in U$ and
 $B_0\in V$ with $A_0\angle B_0$.  This means that we can choose $k$
 with $a\leq k$ for all $a\in A_0$, and $k\leq b$ for all $b\in B_0$.
 It follows that $C_{\leq k}\in U$ and $C_{\geq k}\in V$.  Let $i$ be
 least such that $C_{\leq i}\in U$, and let $j$ be largest such that
 $C_{\geq j}\in V$. Trivially $i \leq k \leq j$, thus
 $(C_{\leq i}, C_{\geq j}) \in \sigma/C$.  Now consider an arbitrary
 element $(A,B)\in\sg/C$.  We define
 \[ \phi(A,B) = (C_{\leq\max(A)},\;C_{\geq\min(B)}). \]
 We use the conventions $\max(\emptyset)=-1$ and $\min(\emptyset)=n^*$
 if necessary; this ensures that $\max(A)\leq\min(B)$ in all cases, so
 $C_{\leq\max(A)}\angle C_{\geq\min(B)}$.  It is also clear that
 $A\sse C_{\leq\max(A)}$, so $C_{\leq\max(A)}\in U$, and similarly
 $C_{\geq\min(B)}\in V$.  Thus, $\phi$ is a morphism of posets from
 $\sg/C$ to itself, with $\phi\geq 1$.  On the other hand, if we
 define $\psi\:\sg/C\to\sg/C$ to be the constant map with
 value $(C_{\leq i},C_{\geq j})$, we find that $\psi\leq\phi$.  This
 gives the required contraction of $\sg/C$.
\end{proof}

\begin{proposition}\lbl{prop-mu-star}
 Consider the map $\mu\:\QQ\tm\QQ\to\QQ$ (given by $(U,V)\mapsto U*V$)
 and the induced morphism $\mu^*\:\CC^{\QQ}\to\CC^{\QQ\tm\QQ}$.  This
 restricts to give an equivalence $\mu^*\:\CF\to\CF_2$, making the
 following diagram commute up to natural isomorphism:
 \begin{center}
  \begin{tikzcd}[sep=huge]
   \CP \arrow[r,"u_*","\simeq"'] \arrow[d,"\inc_*\sg^*"',"\simeq"] &
   \CF \arrow[d,"\mu^*","\simeq"'] \\
   \CP_2 \arrow[r,"u^2_*"',"\simeq"] &
   \CF_2
  \end{tikzcd}
 \end{center}
\end{proposition}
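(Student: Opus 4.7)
The plan is to deduce both conclusions of the proposition—that $\mu^*$ restricts to a functor $\CF\to\CF_2$, and that the square commutes up to natural isomorphism—from two ingredients. The first is a Beck-Chevalley isomorphism $(\mu u^2)^*u_*\simeq\inc_*\sg^*$ obtained from Proposition~\ref{prop-cofinal-square}; the second is a pointwise verification, relying on Proposition~\ref{prop-sg-cofinal}, that $\mu^*u_*X$ is $u^2$-cartesian for every $X\in\CP$. Equivalence of the restricted $\mu^*$ will then follow formally from the fact that the three other sides of the square are already known to be equivalences.

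First I would verify the identity $\mu\circ u^2\circ\inc = u\circ\sg$ as maps $\MM\to\QQ$: for $(A,B)\in\MM$ we have $A\angle B$, so Lemma~\ref{lem-mu-u} gives $uA*uB = u(A\cup B) = u\sg(A,B)$. The resulting commutative square
\begin{center}
 \begin{tikzcd}
  \MM \arrow[r,"\sg"] \arrow[d,"\inc"'] & \PP \arrow[d,"u"] \\
  \PP\tm\PP \arrow[r,"\mu u^2"'] & \QQ
 \end{tikzcd}
\end{center}
produces, via Proposition~\ref{prop-cofinal-square}, a Beck-Chevalley map $(\mu u^2)^*u_*\to\inc_*\sg^*$. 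To show it is an isomorphism I would check cofinality of the induced comma maps. For $r=(A,B)\in\MM$ the map $r/\inc\to u(A\cup B)/u$ is $(A',B')\mapsto A'\cup B'$; for any $C\supseteq A\cup B$ the fibre $\sg/C$ has $(A,B)$ as smallest element and so is strongly contractible, hence the map is homotopy cofinal by Proposition~\ref{prop-cofinal}. For $r\notin\MM$ both commas are empty: the target because $\emptyset\leq uC$ would force $uC=\emptyset$, which is impossible since $N\in uC$, and the source because any $(A',B')\in\MM$ above $(A,B)$ would force $A\angle B$. Post-composing with $u^2_*$ then gives $u^2_*(\mu u^2)^*u_*\simeq u^2_*\inc_*\sg^*$.

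Next I would show that for $X\in\CP$ the unit $\mu^*u_*X\to u^2_*(\mu u^2)^*u_*X$ is an isomorphism, which by Der2 reduces to checking at each $(U,V)\in\QQ\tm\QQ$. The Kan formula yields $(u_*X)_W=\holim_{B\in W}X_B$, so the source is $\holim_{B\in U*V}X_B$. For the target, Lemma~\ref{lem-mu-u} together with the vanishing $(u_*X)_\emptyset=0$ (from the fact that $\emptyset/u$ is empty) collapses the homotopy limit over $U\tm V$ to one over the sieve $U\ostar V\sse U\tm V$, namely $\holim_{(A,B)\in U\ostar V}X_{A\cup B}$. Proposition~\ref{prop-sg-cofinal} identifies these two homotopy limits via the homotopy cofinal map $\sg\colon U\ostar V\to U*V$, and the unit realises this identification. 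Chaining with the previous step gives $\mu^*u_*X\simeq u^2_*\inc_*\sg^*X$; since the right-hand side lies in $\CF_2$ by Propositions~\ref{prop-doubly-loc-alt} and~\ref{prop-uu}, so does the left, and essential surjectivity of $u_*\colon\CP\to\CF$ then yields both $\mu^*(\CF)\sse\CF_2$ and the required commutativity of the square. The equivalence of $\mu^*\colon\CF\to\CF_2$ follows from the equivalences in Propositions~\ref{prop-fracture-obj}, \ref{prop-doubly-loc-alt}, and~\ref{prop-uu}. The main technical obstacle is making the pointwise identifications of the second step genuinely coherent, so that the Kan formula for $u_*$, the sieve extension by zero of $\inc_*\sg^*$, and the cofinality of $\sg$ assemble into one natural isomorphism rather than merely a levelwise agreement.
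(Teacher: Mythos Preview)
Your proposal is correct but takes a longer route than the paper.  You apply Proposition~\ref{prop-cofinal-square} to the square with bottom-left corner $\PP\tm\PP$ (vertical map $\inc$, bottom map $\mu\circ u^2$), obtaining $(\mu u^2)^*u_*\simeq\inc_*\sg^*$, and then in a separate Step~2 you establish $u^2$-cartesianness of $\mu^*u_*X$ by a pointwise argument that again invokes Proposition~\ref{prop-sg-cofinal}; you rightly flag the coherence of that second step as the main technical obstacle.  The paper instead applies Proposition~\ref{prop-cofinal-square} in one shot to the square with bottom-left corner $\QQ\tm\QQ$ (vertical map $u^2\circ\inc$, bottom map $\mu$), giving the Beck--Chevalley transform $\mu^*u_*\to(u^2\circ\inc)_*\sg^*=u^2_*\inc_*\sg^*$ directly.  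The comma map to be checked at $(U,V)\in\QQ\tm\QQ$ is then literally $\sg\:U\ostar V\to U*V$, so Proposition~\ref{prop-sg-cofinal} applies on the nose and no further step is needed.  Your two-step decomposition essentially reproves this single application in pieces: your Step~1 cofinality check is trivial (smallest elements or empty posets), and all the real content of Proposition~\ref{prop-sg-cofinal} is deferred to Step~2.  The paper's route buys coherence for free, since the Beck--Chevalley map is canonical, whereas yours requires verifying that the unit in Step~2 really realises the cofinality identification.  Both arguments are valid, but the paper's is shorter and avoids exactly the coherence issue you identified.
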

\begin{proof}
 By the usual reduction, it will suffice to work with $\CP(e)$,
 $\CF(e)$ and so on.  Proposition~\ref{prop-doubly-loc-alt} gives the
 left hand equivalence, the top and bottom equivalences are given by
 Proposition~\ref{prop-fracture-obj} and Proposition~\ref{prop-uu}
 respectively.  We claim that for $X\in\CP(e)$, there is a natural
 isomorphism $\mu^*u_*X\simeq u^2_*\inc_*\sg^*X$.  Assuming this,
 everything else follows easily by chasing the diagram.  To prove the
 claim, we apply Proposition~\ref{prop-cofinal-square} to the square
 \begin{center}
  \begin{tikzcd}
   \MM \arrow[r,"\sg"] \arrow[d,"u^2\circ\inc"'] & \PP \arrow[d,"u"] \\
   \QQ\tm\QQ \arrow[r,"\mu"'] & \QQ,
  \end{tikzcd}
 \end{center}
 which commutes by Lemma~\ref{lem-mu-u}.  The square gives a
 Beck-Chevalley transform $\al\:\mu^*u_*\to u^2_*\inc_*\sg^*$, and the
 proposition tells us that this is an isomorphism provided that the
 map
 \[ \sg_{(U,V)}\:(U,V)/(u^2\circ\inc) \to \mu(U,V)/u \]
 is homotopy cofinal for all $U,V\in\QQ$.  By unwinding the
 definitions, we see that this is just the map $U\ostar V\to U*V$
 whose cofinality was proved in Proposition~\ref{prop-sg-cofinal}.
\end{proof}

\begin{proof}[Proof of Theorem~\ref{thm-tht-tht}]
 Using Proposition~\ref{prop-mu-star} we obtain the following diagram,
 which commutes on the nose:
 \begin{center}
  \begin{tikzcd}[sep=huge]
   \CF
    \arrow[r,"i_{U*V}^*"]
    \arrow[dr,"\mu^*","\simeq"']
    \arrow[d,"j^*","\simeq"'] &
   \CC \\
   \CC &
   \CF_2
    \arrow[l,"j_2^*","\simeq"']
    \arrow[u,"i_{(U,V)}^*" right]
  \end{tikzcd}
 \end{center}
 One route around the square gives $\tht_{U*V}$ by definition, and the
 other gives $\tht_U\tht_V$ by Corollary~\ref{cor-tht-tht}.
\end{proof}

We conclude with an addendum to Proposition~\ref{prop-sg-cofinal}.  We
do not currently have any use for this, but the method of proof is
interesting so we have included it.

\begin{proposition}\lbl{prop-sg-final}
 The map $\sg\:U\ostar V\to U*V$ is also homotopy final. 
\end{proposition}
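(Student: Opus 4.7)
The plan is to apply Proposition~\ref{prop-cofinal}: $\sigma$ is homotopy final iff the comma poset $C/\sigma = \{(A,B) \in U \ostar V : C \subseteq A \cup B\}$ is strongly contractible for every $C \in U * V$. I first dispose of the edge case $C = \emptyset$: the condition $\emptyset \in U * V$ forces $\emptyset = A \cup B$ with $A \in U$ and $B \in V$, hence $A = B = \emptyset$, so $\emptyset \in U \cap V$, and by upward closure $U = V = \PP$. Then $C/\sigma = \MM$, which has $(\emptyset, \emptyset)$ as a bottom element and is therefore strongly contractible.

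For $C \neq \emptyset$, I would mirror the strategy of Proposition~\ref{prop-sg-cofinal} and introduce the expansion map $\phi : C/\sigma \to C/\sigma$ defined by
\[ \phi(A,B) = (A \cup C_{\leq M(A)},\; B \cup C_{\geq m(B)}), \]
where $M(A) = \max A$ and $m(B) = \min B$, with the conventions $M(\emptyset) = -\infty$ and $m(\emptyset) = +\infty$, so that the appended terms vanish on the empty side. Routine checks show that $\phi$ is a monotonic self-map of $C/\sigma$ with $\phi \geq 1_{C/\sigma}$: the $\angle$ relation is preserved because the maximum of the first component remains $M(A)$ and the minimum of the second remains $m(B)$, and upward closure of $U$ and $V$ keeps both components in the required sets. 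Moreover $\phi$ is idempotent.

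The main obstacle is then to complete the zigzag with maps below $\phi$ terminating at a constant. The direct transplant of the cofinality proof --- taking a single constant $\psi$ at a fixed cut $(C_{\leq i_*}, C_{\geq j_*})$ --- fails here, since $(A,B) \in C/\sigma$ may have $A, B$ extending arbitrarily far outside $C$, and so $\phi(A,B)$ can have arbitrarily small intersection with any fixed basepoint; concretely, for $A \in U$ minimal one can have $\phi_1(A,B) = A$ with no relation to a chosen cut. My plan is therefore to fix an index $k_*$ witnessing $C \in U * V$ (so $C_{\leq k_*} \in U$ and $C_{> k_*} \in V$), take the target basepoint to be $(C_{\leq k_*}, C_{> k_*})$, and construct a multi-step zigzag $\phi = \phi_0 \geq \phi_1 \leq \phi_2 \geq \cdots \geq \phi_n$ of poset maps where each $\phi_i$ moves $\phi_{i-1}(A,B)$ toward the basepoint, either by shifting the effective cut position of its image by one step along the elements of $C \cup \{-\infty, +\infty\}$ or by trimming/adding minimal generators of $U$ or $V$. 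Since the poset $\QQ$ is finite and the relevant indexing by cut positions is linear, the zigzag has bounded length. The principal bookkeeping is then to verify that every intermediate map lands in $C/\sigma$, that each $\phi_i$ and $\phi_{i+1}$ really are pointwise comparable uniformly in $(A,B)$, and that the final $\phi_n$ is constant at $(C_{\leq k_*}, C_{> k_*})$.
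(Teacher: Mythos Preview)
Your high-level strategy is right: show $C/\sg$ is strongly contractible via a zigzag of poset self-maps ending at a constant, with a cut index $k$ witnessing $C\in U*V$ as anchor. But the proposal stops at the point where the real work begins. The zigzag maps $\phi_i$ are never defined, and the phrases ``shifting the effective cut position by one step'' and ``trimming/adding minimal generators of $U$ or $V$'' are not precise enough to check the two things that actually need checking: that each $\phi_i$ lands in $C/\sg$, and that consecutive $\phi_i$ are uniformly comparable. Your map $\phi(A,B)=(A\cup C_{\leq\max A},\,B\cup C_{\geq\min B})$ is fine as a first step with $\phi\geq 1$, but it does not move you toward the basepoint, and you acknowledge this; after that the argument is only a plan.

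The paper's proof supplies exactly the missing concrete maps, and they go in a different direction from your $\phi$: rather than augmenting $A$ and $B$ by pieces of $C$, one \emph{redistributes} $A\cup B$ itself across the two slots. Define $\al_{2i}(A,B)=(A_{<i},A_{\geq i}\cup B)$, $\al_{2i+1}(A,B)=(A_{\leq i},A_{\geq i}\cup B)$, and dually $\bt_{2i}(A,B)=(A\cup B_{<i},B_{\geq i})$, $\bt_{2i+1}(A,B)=(A\cup B_{\leq i},B_{\geq i})$. These satisfy $\al_{2i}\leq\al_{2i+1}\geq\al_{2i+2}$ and likewise for $\bt$, giving zigzags with $\al_{2n^*}=1=\bt_0$. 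The substantive verification is that $\al_i$ preserves $C/\sg$ for $i\geq 2k+1$ and $\bt_i$ for $i\leq 2k+1$; this uses $C\sse A\cup B$ together with upward closure of $U,V$ (and is where the argument genuinely differs from the cofinality proof). One then gets $[\al_{2k+1}]=[\bt_{2k+1}]=1$, and the composite $\al_{2k+1}\bt_{2k+1}(A,B)=((A\cup B)_{\leq k},(A\cup B)_{\geq k})\geq(C_{\leq k},C_{\geq k})$, so it is homotopic to a constant. Your separate treatment of $C=\emptyset$ is correct but unnecessary: any $k$ works there.
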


The proof will be given after some preliminaries.

\begin{definition}\lbl{defn-al-bt}
 We also define maps $\al_i,\bt_i\:\MM\to\MM$ as follows:
 \begin{align*}
  \al_{2i}(A,B)   &= (A_{<i}    ,A_{\geq i}\cup B) &
  \al_{2i+1}(A,B) &= (A_{\leq i},A_{\geq i}\cup B) \\
  \bt_{2i}(A,B)   &= (A\cup B_{<i}    ,B_{\geq i}) &
  \bt_{2i+1}(A,B) &= (A\cup B_{\leq i},B_{\geq i})
 \end{align*}
 Here $A_{>i}$ means $\{a\in A\st a>i\}$, and so on.  We note that
 \begin{align*}
  \al_{2i+2}(A,B) &= (A_{\leq i},A_{>i}\cup B) \\
  \bt_{2i+2}(A,B) &= (A\cup B_{\leq i},B_{>i}).
 \end{align*}
\end{definition}

\begin{lemma}\lbl{lem-al-bt}
 All the above maps $\al_i$ and $\bt_i$ are poset maps, with
 $\sg\al_i=\sg\bt_i=\sg\gm_i=\sg\dl_i=\sg$.  The extreme cases are
 \begin{align*}
  \al_0(A,B)      &= (\emptyset,A\cup B) &
  \al_{2n^*}(A,B) &= (A,B) \\
  \bt_0(A,B)      &= (A,B) &
  \bt_{2n^*}(A,B) &= (A\cup B,\emptyset).
 \end{align*}
 There are inequalities $\al_{2i}\leq\al_{2i+1}\geq\al_{2i+2}$ and
 $\bt_{2i}\leq\bt_{2i+1}\geq\bt_{2i+2}$.
\end{lemma}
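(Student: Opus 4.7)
The plan is to verify each clause of the lemma by direct computation from the definitions; all of the maps consist of partitioning $A$ and $B$ at a threshold $i$ and redistributing some elements between the two coordinates, so each claim reduces to an elementary set-theoretic check.

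First I would verify that each map actually lands in $\MM$. For $\al_{2i}(A,B)=(A_{<i},A_{\geq i}\cup B)$ with $(A,B)\in\MM$: every element of $A_{<i}$ is strictly less than $i$, every element of $A_{\geq i}$ is at least $i$, and for $b\in B$ the hypothesis $A\angle B$ gives $b\geq a$ for every $a\in A$, in particular for every $a\in A_{<i}$; the argument for $\al_{2i+1}$, $\bt_{2i}$, $\bt_{2i+1}$ is entirely analogous. Monotonicity with respect to the coordinatewise inclusion order on $\MM\sse\PP\tm\PP$ is immediate, since the operations $A\mapsto A_{<i}$, $A\mapsto A_{\leq i}$, $A\mapsto A_{\geq i}$, and $A\mapsto A_{>i}$ all preserve inclusions, and union with a fixed set does too.

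Next I would check $\sg\al_i=\sg$ and $\sg\bt_i=\sg$ by expanding the definitions: for instance
\[
\sg(\al_{2i}(A,B)) = A_{<i}\cup A_{\geq i}\cup B = A\cup B = \sg(A,B),
\]
and the other three cases follow in the same way, using $A_{\leq i}\cup A_{\geq i}=A$ and the analogous identities for $B$. The extreme cases follow from the conventions $A_{<0}=A_{\geq n^*}=\emptyset$ (valid since $A\sse N=\{0,\dotsc,n^*-1\}$): one gets $\al_0(A,B)=(\emptyset,A\cup B)$ and $\al_{2n^*}(A,B)=(A_{<n^*},A_{\geq n^*}\cup B)=(A,B)$, and symmetrically for $\bt_0$ and $\bt_{2n^*}$.

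Finally the inequalities: $\al_{2i}\leq\al_{2i+1}$ holds because $A_{<i}\sse A_{\leq i}$ while the second coordinate is the same, and $\al_{2i+1}\geq\al_{2i+2}$ holds because the first coordinates agree and $A_{>i}\sse A_{\geq i}$; the $\bt$ inequalities are the mirror image. There is no real obstacle; the only thing to be careful about is the index bookkeeping between strict and non-strict inequalities, and keeping in mind that the hypothesis $A\angle B$ is exactly what makes the images remain in $\MM$ once elements of $A$ are absorbed into the second coordinate.
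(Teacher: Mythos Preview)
Your proposal is correct and is exactly the kind of direct verification the paper has in mind: the paper's own proof is simply ``Straightforward from the definitions,'' and your write-up spells this out carefully. (Note that the $\gm_i,\dl_i$ in the statement are vestigial and undefined in the paper, so you were right to ignore them.)
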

\begin{proof}
 Straightforward from the definitions.
\end{proof}

\begin{proof}[Proof of Proposition~\ref{prop-sg-final}]
 Consider $C\in U*V$.  By Proposition~\ref{prop-cofinal}, it will
 suffice to prove that the poset
 \[ C/\sg = \{(A,B)\in U\ostar V \st C\sse A\cup B\} \]
 is strongly contractible.  As in the proof of
 Proposition~\ref{prop-sg-cofinal}, we can choose $k$ between $0$ and
 $n^*-1$ such that $C_{\leq k}\in U$ and $C_{\geq k}\in V$.  We claim
 that for all $i\geq 2k+1$, the map $\al_i\:\MM\to\MM$ preserves
 $C/\sg$.  To see this, suppose that $(A,B)\in C/\sg$, so $A\in U$ and
 $B\in V$ and $A\angle B$ and $A\cup B\supseteq C$.  We have
 $\al_i(A,B)=(A_{<u},A_{\geq v}\cup B)$ for some $u,v$ with $u>k$.  We
 have seen that $\al_i$ preserves $\MM$ with $\sg\al_i=\sg$, so the
 only point to check is that $A_{\ge v}\cup B\in V$ and $A_{<u}\in U$.
 The first of these is clear because $B\in V$ and $V$ is closed
 upwards.  The second is also clear if $A_{<u}=A$.  Suppose instead
 that $A_{<u}\neq A$, so there exists $a\in A$ with $a\geq u$.
 It follows that for $b\in B$ we have $b\geq a\geq u>k$, so
 $C_{\leq k}\cap B=\emptyset$.  However, we have
 $C_{\leq k}\sse C\sse A\cup B$ by assumption, so
 $C_{\leq k}\sse A_{\leq k}\sse A_{<u}$.  As $C_{\leq k}\in U$ and $U$
 is closed upwards, we see that $A_{<u}\in U$ as required.  A
 symmetrical argument shows that $\bt_i$ preserves $C/\sg$ for
 $i\leq 2k+1$.  As $\al_{2i}\leq\al_{2i+1}\geq\al_{2i+2}$ we see that
 $[\al_{2k+1}]=[\al_{2n^*}]=1$ in the strong homotopy category.
 Similarly, we have $[\bt_{2k+1}]=[\bt_0]=1$ and thus
 $[\al_{2k+1}\bt_{2k+1}]=1$ in the strong homotopy category.  However,
 it is not hard to see that
 \[ \al_{2k+1}\bt_{2k+1}(A,B) =
     ((A\cup B)_{\leq k},(A\cup B)_{\geq k}) \geq
     (C_{\leq k},C_{\geq k}),
 \]
 so $\al_{2k+1}\bt_{2k+1}$ is equivalent to the constant map with
 value $(C_{\leq k},C_{\geq k})$.
\end{proof}

\section{Monoidal structures}

\begin{definition}
 A \emph{symmetric monoidal derivator} $\CC$ is as specified
 in~\cite{gr:mda}*{Definition 2.4}.  We will not give the full
 details, and we will use notation corresponding to stable homotopy
 theory rather than derived algebra.  The key points are as follows:
 \begin{itemize}
  \item[(a)] Each category $\CC(P)$ has a symmetric monoidal
   structure, with unit denoted by $S$ and the monoidal product of $X$
   and $Y$ called the \emph{smash product} and denoted by $X\Smash Y$.
  \item[(b)] For each $u\:P\to Q$, the pullback functor
   $u^*\:\CC(Q)\to\CC(P)$ preserves smash products up to isomorphism.
 \end{itemize}
\end{definition}

\begin{remark}
 It will not typically be true that the natural morphism
 $u_!(X\Smash u^*(Y))\to u_!(X)\Smash Y$ is an isomorphism.
 Similarly, even if there exist function objects $F(X,Y)$ with
 $[W,F(X,Y)]\simeq [W\Smash X,Y]$, these will typically not satisfy
 $u^*F(X,Y)\simeq F(u^*X,u^*Y)$.  These kinds of properties are valid
 for derivators indexed by groups or groupoids, but posets are the
 other extreme from that.  
\end{remark}
For the rest of this section, we will assume that $\CC$ is a symmetric
monoidal derivator.  We will also assume that the homology theories
$K(n)$ have the property that $K(n)_*(X)=0$ implies
$K(n)_*(X\Smash Y)=0$ for all $Y$.  We will just give some simple
results about how the monoidal structure interacts with chromatic
fracture. 

\begin{proposition}\lbl{prop-tht-monoidal-aux}
 Suppose that $X,Y,Z\in\CF(R)$.  Then the natural map
 \[ \CC(\QQ\tm R)(X\Smash Y,Z) \to \CC(R)(j^*X\Smash j^*Y,j^*Z)
 \]
 is bijective.
\end{proposition}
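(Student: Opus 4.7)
The plan is to reduce this statement to Lemma~\ref{lem-fully-loc-alt} by passing from $\QQ \tm R$ to $\PP \tm R$ via $u^* = (u \tm 1)^*$, exploiting the fact that $u^*$ preserves the smash product.

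First I would use that $Z$ is a fracture object, which by definition means it is $u$-cartesian, so the counit $u_*u^*Z \to Z$ is an isomorphism. Adjunction then gives a natural bijection
\[ \CC(\QQ\tm R)(X\Smash Y,Z) \cong \CC(\PP\tm R)(u^*(X\Smash Y),u^*Z). \]
Since $u^*$ is a monoidal pullback functor, $u^*(X\Smash Y) = u^*X\Smash u^*Y$. Note that $u^*X$, $u^*Y$ and $u^*Z$ all lie in $\CP(R)$ because $X,Y,Z\in\CF(R)$.

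Next I would apply Lemma~\ref{lem-fully-loc-alt} to the pair $u^*X\Smash u^*Y$ (playing the role of $X$) and $u^*Z$ (playing the role of $Y$). The required hypotheses are:
\begin{itemize}
\item[(i)] for every $t,U,r$ the map $(u^*X\Smash u^*Y)_{U,r} \to (u^*X\Smash u^*Y)_{tU,r}$ is a $K(t)$-equivalence;
\item[(ii)] $(u^*Z)_{tU,r}$ is $K(t)$-local.
\end{itemize}
Condition~(ii) is immediate from $u^*Z \in \CP(R)$. For condition~(i), I would use that $i_{(U,r)}^*$ is monoidal, so the map in question is the smash of the two component maps $(u^*X)_{U,r}\to(u^*X)_{tU,r}$ and $(u^*Y)_{U,r}\to(u^*Y)_{tU,r}$, each of which is a $K(t)$-localisation (hence a $K(t)$-equivalence). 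Writing this as the composite $f_1 \Smash f_2 = (f_1 \Smash 1)\circ(1 \Smash f_2)$, the assumption that $K(n)$-acyclics form a $\Smash$-ideal ensures that each factor, and hence the composite, is a $K(t)$-equivalence.

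The lemma then yields a bijection
\[ \CC(\PP\tm R)(u^*X\Smash u^*Y,u^*Z) \xra{\;\cong\;} \CC(R)((u^*X\Smash u^*Y)_\emptyset,(u^*Z)_\emptyset). \]
Finally, I would identify the targets: since $(u\tm 1)\circ i_\emptyset = j$, one has $j^* = i_\emptyset^*u^*$, so $(u^*Z)_\emptyset = j^*Z$, and by monoidality of $i_\emptyset^*$ we get $(u^*X\Smash u^*Y)_\emptyset = j^*X\Smash j^*Y$. Composing the two bijections produces the natural map in the statement. The main obstacle is the temptation to reduce directly to the equivalence $\CF\simeq\CC$, which fails because the smash product of two fully localising objects need not be fully localising (the smash of two $K(t)$-local objects is generally not $K(t)$-local); it is precisely for this reason that the weaker hypotheses of Lemma~\ref{lem-fully-loc-alt} are needed.
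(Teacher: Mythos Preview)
Your proof is correct and follows the same approach as the paper: reduce to $\PP\tm R$ using the $u$-cartesian property of $Z$, then invoke Lemma~\ref{lem-fully-loc-alt}. The paper's own proof is just two lines and leaves the verification of the lemma's hypotheses implicit; your explicit check of condition~(i) using the $\Smash$-ideal assumption on $K(t)$-acyclics, and your closing remark on why one cannot simply cite the equivalence $\CF\simeq\CC$, are exactly the details the reader would need to supply.
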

\begin{proof}
 As $Z$ is $u$-cartesian, we have
 \[ \CC(\QQ\tm R)(X\Smash Y,Z) =
     \CC(\PP\tm R)(u^*(X\Smash Y),u^*Z).
 \]
 Now just apply Lemma~\ref{lem-fully-loc-alt}.
\end{proof}

We can use the above result to show that $\tht_U$ is lax monoidal, in
the appropriate sense for anafunctors.  In more detail, consider the
following diagrams:
\[ \CC(R)\tm\CC(R) \xla{j^*\tm j^*} \CF(R)\tm\CF(R)
     \xra{\Smash} \CC(\QQ\tm R) \xra{i_U^*} \CC(R)
\]
\[ \CC(R)\tm\CC(R) \xra{\Smash} \CC(R) \xla{j^*} \CF(R) \xra{i_U^*} \CC(R). \]
The leftward-pointing maps are equivalences, so we can invert them to
get two different anafunctors $\CC(R)\tm\CC(R)\to\CC(R)$.  Informally,
these are $(X,Y)\mapsto\tht_U(X)\Smash\tht_U(Y)$ and
$(X,Y)\mapsto\tht_U(X\Smash Y)$.  We need to provide a morphism
between these anafunctors.  For this, we introduce the category
\[ \CP(R) = \{(X,Y,Z,u)\st X,Y,Z\in\CF(R),\;
                u\:j^*(X)\Smash j^*(Y)\xra{\simeq}j^*(Z)\},
\]
and the diagram
\begin{center}
 \begin{tikzcd}
  \CP(R) \arrow[d,"l"'] \arrow[r,"r"] &
  \CF(R) \arrow[d,"\inc"] \\
  \CF(R)\tm\CF(R) \arrow[d,"j^*\tm j^*"'] \arrow[r,"\Smash"] &
  \CC(\QQ\tm R) \arrow[r,"i_U^*"] \arrow[d,"j^*"] &
  \CC(R) \\
  \CC(R)\tm\CC(R) \arrow[r,"\Smash"'] &
  \CC(R).
 \end{tikzcd}
\end{center}
Here $l(X,Y,Z,u)=(X,Y)$ and $r(X,Y,Z,u)=Z$.  Using the fact that
$j^*\:\CF(R)\to\CC(R)$ is an equivalence, we find that $l$ is also an
equivalence and the rectangle formed of the two squares is a homotopy
pullback.  Because of this, our two anafunctors can be described as
follows: we invert the equivalence $\CP(R)\to\CC(R)\tm\CC(R)$, then
take one of the two routes around the top square, then apply $i_U^*$.
Proposition~\ref{prop-tht-monoidal-aux} gives a natural map
$X\Smash Y\to Z$ for all $(X,Y,Z,u)\in\CP(R)$, or in other words, a
natural map between the two composites around the top square.  This
gives our required morphism of anafunctors.

\appendix

\section{Compactly generated triangulated categories}
\label{apx-comp-gen}

We next want some results about Brown representability and Bousfield
localisation in triangulated categories and derivators.  For the
homotopy category of spectra, all statements are well-known with very
classical proofs that rely on having an underlying geometric category
of spectra~\cite{ma:ssa}*{Chapter 7}.  There are also proofs of
similar results in more axiomatic frameworks, relying only on the
theory of triangulated categories.  These are typically formulated in
the context of well-generated categories as defined by
Neeman~\cite{ne:tc}, and the proofs are somewhat complex.  It is
well-known to experts that everything becomes much simpler, and much
closer to the original results for the category of spectra, if we
restrict attention to compactly generated categories.  However, it
seems surprisingly hard to find an full account of this in the
literature. We therefore provide one here.

\begin{definition}\lbl{defn-CT}
 Until further notice, $\CT$ will be a compactly generated
 triangulated category with coproducts.  We choose a small skeleton
 $\CT_0$ of $\CT_c$, and note that this is necessarily closed under
 suspensions and desuspensions and cofibres up to isomorphism.  We
 also choose an infinite cardinal $\kp_0$ such that the total number
 of morphisms in $\CT_0$ is at most $\kp_0$.  
\end{definition}

\begin{definition}\lbl{defn-CT-kp}
 Let $\kp$ be a cardinal that is at least as large as $\kp_0$.  We
 define subcategories $\CT_n^\kp\sse\CT$ as follows.  First, we let
 $\CT_0^\kp$ be the subcategory of objects that can be expressed as a
 coproduct $\bigoplus_{i\in I}T_i$, where $|I|\leq\kp$ and
 $T_i\in\CT_c$ for all $i$.  We then define $\CT^\kp_{n+1}$ to be the
 subcategory of objects $Z$ that can be expressed as the cofibre of a
 map from an object in $\CT^\kp_0$ to an object in $\CT^\kp_n$.
 Finally, we let $\CT^\kp_\infty$ be the subcategory of objects $X$
 that can be expressed as the telescope of a sequence $X_n$ with
 $X_n\in\CT^\kp_n$ for all $n$.  Given that $\CT_c$ is essentially
 small, we find that $\CT^\kp_n$ is essentially small for all
 $n\leq\infty$.  
\end{definition}

We now state a version of the Brown representability theorem:
\begin{theorem}\lbl{thm-brown}
 Let $K\:\CT^{\opp}\to\Ab$ be a cohomology theory (so $K$ converts all
 coproducts to products, and distinguished triangles to exact
 sequences).  Then $K$ is representable.
\end{theorem}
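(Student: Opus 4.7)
The plan is the standard transfinite Brown representability argument, which becomes very clean in the compactly generated setting. I will construct $B$ together with a universal class $b \in K(B)$ by producing a tower
\[ B_0 \to B_1 \to B_2 \to \dotsb \]
with coherent classes $b_n \in K(B_n)$, and then setting $B = \hocolim_n B_n \in \CT^{\kp}_\infty$ for a suitable $\kp$.

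For the base step, I take
\[ B_0 = \bigoplus_{T \in \CT_0}\ \bigoplus_{y \in K(T)} T \in \CT^{\kp}_0, \]
so that $K(B_0) = \prod_{T,y} K(T)$ since $K$ converts coproducts to products, and I let $b_0$ have $y$ as its $(T,y)$-component. This guarantees that $f \mapsto f^*(b_0)$ is a surjection $[T, B_0] \to K(T)$ for every $T \in \CT_0$. Given $(B_n, b_n)$ with this surjectivity still holding, I kill unwanted elements by forming
\[ A_n = \bigoplus_{T \in \CT_0}\ \bigoplus_{f \in \ker([T,B_n] \to K(T))} T \]
with its tautological map $A_n \to B_n$. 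The pullback of $b_n$ to $A_n$ vanishes by construction, so the long exact sequence obtained by applying $K$ to the distinguished triangle $A_n \to B_n \to B_{n+1}$ gives a lift $b_{n+1} \in K(B_{n+1})$ of $b_n$. Passing to the telescope and invoking the Milnor short exact sequence
\[ 0 \to {\lim}^1 K(\Sg B_n) \to K(B) \to \lim_n K(B_n) \to 0, \]
obtained by applying $K$ to the cofibre sequence $\bigoplus_n B_n \to \bigoplus_n B_n \to B$, produces a class $b \in K(B)$ projecting to the compatible system $(b_n)$.

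Next I check that for every compact $T$ the evaluation map $\eta_T \: [T,B] \to K(T)$, $f \mapsto f^*(b)$, is a bijection. Surjectivity follows from the construction of $b_0$ together with the fact that $[T,-]$ commutes with the telescope for compact $T$. For injectivity, any $f \: T \to B$ factors through some $B_n$ by compactness, giving $f_n \: T \to B_n$ with $f_n^*(b_n) = f^*(b) = 0$; thus $f_n$ appears as one of the summands of $A_n \to B_n$, hence becomes null in $B_{n+1}$ and a fortiori in $B$.

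Finally, to extend $\eta$ from $\CT_c$ to all of $\CT$, I consider the full subcategory
\[ \mathcal{S} = \{X \in \CT \st \eta_X \: [X,B] \to K(X) \text{ is an isomorphism}\}. \]
Because $[-,B]$ and $K$ both convert coproducts to products and distinguished triangles to long exact sequences, a five-lemma argument shows that $\mathcal{S}$ is thick, and the product formula shows that it is closed under arbitrary coproducts; so $\mathcal{S}$ is a localising subcategory of $\CT$. By the previous step it contains $\CT_c$, and the compact generation hypothesis of Definition~\ref{defn-compact}(b) then forces $\mathcal{S} = \CT$. The main technical obstacle is the passage from the tower to its telescope: one needs to verify carefully that the Milnor $\lim^1$ sequence yields a class $b$ whose restriction to each $B_n$ is exactly $b_n$, and that $[T,-]$ really is compatible with the telescope for compact $T$; given those two standard facts, everything else is formal.
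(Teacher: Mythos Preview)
Your proof is correct and follows essentially the same approach as the paper's: build the tower $B_0\to B_1\to\dotsb$ starting from the tautological coproduct, kill kernels by attaching cells from $\CT_0$, pass to the telescope via the Milnor sequence, verify the bijection on compact objects, and then conclude by the localising subcategory argument. The only cosmetic difference is that you track the cardinal $\kp$ and the filtration $\CT^\kp_n$ explicitly, which the paper does not need or use in this proof (those classes enter only later, in the Bousfield localisation argument).
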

The basic method of proof is due to Brown, and similar axiomatic
versions have appeared with various different hypotheses in a number
of places such as~\cite{ma:ssa}*{Theorem 4.11}
and~\cite{hopast:ash}*{Theorem 2.3.2}.  There are also various
versions with weaker hypotheses and much more complicated proofs, such
as~\cite{ne:tc}*{Chapter 8}.  For completeness we will give
a brief account here with our current hypotheses and notation.
\begin{proof}
 We shall define recursively a sequence of objects
 \[ X(0) \xra{i_0} X(1) \xra{i_1} X(2)
         \xra{i_2} \ldots
 \]
 and elements $u(k)\in K(X(k))$ such that $i_k^*u(k+1)=u(k)$.  We
 start with
 \[ X(0) = \bigoplus_{Z\in\CT_0} \; \bigoplus_{v \in K(Z)} Z. \]
 We take $u(0)$ to be the element of
 \[ K(X(0)) = \prod_{Z\in\CT_0} \;
              \prod_{v \in K(Z)}  K(Z)
 \]
 whose $(Z,v)$th component is $v$.  We then set
 \[ T(k) =
    \{(Z,f) \st Z\in\CT_0,\;
              f \: Z \xra{} X(k),\;
              f^* u(k) = 0 \}.
 \]
 We define $X(k+1)$ by the cofiber sequence
 \[ \bigoplus_{(Z,f)\in T(k)} Z \xra{} X(k)
        \xra{i_k} X(k+1).
 \]
 By applying $K$ to this, we obtain a three-term exact sequence (with
 arrows reversed).  It is clear by construction that $u(k)$ maps to
 zero in the left hand term, so that there exists
 $u(k+1)\in K(X(k+1))$ with $i_k^*u(k+1)=u(k)$ as required.

 We now let $X$ be the telescope of the objects $X(k)$.
 The cofibration defining this telescope gives rise to a
 short exact sequence
 \[ 0 \xra{} {\invlim}{}^1_k K(\Sigma X(k)) \xra{} K(X)
      \xra{} \invlim_k K(X(k)) \xra{} 0 .
 \]
 Using this, we find an element $u\in K(X)$ that maps to $u(k)$ in
 each $K(X(k))$.  As in Yoneda's lemma, this induces a natural map
 $\tau_U\:[U,X]\xra{} K(U)$.  It is easy to see that $\tau_Z$ is an
 isomorphism for each $Z\in\CT_0$ (using the fact that
 these objects are small).  It is also easy to see that
 \[ \{Z\st\tau_{\Sigma^kZ}\text{ is an isomorphism for all $k$} \} \]
 is a localizing category.  It contains $\CT_0$, 
 so it must be all of $\CT$; thus $\tau$ is an isomorphism.
\end{proof}

Next, for any localising subcategory $\CU\sse\CT$, we can form the
Verdier quotient category $\CT/\CU$; see~\cite{ne:tc}*{Chapter 2} for
a detailed treatment.  However, we are implicitly assuming everywhere
that the hom sets of our categories are small, or in other words that
they are genuine sets rather than proper classes, and this can fail
for $\CT/\CU$.  If we can verify in a particular case that $\CT/\CU$
has small hom sets, then we can use Brown representability to prove
the existence of localisation functors.  This general line of argument
is well-known, going back to Adams and Bousfield.  There are various
known approaches to prove that $\CT/\CU$ has small hom sets.  One
possibility is to assume that $\CT$ is the homotopy category of a
Quillen model category or an infinity category in the sense of Lurie;
but here we prefer to work solely with triangulated categories and
derivators.  In~\cite{ma:ssa}*{Chapter 7}, Margolis proves some
results of this type for the category of spectra, and it is well-known
to experts that his approach can be generalised to compactly generated
triangulated categories.  As with the representability theorem, there
are also similar results with weaker hypotheses and much more
complicated proofs, such as~\cite{ne:tc}*{Corollary 4.4.3}.  However,
we have not been able to find an explicit axiomatic version of the
approach of Margolis in the literature, so we provide one here.  We
start by spelling out the argument that small hom sets give
localisations. 

\begin{theorem}\lbl{thm-loc}
 Suppose that $\CT/\CU$ has small hom sets.  Then for each $X\in\CT$
 there exists a distinguished triangle
 \[ CX \xra{q} X \xra{j} LX \xra{d} \Sg CX \]
 with $CX\in\CU$ and $LX\in\CU^\perp$.  In fact, $q$ is terminal in
 $\CU/X$ and $j$ is initial in $X/\CU^\perp$.
\end{theorem}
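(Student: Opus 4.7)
The plan is to apply Brown representability (Theorem \ref{thm-brown}) to the functor $K \: \CT^{\opp} \to \Ab$ defined by $K(Y) = (\CT/\CU)(\pi Y, \pi X)$, where $\pi \: \CT \to \CT/\CU$ is the Verdier quotient. First I would check that $K$ is a cohomology theory: it is set-valued by the hypothesis of small hom sets; it converts coproducts to products because $\pi$ preserves coproducts (as $\CU$ is localising); and it sends distinguished triangles to exact sequences since $\pi$ is triangulated. Brown representability then yields an object $LX \in \CT$ with a natural isomorphism $\Phi_Y \: [Y, LX] \xra{\sim} (\CT/\CU)(\pi Y, \pi X)$, and I take $j = \Phi_X^{-1}(1_{\pi X}) \in [X, LX]$ and complete it to a distinguished triangle $CX \xra{q} X \xra{j} LX \xra{d} \Sg CX$.

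That $LX \in \CU^{\perp}$ is immediate: for $U \in \CU$ we have $\pi U = 0$, so $[U, LX] \cong (\CT/\CU)(0, \pi X) = 0$. To see $CX \in \CU$, I would reduce to showing $\pi(j)$ is an isomorphism in $\CT/\CU$; this suffices since the kernel of $\pi$ is precisely $\CU$ (as $\CU$ is localising, hence thick). The key auxiliary tool is the standard fact that for any $Z \in \CU^{\perp}$, the canonical map $[Y, Z] \to (\CT/\CU)(\pi Y, \pi Z)$ is a bijection; this is a calculus-of-fractions argument that lifts any roof $Y \xla{s} Y' \to Z$ uniquely through $s$ using the cofibre sequence $\Sg^{-1}\mathrm{cone}(s) \to Y' \to Y$ together with $[\mathrm{cone}(s), Z] = 0$. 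Applied to $Z = LX$ and combined with $\Phi$, this produces a natural isomorphism $(\CT/\CU)(\pi Y, \pi LX) \cong (\CT/\CU)(\pi Y, \pi X)$ of functors on $\CT^{\opp}$. Evaluating at $Y = X$ traces $\pi(j)$ to $1_{\pi X}$; once naturality is extended to arbitrary morphisms of $\CT/\CU$ (formal, since each such morphism is a composite of images of maps in $\CT$ and their inverses), the Yoneda lemma in $\CT/\CU$ identifies the natural iso with precomposition by an inverse of $\pi(j)$, so $\pi(j)$ is iso.

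The universal properties then follow by applying hom functors to the triangle: for $Z \in \CU^{\perp}$, vanishing of $[CX, Z]$ and $[\Sg CX, Z]$ (since $CX \in \CU$) gives $[LX, Z] \xra{\sim} [X, Z]$, proving $j$ is initial in $X/\CU^{\perp}$; dually, $[U, CX] \xra{\sim} [U, X]$ for $U \in \CU$, proving $q$ is terminal in $\CU/X$. The main obstacle is the step that $\pi(j)$ is an isomorphism: the Brown representation only immediately gives a one-sided retraction $e = \Phi_{LX}(1_{LX})$ of $\pi(j)$, and upgrading this to a genuine iso requires both the $\CU^{\perp}$-hom bijection lemma and a careful Yoneda-in-the-quotient argument rather than any direct triangulated splitting trick.
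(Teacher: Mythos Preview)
Your proposal is correct and follows essentially the same route as the paper: Brown representability for $W\mapsto(\CT/\CU)(\pi W,\pi X)$ produces $LX$, the calculus-of-fractions bijection $[W,LX]\xra{\sim}(\CT/\CU)(\pi W,\pi LX)$ for $LX\in\CU^\perp$ is then combined with the representing isomorphism to show $\pi(j)$ is invertible, and the universal properties drop out of the triangle. One small simplification: your worry about extending naturality to arbitrary morphisms of $\CT/\CU$ before invoking Yoneda is unnecessary, since a morphism $e$ for which postcomposition $e_*\:\mathrm{Hom}(A,-)\to\mathrm{Hom}(A,-)$ is bijective for every object $A$ is already an isomorphism (apply the bijection at the target to produce a one-sided inverse, then at the source to see it is two-sided); no naturality in $A$ is needed.
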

\begin{proof}
 From~\cite{ne:tc}*{Chapter 2} we have a triangulation of
 $\CT/\CU$ such that the evident functor $\pi\:\CC\to\CC/\CU$ is
 exact.  From~\cite{ne:tc}*{Corollary 3.2.11} we know that $\CT/\CU$
 has small coproducts and that $\pi$ preserves coproducts.  It follows
 that the functor $W\mapsto(\CT/\CU)(\pi W,\pi X)$ is a cohomology
 theory on $\CT$.  By Theorem~\ref{thm-brown}, we can choose an object
 $LX\in\CT$ and a natural isomorphism
 $\CT(W,LX)\simeq(\CT/\CU)(\pi W,\pi X)$ for all $W\in\CT$.  If
 $W\in\CU$ then $\pi W=0$ and so $\CT(W,LX)=0$; this proves that
 $LX\in{}^\perp\CU$.  The natural map
 \[ \CT(W,X) \xra{\pi} (\CT/\CU)(\pi W,\pi X) \simeq
     \CT(W,LX) 
 \]
 corresponds (via the Yoneda Lemma) to a map $j\:X\to LX$.  We can
 then fit this into a distinguished triangle
 \[ CX \xra{q} X \xra{j} LX \xra{d} \Sg CX. \]
 Next, recall that every element of $(\CT/\CU)(\pi W,\pi LX)$ can be
 represented as a fraction $gf^{-1}$ for some diagram
 $(W\xla{f}V\xra{g}LX)$ with $\cof(f)\in\CU$.  As $LX\in\CU^\perp$ we
 see that $\CT(\cof(f),LX)=\CT(\Sg\cof(f),LX)=0$, and it follows that
 there is a unique $h\:W\to LX$ with $hf=g$.  Using this, we find that
 the map
 \[ \pi \: \CT(W,LX) \to (\CT/\CU)(\pi W,\pi LX) \]
 is an isomorphism for all $W$.  By combining this with our
 isomorphism $\CT(W,LX)\simeq(\CT/\CU)(\pi W,\pi X)$, we see that
 $\pi(j)\:\pi X\to\pi LX$ is an isomorphism, so $\pi CX=0$ and
 $CX\in\CU$ as claimed.  Now for $U\in\CU$ we have $\CT(U,LX)_*=0$ hence
 the distinguished triangle gives $\CT(U,CX)\simeq\CT(U,X)$, which
 proves that $(CX\xra{q}X)$ is terminal in $\CU/X$.  Dually, if
 $Z\in\CU^\perp$ then $\CT(CX,Z)_*=0$ so the distinguished triangle
 gives $\CT(X,Z)\simeq\CT(LX,Z)$, therefore $(X\xra{j}LX)$ is initial in
 $X/\CU^\perp$. 
\end{proof}

\begin{definition}\lbl{defn-weakly-initial}
 Let $\CC$ be a category, and let $\CJ$ be a set of objects.  We say
 that $\CJ$ is \emph{weakly initial} if for every $X\in\CC$ there
 exists an object $T\in\CJ$ and a morphism $T\to X$.
\end{definition}

\begin{proposition}\lbl{prop-weakly-initial}
 Suppose that $\CU$ is a localising subcategory of $\CT$ such that for
 each $X\in\CT$, the comma category $X/\CU$ has a weakly initial set.
 Then the category $\CT/\CU$ has small hom sets.
\end{proposition}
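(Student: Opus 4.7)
The plan is to show that every morphism in $\CT/\CU$ from $\pi X$ to $\pi Y$ can be represented by a right fraction whose denominator belongs to a fixed small family, and then count. Recall from~\cite{ne:tc}*{Chapter 2} that morphisms $\pi X \to \pi Y$ in the Verdier quotient may be represented by right fractions
\[ X \xleftarrow{t} W \xrightarrow{g} Y \]
with $\cof(t) \in \CU$, two such fractions being identified if they admit a common refinement through a third fraction of the same shape.

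Fix a weakly initial set $\{g_i\:X\to V_i\}_{i\in I}$ in $X/\CU$, and for each $i$ choose a distinguished triangle
\[ W_i \xrightarrow{t_i} X \xrightarrow{g_i} V_i \to \Sg W_i. \]
The crucial step is to prove that every right fraction $(t,g)$ is equivalent to one of the form $(t_i,g')$ for some $i\in I$ and some $g'\in\CT(W_i,Y)$. To produce such an equivalent fraction, I would complete $t$ to a triangle $W\xrightarrow{t} X\xrightarrow{q} V\to\Sg W$ with $V=\cof(t)\in\CU$. Weak initiality yields $i\in I$ and $h\:V_i\to V$ with $h\circ g_i=q$, so $(1_X,h)$ forms a partial morphism of triangles; axiom TR3 then supplies $\phi\:W_i\to W$ with $t\circ\phi=t_i$. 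Setting $g'=g\circ\phi$, the maps $\phi\:W_i\to W$ and $1_{W_i}\:W_i\to W_i$ exhibit $(t_i,g')$ as a common refinement of $(t,g)$ and $(t_i,g')$, witnessing the equivalence $(t,g)\sim(t_i,g')$.

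It follows that the collection of morphisms $\pi X\to\pi Y$ in $\CT/\CU$ is a quotient of the set $\coprod_{i\in I}\CT(W_i,Y)$, and so is itself a set. The only delicate point is the interaction between this reduction and the fraction equivalence relation, but the explicit refinement above takes care of it, so no serious obstacle remains.
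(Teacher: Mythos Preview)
Your proof is correct and essentially identical to the paper's: both pass to fibres $W_i$ of the weakly initial family and show that every fraction is equivalent to one of the form $(t_i,g')$ via the fill-in axiom TR3. The only cosmetic difference is that the paper additionally invokes the octahedral axiom to check that $\cof(\phi)\in\CU$ (so that $\phi$ becomes invertible in $\CT/\CU$), whereas your direct appeal to the common-refinement description of the equivalence relation on roofs makes that step unnecessary, since only $t\circ\phi=t_i$ needs to have cofibre in $\CU$.
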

\begin{remark}
 As we will explain in more detail below, this implies that there is a
 localisation functor $L$ with kernel $\CU$, and thus that the unit
 map $X\to LX$ is in initial in the comma category $X/\CU^\perp$
 (not $X/\CU$).  This interplay between $X/\CU^\perp$ and $X/\CU$ is
 a little surprising, but that is how the proof works.
\end{remark}
\begin{proof}
 Fix objects $X,Y\in\CT$, and let $\{X\xra{e_i}U_i\}_{i\in I}$ be a
 weakly initial family in $X/\CU$.  Let $Z_i\xra{f_i}X$ be the fibre
 of $e_i$.  As the cofiber of $f_i$ is in $\CU$, every map
 $g\:Z_i\to Y$ gives a fraction $gf_i^{-1}\in(\CT/\CU)(X,Y)$.  It will
 suffice to show that every element of $(\CT/\CU)(X,Y)$ is of this
 form.  A general element of $(\CT/\CU)(X,Y)$ can be represented as
 $qp^{-1}$ for some $X\xla{p}W\xra{q}Y$, where the cofibre of $p$ is in
 $\CU$.  In more detail, we have a cofibration $W\xra{p}X\xra{m}V$ with
 $V\in\CU$.  By the weakly initial property, we can write $m$ as
 $ne_i$ for some $i$ and some $n\:U_i\to V$.  This gives $mf_i=0$, so
 $f_i$ lifts to the fibre of $m$, so we can choose $k\:Z_i\to W$ with
 $pk=f_i$.  The cofibres of $p$ and $f_i$ are in $\CU$, and $\CU$ is
 thick, so the octahedral axiom tells us that the cofibre of $k$ is
 also in $\CU$, so $k$ becomes an isomorphism in $\CT/\CU$.  We can
 now put $g=qk\:Z_i\to Y$ and we have
 $qp^{-1}=(qk)(pk)^{-1}=gf_i^{-1}$ as required.
\end{proof}

\begin{proposition}\lbl{prop-homology-weakly-initial}
 Let $K\:\CT\to\Ab$ be a homology theory (so $K$ preserves all
 coproducts, and converts distinguished triangles to exact
 sequences).  Put
 \[ \CU=\ker(K_*)=
      \{X\st K(\Sg^nX)=0 \text{ for all } n\in\Z\}. \]
 Then $\CU$ is a localising subcategory of $\CT$ such that $X/\CU$ has
 a weakly initial set for all $X$, so $\CT/\CU$ has small hom sets.
\end{proposition}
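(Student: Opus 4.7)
My plan is to address the two assertions in the proposition separately. The first, that $\CU=\ker(K_*)$ is a localising subcategory of $\CT$, is a routine verification: since $K$ is a homology theory, $K_*$ converts distinguished triangles into long exact sequences and preserves arbitrary coproducts, so $\CU$ is immediately closed under triangles (two-out-of-three for vanishing), suspensions, retracts, and arbitrary coproducts.

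For the second assertion, fix $X\in\CT$ and choose a cardinal $\kp_X$ with $\kp_X\geq\kp_0$ and $|\CT(T,X)|\leq\kp_X$ for every $T\in\CT_0$. I take as candidate weakly initial set $\CJ_X$ the collection of isomorphism classes of morphisms $f'\:X\to U'$ with $U'\in\CU\cap\CT^{\kp_X}_\infty$. This is a genuine set because $\CT^{\kp_X}_\infty$ is essentially small by the construction in Definition~\ref{defn-CT-kp} and all relevant hom-sets are small. Once I show that every $f\:X\to U$ with $U\in\CU$ factors through some member of $\CJ_X$, the final assertion follows immediately from Proposition~\ref{prop-weakly-initial}.

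The construction of the factorization is the main technical step and is where the real work lies. Given $f\:X\to U$ with $U\in\CU$, I will produce $U'\in\CU\cap\CT^{\kp_X}_\infty$ together with $f'\:X\to U'$ and $h\:U'\to U$ satisfying $hf'=f$, by assembling $U'$ as the homotopy colimit of a telescope $U'_0\to U'_1\to\dotsb$ with each $U'_n\in\CT^{\kp_X}_n$ and compatible morphisms $U'_n\to U$. At each stage $n$ I alternate two kinds of cell attachments: (a) enlarging $U'_n$ by cofibres of maps from objects of $\CT_0$ so as to absorb the compact-object probes $T\to X\to U$ (there are at most $\kp_X$ such probes by the choice of $\kp_X$), ensuring that in the colimit $f$ factors through $U'$; and (b) attaching further cells from $\CT_0$ to kill $K_*$-classes of $U'_n$ whose images in $U$ are null, ensuring that $K_*(U')=0$ in the limit. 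The cardinality bound is preserved because each kind of attachment contributes at most $\kp_X$ cells per stage.

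The main obstacle is the interleaving of operations (a) and (b). One must verify that the killing cells added in step (b) do not disrupt the partial factorization $U'_n\to U$ being maintained in step (a), that the compatibility between $f'\:X\to U'$ and $h\:U'\to U$ survives the passage to the homotopy colimit, and that the cardinality bound $\kp_X$ really does absorb both kinds of additions simultaneously at all stages. This bookkeeping, carried out in the spirit of Margolis's cellular approximation for spectra, is the technically delicate part of the proof.
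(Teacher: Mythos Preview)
Your approach has a genuine gap in both steps~(a) and~(b), and it differs substantially from the paper's.

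In step~(a) you claim that absorbing all composites $T\to X\to U$ with $T\in\CT_0$ into $U'$ will force $f\:X\to U$ itself to factor through $U'$.  But this only follows when $X$ is compact; for general $X$ there is no mechanism by which compatibly factoring the compact probes assembles into a single map $X\to U'$.  (If you try to make $[T,U']\to[T,U]$ bijective for all compact $T$, then $U'\simeq U$ and smallness is lost.)  In step~(b) you attach cells $T\to U'_n$ to kill $K_*$-classes, and to extend the map $h_n\:U'_n\to U$ over the cofibre you need the composite $T\to U'_n\to U$ to be nullhomotopic; but $K_*(U)=0$ only tells you this composite is $K_*$-null, not null.  So the map $h$ cannot in general be maintained through the construction.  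A further issue is that your $\kp_X$ bounds $|\CT(T,X)|$ rather than $|K_*(T)|$, so you have no control over the number of cells required in step~(b).

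The paper avoids all of this by bounding the \emph{fibre} of a morphism in $X/\CU$, not the target.  Given $X\xra{g}V$ with $V\in\CU$, the fibre $P\xra{j}X$ has $K_*(P)\cong K_*(X)$, hence $|K_*(P)|\leq\kp$ for a suitable $\kp$ depending only on $X$ and $K$.  The preparatory construction in Corollary~\ref{cor-F-one} through Proposition~\ref{prop-F-infty} produces $Q=F_\infty P\in\CT_\infty^\kp$ with a $K_*$-isomorphism $q\:Q\to P$.  Taking the cofibre $X\xra{f}U$ of $jq$ gives $U\in\CU$ with fibre $Q\in\CT_\infty^\kp$, and since $g(jq)=0$ the map $g$ automatically factors through $f$.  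The weakly initial set is then $\{(X\to U)\st\fib(X\to U)\in\CT_\infty^\kp\}$.  The factorisation here is for free from the fibre sequence, and no map to the original $V$ needs to be maintained during any cell-attachment process.
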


The proof will be given after some preliminary definitions and
lemmas. 
\begin{definition}\lbl{defn-K-hat}
 We define $K_*\:\CT\to\Ab_*$ by $K_n(X)=K(\Sg^{-n}X)$.  We then
 define $\hK_*\:\CT\to\Ab_*$ to be the left Kan extension of the
 restriction of $K_*$ to $\CT_0\subset\CT$.  Explicitly, $\hK_*(X)$ is
 the colimit of the functor $\CT_0/X\to\Ab_*$ sending $(T\xra{t}X)$ to
 $K_*(T)$.  There is an evident counit map $\phi_X\:\hK_*(X)\to K_*(X)$.
\end{definition}

\begin{lemma}\lbl{lem-K-hat-K}
 The counit map $\phi_X\:\hK_*(X)\to K_*(X)$ is an isomorphism for all
 $X$.  
\end{lemma}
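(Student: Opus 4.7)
The plan is to show that $\hK_*$ is itself a homology theory on $\CT$ and that $\phi$ is a natural transformation between homology theories; the claim will then follow from compact generation by a Whitehead-style comparison.

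First, if $X\in\CT_0$ then $(X,1_X)$ is terminal in the comma category $\CT_0/X$, so the defining colimit collapses to $K_*(X)$ and $\phi_X$ is tautologically an isomorphism. Second, I claim that $\CT_0/X$ is filtered for every $X\in\CT$. Given $(T_1,t_1),(T_2,t_2)\in\CT_0/X$, the pair $(T_1\oplus T_2,(t_1,t_2))$ dominates them and remains in $\CT_0$. For parallel morphisms $f,g\:(T_1,t_1)\to(T_2,t_2)$ we have $t_2(f-g)=0$, so $t_2$ factors through $T_3=\cof(f-g)$ via some $p\:T_2\to T_3$, yielding a morphism $(T_2,t_2)\to(T_3,t_3)$ in $\CT_0/X$ that coequalises $f$ and $g$. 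Hence $\hK_*(X)$ is a filtered colimit of abelian groups.

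The main step is to verify that $\hK_*$ is a homology theory. Coproduct preservation follows from the observation that any map from a compact $T$ to $\bigoplus_i X_i$ factors through a finite subcoproduct (by the compactness property of $T$), so the full subcategory of such finitely-supported objects is cofinal in $\CT_0/\bigoplus_i X_i$, and filteredness lets one identify the colimit with $\bigoplus_i\hK_*(X_i)$. For exactness on a distinguished triangle $X\xra{f}Y\xra{g}Z$, composability is clear from $gf=0$. For exactness at $\hK_*(Y)$, represent a class $\al\in\hK_*(Y)$ with $\hK_*(g)(\al)=0$ by a triple $(T,t\:T\to Y,k\in K_*(T))$. Filteredness of $\CT_0/Z$ produces $s\:T\to T'$ with $t's=gt$ and $K_*(s)(k)=0$. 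Extending $s$ to a distinguished triangle $T''\xra{u}T\xra{s}T'\to\Sg T''$ and using the exactness of $K_*$, one lifts $k$ to some $k''\in K_*(T'')$ with $K_*(u)(k'')=k$. Since $g(tu)=t'(su)=0$, the map $tu\:T''\to Y$ factors through $X$ as $fv$ for some $v\:T''\to X$; then the class $[(T'',v,k'')]\in\hK_*(X)$ maps to $\al$ under $\hK_*(f)$.

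Finally, $\phi$ is a natural transformation between homology theories which by the first step is an isomorphism on all of $\CT_0$ (and which automatically commutes with suspension). The class
\[
 \CU = \{X\in\CT \st \phi_X \text{ is an isomorphism}\}
\]
is closed under $\Sg^{\pm1}$, under distinguished triangles (by the five-lemma applied to the two long exact sequences), and under arbitrary coproducts (since both $K_*$ and $\hK_*$ preserve them), so it is a localising subcategory of $\CT$ containing $\CT_0$. Compact generation then forces $\CU=\CT$, proving the lemma. The main obstacle is the verification that $\hK_*$ sends distinguished triangles to long exact sequences, where both the filteredness of $\CT_0/X$ and the exactness of the underlying $K_*$ are essential.
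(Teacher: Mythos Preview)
Your proof is correct and follows the same strategy as the paper: show that $\CT_0/X$ is filtered, deduce that $\hK_*$ is a homology theory, and then conclude by observing that $\{X\st\phi_X\text{ is iso}\}$ is a localising subcategory containing $\CT_0$. The paper simply defers the verification that $\hK_*$ is a homology theory to~\cite{hopast:ash}*{Section~2.3}, whereas you have spelled out those details (filteredness, coproduct preservation, and exactness on triangles) explicitly; your arguments for each of these are sound, modulo the harmless abuse of treating $T_1\oplus T_2$ and $\cof(f-g)$ as literally in the skeleton $\CT_0$ rather than merely isomorphic to objects there.
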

\begin{proof}
 It is not hard to see that the category $\CT_0/X$ is filtered,
 and to deduce that $\hK$ is again a homology theory.  Details
 are given in~\cite{hopast:ash}*{Section 2.3}, for example.  (In that
 reference there are officially some additional standing assumptions
 that we are not assuming here, such as that $\CT$ has a symmetric
 monoidal structure, but none of those assumptions are used in the
 relevant proofs.)  It follows that the category
 $\{X\st\phi_X \text{ is iso }\}$ is localising and contains $\CT_0$,
 so it must be all of $\CT$, as required.
\end{proof}

\begin{definition}\lbl{defn-kappa-one}
 We let $\kp_1$ be a cardinal such that $\kp_1\geq\kp_0$ and
 $\kp_1\geq|K_*(T)|$ for all $T\in\CT_0$.
\end{definition}

\begin{corollary}\lbl{cor-F-one}
 Fix a cardinal $\kp$ such that $\kp\geq\kp_1$.  Suppose that $X\in\CT$ with
 $|K_*(X)|\leq\kp$.  Then one can choose a fibre sequence
 \[ FX\xra{\al} X\xra{\bt} GX \]
 such that
 \begin{itemize}
  \item[(a)] $FX\in\CT_0^\kp$.
  \item[(b)] $K_*(\al)$ is surjective, so $K_*(\bt)=0$.
  \item[(c)] $|K_*(GX)|\leq\kp$.
 \end{itemize}
 (We do not claim that $F$ or $G$ is a functor.) 
\end{corollary}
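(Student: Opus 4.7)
The plan is to construct $FX$ as a coproduct, indexed by the set $K_*(X)$, of objects from $\CT_0$ chosen to hit each element, then set $GX$ to be the cofibre and extract the bound on $|K_*(GX)|$ from the long exact sequence.

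First I would invoke Lemma \ref{lem-K-hat-K}, which identifies $K_*(X)$ with the filtered colimit $\hK_*(X) = \colim_{(T,t)\in\CT_0/X}K_*(T)$. Consequently, every homogeneous element $x\in K_n(X)$ lifts to a class $y_x\in K_n(T_x)$ for some $T_x\in\CT_0$ equipped with a map $t_x\:T_x\to X$ whose image in $K_n(X)$ is $x$. Because $|K_*(X)|\leq\kp$, such choices can be made simultaneously for all $x\in K_*(X)$. Define
\[ FX \;=\; \bigoplus_{x\in K_*(X)} T_x, \qquad
   \al \;=\; \bigvee_x t_x \: FX \to X. \]
Since $|K_*(X)|\leq\kp$ and each $T_x\in\CT_0\sse\CT_c$, Definition \ref{defn-CT-kp} gives $FX\in\CT_0^\kp$, establishing~(a).

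Next, $K_*$ preserves coproducts (being a homology theory), so $K_*(FX)=\bigoplus_xK_*(T_x)$, and $K_*(\al)$ hits every $x\in K_*(X)$ by construction; hence it is surjective, proving~(b). Complete $\al$ to a distinguished triangle $FX\xra{\al}X\xra{\bt}GX\to\Sg FX$ and apply $K_*$ to obtain the long exact sequence
\[ K_n(FX) \xra{K_n(\al)} K_n(X) \to K_n(GX) \to K_{n-1}(FX) \to K_{n-1}(X). \]
Because $K_n(\al)$ is surjective in every degree, this sequence collapses to give a short exact sequence exhibiting $K_n(GX)$ as a subgroup of $K_{n-1}(FX)$.

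For the final bound in~(c), observe that $|K_*(FX)|=|\bigoplus_xK_*(T_x)|$. There are at most $\kp$ summands, each of cardinality at most $\kp_1\leq\kp$ by Definition \ref{defn-kappa-one}, so $|K_*(FX)|\leq\kp\cdot\kp=\kp$ (since $\kp$ is infinite). Hence $|K_*(GX)|\leq|K_*(FX)|\leq\kp$. There is no real obstacle here: the only subtle point is the use of the filtered colimit formula from Lemma \ref{lem-K-hat-K} to lift \emph{every} element (not merely a set of generators) of $K_*(X)$ through $\CT_0$, which is precisely what makes $K_*(\al)$ surjective while keeping $FX$ in $\CT_0^\kp$. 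The construction is patently non-functorial because of the arbitrary choice of lifts $(T_x,t_x,y_x)$.
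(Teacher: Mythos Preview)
Your proof is correct and follows essentially the same approach as the paper: build $FX$ as a coproduct of small objects chosen via the identification $K_*=\hK_*$ from Lemma~\ref{lem-K-hat-K}, take $GX$ to be the cofibre, and bound $|K_*(GX)|$ via the long exact sequence and elementary cardinal arithmetic. The only cosmetic difference is that the paper indexes over a homogeneous generating set for $K_*(X)$ rather than over all of $K_*(X)$, which makes no difference to the argument or the bounds.
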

\begin{proof}
 Let $\{b_i\st i\in I\}$ be a homogeneous generating set for $K_*(X)$
 with $|I|\leq\kp$.  As $K_*(X)=\hK_*(X)$, we can choose objects
 $T_i\in\CT_0$ and maps $t_i\:T_i\to X$ and elements $a_i\in K_*(T_i)$
 with $(t_i)_*(a_i)=b_i$.  We then take
 $FX=\bigoplus_iT_i\in\CT_0^\kp$, and let $\al\:FX\to X$ be the map
 given by $t_i$ on the $i$'th summand.  Using
 $K_*(FX)=\bigoplus_iK_*(T_i)$ we find that $K_*(\al)$ is surjective
 as required.  We then define $X\xra{\bt}GX$ to be the 
 cofibre of $\al$, so $K_*(\bt)=0$ and we have a short exact sequence
 $K_{*+1}(GX)\to K_*(FX)\to K_*(X)$.  Standard cardinal arithmetic now
 gives $|K_*(FX)|\leq\kp$ and then $|K_*(GX)|\leq\kp$. 
\end{proof}

\begin{construction}\label{cons-F-infty}
 We have a sequence $X\xra{\bt}GX\xra{\bt}G^2X\xra{}\dotsb$, and we
 define $G^\infty X$ to be the telescope.  For $0\leq n\leq\infty$ we
 define $F_nX$ to be the fibre of the map $X\to G^nX$.  The octahedral
 axiom then gives us a cofibration
 $\Sg^{-1}FG^nX\to F_nX\to F_{n+1}X$, and one can also check that
 $F_\infty X$ is the telescope of the objects $F_nX$ for $n<\infty$.
\end{construction}

\begin{proposition}\lbl{prop-F-infty}
 For $X$ and $\kp$ as above, the fibre sequence
 \[ F_\infty X \xrightarrow{\alpha_{\infty}} X \to G^\infty X \]
 satisfies
 \begin{itemize}
  \item[(a)] $F_\infty X\in\CT_\infty^\kp$.
  \item[(b)] $K_*(\al_\infty)$ is an isomorphism.
  \item[(c)] $K_*(G^\infty X)=0$, or in other words $G^\infty X\in\CU$.
 \end{itemize}
\end{proposition}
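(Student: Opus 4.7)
My plan is to prove (a) by induction on $n$, then (c) by a direct computation on the telescope, and finally (b) as an immediate consequence of (c).

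For part~(a), I would show by induction that $F_nX\in\CT_n^\kp$ for all finite $n$; the telescope clause in Construction~\ref{cons-F-infty} then yields $F_\infty X\in\CT_\infty^\kp$. The base case $F_0X=\fib(1_X)=0\in\CT_0^\kp$ is trivial. For the inductive step, Construction~\ref{cons-F-infty} already supplies the cofibre sequence
\[ \Sg^{-1}FG^nX \to F_nX \to F_{n+1}X, \]
so $F_{n+1}X$ is the cofibre of a map whose source is $\Sg^{-1}FG^nX$ and whose target is $F_nX$. To apply Corollary~\ref{cor-F-one} to $G^nX$ I need $|K_*(G^nX)|\leq\kp$, which I secure by a parallel induction from Corollary~\ref{cor-F-one}(c), starting from the standing hypothesis $|K_*(X)|\leq\kp$. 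Then Corollary~\ref{cor-F-one}(a) places $FG^nX$ in $\CT_0^\kp$, and $\CT_0^\kp$ is closed under (de)suspension because $\CT_0$ is. Definition~\ref{defn-CT-kp} now gives $F_{n+1}X\in\CT_{n+1}^\kp$.

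For part~(c), the key input is Corollary~\ref{cor-F-one}(b), which says each structure map $\bt\:G^nX\to G^{n+1}X$ kills $K_*$. Since $K$ is a homology theory it commutes with sequential homotopy colimits: present the telescope as the cofibre of $1-\mathrm{shift}\:\bigoplus_n G^nX\to\bigoplus_n G^nX$, apply $K_*$, and use that $K$ preserves coproducts to identify $K_*(G^\infty X)=\operatorname{colim}_n K_*(G^nX)$. As every transition map in this directed system is zero, every element dies at the next stage and the colimit vanishes. Part~(b) is then immediate: apply $K_*$ to the fibre sequence $F_\infty X\xra{\al_\infty}X\to G^\infty X$ and observe that in the resulting long exact sequence both neighbours of $K_*(\al_\infty)$ are zero by~(c).

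The main subtlety is the cardinal bookkeeping needed to keep Corollary~\ref{cor-F-one} iterable: one must verify inductively that the uniform bound $|K_*(G^nX)|\leq\kp$ is preserved at every stage, or else the construction of $G^{n+1}X$ from $G^nX$ is not legitimate. Once that bound is in hand, the remainder is a routine assembly of standard properties of homology theories on compactly generated triangulated categories.
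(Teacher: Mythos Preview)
Your proposal is correct and follows essentially the same route as the paper's proof: the paper likewise argues by induction that $|K_*(G^nX)|\leq\kp$ so that $FG^nX\in\CT_0^\kp$ and hence $F_nX\in\CT_{n-1}^\kp$ (you prove the slightly weaker $F_nX\in\CT_n^\kp$, which suffices equally well for the telescope clause), then observes that the vanishing of each $K_*(\bt)$ forces $K_*(G^\infty X)=0$, from which~(b) follows by the long exact sequence. Your treatment is simply a bit more explicit about why $K_*$ commutes with the telescope.
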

\begin{proof}
 We see by induction that $|K_*(G^nX)|\leq\kp$ for all $n$. It follows
 that $FG^nX\in\CT_0^\kp$, and thus that $F_nX\in\CT_{n-1}^\kp$, therefore $F_\infty X\in\CT_\infty^\kp$.  Also, as the maps
 $K_*(\bt)\:K_*(G^nX)\to K_*(G^{n+1}X)$ are zero, we see that
 $K_*(G^\infty X)=0$.  It follows that the map
 $K_*(F_\infty X)\to K_*(X)$ is an isomorphism as desired. 
\end{proof}

\begin{proof}[Proof of Proposition~\ref{prop-homology-weakly-initial}]
 It is straightforward to check that $\CU$ is a localising
 subcategory.

 Now fix $X\in\CT$, and choose $\kp\geq\kp_1$ such that
 $|K_*(X)|\leq\kp$.  Let $\CA$ be the subcategory of
 $X/\CU$ consisting of objects $(X\xra{f}U)$ such that the fibre of
 $f$ lies in $\CT^\kp_\infty$.   As $\CT_\infty^\kp$ is essentially
 small, we see that $\CA$ is also essentially small, so we can choose
 a small skeleton $\CA_0$.  

 Now let $(X\xra{g}V)$ is an arbitrary object of $X/\CU$.  Let
 $P\xra{j}X$ be the fibre of $g$, so $K_*(j)$ is an isomorphism, so
 $|K_*(P)|\leq\kp$.  Proposition~\ref{prop-F-infty} therefore gives us a
 map $Q=F_\infty P\xra{q}P$ such that $Q\in\CT_\infty^\kp$ and
 $K_*(q)$ is an isomorphism.  Let $X\xra{f}U$ be the cofibre of
 $jq\:Q\to X$.  As $K_*(jq)$ is an isomorphism, we see that $U\in\CU$
 and so $(X\xra{f}U)\in X/\CU$.  As $gj=0$ we have $g(jq)=0$ so the
 map $g\:X\to V$ factors through $f$, so there is a morphism from
 $(X\xra{f}U)$ to $(X\xra{g}V)$ in $X/\CU$.  This proves that $\CA$ is
 weakly initial in $X/\CU$, and it follows that the skeleton $\CA_0$
 has the same property.
\end{proof}

\begin{bibdiv}
\begin{biblist}
\bib{boka:hlc}{book}{
  author={Bousfield, A.~K.},
  author={Kan, Daniel~M.},
  title={Homotopy limits, completions and localizations},
  series={Lecture notes in Mathematics},
  publisher={Springer--Verlag},
  date={1972},
  volume={304},
 }

\bib{ci:idc}{article}{
   author={Cisinski, Denis-Charles},
   title={Images directes cohomologiques dans les cat\'{e}gories de mod\`eles},
   language={French},
   journal={Annales Math\'{e}matiques Blaise Pascal},
   number={10},
   date={2003},
   pages={195-244},
}

\bib{ci:pmt}{article}{
   author={Cisinski, Denis-Charles},
   title={Les pr\'{e}faisceaux comme mod\`eles des types d'homotopie},
   language={French, with English and French summaries},
   journal={Ast\'{e}risque},
   number={308},
   date={2006},
   pages={xxiv+390},
   issn={0303-1179},
   isbn={978-2-85629-225-9},
   review={\MR{2294028}},
}

\bib{cisnem:additivityKtheory}{book}{
  author={Cisinski, D.~C.},
  author={Neeman, A.},
  title={Additivity for derivator K-theory},
  series={Advances in Mathematics},
  date={2008},
  volume={217},
  pages={1381-1475},
  }

\bib{franke}{book}{
  author={Franke, J.},
  title={Uniqueness theorems for certain triangulated categories possessing an Adams spectral sequence},
  series={K-theory Preprint Archives},
  date={1996},
  }

\bib{groth:bookderivators}{book}{
  author={Groth, M.},
  title={Book project on derivators},
  date={under construction},
  volume={1},
  }

\bib{groth:derpointstable}{book}{
  author={Groth, M.},
  title={Derivators, pointed derivators and stable derivators},
  series={Algebraic \& Geometric Topology},
  publisher={Mathematical Sciences Publisher},
  date={2013},
  volume={13},
  pages={313-374},
  }

\bib{gr:mda}{book}{
  author={Groth, M.},
  title={Monoidal derivators and additive derivators},
  series={arXiv:1203.5071},
  date={2012},
  }
  
\bib{groth:canonicitytriangulations}{book}{
  author={Groth, M.},
  title={Revisiting the canonicity of canonical triangulations},
  series={Theory and Applications of Categories},
  date={2018},
  volume={33},
  number={14},
  pages={350–389},
  }

  \bib{groth:mayervietoris}{book}{
  author={Groth, M.},
  author={Ponto, K.},
  author={Shulman, M.}
  title={Mayer-Vietoris sequences in stable derivators},
  publisher={Homology, Homotopy and Applications},
  date={2014},
  volume={16},
  pages={265-294}
 }

 \bib{he:trm}{article}{
 title = {Two results from Morita theory of stable model categories},
 author = {Heider, Andreas},
 eprint = {arXiv:0707.0707 [math.AT]}
}

\bib{hopast:ash}{article}{
    author={Hovey, Mark},
    author={Palmieri, John H},
    author={Strickland, Neil P},
     title={Axiomatic stable homotopy theory},
      date={1997},
      ISSN={0065-9266},
   journal={Mem. Amer. Math. Soc.},
    volume={128},
    number={610},
     pages={x+114},
    }

\bib{ma:ssa}{book}{
    author={Margolis, Harvey~R.},
     title={Spectra and the Steenrod algebra},
 publisher={North-Holland},
      date={1983},
}
    
\bib{ne:tc}{book}{
   author={Neeman, Amnon},
   title={Triangulated categories},
   series={Annals of Mathematics Studies},
   volume={148},
   publisher={Princeton University Press, Princeton, NJ},
   date={2001},
   pages={viii+449},
   isbn={0-691-08685-0},
   isbn={0-691-08686-9},
   review={\MR{1812507}},
   doi={10.1515/9781400837212},
}

\bib{pontoshul:linearitytraces}{book}{
  author={Ponto, K.},
  author={Shulman, M.}
  title={Linearity of traces in monoidal categories and bicategories},
  publisher={arXiv:1406.7854},
  date={2014}
 }
 
  \bib{pronk:bicategoriesfractions}{book}{
  author={Pronk, D. A.},
  title={Etendues and stacks as bicategories of fractions},
  series={Compositio mathematica},
  date={1996},
  volume={102, no.3},
  pages={243-303}
 }

 \bib{qu:haki}{inproceedings}{
    author={Quillen, Daniel},
     title={Higher algebraic $K$-theory. I},
      date={1973},
 booktitle={Algebraic $k$-theory, i: Higher $k$-theories (proc. conf., battelle
  memorial inst., seattle, wash., 1972)},
 publisher={Springer},
   address={Berlin},
     pages={85\ndash 147. Lecture Notes in Math., Vol. 341},
}

   \bib{vitale:bipullbacksandfractions}{book}{
  author={Vitale, E. M.},
  title={Bipullbacks and calculus of fractions},
  series={Chaiers de topologie et g\'eom\'etrie diff\'erentielle cat\'egoriques},
  date={2010},
  volume={51, no.2},
  pages={83-113}
 }
\end{biblist}
\end{bibdiv}

\end{document}